\RequirePackage{rotating}
\documentclass[11pt,reqno]{amsart}
\usepackage{amsfonts,amsmath,amssymb,amsbsy,amstext,amsthm,accents,enumerate,float,fullpage,verbatim, mathtools, url, microtype,eucal,bm,mathbbol, parskip, cases, rotating, ulem}
\usepackage[dvipsnames]{xcolor}
\usepackage{colortbl, diagbox}
\usepackage[shortlabels]{enumitem}
\usepackage[colorlinks=true,hyperindex, linkcolor=blue, pagebackref=false, citecolor=cyan]{hyperref}
\usepackage[alphabetic,lite,backrefs]{amsrefs}

\usepackage{lscape}
\usepackage{cleveref}
\usepackage{cancel} %Sheila

\usepackage{graphicx,pstricks}
\usepackage{graphics}
\usepackage{subcaption}
\usepackage{import}
\graphicspath{ {images/} }
\usepackage{subfiles}
\usepackage{float, pdfpages}

% \usepackage{amssymb,latexsym,tikz, hyperref}
% \usetikzlibrary{arrows,decorations.markings}

%\usepackage{blkarray}
\usepackage[all,cmtip]{xy}
\usepackage{tikz}
\usetikzlibrary{arrows,decorations.markings, cd}
\tikzstyle arrowstyle=[scale=1]
\tikzstyle directed=[postaction={decorate,
decoration={markings,mark=at position .65 with {\arrow[arrowstyle]{stealth}}}}]
\usepackage{mathdots}
\usepackage{bbold} 
\usepackage{tikz-cd}   % commutative diagrams
% \usepackage{graphicx}
% \usepackage{float}
% \usepackage{placeins}
% \usepackage{mdframed}
% \usepackage{amssymb}
% \usepackage{esint}
% \usepackage{cool}
% \usepackage[all,cmtip]{xy}
% % \usepackage{mathtools}
% % \usepackage{amstext} % for \text macro
% \usepackage{array}   % for \newcolumntype macro
\usepackage[shortlabels]{enumitem}
\usepackage{ytableau}
\usepackage[ruled,vlined]{algorithm2e}
\usepackage[all]{nowidow}
\usepackage{genyoungtabtikz}

%%%%%% Set the amount of indentation that occurs
\setlength{\parindent}{0.3cm}
%%%%%%

% \usepackage{titlesec}
% \setcounter{secnumdepth}{4}				     			% Allows one to use nice section titles
% \titleformat{\section}[block]{\large\scshape\bfseries\filcenter}{\thesection.}{1em}{}		% Creates section titles
% \titleformat{\subsection}[hang]{\large\scshape\bfseries}{\thesubsection}{1em}{}			% Creates subsection titles
% \titleformat{\subsubsection}[hang]{\large\scshape\bfseries}{\thesubsubsection}{1em}{}		% Creates subsection titles

%Theorem environments
\newtheorem{lemma}{Lemma}[section]
\newtheorem{theorem}[lemma]{Theorem}

\newtheorem{prop}[lemma]{Proposition}
\newtheorem{cor}[lemma]{Corollary}
\newtheorem{conj}[lemma]{Conjecture}

\newtheorem{question}[lemma]{Question}
\newtheorem{problem}[lemma]{Problem}

\theoremstyle{definition}
\newtheorem{definition}[lemma]{Definition}
\newtheorem{example}[lemma]{Example}

\newtheorem{remark}[lemma]{Remark}

% Commands
\newcommand{\one}{\mathbb{1}}

\newcommand{\Hilb}{\operatorname{Hilb}}

\newcommand{\GG}{\mathcal{G}}

\newcommand{\DowlingGeometry}[2]{{Q_{#1}(#2)}}

\newcommand{\OM}{\mathcal{M}}

\newcommand{\Ext}{\operatorname{Ext}}

\newcommand{\im}{\operatorname{im}}

\newcommand{\Br}{\operatorname{Br}}

 %done

\newcommand{\End}{\operatorname{End}}

\newcommand{\Sym}{\operatorname{Sym}} %done

\newcommand{\supp}{\operatorname{supp}}

\newcommand{\mons}{\operatorname{Mons}}
\newcommand{\braid}{\operatorname{Br}}
\newcommand{\BEZ}{\operatorname{BEZ}}

 % for defined terms

% multiset binomial
\newcommand{\multichoose}[2]{\ensuremath{\left(\kern-.3em\left(\genfrac{}{}{0pt}{}{#1}{#2}\right)\kern-.3em\right)}}
% q-multiset binomial
\newcommand{\qmultichoose}[2]{\ensuremath{\left[\kern-.3em\left[\genfrac{}{}{0pt}{}{#1}{#2}\right]\kern-.3em\right]_q}}
% q-binomial
\newcommand{\qbinom}[2]{\ensuremath{\left[ \genfrac{}{}{0pt}{}{#1}{#2} \right]_q}}

\newcommand{\sgn}[2]{{\mathrm{sgn}_{{#1},{#2}}}}

% \newcommand{\tT}{\tilde{T}}

% \newcommand{\Sym}{\text{Sym}}

								% Category of Sets
								% Category of Abelian Groups

\newcommand{\Whit}{\text{\bf Whit}}

%%%%%%%%%%%%%%%%%%%%%%%%%%%%%% Letters  %%%%%%%%%%%%%%%%%%%%%%%%%%%%%%%%%%%%%%%%%%%%
%%%%%%%%%%%%%%%%%%%%%%%%%%%%%%%%%%%%%%%%%%%%%%%%%%%%%%%%%%%%%%%%%%%%%%%%%%%%%%

\newcommand{\kk}{\mathbb k}
\renewcommand{\aa}{\mathbf a}

\newcommand{\mm}{\mathbf m}

\newcommand{\xx}{\mathbf x}
\newcommand{\yy}{\mathbf y}
\newcommand{\zz}{\mathbf z}

\renewcommand{\k}{\mathbb{k}}
%\renewcommand{\kk}{\mathbb{k}}

%%%% Caligraphic Fonts - i.e. ????. %%%%%
\newcommand{\cA}{\mathcal{A}}

\newcommand{\cC}{\mathcal{C}}

\newcommand{\cF}{\mathcal{F}}
\newcommand{\cG}{\mathcal{G}}

\newcommand{\cL}{\mathcal{L}}
\newcommand{\cM}{\mathcal{M}}

\newcommand{\cS}{\mathcal{S}}

\newcommand{\cU}{\mathcal{U}} 		% Notice this is different

\newcommand{\cX}{\mathcal{X}}

%%%% Blackboard Fonts - i.e. Real Numbers, Integers, etc. %%%%%
%\newcommand{\AA}{\mathbb{A}}

\newcommand{\CC}{\mathbb{C}}

\newcommand{\FF}{\mathbb{F}}
% no \GG since we're using mathcal G for Grobner bases

\newcommand{\NN}{\mathbb{N}}
\newcommand{\PP}{\mathbb{P}}

\newcommand{\RR}{\mathbb{R}}
%\renewcommand{\S}{\mathbb{S}}
%\newcommand{\TT}{\mathbb{T}}
		% Notice this is \bU

\newcommand{\ZZ}{\mathbb{Z}}

 %%%% Sarif Fonts - i.e. ???? %%%%%

 %%%% Fraktur Fonts - i.e. maximal ideals, prime ideals, etc. %%%%%

\newcommand{\m}{\mathfrak{m}}

\renewcommand{\r}{\mathfrak{r}}

%

%\newcommand{\go}{\op}

%Initial ideals

%\renewcommand{\tt}{{\bf t}}

%Kategorier

%Modulkategorier

% Algebraer

% Knipper

%Kategorier

%Kategorieer av komplekser

% Begreper homologisk alebra

\newcommand{\Hom}{\text{Hom}}

\newcommand{\Aut}{\text{Aut}}
\newcommand{\Tor}{\text{Tor}}

%Avbildninger og andre symbolforkortelser

%Notasjonsforkortelser

%\newcommand{\medskip}{\vskip 2mm}

%\renewcommand{\red}{{\text{red}}}

% Simplisielle komplekser

%Monomialidealer

%Standard notasjoner

%\newcommand{\st}{\hskip 0.5mm {}^{\rule{0.4pt}{1.5mm}}}              

%Hibi

% \newcommand{\g}{{\gamma}}

%LPres

%LPII

%Polar-2019

% \DeclarePairedDelimiter\abs{\lvert}{\rvert}%
% \newcommand{\ayah}[1]{{\color{magenta} \sf AYAH: [#1]}}

% \newcommand{\MM}{{\mathcal M}}

\def\CC{{\mathbb C}}
\def\ZZ{{\mathbb Z}}
\def\NN{{\mathbb N}}
\def\RR{{\mathbb R}}

\def\PP{{\mathbb P}}

\def\FF{{\mathbb F}}

\def\schurfunctor{{\mathbb{S}}}

\def\specht{\mathcal{S}}

\newcommand{\OS}{\mathrm{OS}}
\newcommand{\VG}{\mathrm{VG}}
\newcommand{\ch}{\mathrm{ch}}
\newcommand{\Conf}{\mathrm{Conf}}
\newcommand{\Lie}{\mathrm{Lie}}
\newcommand{\symm}{\mathfrak{S}}
\newcommand{\cnkrep}{\mathfrak{c}}
\newcommand{\Snkrep}{\mathcal{S}}
\newcommand{\init}{\mathrm{in}}
\newcommand{\eq}{\mathrm{eq}}

\newcommand{\regrep}{\rho_{\mathrm{reg}}}
\newcommand{\halfregrep}{\rho_{\frac{1}{2}\mathrm{reg}}}

%sheila for proof of branching equivalence

\usepackage{array}
\newcolumntype{H}{>{\setbox0=\hbox\bgroup}c<{\egroup}@{}}
\begin{document}

\title[Koszulity, supersolvability and Stirling Representations]{Koszulity, supersolvability and Stirling Representations}

\author{Ayah Almousa}
\address{University of South Carolina, Columbia SC 29208}
\email{aalmousa@mailbox.sc.edu}
\author{Victor Reiner}
\author{Sheila Sundaram}
\address{University of Minnesota - Twin Cities, Minneapolis MN 55455}
\email{reiner@umn.edu, shsund@umn.edu}
% \urladdr{\url{http://umn.edu/~almou007}}

\keywords{Stirling number, Koszul algebra,
Orlik-Solomon, Varchenko-Gelfand, quadratic algebra, Groebner basis, branching, holonomy Lie algebra, Drinfeld-Kohno, infinitesimal braid, chord diagrams}
\subjclass[2020]{ 
16S37,  %Quadratic and Koszul algebras
05B35  %Combinatorial aspects of matroids 
}
\date{\today}

\begin{abstract}
Supersolvable hyperplane arrangements and matroids are known to give rise to certain Koszul algebras, namely their
Orlik-Solomon algebras and graded Varchenko-Gel'fand algebras.
We explore how this interacts with group actions, particularly for the braid arrangement and the action
of the symmetric group, where the Hilbert functions of
the algebras and their Koszul duals are given by Stirling numbers of the first and second kinds, respectively.
The corresponding symmetric group representations exhibit
branching rules that interpret Stirling number recurrences,
which are shown to apply to all supersolvable arrangements.  They also enjoy representation stability properties that follow from Koszul duality.
\end{abstract}

\maketitle

%%%%%%%%%%%%%%%%%%%%%%%%%%%%%
%This \setcounter line below makes the table of contents only list sections, not subsections or subsubsections
\setcounter{tocdepth}{1}
%%%%%%%%%%%%%%%%%%%%%%%%%%%%%
\tableofcontents

%%%%%%%%%%%%%%%%%%%%%%%%%%%%%%%%%%%%%%%%%%%%%%
\section{Introduction}
\label{sec: intro}
%%%%%%%%%%%%%%%%%%%%%%%%%%%%%%%%%%%%%%%%%%%%%%
This paper was motivated by a connection between {\it Stirling numbers} and
{\it Koszul algebras}.
The (signless) {\it Stirling numbers of the first kind $c(n,k)$} and {\it Stirling numbers of the second kind $S(n,k)$} are centuries-old answers to certain counting problems:  $c(n,k)$ is the number of permutations $\{1,2,\ldots,n\}$ with $k$ cycles, while $S(n,k)$ is the number of set partitions of $\{1,2,\ldots,n\}$ with $k$ blocks. On the other hand, Koszul algebras $A$ and their Koszul dual algebras $A^!$ originated in work of Priddy \cite{Priddy} and Fr\"oberg \cite{Froberg-original} in the 1970s (see also B\u{a}rc\u{a}nescu and Manolache \cite{BarcanescuManolache79 , BarcanescuManolache81}), playing an important role in topology, and in homological and commutative algebra.  

The connection stems from a particular Koszul dual pair of graded $\kk$-algebras 
$A=\bigoplus_{d=0}^\infty A_d$
and
$A^!=\bigoplus_{d=0}^\infty A^!_d$,
described later, carrying actions of the symmetric group $\symm_n$.  Their {\it Hilbert series}
\begin{align}
    \Hilb(A,t):=\sum_{d=0}^\infty \dim_\kk A_d t^d 
             &= (1+t)(1+2t)\cdots (1+(n-1)t) \\
             &= \sum_{k=0}^{n-1}c(n,n-k) t^k,\\
    \Hilb(A^!,t):=\sum_{d=0}^\infty \dim_\kk A^!_d \, t^d 
             &= \frac{1}{(1-t)(1-2t)\cdots (1-(n-1)t)}\\
             &= \sum_{k=0}^\infty S((n-1)+k,n-1) \, t^k
\end{align}
re-interpret the Stirling numbers $c(n,k), S(n,k)$.

In fact, there are two different well-studied algebras that can play the role of the algebra $A$ above:  the {\it Orlik-Solomon algebra} $\OS(\Br_n)$, or the
{\it graded Varchenko-Gel'fand algebra} $\VG(\Br_n)$, associated to the matroid and
oriented matroid $\Br_n$ for the {\it braid arrangement on $n$ strands}, also
known as the type $A$ {\it reflection hyperplane arrangement}, or the {\it graphic arrangement} associated to the complete graph on $n$ vertices.
A great deal is known about the $\symm_n$-representations on the graded components $A_d$ for either one of these algebras $A=\OS(M),\VG(\OM)$, due to their importance in the topology of configuration spaces and in combinatorics. Their Koszul duals $A^!$ have seen less study from a combinatorial representation theory viewpoint, and were our original main interest.

A natural framework here turns out to be the combinatorial notion of {\it supersolvability}.  Well-known results show that the algebras $A=\OS(M), \VG(\OM)$
for supersolvable matroids $M$ and oriented matroids $\OM$ have {\it quadratic Gr\"obner basis} presentations, which then implies their Koszulity.

Sections~\ref{sec: koszul-algebras},~\ref{sec: grobner}, and~\ref{sec: supersolvable} give background for this story. 
Section~\ref{sec: koszul-algebras} is mainly a
review of basic theory of Koszul algebras carrying group actions,
although it contains one new observation on {\it branching rules} (Proposition~\ref{prop: general-branching}).
Section~\ref{sec: grobner} recalls notions from {\it noncommutative Gr\"obner bases}, along with special features of commutative or anti-commutative rings, connecting quadratic Gr\"obner bases with Koszulity.  
Section~\ref{sec: supersolvable} reviews {\it matroids}, {\it oriented matroids} and the notion of supersolvability.

Section~\ref{sec:OS-and-VG-rings} starts with a review of 
the well-studied anti-commutative 
Orlik-Solomon algebras $\OS(M)$ and their not quite
as well-studied commutative counterparts, the graded Varchenko-Gel'fand rings $\VG(\OM)$.  After 
recalling why both $A=\OS(M), \VG(\OM)$ are Koszul algebras whenever
$M,\OM$ are supersolvable, the first main result, Theorem~\ref{thm: shriek-presentations}, gives
an explicit (noncommutative) quadratic Gr\"obner basis presentation for their Koszul duals $A^!$.  In the case
of $A=\OS(M)$, the presentation for $A^!$ is consistent with Kohno's presentation \cites{kohno1983holonomy, Kohno} of the {\it holonomy Lie algebra} for the cohomology of the complement of a complex hyperplane arrangement;  in the case
of $A=\VG(\OM)$, the presentation for $A^!$ appears to be new.  An application of the presentation, Corollary~\ref{cor: equiv-hilb-factorizations},
gives a Koszul dual analogue of the fact that multiplication by the
sum of the variables $\sum_i x_i$ endows $A=\OS(M)$ with an (equivariant) exact chain complex structure: 
in the supersolvable case, right-multiplication by the sum of the dual variables $\sum_i y_i$ within $A^!=\OS(M)^!$ gives an (equivariant) injective self-map of degree one.

Section~\ref{sec: example-section} pauses to illustrate the foregoing theory on simple examples of supersolvable matroids, such as Boolean matroids and rank two matroids, including discussion of equivariant structure.

Section~\ref{sec: supersolvable-branching-rules} proves the next main result, Theorem~\ref{thm: supersolvable-branching-exact-sequences}, giving
branching rules for
$A=\OS(M),\VG(\OM)$ and their Koszul duals $A^!$,
in the form of short exact sequences that apply
whenever $M, \OM$ are supersolvable.  For braid matroids $\Br_n$, these short exact sequences re-interpret the two classical Stirling number recurrences:
\begin{equation}
    \label{Stirling-recurrences-in-intro}
    \begin{aligned}
c(n,k) &= (n-1)\cdot c(n-1,k) + c(n-1,k-1), \\
S(n,k) &= k\cdot S(n-1,k) + S(n-1,k-1).
\end{aligned}
\end{equation} 

Sections~\ref{sec: deviations}, \ref{sec: topology}, and \ref{sec: rep-stability} review more general theory of Koszul algebras $A$, particularly when $A$ is either anti-commutative (like $\OS(M)$) or commutative (like $\VG(\OM)$).  Section~\ref{sec: deviations}
recalls why the Koszul dual $A^!$ is the universal enveloping algebra for its Lie (super-)algebra of primitive elements, also known as its {\it homotopy Lie algebra}, and why the latter coincides in this setting with its own {\it linear strand}, the {\it holonomy Lie algebra}.
The Poincar\'e-Birkhoff-Witt Theorem for universal enveloping algebras
then leads to equivariant versions of results such as the {\it lower central series formula} in the anti-commutative case, and the theory of {\it acyclic closures} and {\it deviations} in the commutative case.
Section~\ref{sec: topology} briefly reviews 
the topological interpretations of Koszul duality,
and the interpretation of $\OS(M), \VG(\OM)$ in terms of the cohomology of complements of
subspace arrangements.
Section~\ref{sec: rep-stability} reviews Church and Farb's notion of
representation stability for $\symm_n$-representations \cite{ChurchFarb}.  It then proves two results on its interaction with Koszul duality (Corollaries~\ref{cor: shrieks-have-rep-stability-cor}, ~\ref{cor: primitives-inherit-rep-stability}) showing that after fixing $d$, representation stability for the $d^{th}$ graded components $\{A_d(n)\}_{n\geq 1}$ in a family of Koszul algebras implies the analogous representation stability for their Koszul duals $\{ A^!_d(n)\}_{n\geq 1}$,  along with a similar statement for their holonomy Lie algebras.

Finally, Section~\ref{sec: stirling-reps} returns
to the motivating example of the braid arrangement matroids $\Br_n$, examining the consequences of all the previous results for $\OS(\Br_n), \VG(\Br_n)$, including the aforementioned
branching rules re-interpreting the Stirling number recurrences, Corollary~\ref{cor: braid-matroid-branching}.  One surprise here is Theorem~\ref{thm: OS-perm-reps}, on the prevalence of {\it permutation} representations of $\symm_n$ among the homogeneous components $A^!_i$ of the Koszul dual $A^!$ when $A=\OS(\Br_n)$.

Section~\ref{sec: remarks-and-questions} collects some further remarks and questions. \Cref{appendix: tables} includes tables of data for the characters of the Stirling representations of the first and second kind for $\OS(\Br_n)$ and $\VG(\Br_n)$ and the primitives of their corresponding holonomy Lie algebras. In addition, the code at \cite{Almousa_StirlingRepresentations_2024} can also be used to generate more data.

\subsection*{Summary of main results} 
For the ease of the reader, we summarize below the main results and their applications to the type $A$ braid arrangement $\Br_n$.
\begin{itemize}
    \item \Cref{thm: shriek-presentations} provides an explicit noncommutative Gr\"{o}bner basis for Koszul duals of Orlik-Solomon and Varchenko-Gel'fand rings of supersolvable matroids.
    \begin{itemize}
        \item The discussion following \Cref{rem: Stirling-number-coincidence} explains the bijection between standard monomials for $\OS(\Br_n)$ and $\VG(\Br_n)$ and restricted growth functions.
    \end{itemize}
    \item \Cref{cor: equiv-hilb-factorizations} shows that for a supersolvable matroid $M$ or oriented matroid $\OM$, right-multiplication by the sum of the dual variables $\sum_i y_i$ in $A^! = \OS(M)^!$ gives a degree one injective self-map, and the sum of the squares of the dual variables $\sum_i y_i^2$ in $A^! = \VG(\OM)^!$ gives a degree two injective self-map. These maps are equivariant with respect to any group $G$ of automorphisms of $M, \cM$.
    \begin{itemize}
        \item We conjecture the existence of $\symm_n$-equivariant degree one injective self-maps for $\VG(\Br_n)$ in \Cref{conj: braid-matroid-equivariant-injectivity}. 
    \end{itemize}
    \item \Cref{prop: general-branching} shows that the graded pieces of an equivariant Koszul algebra $A$ satisfy branching rules of a certain form if and only if the corresponding graded pieces for $A^!$ do. \Cref{thm: supersolvable-branching-exact-sequences} gives short exact sequences for $\OS(M)$, $\VG(\OM)$ and their Koszul duals that lift such branching rules whenever one has supersolvable matroids.
    \begin{itemize}
        \item \Cref{cor: braid-matroid-branching} gives these branching rules for the Stirling representations, which lift the classical Stirling number recurrences \eqref{Stirling-recurrences-in-intro}.
    \end{itemize}
    \item \Cref{thm:holonomy-lie-algebra-presentation} gives a presentation for the holonomy Lie algebra of $\VG(\OM)$ for an arbitrary oriented matroid $\OM$. In the supersolvable case, this presentation is consistent with the Gr\"{o}bner basis for $\VG(\OM)^!$ from \Cref{thm: shriek-presentations}.
    \item \Cref{cor: shrieks-have-rep-stability-cor} shows that if a family of Koszul algebras $A(n)$ with actions by $\symm_n$ are representation stable, then so are their Koszul duals.
    \begin{itemize}
        \item \Cref{braid-matroid-shrieks-have-rep-stability-cor} applies this to show representation stability for $\OS(\braid_n)^!$ and $\VG(\braid_n)^!$. \Cref{conj: onset-stability-shrieks} conjectures that the bounds for the onset of stability given in \Cref{braid-matroid-shrieks-have-rep-stability-cor} are tight.
    \end{itemize}
    \item \Cref{cor: primitives-inherit-rep-stability} shows that  families of representation stable commutative or anti-commutative Koszul algebras $A(n)$ pass this
    representation stability to their holonomy Lie algebras $\cL(n)$.
    \begin{itemize}
        \item \Cref{cor: braid-matroid-primitives-are-rep-stable} states that this holds for the holonomy Lie algebras of $\OS(\braid_n)$ and $\VG(\braid_n)$. In \Cref{conj: onset-of-stability-primitives}, we conjecture that the onset of stability is at $2i$ for high enough $i$.
    \end{itemize}
    \item \Cref{thm: OS-perm-reps} summarizes several cases where $[\OS(\braid_n)_i^!]$ are permutation representations.
\end{itemize}

%%%%%%%%%%%%%%%%%%%%%%%%%%%%%%%%%%%%%%%%%%%%%%
\section{Koszul algebras}
\label{sec: koszul-algebras}
%%%%%%%%%%%%%%%%%%%%%%%%%%%%%%%%%%%%%%%%%%%%%%

We review here the definitions and properties of Koszul algebras.  Useful surveys and references are
Berglund \cite{Berglund}, Faber et al \cite{Faber}*{\S 2}, 
Fr\"oberg \cite{Froberg-survey},
Mazorchuk, Ovsienko and Stroppel \cite{MazorchukOvsienkoStroppel}, 
McCullough and Peeva \cite{mcculloughPeevaSurvey}*{\S ~8}, Polishchuk and Positselski \cite{PolishchukPositselski}, and Priddy \cite{Priddy}.

\subsection{Standard graded algebras and Koszul algebras}

Fix a field $\kk$ throughout this discussion.

\begin{definition}
({\it Standard graded $\kk$-algebras})
For $V$ a $\kk$-vector space with $\kk$-basis $x_1,\ldots,x_n$, let 
$$
T^i(V):=V^{\otimes i}:=\underbrace{V\otimes \cdots \otimes V}_{i \text{ tensor factors}},
$$
and define the {\it tensor algebra}
$
T_\kk(V)=\bigoplus_{i=0}^\infty T^i(V),
$
with concatenation product. We identify it with 
$$
T_\kk(V) \cong \kk\langle x_1,\ldots,x_n\rangle,
$$
the free associative $\kk$-algebra on $n$ letters.  It is a {\it graded} $\kk$-algebra, in which $T^i(V)$ is the $i^{th}$ homogeneous component, and is generated as an algebra in
degree $1$ by $V$, the span of $x_1,\ldots,x_n$.

A {\it standard graded (associative) $\kk$-algebra} is a graded quotient ring $A$ of $T_\kk(V)$, that is,
\begin{equation}
\label{standard-graded-algebra-presentation}
A=T_\kk(V)/I
\end{equation}
for some two-sided ideal $I \subset T_\kk(V)$ which is {\it homogeneous}:  
$I = \bigoplus_{i=0}^\infty I_i$
where $I_i:=I \cap T^i (V)$.  We will generally assume that the images of $x_1,\ldots,x_n$ within $A$ (which we still denote $x_1,\ldots,x_n$, abusing notation) are minimal generators for $A$ as a
$\kk$-algebra, or equivalently,
that $I=I_2 \oplus I_3 \oplus \cdots$. 
\end{definition}

\begin{definition}({\it Koszul algebras})
Given a standard graded $\kk$-algebra $A$, let $A_+:=\bigoplus_{i=1}^\infty A_i$,
and regard the field $\kk=A/A_+$ as the {\it trivial (graded, left-)$A$-module}, generated in degree $0$.

Call $A$ a {\it Koszul algebra} if the surjection $A \twoheadrightarrow \kk=A/A_+$ can be extended as the first step in a graded resolution of $\kk$ by free left $A$-modules, which is {\it linear} in the sense that it has this form:
\begin{equation}
\label{linear-resolution-of-k}
\begin{array}{ccrcccccccl}
0 \longleftarrow &\kk& {\longleftarrow}
&F_0& \overset{d_1}{\longleftarrow} &F_1& \overset{d_2}{\longleftarrow} &F_2& \overset{d_3}{
\longleftarrow} &F_3& \leftarrow \cdots\\
& & &\Vert& &\Vert& &\Vert& &\Vert& \\
 & & &A& &A(-1)^{\beta_1}& &A(-2)^{\beta_2}& &A(-3)^{\beta_3}&\\
\end{array}
\end{equation}
Here $F_i=A(-i)^{\beta_i}$ is a graded free left $A$-module of rank $\beta_i$, all of whose $A$-basis elements have been shifted to degree $i$, 
 that is $A(-i)_j := A_{j-i}$. 
Linearity of the above resolution is equivalent to saying that the matrices for the differentials $d_i: A_i \longrightarrow A_{i-1}$ in the resolution have only linear (degree one) entries, that is, all matrix entries lie in $A_1$.
\end{definition}

Koszulity of $A$ has strong consequences for its algebra presentation, and for the form of the resolution \eqref{linear-resolution-of-k}, related to the notion of
{\it quadratic algebras} and their {\it quadratic duals}.

\begin{definition}
({\it Quadratic algebras} and {\it quadratic duals}) Say that the standard graded $\kk$-algebra $A$ presented as
in \eqref{standard-graded-algebra-presentation} is a {\it quadratic algebra} if $I$ is generated as a two-sided ideal by 
$$
I_2=I \cap T^2(V)= I \cap (V \otimes V).
$$

For $A$ any quadratic algebra, presented as in 
\eqref{standard-graded-algebra-presentation}, one defines its {\it quadratic dual algebra} $A^!$ as follows.  Let $V^*$ have $\kk$-dual basis
$y_1,\ldots,y_n$ to the ordered $\kk$-basis $x_1,\ldots,x_n$ for $V$, so that the bilinear pairing
$V^* \times V \rightarrow \kk$ has $(y_i,x_j)=\delta_{ij}$.  Then $T^2(V^*)$ and $T^2(V)$ have dual $\kk$-bases
$$
\{y_i \otimes y_j\}_{1\le i,j \le n} 
\text{ and }
\{x_i \otimes x_j\}_{1\le i,j \le n}
$$
with respect to the bilinear pairing
$
T^2(V^*) \times T^2(V) \rightarrow \kk
$
defined by 
\begin{equation}
\label{bilinear-pairing-on-2-tensors}
(y \otimes y', x \otimes x')
:=(y,x) \cdot (y',x').
\end{equation}
Define $A^!$ as this
quadratic algebra quotient of the free associative algebra $T_\kk(V^*)=\kk\langle y_1,\ldots,y_n \rangle$:
$$
A^!:=T_\kk(V^*)/J
$$
where $J$ is the two-sided ideal generated by
$$
J_2=I_2^\perp = \{ p \in T^2(V^*): (p,q)=0 \text{ for all }q \in I_2 \}.
$$
\end{definition}

Note that this really is a duality, in the sense that 
$(A^!)^! \cong A$.

\begin{example}
\label{standard-example-exterior-symmetric}
A commutative polynomial ring $\kk[x_1,\ldots,x_n]$ is a quadratic $\kk$-algebra:
$$
A=\Sym(V)=\kk[x_1,\ldots,x_n] \cong
\kk\langle x_1,\ldots,x_n\rangle
/I
$$
where $I=(x_i x_j-x_j x_i)_{1 \leq i<j \leq n}.$
Its quadratic dual $A^!$ is 
the anti-commutative exterior algebra
$$
A^!=\wedge(V^*)=\wedge(y_1,\ldots,y_n)=
\kk\langle y_1,\ldots,y_n\rangle/J
$$
where 
$J=(y_i y_j + y_jy_i)_{1 \leq i<j \leq n}+(y_i^2)_{1 \leq i \leq n}.$
\end{example}

\subsection{Priddy's resolution and its consequences}

It is  not hard to show that Koszul algebras $A$ are always quadratic\footnote{Quadraticity is equivalent to having a partial linear resolution 
$
0 \leftarrow \kk \leftarrow A \leftarrow F_1\leftarrow F_2
$
up to homological degree $2$.}.
What is more remarkable is a result of Priddy \cite{Priddy},
using $A^!$ to construct a simple, explicit linear $A$-resolution of $\kk$ whenever $A$ is Koszul.  Before describing it, let us point out certain maps on $A$ and on the {\it graded $\kk$-dual} $(A^!)^*$.  The latter is defined to be the following graded $\kk$-vector subspace of
the usual dual $\Hom_\kk(A^!,\kk)$:
$$
(A^!)^*:=\bigoplus_{i=0}^\infty (A^!_i)^*.
$$
\begin{itemize}
\item For $x$ in $A_1$, the map on $A$ which right-multiplies by $x$, that is
$a \mapsto ax$, gives a left $A$-module map $A \longrightarrow A$, raising degree by one.
\item For $y$ in $A^!_1$, the map
precomposing $\varphi$ in $(A^!)^*$ with left-multiplication\footnote{This corrects a typo in the definition from \cite{mcculloughPeevaSurvey}, and agrees with \cite[\S2.3, pp.~25-27]{PolishchukPositselski}, \cite[Prop.~44]{MazorchukOvsienkoStroppel}.} by $y$, that is
$\varphi \mapsto \varphi.y$ where $(\varphi.y)(b):=\varphi(yb)$, gives a $\kk$-linear map $(A^!)^* \rightarrow (A^!)^*$, lowering degree by one. 
\item 
Combining these, any $x \otimes y$ in 
$A_1 \otimes A^!_1=V \otimes V^*$ gives rise to a (left $A$-module) map $A \otimes (A^!)^* \longrightarrow A \otimes (A^!)^*$ that sends $a  \otimes \varphi \longmapsto (x \otimes y).(a \otimes \varphi):=ax \otimes \varphi.y$.
\end{itemize}

\begin{theorem}(The Priddy resolution)
\label{thm:Priddy-resolution}
When $A$ is Koszul, 
the element $
c:=\sum_{j=1}^n x_j \otimes y_j 
$ 
in $A_1 \otimes A^!_1$
acting on $A \otimes_\kk (A^!)^*$ as
a left $A$-module map gives a linear resolution of $\kk$ as in
\eqref{linear-resolution-of-k}, 
$$
0 \longleftarrow \kk {\longleftarrow}
A \otimes_\kk (A^!_0)^* 
 \overset{d_1}{\longleftarrow} 
A \otimes_\kk (A^!_1)^* 
 \overset{d_2}{\longleftarrow} 
A \otimes_\kk (A^!_2)^*  
 \overset{d_3}{\longleftarrow} \cdots
$$
Its differential $d_i: A \otimes_\kk (A^!_i)^*  \overset{d_i}{\longrightarrow} A \otimes_\kk (A^!_{i-1})^*$ is given explicitly as follows:
\begin{equation}
\label{eq:Priddy-complex-differential}
a \otimes \varphi  \longmapsto  c.\left( a \otimes \varphi \right)
=\sum_{j=1}^n a x_j  \otimes \varphi.y_j.
\end{equation}
\end{theorem}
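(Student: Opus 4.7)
The verification splits naturally into three steps: (i) well-definedness of each $d_i$ as a left-$A$-linear map of the asserted linear form; (ii) the complex property $d_{i-1} \circ d_i = 0$; (iii) exactness in positive homological degree. Steps (i) and (ii) work for any quadratic algebra, and only (iii) actually uses Koszulity of $A$ -- that is where the main obstacle lies.

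Step (i) is essentially bookkeeping. Right-multiplication by $x_j$ on $A$ commutes with the left $A$-action, so the formula $a\otimes\varphi \mapsto \sum_j ax_j \otimes (y_j.\varphi)$ is left-$A$-linear; placing $(A^!_i)^*$ in internal degree $i$ realizes $F_i$ as $A(-i)^{\beta_i}$ with $\beta_i := \dim_\kk A^!_i$, and every matrix entry of $d_i$ lies in $A_1$, as required.

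For (ii), I expand
\begin{equation*}
d_{i-1} d_i(a\otimes\varphi) \;=\; \sum_{j,k} a x_j x_k \otimes (y_k y_j).\varphi
\end{equation*}
and pair against an arbitrary $b \in A^!_{i-2}$, reducing vanishing to the claim that $\sum_{j,k} x_j x_k \cdot \varphi(b\, y_k y_j) \in I_2$ in $T^2(V)$. The linear functional $\psi \in (A^!_2)^*$ defined by $\psi(u) := \varphi(b\, u)$ is well-defined because $\varphi$ annihilates $J \cap T^i(V^*)$, whence $\psi$ vanishes on $J_2$. Under the non-degenerate pairing $T^2(V^*) \times T^2(V) \to \kk$, the dual space $(A^!_2)^*$ is identified with $J_2^\perp = (I_2^\perp)^\perp = I_2$, and unwinding this identification (with due attention to tensor orderings matching the paper's pairing and module-action conventions) delivers the required $I_2$-membership. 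Thus (ii) is precisely the algebraic content of the quadratic-dual relation $J_2 = I_2^\perp$.

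For (iii), my plan is to compare the Priddy complex $K_\bullet := A\otimes(A^!)^*$ with the minimal graded free resolution $P_\bullet$ of $\kk$ furnished by Koszulity. Since every matrix entry of $d_i$ already lies in $A_+$, the complex $K_\bullet$ is itself minimal; uniqueness of minimal graded free resolutions then identifies $K_\bullet$ with $P_\bullet$ as soon as their ranks agree in each homological degree. The hard input is this numerical matching $\operatorname{rank}_A P_i = \dim_\kk \operatorname{Tor}_i^A(\kk,\kk)_i = \dim_\kk A^!_i$, equivalently the Hilbert-series identity $\Hilb(A,t)\cdot\Hilb(A^!,-t)=1$. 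This is the principal obstacle; it is typically derived from Koszulity by an induction on homological and internal degree exploiting the fact that $\operatorname{Tor}_i^A(\kk,\kk)$ is concentrated in internal degree $i$, or -- in the quadratic Gr\"obner basis setting used later in the paper -- by an explicit contracting homotopy built from a normal-form basis.
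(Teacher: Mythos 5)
The paper states this result without proof, citing Priddy, so there is no internal argument to compare against. Your outline of steps (i)--(ii) is sound, and you are right to flag a tensor-ordering subtlety in (ii): with the paper's stated direct pairing $(y\otimes y',x\otimes x') := (y,x)\cdot(y',x')$ and the right action $(y.\varphi)(b) := \varphi(by)$, your unwinding of $\psi$ delivers $\sum_{j,k}\varphi(by_jy_k)\,x_j\otimes x_k \in I_2$, whereas $d^2=0$ actually requires the transposed sum $\sum_{j,k}\varphi(by_ky_j)\,x_j\otimes x_k \in I_2$; these agree only when $I_2$ is stable under the tensor flip (true in the commutative and anti-commutative settings this paper cares about, but not for a general quadratic algebra). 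The clean statement uses the cross pairing $(y\otimes y',x\otimes x') := (y',x)\cdot(y,x')$ or the left action $\varphi(yb)$; this is a convention slip in the paper's exposition, but your write-up would need to resolve it rather than merely acknowledge it.

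The genuine gap is in step (iii). You claim that once ranks match, minimality plus ``uniqueness of minimal graded free resolutions'' identifies $K_\bullet$ with $P_\bullet$. But uniqueness compares two \emph{resolutions}, so invoking it presupposes the very exactness of $K_\bullet$ you are trying to establish. Minimality together with rank agreement does not imply exactness: over $A=\kk[x,y]$ the minimal complex
\begin{equation*}
0\longrightarrow A(-2)\xrightarrow{\;0\;} A(-1)^2 \xrightarrow{\;(x,\,y)\;} A\longrightarrow 0
\end{equation*}
has $H_0=\kk$ and the same graded ranks as the Koszul (equivalently, Priddy) resolution, yet $H_1$ and $H_2$ are nonzero. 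So the numerical matching $\dim_\kk A^!_i=\dim_\kk\Tor_i^A(\kk,\kk)$---which you correctly identify as requiring an argument, and which for a quadratic algebra drops out of the linear strand of the bar complex via $\Tor_i^A(\kk,\kk)_i\cong\bigcap_k V^{\otimes(k-1)}\otimes I_2\otimes V^{\otimes(i-1-k)}\cong(A^!_i)^*$---is necessary but not sufficient for step (iii). An actual exactness argument is still needed, for instance by realizing $K_\bullet$ as the diagonal subcomplex of the normalized bar resolution and showing the quotient is acyclic exactly when the off-diagonal Tors vanish, or via the lattice-distributivity characterization of Koszulity in \cite{PolishchukPositselski}.
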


\begin{example}
\label{standard-example-exterior-symmetric-2}
    Continuing Example~\ref{standard-example-exterior-symmetric}, one can check that the Priddy resolution for $\kk$
    over $A=\kk[x_1,\ldots,x_n]=\Sym(V)$
    becomes the usual {\it Koszul resolution}
$$
0 \leftarrow \kk 
\leftarrow \Sym(V) \otimes_\kk \wedge^0(V)
\leftarrow \Sym(V) \otimes_\kk \wedge^1(V)
\leftarrow \Sym(V) \otimes_\kk \wedge^2(V)
\leftarrow \cdots
\leftarrow \Sym(V) \otimes_\kk \wedge^n(V)
\leftarrow 0,
$$
using that fact that $(A^!_i)^*=(\wedge^i(V^*))^* \cong \wedge^i(V)$.
\end{example}

We note some important consequences of Priddy's resolution.  Taking graded $\kk$-duals swaps the roles of $A$ and $A^!$ in the resolution.  Consequently, $A$ is Koszul if and only if $A^!$ is Koszul.  In this case, one calls $A^!$ the {\it Koszul dual algebra} of $A$. 
Priddy's resolution also has an important consequence for the {\it Hilbert series} of $A, A^!$:
$$
\begin{aligned}
\Hilb(A,t)&:=\sum_{i=0}^\infty \dim_\kk A_i t^i,\\
\Hilb(A^!,t)&:=\sum_{i=0}^\infty \dim_\kk A^!_i t^i
=\sum_{i=0}^\infty \dim_\kk (A^!_i)^* t^i
=\Hilb((A^!)^*,t).
\end{aligned}
$$

\begin{cor}
\label{cor:Koszul-dual-Hilbert-series}
    Whenever $A, A^!$ are Koszul, one has
    $
    \Hilb(A,t) \cdot \Hilb(A^!,-t)=1.
    $
\end{cor}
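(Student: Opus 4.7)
The plan is to exploit the Priddy resolution to compute Euler characteristics degree by degree, and then repackage the result as the claimed generating function identity.

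First, I would observe that the free module $F_i = A \otimes_\kk (A^!_i)^*$ in Priddy's resolution has its $A$-module generators concentrated in internal degree $i$, since $(A^!_i)^*$ is placed in degree $i$. Its Hilbert series is therefore
\[
\Hilb(F_i,t) = \Hilb(A,t)\cdot \dim_\kk A^!_i \cdot t^i,
\]
and in each fixed internal degree $d$ the homogeneous component $(F_i)_d = A_{d-i}\otimes_\kk (A^!_i)^*$ is finite-dimensional (this uses that $A$ is standard graded and finitely generated in degree one, forcing $\dim_\kk A_{d-i}<\infty$, and similarly for $A^!$).

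Next, I would split the Priddy resolution by internal degree. For each fixed $d\ge 0$, restricting to the degree-$d$ part gives a finite complex of finite-dimensional $\kk$-vector spaces
\[
0\longleftarrow \kk_d \longleftarrow (F_0)_d \longleftarrow (F_1)_d \longleftarrow \cdots \longleftarrow (F_d)_d \longleftarrow 0,
\]
which is exact by Priddy's theorem. Taking the alternating sum of dimensions in each such internal degree yields
\[
\delta_{d,0} \;=\; \sum_{i=0}^{d}(-1)^i \dim_\kk A_{d-i}\cdot \dim_\kk A^!_i.
\]

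Finally, I would package these identities into one generating-function equality. The right-hand side is precisely the coefficient of $t^d$ in the product
\[
\Hilb(A,t)\cdot \sum_{i=0}^\infty (-1)^i \dim_\kk A^!_i \, t^i \;=\; \Hilb(A,t)\cdot \Hilb(A^!,-t),
\]
so the degree-by-degree identity above asserts that this product equals $1$, as desired.

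There is no substantial obstacle here: the only things to check are the finite-dimensionality in each internal degree (ensuring the Euler characteristic makes sense) and the correct bookkeeping of the degree shift making $F_i$ generated in degree $i$. Both are immediate from the form of Priddy's resolution given in the preceding theorem.
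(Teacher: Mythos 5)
Your proof is correct and is essentially the same as the paper's: both split the Priddy resolution into its finite exact strands in each internal degree $d$, take the alternating sum of dimensions to get $\sum_{i=0}^d (-1)^i \dim_\kk A_{d-i}\cdot\dim_\kk(A^!_i)^* = \delta_{d,0}$, and then repackage this family of identities as $\Hilb(A,t)\cdot\Hilb(A^!,-t)=1$. You include a bit more bookkeeping (finite-dimensionality, the degree shift on $F_i$), but the argument is the same.
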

\begin{proof}
For each degree $d \geq 1$, taking the coefficient of $t^d$ on both sides in the corollary gives the identity
$$
\sum_{i=0}^d  (-1)^i \dim_\kk A_{d-i} \cdot \dim_\kk (A^!_i)^*  = 0
$$
asserting vanishing of Euler characteristic for the (exact) $d^{th}$ 
graded component in Priddy's resolution 
\begin{equation}
\label{exact-strand-of-Priddy-resolution}
0 \rightarrow A_d \otimes_\kk (A^!_0)^* 
\rightarrow A_{d-1} \otimes_\kk (A^!_1)^* \rightarrow \cdots
\rightarrow A_1 \otimes_\kk (A^!_{d-1})^*
\rightarrow A_0 \otimes_\kk (A^!_d)^*
\rightarrow 0. 
\end{equation}
\end{proof}

\begin{example}
\label{standard-example-exterior-symmetric-3}
For the pair of Koszul dual algebras
\begin{align*}
A&=\Sym(V)=\kk[x_1,\ldots,x_n],\\
A^!&=\wedge(V^*)=\wedge(y_1,\ldots,y_n),
\end{align*}
one has these Hilbert series
\begin{align*}
\Hilb(A,t)&=\frac{1}{(1-t)^n}
\quad \text{ with }
\dim_\kk A_i=\multichoose{n}{i}:=\binom{n+i-1}{i},\\
\Hilb(A^!,t)&=(1+t)^n
\quad \text{ with }
\dim_\kk A^!_i=\binom{n}{i}.
\end{align*}
\end{example}

\begin{example}
({\it Noncommutative monomial Koszul algebras})
When a 
two-sided ideal
$I$
inside 
$T(V)= \kk\langle x_1,\ldots,x_n\rangle
$
is generated by a subset of noncommutative monomials, it is called a 
{\it monomial ideal}.  It is called a {\it quadratic monomial ideal} if the generating monomials are quadratic,
that is, they form a subset of the $n^2$ monomials
 $\{x_i x_j: (i,j) \subseteq [n] \times [n]\}$.
Starting with any quadratic monomial ideal $I$,
one can associate two complementary binary relations $D, D^c \subseteq [n] \times [n]$:
\begin{align*}
D&:=\{(i,j) \in [n] \times [n]: x_i x_j \not\in I\},\\
D^c&:=\{(i,j) \in [n] \times [n]: x_i x_j \in I\}.
\end{align*}
In this setting, denote the ideal $I$ by $I_D$, and denote the 
quotient algebra 
$
A_D:=T(V)/I_D.
$
One can view 
$D$ as a choice of a {\it directed graph} on vertex set $[n]$ having no repeated directed arcs $i \rightarrow j$, but allowing (single) copies of loops $i \rightarrow i$
and (single) pairs of antiparallel arcs $i \rightarrow j$ and $j \rightarrow i$.  Then the $d^{th}$ homogeneous component $(A_D)_d$ of $A_D$ has a $\kk$-basis indexed by the monomials $x_{i_1} x_{i_2} \cdots x_{i_d}$ whose subscripts $(i_1,i_2,\ldots,i_d)$ correspond to walks with $d-1$ steps along arcs $i_j \rightarrow i_{j+1}$ in
the digraph $D$. Hence
$
\Hilb(A_D,t)=1+\sum_{d=1}^{\infty} a_{D}(d) t^d,
$
where $a_D(d)$ is the number of such walks.
 
It turns out that these (noncommutative) {\it quadratic monomial $\kk$-algebras} are always Koszul.  A linear resolution of $\kk$ over $A_D$ 
is a special case of a resolution constructed by Fr\"oberg in \cite{Froberg-original}, and was also described recursively by 
% Kobayashi \cite[\S  2]{Kobayashi} and later by
Bruns, Herzog and Vetter \cite{BrunsHerzogVetter}*{\S  3}; we review the latter construction here.  Note that the quadratic dual $A_D^!$ has the form 
$$
A_D^!=T(V^*)/J_{D^c} \quad \text{ where } \quad
J_{D^c}=(y_i y_j: (i,j) \in D)=((I_D)_2^\perp).
$$ 
Letting $A:=A_D$, the linear $A$-free resolution 
$
0 \leftarrow \kk \leftarrow F_0 \leftarrow F_1 \leftarrow \cdots
$
described recursively in \cite{BrunsHerzogVetter} has $F_d$ being a free left $A$-module whose $A$-basis elements 
$
\{ e_{(i_1,\ldots,i_d)} \}
$
are indexed by all walks 
$(i_1,i_2,\ldots,i_d)$
taking $d-1$ steps along arcs
$i_j \rightarrow i_{j+1}$ in the complement $D^c$.
Unraveling their recursion,  the resolution
has these $A$-linear differentials:
\begin{equation}
\label{eq:BHV-differential}
e_{(i_1,i_2,\ldots,i_d)} \longmapsto 
x_{i_1} \, e_{(i_2,\ldots,i_{d})}. 
\end{equation}
Note that one has an isomorphism of free $A$-modules 
\begin{equation}
\label{eq:BHV-to-Priddy-dictionary}
\begin{array}{rcl}
F_d & \longrightarrow& A \otimes_\kk (A^!_d)^*\\
a \, e_{(i_1,\ldots,i_d)} 
 &\longmapsto &a \otimes [y_{i_1} \cdots y_{i_d}]^*\\
\end{array}
\end{equation}
where 
$[y_{i_1} \cdots y_{i_d}]^* \in (A^!_d)^*$ is the $\kk$-linear functional
$A^!_d \rightarrow \kk$ sending $y_{i_1} \cdots y_{i_d}$ to $1$ and sending all other degree $d$ monomials to $0$.  One can check that the definitions preceding Theorem~\ref{thm:Priddy-resolution} imply
$$
 [y_{i_1} y_{i_2}\cdots y_{i_d}]^*.y_j
=
\begin{cases}
[y_{i_2} \cdots y_{i_{d}}]^*
&\text{ if } j=i_1\\
0&\text{ otherwise.}
\end{cases}
$$
One therefore concludes that the differential in Priddy's resolution is the $A$-linear map sending
$$
1 \otimes [y_{i_1} y_{i_2} \cdots y_{i_d}]^*
\longmapsto
\sum_{j=1}^n x_j \otimes
\left( [y_{i_1} y_{i_2} \cdots y_{i_d}]^* . y_j\right)
=x_{i_1} \otimes [y_{i_2} \cdots y_{i_{d}}]^*.
$$
This agrees with the differential described by \eqref{eq:BHV-differential} after passing through the isomorphism \eqref{eq:BHV-to-Priddy-dictionary}.

Note that since $A_D$ is Koszul, and $A_D^! \cong A_{D^c}$, one has
$
\Hilb(A_D,t) \cdot \Hilb(A_{D^c},-t)=1,
$
an identity which appeared earlier in work of Brenti \cite{Brenti}*{\S  7.5}.
%and also Gessel \cite{Gessel}?.
\end{example}

Our goal is to study Koszul algebras $A$ together with symmetries coming from
a finite group $G$ of graded ring automorphisms.  We will regard each graded component $A_i$ and $A^!_i$ as representations of $G$, or equivalently, as
$\kk G$-modules.  In order to work over arbitrary fields $\kk$ where $\kk G$ might not be semisimple, we introduce the {\it Grothendieck ring} $R_\kk(G)$.

\begin{definition} ({\it Grothendieck ring}) 
\label{Grothendieck-ring-defn}
As a $\ZZ$-module, the {\it Grothendieck group}
of $\kk G$-modules $R_\kk(G)$ is a quotient
 of the free $\ZZ$-module whose basis is the set of
 isomorphism classes $[V]$ of finite-dimensional $\kk G$-modules $V$, and where one mods out by the $\ZZ$-span of these relations: 
\begin{equation}
\label{Grothendieck-group-relations}
    \{\,\, [V]-([U]+[W]) : \text{ for all } \kk G\text{-module short exact sequences }0\rightarrow U \rightarrow V \rightarrow W \rightarrow 0 \,\,\} 
\end{equation}
In particular, in $R_\kk(G)$ one has $[U \oplus W]=[U]+[W]$.  Multiplication in $R_\kk(G)$ is induced by the rule $[V]\cdot [W]:=[V \otimes_\kk W]$, which one can check is consistent with the relations in \eqref{Grothendieck-group-relations}.
\end{definition}
We collect here a few standard facts about $R_\kk(G)$, omitting the proofs.

\begin{prop}
\label{easy-facts-on-Grothendieck-ring-prop}
For any finite group $G$, one has the following.
\begin{itemize}
\item[(i)]
The relations in $R_\kk(G)$ imply $\sum_{i=0}^\ell (-1)^i [V_i]=0$ for longer exact sequences of $\kk G$-modules
$$
0 \leftarrow V_0 \leftarrow V_1 \leftarrow \cdots \leftarrow V_\ell \leftarrow 0.
$$
\item[(ii)]
More generally, a finite $\kk G$-module complex
$0\leftarrow C_0 \overset{\partial}{\leftarrow} \cdots \overset{\partial}{\leftarrow} C_\ell \leftarrow 0$ with homology $\{H_*\}$ gives an Euler-Poincar\'e-Hopf-Lefschetz relation 
$
\sum_{i=0}^\ell (-1)^i [C_i]=\sum_{i=0}^\ell (-1)^i [H_i]
$
in $R_\kk(G)$.
\item[(iii)] Short exact sequences $0\rightarrow U \rightarrow V \rightarrow W \rightarrow 0$ of $\kk G$-modules lead to {\it dual/contragredient}
exact sequences $0\rightarrow W^* \rightarrow V^* \rightarrow U^* \rightarrow 0$, and also
$(U\otimes V)^* \cong U^* \otimes V^*$.  Hence the involution $[U] \mapsto [U^*]$ induces a well-defined involutive ring automorphism  $(-)^*: R_\kk(G) \rightarrow R_\kk(G)$.
\item[(iv)] For subgroups $H$ of $G$, the map $[U] \mapsto [U\downarrow^G_H]$, where $U\downarrow^G_H$ is the {\it restriction} of the $\kk G$-module $U$ to
a $\kk H$-module, induces a well-defined ring map 
$(-)\downarrow: R_\kk(G) \rightarrow R_\kk(H)$.
\item[(v)]
Since $(U^*)\downarrow^G_H \cong \left( U\downarrow^G_H \right)^*$ as $\kk H$-modules,
the maps in (iii),(iv) commute. 
\end{itemize}
\end{prop}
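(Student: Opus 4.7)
The plan is to derive each part from the defining short-exact-sequence relations \eqref{Grothendieck-group-relations} by standard homological bookkeeping; the key moves are splicing a long exact sequence into short ones, and splitting a complex via its cycles and boundaries.

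For (i), given an exact sequence $0 \leftarrow V_0 \leftarrow V_1 \leftarrow \cdots \leftarrow V_\ell \leftarrow 0$, I would set $Z_i := \ker(V_i \to V_{i-1})$ for $1 \leq i \leq \ell-1$, with the conventions $Z_0:=V_0$ and $Z_\ell:=0$. Exactness then yields a short exact sequence $0 \to Z_i \to V_i \to Z_{i-1} \to 0$ of $\kk G$-modules for each $i \geq 1$. Applying the defining relation gives $[V_i]=[Z_i]+[Z_{i-1}]$ in $R_\kk(G)$, and the alternating sum $\sum_{i=0}^\ell (-1)^i [V_i]$ telescopes to $\pm [Z_\ell]=0$.

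For (ii), introduce boundaries $B_i:=\mathrm{im}(C_{i+1}\to C_i)$, cycles $Z_i:=\ker(C_i\to C_{i-1})$, and homology $H_i=Z_i/B_i$. These fit into two families of short exact $\kk G$-module sequences, $0\to B_i\to Z_i\to H_i\to 0$ and $0\to Z_i\to C_i\to B_{i-1}\to 0$. Combining the resulting identities $[C_i]=[B_i]+[B_{i-1}]+[H_i]$ and taking alternating sums causes the boundary contributions to telescope, leaving the Euler--Poincar\'e identity $\sum_i (-1)^i [C_i]=\sum_i (-1)^i [H_i]$.

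For (iii), the map $[U]\mapsto [U^*]$ is well defined because $\kk$-linear duality is an exact contravariant functor on finite-dimensional vector spaces, so a short exact sequence $0\to U\to V\to W\to 0$ dualizes to $0\to W^*\to V^*\to U^*\to 0$; this remains a $\kk G$-module sequence once $V^*$ is equipped with the contragredient action $(g\cdot\varphi)(v):=\varphi(g^{-1}v)$. Multiplicativity follows from the natural $\kk G$-isomorphism $(U\otimes V)^*\cong U^*\otimes V^*$, and the involutive property from the canonical $\kk G$-isomorphism $V\cong (V^*)^*$. For (iv), restriction $U\mapsto U\downarrow^G_H$ is exact on the underlying vector spaces and preserves tensor products on the nose, so it respects both families of relations generating $R_\kk(G)$ and its multiplication. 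Part (v) is then immediate because the contragredient action and restriction are performed independently on the underlying vector space and evidently commute as $\kk$-linear constructions.

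The only subtle point I anticipate is a hygiene issue: the duality construction in (iii) and the identification $(U\otimes V)^*\cong U^*\otimes V^*$ are natural only for finite-dimensional modules, so one should restrict $R_\kk(G)$ to the Grothendieck ring of finite-dimensional $\kk G$-modules (or tacitly assume finite-dimensionality). This is harmless for the intended applications, since $R_\kk(G)$ will only be applied to the finite-dimensional graded components $A_i$ and $A^!_i$ of the Koszul algebras under study.
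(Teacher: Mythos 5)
The paper explicitly omits the proof of this proposition, labeling it a collection of ``standard facts,'' so there is no argument in the paper to compare against. Your proof is correct and follows the standard derivation: part (i) by splicing the long exact sequence at the kernels $Z_i$ into short exact sequences $0\to Z_i\to V_i\to Z_{i-1}\to 0$ and telescoping; part (ii) by splitting the complex via its cycles and boundaries through the two short exact sequences $0\to B_i\to Z_i\to H_i\to 0$ and $0\to Z_i\to C_i\to B_{i-1}\to 0$; parts (iii)--(v) by exactness of the contragredient dual and the restriction functor and their compatibility with tensor products. Your hygiene remark at the end is well-taken and worth recording explicitly: Definition~\ref{Grothendieck-ring-defn} does not say ``finite-dimensional,'' yet without that restriction an Eilenberg-swindle argument (using $V\oplus V^{\oplus\infty}\cong V^{\oplus\infty}$) would force $[V]=0$ for all $V$ and trivialize $R_\kk(G)$, and the double-duality $V\cong(V^*)^*$ and $(U\otimes V)^*\cong U^*\otimes V^*$ isomorphisms in (iii) genuinely require finite dimension. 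The paper evidently intends this restriction tacitly, since all classes it ever forms in $R_\kk(G)$ come from the finite-dimensional graded pieces $A_i$ and $A^!_i$.
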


\begin{remark}\label{rem:explain-G-action-A-to-Ashriek}
We explain here why a group $G$ acting on a Koszul algebra $A$ also acts on the Koszul dual $A^!$.
When a standard graded $\kk$-algebra $A=T(V)/I$ carries the action of a group $G$
of graded $\kk$-algebra automorphisms, the fact
that $G$ preserves $A_1=V$, and $A_1$ generates $A$, implies that one can regard $G$ as a subgroup of $GL(V)$, possibly replacing $G$ by $G/K$ if $K$ is the kernel of its action on $V$.
Then $G$ also acts contragrediently on $V^*$,
via $\varphi \mapsto \varphi \circ g^{-1}$.
This gives the natural $\kk$-bilinear pairing $V^* \otimes V \rightarrow \kk$ defined by $\varphi \otimes v \mapsto \varphi(v)$ a certain 
$G$-invariance:  
$$
g(\varphi \otimes v) = (\varphi \circ g^{-1}) \otimes g(v) \mapsto \varphi (g^{-1}(g(v)))=\varphi(v).
$$
The dual pairing \eqref{bilinear-pairing-on-2-tensors} between $T^2(V^*)$
and $T^2(V)$ then inherits this same $G$-invariance.

Consequently, when $A=T(V)/I$ is a quadratic algebra with the action of a group $G$ preserving
the subspace $I_2 \subset T^2(V)$ that generates the ideal $I$, then $G$ also preserves the subspace $J_2=I_2^\perp$ that generates the ideal $J$ defining the quadratic dual $A^!=T(V^*)/J$.  Thus $G$ also acts on $A^!$.
\end{remark}

The following proposition should not be surprising.

\begin{prop}
When $A, A^!$ are Koszul, the Priddy resolution is $G$-equivariant for any
group of graded $\kk$-algebra automorphisms acting on $A$ (and hence on $A^!$).
\end{prop}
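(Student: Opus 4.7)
The plan is to observe that every ingredient of Priddy's construction is canonical enough to be $G$-equivariant, with the one nontrivial point being $G$-invariance of the distinguished element $c = \sum_j x_j \otimes y_j \in A_1 \otimes A^!_1$.

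First, I would equip each term $F_i := A \otimes_\kk (A^!_i)^*$ with its natural $\kk G$-module structure. The action on $A$ is given; $G$ acts on $A^!$ by Remark~\ref{rem:explain-G-action-A-to-Ashriek}, and hence contragrediently on $(A^!_i)^*$ as in Proposition~\ref{easy-facts-on-Grothendieck-ring-prop}(iii); and $F_i$ inherits the diagonal action. Since $G$ acts on $A$ by graded algebra automorphisms, the left $A$-module structure on $F_i$ is $G$-equivariant, in the sense that $g\bigl(a\cdot (a'\otimes \varphi)\bigr) = g(a)\cdot g(a'\otimes\varphi)$. The augmentation $F_0 = A \twoheadrightarrow \kk$ is trivially $G$-equivariant, since $A_+$ is $G$-stable and $\kk$ carries the trivial action.

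The key step is the $G$-invariance of $c = \sum_{j=1}^n x_j \otimes y_j$ inside $V \otimes V^*$. Under the canonical isomorphism $V \otimes V^* \cong \mathrm{End}_\kk(V)$ sending $x\otimes y$ to the rank-one operator $v \mapsto y(v)\cdot x$, the element $c$ corresponds to $\mathrm{id}_V$, and the diagonal $G$-action on $V\otimes V^*$ transports to the conjugation action $T \mapsto g \circ T \circ g^{-1}$ on $\mathrm{End}_\kk(V)$, which fixes the identity. Equivalently, in coordinates, writing $g(x_j) = \sum_k g_{kj}x_k$ forces $g(y_j) = \sum_l (g^{-1})_{jl}y_l$, and then $\sum_j g(x_j)\otimes g(y_j) = \sum_j x_j \otimes y_j$ reduces to $gg^{-1}=I$.

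To finish, the differential $d_i$ is by construction left multiplication by $c$ under the $\kk$-bilinear action $(V\otimes V^*)\otimes F_i \to F_{i-1}$, $(x\otimes y)\otimes (a\otimes\varphi) \mapsto xa \otimes y.\varphi$. This action is $G$-equivariant, since each one-variable constituent is: $g(x.a) = g(x).g(a)$ because $G$ acts by algebra automorphisms, and a direct calculation using $(g\varphi)(b) = \varphi(g^{-1}b)$ gives $g(y.\varphi) = g(y).g(\varphi)$. With $c$ fixed by $G$, the map $d_i = c \cdot (-)$ therefore commutes with the $G$-action. There is no substantive obstacle; the entire content is the basis-free recognition of $c$ as $\mathrm{id}_V$, which also explains why the $G$-equivariance is independent of the choice of basis used to write down $c$.
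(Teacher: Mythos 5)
Your proof is correct and matches the paper's argument: the central point in both is that $c = \sum_j x_j \otimes y_j$ corresponds to $\mathrm{id}_V$ under the $G$-equivariant isomorphism $V \otimes V^* \cong \End_\kk(V)$ and is therefore $G$-fixed. You spell out a few routine equivariance checks (module structures, augmentation, the action map) that the paper leaves implicit, but the essential idea is the same.
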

\begin{proof}
   This follows because the differential acts by
   $c=\sum_{j=1}^n x_j \otimes y_j$ in $A_1 \otimes A^!_1=V \otimes V^*$, and $c$ is $G$-fixed: 
 under the $G$-equivariant isomorphism $V \otimes V^* \cong \End_\kk(V)$ that sends $v \otimes f$ to $\varphi:V \rightarrow V$ given by $\varphi(w)=f(w) \cdot v$, one can check that $c \mapsto 1_V$, which is a $G$-fixed element of $\End_\kk(V)$. 
\end{proof}

This gives a version of Corollary~\ref{cor:Koszul-dual-Hilbert-series},
regarding the {\it equivariant Hilbert series} in $R_\kk(G)[[t]]$
\begin{equation}
\label{generic-equivariant-Hilbert-series}
\Hilb_\eq(A,t):=\sum_{i=0}^\infty [A_i] t^i.
\end{equation}

\begin{cor}(cf. \cite[Prop.~8.1]{JosuatvergesNadeau})
\label{cor:equivariant-Koszul-dual-Hilbert-series}
   Let $A, A^!$ be Koszul dual algebras, both with the action of a group $G$ of graded $\kk$-algebra automorphisms.  Then one has this identity in  $R_\kk(G)[[t]]$:
   \begin{equation}
   \label{reciprocal-equivariant-Hilbert-series}
   \Hilb_\eq(A,t) \cdot \Hilb_\eq((A^!)^*,-t)=1
   \end{equation}
   Equivalently, $[A_0]=[(A^!_0)^*]=[\one_G]$ and 
   %for $d=0,1,2,\ldots$, 
   one has these identities in $R_\kk(G)$ for $d\ge 1$:
   \begin{equation}
       \label{equivariant-Priddy-strand-identity}
   \sum_{i=0}^d (-1)^i [A_{d-i}] \cdot [(A^!_i)^*] = 0
   \end{equation}
   which can be rewritten as this recurrence for $[(A^!_d)^*]$:
   \begin{equation}
   \label{equivariant-recurrence-for-shrieks}
   [(A^!_d)^*]=\sum_{i=1}^{d} (-1)^{i-1} [A_i] \cdot [(A^!_{d-i})^*]
   \end{equation}
   and this unraveled formula:
   \begin{equation}
\label{unraveled-shriek-recurrence}
[(A^!_d)^*] =
\sum_{\substack{\alpha=(\alpha_1,\ldots,\alpha_\ell):\\
\alpha_1+\cdots+\alpha_\ell=d}} (-1)^{d-\ell} [A_{\alpha_1}]
[A_{\alpha_2}]\cdots [A_{\alpha_\ell}].
\end{equation}
This last sum runs over all (strict) ordered compositions $\alpha=(\alpha_1,\ldots,\alpha_\ell)$ of $d$,
of any length $\ell \geq 1$,
that is, $\alpha_i$ are positive integers summing to $d$.
   
\end{cor}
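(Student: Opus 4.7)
The plan is to transport the proof of Corollary~\ref{cor:Koszul-dual-Hilbert-series} into the Grothendieck ring $R_\kk(G)$, using the $G$-equivariance of the Priddy resolution just established.

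First, I would apply the Euler--Poincar\'e relation in $R_\kk(G)$ from Proposition~\ref{easy-facts-on-Grothendieck-ring-prop}(i) to the degree-$d$ strand of the Priddy resolution, which is an exact sequence of $\kk G$-modules
\begin{equation*}
0 \rightarrow A_d \otimes_\kk (A^!_0)^* \rightarrow A_{d-1} \otimes_\kk (A^!_1)^* \rightarrow \cdots \rightarrow A_0 \otimes_\kk (A^!_d)^* \rightarrow 0
\end{equation*}
for each $d \geq 1$. Combined with the multiplicativity $[U \otimes_\kk V] = [U]\cdot[V]$ built into the definition of $R_\kk(G)$, this immediately produces \eqref{equivariant-Priddy-strand-identity}. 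Next, I would observe that $\sum_{i=0}^d (-1)^i [A_{d-i}] \cdot [(A^!_i)^*]$ is exactly the coefficient of $t^d$ in the formal power series product $\Hilb_\eq(A,t) \cdot \Hilb_\eq((A^!)^*,-t)$, and that the $d=0$ coefficient equals $[A_0]\cdot[(A^!_0)^*] = [\one_G]$, since both $A_0=\kk$ and $(A^!_0)^* = \kk$ carry the trivial $G$-action. Packaging the degree-$d$ identities together therefore yields \eqref{reciprocal-equivariant-Hilbert-series}.

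For the remaining equations, \eqref{equivariant-recurrence-for-shrieks} is obtained by isolating the $i=d$ summand in \eqref{equivariant-Priddy-strand-identity} and using $[A_0]=[\one_G]$. The unraveled formula \eqref{unraveled-shriek-recurrence} follows by induction on $d$: expanding each $[(A^!_{d-i})^*]$ appearing on the right-hand side of \eqref{equivariant-recurrence-for-shrieks} via the same recurrence, and continuing until each factor has been reduced to the base case $[(A^!_0)^*]=[\one_G]$, produces a sum indexed by all ordered compositions $\alpha=(\alpha_1,\ldots,\alpha_\ell)$ of $d$, in which the $k$-th expansion step contributes a factor of $(-1)^{\alpha_k - 1}$. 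The accumulated sign is therefore $(-1)^{\sum_k(\alpha_k-1)} = (-1)^{d-\ell}$, as required.

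There is no substantive obstacle: every step is either a direct application of earlier results in the excerpt or a routine manipulation of generating functions. The only ingredient beyond the scalar case of Corollary~\ref{cor:Koszul-dual-Hilbert-series} is the validity of the Euler--Poincar\'e relation in $R_\kk(G)$, which is furnished by Proposition~\ref{easy-facts-on-Grothendieck-ring-prop}.
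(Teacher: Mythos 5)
Your proposal is correct and follows the same approach as the paper: apply the Euler--Poincar\'e relation in $R_\kk(G)$ to the exact degree-$d$ strand of the $G$-equivariant Priddy resolution to get \eqref{equivariant-Priddy-strand-identity}, then observe that this is precisely the statement that the coefficients of $\Hilb_\eq(A,t)\cdot\Hilb_\eq((A^!)^*,-t)$ vanish in positive degree, from which the recurrence and the unraveled composition formula follow by routine rearrangement. The paper's proof is just the first sentence; you have simply spelled out the algebraic unpacking that the paper leaves implicit.
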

\begin{proof}
It suffices to prove \eqref{equivariant-Priddy-strand-identity}, which follows from the $G$-equivariance and exactness of \eqref{exact-strand-of-Priddy-resolution} .
\end{proof}

\begin{example}
\label{standard-example-exterior-symmetric-4}
Continuing Examples~\ref{standard-example-exterior-symmetric}, \ref{standard-example-exterior-symmetric-2}, \ref{standard-example-exterior-symmetric-3},
the Koszul algebras
$A=\Sym(V), \, A^!=\wedge(V^*)$ carry the action of $G=GL(V)$. 
There is a ring homomorphism from $R_\kk(G)$ to the ring $$
\Lambda_\kk(\zz):=\Lambda_\kk(z_1,\ldots,z_n)=\kk[z_1,\ldots,z_n]^{\symm_n}
$$
of symmetric polynomials in $n$ variables with $\kk$ coefficients, mapping the class $[U]$
of a $\kk G$-module $U$ to $\mathrm{trace}(g|_U)$ where $g=\mathrm{diag}(z_1,\ldots,z_n)$ in $GL(V)$ is the diagonal matrix in $GL(V)$ having $g(x_i)=z_i \cdot x_i$ in $V$ for $i=1,2,\ldots,n$,
so that $g(y_i)=z_i^{-1} \cdot y_i$
in $V^*$.

Applying this homomorphism to \eqref{reciprocal-equivariant-Hilbert-series} gives a standard identity 
$H(t) E(-t) = 1$ in $\Lambda_\kk(\zz)[[t]]$,
where 
\begin{align*}
H(t):=\sum_{k=0}^\infty h_k(z_1,\ldots,z_n) t^k=\prod_{j=1}^n \frac{1}{1-z_j t},\\
E(t):=\sum_{k=0}^n e_k(z_1,\ldots,z_n) t^k
=\prod_{j=1}^n (1+z_j t).
\end{align*}
This can be viewed as the specialization
of a well-known identity in the ring of symmetric
functions in infinitely many variables $\Lambda:=\Lambda_\ZZ(z_1,z_2,\ldots)$ with integer coefficients, relating the two sets of
algebraically independent generators
$\{h_1,h_2,\ldots\}$ and $\{e_1,e_2,\ldots\}$; see \cite{Macdonald}*{Chap.~1, eqn.~(2.6)}, \cite{StanleyEC2}*{Thm.~7.6.1}.  Rewritten as in
\eqref{equivariant-recurrence-for-shrieks}, one has 
$
e_0=h_0=1
$
and
$
e_d=\sum_{i=1}^{d} (-1)^{i-1} h_i \cdot e_{d-i}
$
for all 
$d \geq 1.$
Due to their algebraic independence, any symmetric function identities in $\Lambda$ among $\{h_i\}, \{e_i\}$ lead to the same identities relating $\{[A_1],[A_2],\ldots\},  
\{[(A^!_1)^*],[(A^!_2)^*],\ldots\}$ in $R_\kk(G)$ for any Koszul algebra $A$ over any field $\kk$.  For example,
a special case of the {\it Jacobi-Trudi identity} 
\cite[Chap.~1, eqn.~(3.4)]{Macdonald}, \cite[Thm.~7.16.1]{StanleyEC2} expresses the 
$\{e_k\}$ in terms of the $\{h_k\}$:
$$
e_d = \det\left[ 
\begin{matrix}
    h_1 & h_2 & h_3 & \cdots & & \\
    1   & h_1 & h_2 & \cdots & & \\
    0   & 1   & h_1 & \cdots & &\\
    0   & 0   &  1  &       & & \\
    \vdots&\vdots& &\ddots & \ddots\\
    0   &  0  &  0  & \cdots &1 &h_1
\end{matrix}
\right]
= \sum_{\alpha=(\alpha_1,\ldots,\alpha_\ell)} (-1)^{d-\ell} h_{\alpha_1} h_{\alpha_2} \cdots h_{\alpha_\ell},
$$
where $\alpha$ runs over all compositions of $d$.  
One now recovers the unraveled formula~\eqref{unraveled-shriek-recurrence} for $[(A^!_d)^*]$.
\end{example}

\subsection{A Koszul branching relation}
We wish to lift several combinatorial recurrences to {\it branching rules} for
Koszul algebras $A$ and their Koszul duals $A^!$.
Recall from Proposition~\ref{easy-facts-on-Grothendieck-ring-prop}(iv) that for any subgroup $H$ of a group $G$, the map $[U] \mapsto [U\downarrow^G_H]$ induces a ring map 
$(-)\downarrow: R_\kk(G) \rightarrow R_\kk(H)$.

\begin{prop}
\label{prop: general-branching}
    Let $A,B$ be two Koszul $\kk$-algebras, with actions of groups $G, H$, where $H$ is a subgroup of $G$, and let $\mathcal X$ be a $\kk H$-module.
    Then in $R_\kk(H)$, one has
    \begin{align*}
    &[A_i\downarrow] = [B_i]+[{\mathcal X}] \cdot [B_{i-1}]\\
    \text{ if and only if } \quad &
    [(A^!_i)^*\downarrow] = [(B^!_i)^*]+[{\mathcal X}]\cdot \left( [(A^!_{i-1})^*\downarrow] \right)\\
 % 2023 Oct 30: adding dual equivalence here   
\text{ if and only if } \quad 
    &[A^!_i\downarrow] = [B^!_i]+[{\mathcal X}^*]\cdot \left( [A^!_{i-1}\downarrow] \right)
\end{align*}
\end{prop}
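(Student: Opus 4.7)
The approach is to reformulate each of the three conditions as an identity of equivariant Hilbert series in the power series ring $R_\kk(H)[[t]]$, and then pass between them using the Koszul reciprocal relation of Corollary~\ref{cor:equivariant-Koszul-dual-Hilbert-series}. Since $(-)\downarrow\colon R_\kk(G) \to R_\kk(H)$ is a ring homomorphism by Proposition~\ref{easy-facts-on-Grothendieck-ring-prop}(iv), it extends coefficient-wise to power series. Comparing coefficients of $t^i$ (including at $t^0$, where $[A_0\downarrow]=[B_0]=[\one_H]$), the first condition is equivalent to
\[
\Hilb_\eq(A,t)\downarrow \;=\; (1+[\mathcal X]\,t)\cdot \Hilb_\eq(B,t).
\]

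For the equivalence of the first and second conditions, I would apply Corollary~\ref{cor:equivariant-Koszul-dual-Hilbert-series} to $A$ and restrict to $H$, obtaining $\Hilb_\eq(A,t)\downarrow \cdot \Hilb_\eq((A^!)^*,-t)\downarrow = 1$, and similarly $\Hilb_\eq(B,t) \cdot \Hilb_\eq((B^!)^*,-t) = 1$. Substituting the reformulated first condition into the first of these relations and then using the second gives
\[
(1+[\mathcal X]\,t)\cdot \Hilb_\eq((A^!)^*,-t)\downarrow \;=\; \Hilb_\eq((B^!)^*,-t);
\]
sending $t\mapsto -t$ and reading off the coefficient of $t^i$ produces exactly the second condition. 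Each step is reversible since the series $(1+[\mathcal X]t)$ and $\Hilb_\eq(B,t)$ have unit constant term and so are invertible in $R_\kk(H)[[t]]$.

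For the equivalence of the second and third conditions, I would apply the contragredient involution $(-)^*$ on $R_\kk(H)$ from Proposition~\ref{easy-facts-on-Grothendieck-ring-prop}(iii) termwise to the second condition. Since $(-)^*$ is a ring automorphism one has $([\mathcal X]\cdot[V])^* = [\mathcal X^*]\cdot[V^*]$; since the graded pieces $A^!_i$ are finite-dimensional, double-dual returns the original class; and by part~(v) the involution commutes with restriction. These facts together transform the second condition directly into the third.

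The main bookkeeping subtlety is handling the substitution $t\mapsto -t$ cleanly: it is safer to cross-multiply by $(1+[\mathcal X]t)$ before substituting, rather than formally divide, so that every intermediate expression is manifestly an element of $R_\kk(H)[[t]]$ with invertible constant term. Beyond this, there is no deep step: once the reformulation as Hilbert-series identities is in place, all three equivalences are formal consequences of the Koszul reciprocal relation and the ring-theoretic properties of the Grothendieck ring.
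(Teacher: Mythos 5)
Your proof is correct and takes essentially the same route as the paper's: both translate the conditions into generating-function identities in $R_\kk(H)[[t]]$, invoke the Koszul reciprocal relation $\Hilb_\eq(-,t)\cdot\Hilb_\eq((-)^{!*},-t)=1$ (applied to $A$ after restricting to $H$, and to $B$), and manipulate to pass from one condition to the other, with the second-to-third equivalence handled by the contragredient involution $(-)^*$ via Proposition~\ref{easy-facts-on-Grothendieck-ring-prop}(iii),(iv),(v). The only cosmetic difference is that the paper introduces shorthand $a_i, b_i, \bar a_i, x$, etc., while you work with $\Hilb_\eq$ directly, and you isolate the substitution $t\mapsto -t$ as a final step rather than folding it into a single chain of equivalences.
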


\begin{proof} The last equivalence uses the properties of the ring automorphism $(-)^*: R_\kk(G) \rightarrow R_\kk(G)$ from Proposition~\ref{easy-facts-on-Grothendieck-ring-prop}(iii),(iv),(v).  Hence it suffices to prove the first equivalence. 

   Introduce a few abbreviated notations
   \begin{align*}
       a_i&:=[A_i] \text{ and } a^{!*}_i:=[(A^!_i)^*] \text{ in }R_\kk(G),\\
       b_i&:=[B_i] \text{ and } b^{!*}_i=[(B^!_i)^*] \text{ in }R_\kk(H),\\
       \bar{a}_i&:=[A_i\downarrow] \text{ and } \bar{a}^{!*}_i:=[(A^!_i)^*\downarrow] \text{ in }R_\kk(H),\\
       x&:=[{\mathcal X}]\text{ in }R_\kk(H)
    \end{align*}
    along with analogous
     generating functions in $R_\kk(G)[[t]]$ and $R_\kk(H)[[t]]$, such as $a(t):=\sum_i a_i t^i$, and similarly $b(t),a^{!*}(t),b^{!*}(t),\bar{a}(t)$.
In this notation, the first equivalence of the proposition asserts 
$$
\bar{a}_i = b_i+x b_{i-1} \Leftrightarrow
\bar{a}^{!*}_i = b^{!*}_i + x \bar{a}^{!*}_{i-1}.
$$
Note that one has these three relations, coming
from Corollary~\ref{cor:equivariant-Koszul-dual-Hilbert-series} for the Koszul algebras $A, B$, and applying the ring map $(-)\downarrow$ to the first relation:
\begin{align*}
a^{!*}(t) a(-t)&=1\\
b^{!*}(t) b(-t)&=1\\
\bar{a}^{!*}(t) \bar{a}(-t)&=1
\end{align*}
This lets one compute as follows:
\begin{align*}
&\bar{a}_i = b_i+x b_{i-1} \\
\Leftrightarrow \quad &\bar{a}(t) = (1+xt)\cdot b(t)\\
 \Leftrightarrow \quad &\frac{1}{\bar{a}(-t)} = \frac{1}{1-xt} \cdot \frac{1}{b(-t)}\\
\Leftrightarrow \quad&\bar{a}^{!*}(t) = \frac{1}{1-xt} \cdot b^{!*}(t)   \\
 \Leftrightarrow \quad&(1-xt) \cdot \bar{a}^{!*}(t) = b^{!*}(t)\\
 \Leftrightarrow \quad&\bar{a}^{!*}_i- x \bar{a}^{!*}_{i-1} = b^!_i \\
 \Leftrightarrow \quad&\bar{a}^{!*}_i = b^{!*}_i + x \bar{a}^{!*}_{i-1}. \qedhere
\end{align*}
\end{proof}

\begin{example}
Continuing Example~\ref{standard-example-exterior-symmetric-3}, the symmetric group $G=\symm_n$ acts on the Koszul dual algebras
$A(n):=\kk[x_1,\ldots,x_n]=\Sym(V)$ and
$A(n)^!=\wedge(y_1,\ldots,y_n)=\wedge(V^*)$
by permuting variables. 
One can apply Proposition~\ref{prop: general-branching} with 
   $B=A(n-1)=\kk[x_1,\ldots,x_{n-1}],
    B^!=\wedge(y_1,\ldots,y_{n-1})$,
which are both $\kk H$-modules for $H=\symm_{n-1}$,
and with $\cX=\one_H$ the trivial $\kk H$-module. Recalling the notation $\multichoose{n}{i}:=\binom{n+i-1}{i}$,
one then sees that the proposition lifts the equivalence of these two versions of the 
Pascal recurrence 
\begin{align*}
\binom{n}{i}&=\binom{n-1}{i} +\binom{n-1}{i-1},\\
\multichoose{n}{i}&=\multichoose{n-1}{i} +\multichoose{n}{i-1},
\end{align*}
to an equivalence of statements on
restricting $A(n)_i, A(n)^!_i$ from $\symm_n$
to $\symm_{n-1}$: 

\begin{align*}
[A(n)^!_i\downarrow] &=[A(n-1)^!_i] + [A(n-1)^!_{i-1}],\\
[A(n)_i\downarrow] &=[A(n-1)_i] + [A(n)_{i-1}\downarrow].
\end{align*}

Both also follow from segregating the degree $i$ monomials in $\kk[x_1,\ldots,x_n]$ or $\wedge(y_1,\ldots,y_n)$, counted by the left sides, into monomials {\it not divisible} by the last variable $x_n, y_n$, versus those {\it divisible} by it.
\end{example}

%%%%%%%%%%%%%%%%%%%%%%%%%%%%%%%%%%%%%%%%%%%%%%
\section{Review of noncommutative, commutative, exterior Gr\"obner bases}
\label{sec: grobner}
%%%%%%%%%%%%%%%%%%%%%%%%%%%%%%%%%%%%%%%%%%%%%%
We review here some of the theory of Gr\"obner bases for two-sided ideals $I$ in noncommutative, commutative and exterior algebras over a field $\kk$, emphasizing aspects that are special to the 
situation where $I$ is homogeneous, and/or quadratic.  Useful references for the
\begin{itemize}
    \item commutative theory: Cox, Little and O'Shea \cite{CoxLittleOShea}, Adams and Loustaunau \cite{AdamsLoustaunau}, Eisenbud \cite[Ch.~15]{Eisenbud},
    \item exterior algebra theory: Aramova, Herzog and Hibi \cite{AramovaHerzogHibi}, Stokes \cite{Stokes},
    \item noncommutative theory:
%    Bokut, Chen et al \cite{BCKKL},  
    Bokut and Chen \cite{BokutChen},
    Mora \cite{Mora}, Ufnarovskii \cite[\S 2]{Ufnarovskij}, Polishchuk and Positselski \cite[Ch. 4]{PolishchukPositselski}, Shepler and Witherspoon \cite[\S  3]{SheplerWitherspoon}.
\end{itemize}

\subsection{Monomial orders, initial forms, and initial ideals}

Fix a positive integer $n$, and abbreviate
the free associative, commutative, and
exterior algebras $R$ in $n$ variables $z_1,\ldots,z_n$ as follows:
\begin{align*}
\kk\langle \zz \rangle &:= \kk \langle z_1,\ldots,z_n \rangle,\\
\kk[ \zz ] &:= \kk[ z_1,\ldots,z_n ],\\
\wedge( \zz ) &:= \wedge( z_1,\ldots,z_n).
\end{align*}
The set of monomials in each these rings $R$ will be denoted
\begin{align*}
\mons( \kk\langle \zz \rangle) 
&:=
\{ z_{i_1} z_{i_2} \cdots z_{i_\ell} : \ell \geq 0 \text{ and }(i_1,\ldots,i_\ell)\in [n]^\ell \}\\
\mons(\kk[ \zz ]) 
&:=\{ \zz^\aa= z_1^{a_1} z_2^{a_2} \cdots z_n^{a_n} : \aa=(a_1,\ldots,a_n) \in \NN^n\} \\
\mons(\wedge( \zz )) 
&:=\{ \zz_S= z_{i_1} \wedge z_{i_2} \wedge \cdots \wedge z_{i_\ell} : S=\{i_1 < i_2 < \cdots < i_\ell\} \subseteq [n]\}. 
\end{align*}

\begin{definition} (Monomial orders, initial forms, initial ideals)
    A linear ordering $\prec$ on 
$\mons(R)$ for any of the above three rings $R$ is called a {\it monomial ordering} if 
\begin{itemize}
    \item it is a well-ordering: there are no infinite descending chains $m_1 \succ m_2 \succ m_3 \succ \cdots$, and 
    \item whenever $m \prec m'$, then 
    $m_1m m_2 \prec m_1 m' m_2$ for any other monomials $m_1,m_2$.
    \end{itemize}
Having fixed a monomial order $\prec$ on one of these
rings $R=\kk\langle \zz \rangle, \kk[ \zz ], \wedge( \zz )$, write any ring element
as a finite $\kk$-linear sum of monomials $m$ with nonzero coefficients $c_m$ in $\kk$
\begin{align*}
f=\sum_{m \in \mons(R)} c_m m \,\,
=c_{m_0} \cdot m_0 + 
\sum_{\substack{m \in \mons(R):\\m \prec m_0}} c_m m
\end{align*}
and then define $m_0$ to be its unique {\it $\prec$-initial term} or {\it $\prec$-leading monomial},
denoted 
$
\init_\prec(f):=m_0.
$
Given a (two-sided) ideal $I \subset R$, define its {\it $\prec$-initial ideal} to be this two-sided monomial ideal of $R$:
$$
\init_\prec(I):=( \init_\prec(f): f \in I ).
$$
\end{definition}

\begin{definition}
Given a monomial order $\prec$ on one of $R=\kk\langle \zz \rangle, \kk[ \zz ], \wedge( \zz )$,
and a two-sided ideal $I \subset R$, one says that a  subset $\GG \subset I$ is a {\it Gr\"obner basis (GB) for $I$ with respect to $\prec$} if 
$$
\init_\prec(I) = ( \{ \init_\prec(g): g \in \GG\} )=: ( \init_\prec(\GG) ).
$$
Equivalently, every $f$ in $I$ has $\init_\prec(f)=m_0$ (left-right) divisible by at least one $\init_\prec(g)=m$ for some $g$ in $\GG$, meaning that $m_0=m_1 m m_2$ for some $m_1,m_2$ in $\mons(R)$. One calls a Gr\"obner basis $\GG$ {\it reduced}
if for each pair $g \neq g'$ in $\GG$, none of the monomials $m$ appearing in $g$ with nonzero coefficient are divisible by $\init_\prec(g')$.
\end{definition}

Gr\"obner bases for $I$ exist, but may need to 
be infinite when working in $R=\kk \langle \zz \rangle$.
For example, 
%\begin{equation}
%\label{the-trivial-Grobner-basis}
$\GG_0=I$ itself
%\GG_0=\{ \init_\prec(f): f \in I \}
%\end{equation}
always
gives a GB for $I$, but is infinite as long as $I \neq \{0\}$.
The fact that a GB for an ideal always generates the ideal will follow from a certain {\it division algorithm}. 

\begin{definition} ($\GG$-standard monomials and the division algorithm) Call a monomial $m$ in $\mons(R)$ a {\it $\GG$-standard monomial with respect to $\prec$} if it is (left-right) divisible by {\it none} of $\{ \init_\prec(g) : g \in \GG\}$.

The {\it division algorithm on $R$ with respect to $\GG$ and $\prec$} starts with any $f$ in $R$ and  produces a remainder $r$ having $f \equiv r \bmod I$ (and written $f \rightarrow_\GG r$) which is a $\kk$-linear combination of $\GG$-standard monomials, as follows.  Assuming $f=\sum_m c_m m$ contains any monomials which are {\it not} $\GG$-standard, pick the $\prec$-largest such monomial $m$, and write it as $m=m_1 m' m_2$ where $m'=\init_\prec(g)$ for some\footnote{Without loss of generality, assume that all $g$ in $\GG$ are $\prec$-monic, meaning that $\init_\prec(g)$ has coefficient $+1$ in $g$.} $g$ in $\GG$.  Then replace $f$ by 
$$
f':=f-c_m \cdot m_1 \cdot g \cdot m_2
$$
which has $f \equiv f' \bmod I$.  Repeat the process with $f'$.  One can show that, because $\prec$ is a well-ordering, this algorithm
will eventually terminate with a remainder $r$
that contains only $\GG$-standard monomials.
However, the remainder $r$ may not be unique, due to
choices of which element $g$ in $\GG$ has $m'=\init_\prec(g)$ dividing the non-$\GG$-standard
term $m$ of $f$ at each stage.
\end{definition}

The following equivalent conditions
defining Gr\"obner bases are
standard verifications.

\begin{prop}
\label{prop:various-Grobner-equivalents}
Fixing $\prec$ and the two-sided ideal $I \subset R$, the following are equivalent for $\GG \subset I$:
\begin{enumerate}
    \item[(i)]  $\GG$ is a GB for $I$ with respect to $\prec$.
    \item[(ii)]  The division algorithm $f \rightarrow_\GG r$ always gives the same remainder $r$ for $f$.
    \item[(iii)]  One has $f \in I$ if and only if 
    $f \rightarrow_\GG 0$, regardless of choices in the division algorithm.
    In particular, $\GG$ generates $I$.
    \item[(iv)]  The (images of the) $\GG$-standard monomials 
    with respect to $\prec$ give a $\kk$-basis
    for $R/I$.
\end{enumerate}
\end{prop}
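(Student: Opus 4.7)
My plan would be to establish the equivalences via the cyclic chain (i) $\Rightarrow$ (iv) $\Rightarrow$ (iii) $\Rightarrow$ (ii) $\Rightarrow$ (i), exploiting throughout that $\prec$ is a well-ordering (so division terminates) and that each reduction step replaces a term $c_m m$ by a $\kk$-linear combination of terms whose monomials are strictly $\prec m$.

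For (i) $\Rightarrow$ (iv), termination of division shows every $f$ is congruent modulo $(\GG) \subseteq I$ to a $\kk$-linear combination of $\GG$-standard monomials, giving spanning of $R/I$. Linear independence modulo $I$ follows by contradiction: if a nonzero combination $r$ of standard monomials were to lie in $I$, then $\init_\prec(r) \in \init_\prec(I) = (\init_\prec(\GG))$ by (i), so $\init_\prec(r)$ would be divisible by some $\init_\prec(g)$, contradicting its own standardness as a monomial appearing in $r$.

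For (iv) $\Rightarrow$ (iii), any $f \in I$ reduces via division to a remainder $r$ in standard-monomial form with $r \equiv f \equiv 0 \pmod{I}$, forcing $r = 0$ by (iv); conversely, $f \rightarrow_\GG 0$ realizes $f$ as a sum of left-right multiples of elements of $\GG$, hence in $(\GG) \subseteq I$. For (iii) $\Rightarrow$ (ii), two division paths from $f$ yielding remainders $r, r'$ give $r - r' = (f - r') - (f - r) \in (\GG) \subseteq I$, which by (iii) reduces to $0$; but $r - r'$ is supported on standard monomials, so no reduction step applies to it, and the only possible reduction sequence terminates at $r - r'$ itself, forcing $r - r' = 0$.

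The hard part will be (ii) $\Rightarrow$ (i). Here one should read ``the division algorithm always gives the same remainder for $f$'' in its standard Gröbner sense, namely that the remainder depends only on the coset $f + I$, so that $f \equiv f' \pmod{I}$ yields identical remainders (the strictly pointwise reading alone would not suffice in general, since $\GG \subsetneq I$ can still leave division deterministic on each individual $f$). Under this interpretation, any $f \in I$ lies in the coset of $0$ and hence shares its trivial remainder, giving $f \rightarrow_\GG 0$; this reducibility then forces $\init_\prec(f)$ to be divisible by some $\init_\prec(g) \in \init_\prec(\GG)$, since otherwise $\init_\prec(f)$ would be the top term and untouched by any reduction, hence retained in the final remainder. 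This establishes $\init_\prec(I) \subseteq (\init_\prec(\GG))$, with the reverse inclusion automatic from $\GG \subseteq I$, yielding (i).
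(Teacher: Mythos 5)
The paper offers no proof of this proposition, dismissing it as ``standard verifications,'' so there is no in-paper argument to compare against; I will evaluate your proof on its own merits.

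Your cyclic chain (i) $\Rightarrow$ (iv) $\Rightarrow$ (iii) $\Rightarrow$ (ii) $\Rightarrow$ (i) is well chosen, and the arguments for (i) $\Rightarrow$ (iv) and (iv) $\Rightarrow$ (iii) are correct. Your most interesting contribution is the observation about (ii): read literally as ``for each fixed $f$, every sequence of choices in the division algorithm yields the same remainder,'' condition (ii) is strictly weaker than (i), (iii), (iv). The cleanest witness is $\GG = \varnothing$ with $I \neq 0$: division does nothing, so each $f$ has the unique remainder $f$, and (ii) holds; but (i), (iii), (iv) all fail, and $\GG$ certainly does not generate $I$. Since (iii) explicitly asserts ``In particular, $\GG$ generates $I$,'' the proposition cannot be literally correct under the pointwise reading of (ii). Your reinterpretation --- that the remainder depends only on the coset $f + I$ --- is precisely the strengthening needed to make (ii) $\Rightarrow$ (i) go through, and your argument for that implication (a standard leading monomial of $f$ survives every reduction step, so $f \in I$ cannot reduce to $0$) is correct.

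There is one small gap you should close: your argument for (iii) $\Rightarrow$ (ii) establishes only pointwise uniqueness (two division paths from the \emph{same} $f$ agree), whereas your reinterpreted (ii) requires agreement across the coset $f + I$. The extension is immediate and worth stating: if $f \equiv g \pmod{I}$ with remainders $r$ and $s$ respectively, then $r - s = (f - g) - (f - r) + (g - s)$, where the first summand lies in $I$ by hypothesis and the other two lie in $(\GG) \subseteq I$ by construction of the division algorithm; hence $r - s \in I$, and since $r - s$ is supported on $\GG$-standard monomials, (iii) forces $r - s = 0$. With that one-line supplement your chain closes and the proof is complete.
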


The GB condition has a useful rephrasing for {\it homogeneous} ideals $I$, meaning $I=\bigoplus_{d=0}^\infty (I \cap R_d)$.

\begin{prop}
\label{prop: graded-GB-via-Hilb-trick}
For a homogeneous two-sided ideal $I \subset R$,
a subset $\GG \subset I$ forms a GB of $I$ 
with respect to $\prec$ if and only if
$
 \Hilb(S/(\init_\prec(\GG)),t) = \Hilb(S/I,t).
$
\end{prop}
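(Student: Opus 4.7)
The plan is to prove both directions by relating each quotient to $R/\init_\prec(I)$ via its basis of standard monomials. The crucial preliminary fact (implicit in Proposition~\ref{prop:various-Grobner-equivalents}(iv) applied to the always-available ``trivial'' Gr\"obner basis $\GG_0$ from \eqref{the-trivial-Grobner-basis}) is that the $\init_\prec(I)$-standard monomials descend to a $\kk$-basis of $R/I$. Consequently, since $I$ is homogeneous so is $\init_\prec(I)$, and one obtains the unconditional equality of Hilbert series
\begin{equation*}
\Hilb(R/I,t) = \Hilb(R/\init_\prec(I),t).
\end{equation*}
Moreover, for any $\GG \subseteq I$ one has the trivial inclusion of two-sided homogeneous ideals $(\init_\prec(\GG)) \subseteq \init_\prec(I)$, which gives a coefficient-wise inequality
\begin{equation*}
\Hilb(R/(\init_\prec(\GG)),t) \succeq \Hilb(R/\init_\prec(I),t) = \Hilb(R/I,t).
\end{equation*}

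For the forward direction, if $\GG$ is a Gr\"obner basis for $I$ with respect to $\prec$, then by definition $(\init_\prec(\GG)) = \init_\prec(I)$, and the displayed inequality collapses to the desired equality of Hilbert series.

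For the reverse direction, suppose $\Hilb(R/(\init_\prec(\GG)),t) = \Hilb(R/I,t)$. Combining with the preliminary equality, the two monomial quotients $R/(\init_\prec(\GG))$ and $R/\init_\prec(I)$ have identical Hilbert series, and the containment $(\init_\prec(\GG)) \subseteq \init_\prec(I)$ holds in each graded piece. A containment of finite-dimensional graded subspaces with matching dimensions in every degree must be an equality, forcing $(\init_\prec(\GG)) = \init_\prec(I)$, which is the Gr\"obner basis condition.

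The only nontrivial step — and the one I would flag as the main obstacle to writing this cleanly — is the preliminary assertion that the $\init_\prec(I)$-standard monomials form a basis of $R/I$. Spanning follows by repeatedly applying the division-algorithm-style reduction against \eqref{the-trivial-Grobner-basis}: for any $f \in R$, subtracting an appropriate scalar multiple of an element $g \in I$ with $\init_\prec(g)$ dividing the largest non-standard monomial in $f$ strictly decreases the $\prec$-leading non-standard term, and the well-ordering property of $\prec$ guarantees termination. Linear independence is the slicker half: a nonzero $\kk$-linear combination of $\init_\prec(I)$-standard monomials lying in $I$ would have its $\prec$-leading monomial simultaneously belonging to $\init_\prec(I)$ and being standard, a contradiction. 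This basis description in both graded pieces then yields the key equality $\Hilb(R/I,t) = \Hilb(R/\init_\prec(I),t)$ used throughout.
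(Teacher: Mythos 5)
Your proof is correct and follows essentially the same route as the paper's: both reduce the claim to the equality $(\init_\prec(\GG)) = \init_\prec(I)$, use the inclusion $(\init_\prec(\GG)) \subseteq \init_\prec(I)$ together with the graded surjection $R/(\init_\prec(\GG)) \twoheadrightarrow R/\init_\prec(I)$ to turn Hilbert-series equality into ideal equality, and establish the key unconditional identity $\Hilb(R/I,t) = \Hilb(R/\init_\prec(I),t)$ via the trivial Gr\"obner basis $\GG_0$ and Proposition~\ref{prop:various-Grobner-equivalents}(iv). The only difference is that you unfold the division-algorithm argument underlying Proposition~\ref{prop:various-Grobner-equivalents}(iv) explicitly, where the paper simply cites it.
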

\begin{proof}
By definition $\GG \subset I$ is a GB if and only if
the inclusion $(\init_\prec(\GG)) \subseteq \init_\prec(I)$
is an equality. This occurs if and only if the graded $\kk$-algebra surjection 
$
R/(\init_\prec(\GG)) \twoheadrightarrow R/\init_\prec(I)
$
is a $\kk$-vector space isomorphism in each degree.
By dimension-counting, this occurs if and only if
$$
\Hilb(S/(\init_\prec(\GG)),t) = \Hilb(S/\init_\prec(I),t)
$$
However one also has
$\Hilb(S/\init_\prec(I),t)=\Hilb(S/I,t)$, since
the Gr\"obner basis
$\GG_0:=I$ itself 
%\{\init_\prec(f):f \in I\}$ from \eqref{the-trivial-Grobner-basis}
has its
$\GG_0$-standard monomials giving a (homogeneous) $\kk$-basis
for both $S/\init_\prec(I)$ by definition, and for $S/I$
by Proposition~\ref{prop:various-Grobner-equivalents}(iv).
\end{proof}

There are some advantages to working with Gr\"obner bases
in the commutative polynomial algebra $\kk[\zz]$ and exterior algebra $\wedge(\zz)$, 
where GBs for ideals are always finite,
and can be computed via versions of {\it Buchberger's algorithm}.  One can always view quotients  $\kk[\zz]/I$ and $\wedge(\zz)/I$ as quotients
of $\kk\langle \zz \rangle$ via
the surjections
$$
\begin{array}{rcl}
 \kk\langle \zz \rangle \overset{\pi}{\longrightarrow} \kk[\zz] &\text{ with }
 &
 \ker(\pi)=(z_iz_j-z_jz_i : 1 \leq i < j \leq n)\\
 \kk\langle \zz \rangle \overset{\pi}{\longrightarrow}
\wedge(\zz) &\text{ with }&
\ker(\pi)=(z_iz_j+z_jz_i : 1 \leq i < j \leq n) + (z_i^2: 1 \leq i\leq n)   
\end{array}
$$
In other words, $\kk[\zz]/I$ or $\wedge(\zz)/I$ is isomorphic to $\kk\langle \zz \rangle /\pi^{-1}(I)$.
Note that since the
\begin{itemize}
    \item {\it commutators} 
$[z_i,z_j]_+:=z_i z_j-z_jz_i$,
\item {\it anti-commutators} $[z_i,z_j]_-:=z_i z_j + z_jz_i$, and \item squares $z_i^2$
\end{itemize}
that generate $\ker(\pi)$ are homogeneous and quadratic, this means that if $I$ is a homogeneous ideal of $\kk[\zz]$ or $\wedge(\zz)$,
then $\pi^{-1}(I)$ will be a homogeneous two-sided ideal
of $\kk\langle \zz \rangle$.  Similarly, if $I$ is a quadratic ideal, then the same holds for $\pi^{-1}(I)$,
and $\kk\langle \zz \rangle /\pi^{-1}(I)$ will be a quadratic algebra.

This leads to one of the most common techniques for proving
Koszulity.

\begin{theorem} 
Consider ($2$-sided) ideals $I$ inside any of the three rings
$R=\kk\langle \zz \rangle, \kk[\zz],\wedge(\zz)$.
\begin{enumerate}
    \item[(i)] {\rm{(Fr\"oberg \cite{Froberg-original})}} 
    The quotient $R/I$ by any
    quadratic monomial ideal $I$ is Koszul.
    \item[(ii)] \rm{\cite[\S  4]{Froberg-survey}, \cite[Thm. 8.14]{mcculloughPeevaSurvey}, \cite[\S  3]{Peeva}}  If $I$ has a quadratic Gr\"obner basis $\GG$ with respect to some monomial order $\prec$ on $R$, then $R/I$ is Koszul.
\end{enumerate}
\end{theorem}
\begin{proof}
For assertion (i), Fr\"oberg's main result in \cite{Froberg-original} proves Koszulity of a general class of algebras $A$, containing as special cases the quadratic monomial quotients $R/I$ for any such $R$.

Assertion (ii) for the commutative case where $R=\kk[\xx]$ 
is credited in \cite{EisenbudReevesTotaro} to Fr\"oberg's result (i) 
``and a deformation argument noticed by Kempf and others".  This 
deformation argument is written down explicitly by Peeva in \cite[Thm.~22.9(3)]{Peeva}, proving the following assertion.
Given a graded $A$-module $M$, produce a free (left-)$A$-module resolution
$0 \leftarrow M \leftarrow F_0 \leftarrow F_1 \leftarrow \cdots$
which is {\it minimal} in the sense that the differentials have entries in $A_+$.
Then define the {\it graded Betti number} $\beta_{ij}^A(M)$ to be 
the number of free summands of the form $A(-j)$ appearing in
the $i^{th}$ resolvent
$F_i=\bigoplus_{j} A(-j)^{\beta_{ij}^A(M)}$.  Thus Koszulity of $A$
may be rephrased as $\beta_{ij}^{A}(\kk)=0$ for $j \neq i$.
Then one has 
\begin{equation}
\label{eq:commutative-Betti-number-bound-by-deformation}
\beta_{ij}^{\kk[\xx]/I}(\kk) \leq \beta_{ij}^{\kk[\xx]/\init_\prec I}(\kk)
\end{equation}
for any monomial order $\prec$ on $\kk[\xx]$.
Since $I$ having a quadratic Gr\"obner basis with respect to $\prec$
implies that $\kk[\xx]/\init_\prec I$ is Koszul by assertion (i),
this implies that $\kk[\xx]/I$ itself is Koszul.

Assertion (ii) for the anticommutative case where $R=\wedge(\zz)$ is asserted in Peeva \cite[\S 3, p.~613]{Peeva}, indicating that the exterior analogue of
\eqref{eq:commutative-Betti-number-bound-by-deformation}
can be proven by a similar deformation argument, using Gr\"obner basis theory over exterior algebras, and similar in
spirit to \cite[Prop.~1.8]{AramovaHerzogHibi}, \cite[p.~4369]{EisenbudPopescuYuzvinsky}.  This argument employs the exterior
analogue of a (commutative) flat deformation result as in
Eisenbud \cite[Thm.~15.17]{Eisenbud}, along the lines of
Murai \cite[Lemmas~2.1, 2.2]{Murai}.

Assertion (ii) for the noncommutative case where $R=\kk\langle \xx \rangle$
is asserted in \cite[\S 4]{Froberg-survey}. It is proven for
certain kinds of noncommutative quadratic Gr\"obner bases called {\it PBW bases}
in \cite[\S5]{Priddy} and \cite[Ch.~4]{PolishchukPositselski}.  It is also proven for quadratic Gr\"obner bases with respect to {\it degree orderings} on monomials in $\kk\langle \zz\rangle$ in
J\"ollenbeck and Welker \cite[Cor.~4.9]{JollenbeckWelker}.  A
proof for general term orders $\prec$ on $\kk\langle \zz \rangle$ was 
written down recently in an unpublished preprint of
Backelin \cite{Backelin}.
\end{proof}

%%%%%%%%%%%%%%%%%%%%%%%%%%%%%%%%%%%%%%%%%%%%%%
\section{Matroids, oriented matroids, and supersolvability}
\label{sec: supersolvable}
%%%%%%%%%%%%%%%%%%%%%%%%%%%%%%%%%%%%%%%%%%%%%%

The Koszul algebras of interest to us are
{\it Orlik-Solomon algebras} of matroids and {\it graded Varchenko-Gel'fand algebras} of oriented matroids,
in the case where the matroids are supersolvable.
We therefore review here the basics of matroids,  oriented matroids, and supersolvability.

%%%%%%
\subsection{Matroid and oriented matroid review}
\label{subsec:matroid-review}

A useful reference for matroids is Oxley \cite{Oxley}, and for oriented matroids is
Bj\"orner, Las Vergnas, Sturmfels, White and
Ziegler \cite{BLSWZ}.

A matroid $M$ (respectively, oriented matroid $\OM$) on ground set $E=\{1,2,\ldots,n\}$ is an abstraction of the linear dependence information about a list of vectors $v_1,v_2,\ldots,v_n$ in a vector space over a field $\kk$ (respectively, $\kk=\RR$), forgetting the coordinates of the vectors themselves,
but recording which subsets are linearly dependent (respectively, the $\pm$ signs in their linear dependences).  One way to record this information is with the matroid or oriented matroid's {\it circuits},
abstracting the minimal dependences.

\begin{definition}
A {\it matroid} $M$ on ground set $E=\{1,2,\ldots,n\}$ is defined by its collection $\cC \subset 2^E$ of {\it circuits}, satisfying these axioms:
\begin{enumerate}
\item[{\sf C1.}] $\varnothing \not\in \cC$
\item[{\sf C2.}] If $C,C'$ in $\cC$,
and $C \subseteq C'$ then $C=C'$
\item[{\sf C3.}]  If $C,C'$ in $\cC$,
and $e \in C \cap C' \subsetneq C,C'$,
then there exists $C'' \in \cC$ with
$C'' \subseteq C \cup C' \setminus \{e\}$.
\end{enumerate}

An {\it oriented matroid} $\OM$ on ground set $E=\{1,2,\ldots,n\}$ is defined by its collection $\cC^\pm=\{(C_+,C_-)\}$ of {\it signed circuits} which are pairs $(C_+,C_-)$ of disjoint subsets $C_+ \sqcup C_- \subseteq E$, satisfying these axioms:
\begin{enumerate}
\item[{\sf C1${}^\pm$.}] $(\varnothing,\varnothing) \not\in \cC^\pm$
\item[{\sf C2${}^\pm$.}] If $(C_+,C_-)$ in $\cC^\pm$, then  $(C_-,C_+)$ in $\cC^\pm$
\item[{\sf C3${}^\pm$.}] If $(C_+,C_-),(C'_+,C'_-)$ in $\cC^\pm$,
and $C_+ \cup C_- \subseteq C'_+ \cup C'_-$ then 
$(C'_+,C'_-)=(C_+,C_-)$ or $(C_-,C_+)$.
\item[{\sf C4${}^\pm$.}]  If $(C_+,C_-),(C'_+,C'_-)$ in $\cC^\pm$ and
$e \in C_+ \cap C'_-$, then there exists $(C''_+,C''_-) \in \cC^\pm$ 
with $C'' \subseteq C \cup C' \setminus \{e\}$ having
%\begin{itemize}    \item 
    $C''_+ \subseteq \left(  C_+ \cup C'_+ \right) \setminus \{e\}$, and
%    \item 
    $C''_- \subseteq \left( C_- \cup C'_- \right) \setminus \{e\}$.
%\end{itemize}
\end{enumerate}
    
\end{definition}
One can check that every oriented matroid $\OM$ with signed circuits $\cC^\pm$ gives rise to a matroid $M$ having circuits $\cC:=\{C_+ \cup C_-: (C_+,C_-) \in \cC^\pm\}$;  one calls the matroid $M$ {\it orientable} whenever it comes from such an oriented matroid $\OM$, and one calls $\cC$ the {\it (matroid) circuits} of $\OM$. 

One calls $M$ a {\it representable matroid} (over the field $\kk$) if there
exists a list of vectors $v_1,v_2,\ldots,v_n$ in a $\kk$-vector space such that 
the subsets $C$ in $\cC$ index the {\it minimal
dependent subsets} $\{v_j\}_{j \in C}$, that is, $\sum_{j \in C} c_j v_j=0$ for
some $c_j$ in $\kk$, but every proper subset of $\{v_j\}_{j\in C}$ is independent.
Similarly, $\OM$ is a {\it representable} oriented matroid if additionally $\kk=\RR$
and the pairs $(C_+,C_-)$ in $\cC^\pm$ give the subsets $C^+=\{j:c_j > 0\}, \, C^-=\{j:c_j < 0\}$.
for all such minimal dependent subsets of $v_1,\ldots,v_n$.

A matroid $M$ on ground set $E$
can also be specified by its collection of {\it flats} $\cF =\{F\} \subseteq 2^E$,
where $F \subseteq E$ is a flat if every circuit $C$ in $\cC$ with
$|C \cap F|=|C|-1$ has $C \subseteq F$.  We will consider $\cF$ as a poset
ordered via inclusion.  This poset turns out to always be a 
{\it geometric lattice}, meaning that  
\begin{itemize}
\item any pair of flats $F,F'$ have a {\it meet} (greatest lower bound) $F \wedge F'=F \cap F'$ and a {\it join} (least upper bound) $F \vee F'$,
\item it is an {\it atomic lattice} in the sense that every flat $F$ has 
$$
F=\bigvee_{\text{ atoms }G \leq F} G,
$$ where {\it atoms} are flats that cover the unique bottom element, and
\item it is {\it upper semimodular}, meaning that there is a {\it rank function}
$r: \cF \rightarrow \{0,1,2,\ldots\}$ satisfying 
\begin{equation}
\label{upper-semimodular-inequality}
 r(F \vee F') \leq r(F)+r(F')-r(F \wedge F').
\end{equation}
\end{itemize}
The {\it rank} of the matroid $M$ is defined to be $r(M):=r(E)$.

It will also be convenient later (in Definition~\ref{Kohno-relations-defn} below) to note that every oriented matroid $\OM$ on $E$ of rank $r$ can be specified via its {\it chirotope}.  This is a function
$\chi_\OM: E^r \rightarrow \{0,\pm 1\}$ satisfying
certain axioms;  see \cite[\S 1.9, 3.5]{BLSWZ}), and the values $\chi_\OM(i_1,i_2,\ldots,i_r)$ are defined only up to an overall rescaling by $\pm 1$.  In the case where $\OM$ is realized by
vectors $v_1,v_2,\ldots,v_n$, then
$\chi_\OM(i_1,i_2,\ldots,i_r)$ 
is the $\{0,\pm 1\}$-valued sign of the determinant of the $r \times r$ matrix having $v_{i_1},v_{i_2},\ldots,v_{i_r}$
as its columns.

In studying Orlik-Solomon and Varchenko-Gel'fand rings,
it will turn out (see Remark~\ref{rmk: simplicity-is-no-restriction} below) that we lose no generality by restricting to matroids and oriented matroids which are {\it simple}, meaning that they have no {\it loops} (= singleton circuits $C=\{i\}$) and no {\it parallel elements} (= circuits $C=\{i,j\}$ of size two).  Consequently, their matroid structure $M$
is completely determined by the poset of flats $\cF$ up to isomorphism, whose
unique bottom element will be the empty flat $F=\varnothing$, and whose atoms
at rank $1$ are the singleton flats $F=\{1\},\{2\},\ldots,\{n\}$, identified
with the ground set $E$.

\subsection{Supersolvability}
\label{supersolvability-review}
We will be focussing on matroids that satisfy the strong condition of supersolvability, reviewed here.

\begin{definition}
Say that a flat $F$ in a matroid $M$ is {\it modular}
if one always has equality in \eqref{upper-semimodular-inequality}:
$$
r(F \vee F') = r(F)+r(F')-r(F \wedge F') \text{ for all }F' \in \cF.
$$
A matroid $M$ is called {\it supersolvable} if the poset $\cF$ contains a complete flag $\underline{F}$ of modular flats
$$
\underline{F}:=(\varnothing =F_0 \subsetneq F_1 \subsetneq \cdots \subsetneq F_{r(M)-1} 
\subsetneq F_{r(M)}=E).
$$
\end{definition}

We consider as examples the (strict) subset of supersolvable matroids among the {\it uniform matroids}, which we recall here.

\begin{definition} ({\it Uniform matroids})  
The {\it uniform matroid}
$M=U_{r,n}$ of rank $r$ on ground set $E=\{1,2,\ldots,n\}$ has
circuits $\cC$ equal to all $(r+1)$-element subsets of $E$.  Its poset of flats $\cF$ is obtained from $2^E$, the {\it Boolean algebra} of rank $n$, by removing all subsets of cardinalities $r,r+1,\cdots,n-2,n-1$.
\end{definition}

\begin{remark}
The uniform matroid $U_{r,n}$ is represented by any list of $n$ vectors $v_1,v_2,\ldots,v_n$ in $\kk^r$ that are sufficiently generic, in the sense that every $r$-element subset $\{v_{i_1},\ldots,v_{i_r}\}$ is linearly independent.  This imposes restrictions on the cardinality of the field $\kk$, depending upon $n$ and $r$, but means that $U_{r,n}$ is always representable over an infinite field, such as $\kk=\RR$, and hence is always orientable.  Nevertheless, some of these orientations $\OM$ of $M=U_{r,n}$ can behave differently, for example in their group of automorphisms $\Aut(\OM)$.  
In the examples of this section, we will consider only the unoriented matroid $M=U_{r,n}$.
\end{remark}

It is not hard to see that the uniform matroid $M=U_{r,n}$ is
\begin{itemize}
\item simple if and only if $(r,n) = (0,0)$ (the empty matroid), $(r,n) = (1,1)$, or $r\geq 2$; and 
    % \item simple if and only if $r \geq 1$, 
\item simple and supersolvable if and only if $(r,n) = (0,0)$, $(r,n) = (1,1)$, or $r=2$ and $n\geq 2$.
\end{itemize}

\begin{example}
\label{ex: Boolean-matroid-1}
The {\it Boolean matroid} $M=U_{n,n}$ on ground set $E=\{1,2,\ldots,n\}$ has no
circuits, that is, $\cC=\varnothing$, and poset of flats $\cF=2^E$.  Every flat $F$ is modular, so
every complete flag $\underline{F}$ of flats is modular and $M$ is supersolvable.
\end{example}

\begin{example}
\label{ex: rank-two-matroid-1}
Every rank two simple matroid is isomorphic to a uniform matroid $M=U_{2,n}$ on $E=\{1,2,\ldots,n\}$, with this flat poset $\cF$:

\begin{center}
\begin{tikzpicture}
  [scale=.30,auto=left]
  %,every node/.style={circle,fill=black!20}]
  \node (E) at (8,12) {$E$};
  \node (1) at (0,8) {$\{1\}$};
  \node (2) at (4,8) {$\{2\}$};
   \node (3) at (8,8) {$\{3\}$};
   \node (dots) at (12,8) {$...$};
  \node (n) at (16,8) {$\{n\}$};
  \node (empty) at (8,4) {$\varnothing$};
  \foreach \from/\to in {empty/1,empty/2,empty/3,empty/n,1/E,2/E,3/E,n/E}
    \draw (\from) -- (\to);
\end{tikzpicture}
\end{center}
Again, every flat $F$ is modular, and every complete flag $\varnothing \subset \{i\} \subset E$ shows that $M$ is supersolvable.
\end{example}

Our original motivation came from {\it braid matroids}.

\begin{example} 
\label{graphic-braid-example}
({\it Supersolvable graphic matroids} and {\it braid matroids}) 
Let $G$ be a graph on vertex set $\{1,2,\ldots,n\}$ with edge set $E \subseteq \{\{i,j\}: 1 \leq i<j \leq n\}$ which is {\it simple},
that is, $G$ no self-loops and no parallel edges.  Then $G$ gives rise to a simple {\it graphic} matroid $M$
(and oriented matroid $\OM$) represented by the list of vectors
$\{v_{ij}=e_i-e_j\}_{\{i,j\} \in E} \subset \RR^n$, where $e_1,\dots,e_n$ are standard basis vectors.  The matroid circuits $\cC$ are indexed by subsets $C \subseteq E$ of edges that form a cycle within $G$.  Stanley showed \cite[Prop. 2.8]{stanley1972supersolvable}
that this graphic matroid is supersolvable if and only $G$ is a {\it chordal} graph, meaning that for every minimal cycle of edges 
$u_1-u_2-\cdots-u_{\ell-1}-u_{\ell}-u_1$ in $G$ having $\ell \geq 4$, there will be another edge $\{u_i,u_j\}$ of $G$ with $i \not\equiv j\pm 1\bmod{\ell}$ forming a chord. 

In particular, the {\it complete graph} $K_n$ on $n$ vertices with all $\binom{n}{2}$ edges is a chordal graph, and its graphic matroid is called the {\it braid matroid} $\braid_n$ {\it on $n$ strands}.  Its poset of flats $\cF$ is isomorphic to the lattice $\Pi_n$ of all set partitions $\pi=(B_1,\ldots,B_\ell)$ of $\{1,2,\ldots,n\}=\sqcup_{i=1}^\ell B_i$,
with ordering by refinement:  $\pi \leq \pi'$
if for every block $B_i$ of $\pi$ there exists some block $B'_{i'}$ of $\pi'$ having $B_i \subseteq B'_{i'}$. The flat $F$ corresponding to $\pi$ contains all edges $\{i,j\}$ whose end vertices $i,j$ lie in the same block $B_k$ of $\pi$.  The modular flats correspond to partitions $\pi$ with at most one non-singleton block.  For example, one modular complete flag $\underline{F}$ of flats 
corresponds to the set partitions $\pi_1 < \pi_2 < \cdots < \pi_n$ where 
$$
\pi_k:=\{ \{1,2,\ldots,k\}, \{k+1\}, \{k+2\},\ldots,\{n-1\}, \{n\}\}.
$$
\end{example}

%%%%%%%%%%%%%%%%%%%%%%%%%%%%%%%%%%%%%%%%%%%%%%
\section{Orlik-Solomon and Varchenko-Gel'fand rings}
\label{sec:OS-and-VG-rings}
%%%%%%%%%%%%%%%%%%%%%%%%%%%%%%%%%%%%%%%%%%%%%%
We review here the Orlik-Solomon algebra of a matroid $M$
and graded Varchenko-Gel'fand algebra\footnote{Also called the {\it Cordovil algebra} in \cite{MatherneMiyataProudfootRamos}.} of an oriented matroid $\OM$. 
Useful references for Orlik-Solomon algebras are Dimca \cite[Ch. 3]{Dimca}, Dimca and Yuzvinsky \cite{DimcaYuzvinsky}, Orlik and Terao \cite[Ch. 3]{OrlikTerao}, Yuzvinsky \cite{Yuzvinsky}.  Useful references for graded Varchenko-Gel'fand algebras are Brauner \cite[Secs. 3.3, 5.2]{Brauner}, Cordovil \cite{Cordovil}, Dorpalen-Barry \cite{Dorpalen-Barry}, Dorpalen-Barry, Proudfoot and Wang \cite{DorpalenBarryProudfootWang}, Moseley \cite{Moseley}, Varchenko and Gel'fand \cite{VarchenkoGelfand}.

For the remainder of this section, let $\kk$ be any commutative ring with $1$.

\begin{definition}({\it Orlik-Solomon algebra})
For a simple matroid $M$ on $E=\{1,2,\ldots,n\}$,
define its {\it Orlik-Solomon algebra} over $\kk$ as an anti-commutative quotient 
$$
\OS(M):=\wedge(x_1,\ldots,x_n)/I_{\OS(M)}
$$
where $\wedge(x_1,\ldots,x_n)$ is the exterior algebra over $\kk$
on $n$ generators.  The Orlik-Solomon ideal 
\begin{equation}
\label{OS-ideal-generators1}
I_{\OS(M)}=
(\partial(x_C): C \in \cC)
\end{equation}
has one generator $\partial(x_C)$ for each circuit $C=\{c_1,c_2,\ldots,c_k\}$ in $\cC$, with $\partial(x_C)$  defined by 
\begin{equation}
\label{OS-ideal-generators2}
\partial x_C := \sum_{j=1}^{k} (-1)^{j-1} 
x_{c_1} \wedge \cdots \wedge x_{c_{j-1}} \wedge \widehat{x_{c_j}} 
\wedge x_{c_{j+1}} \cdots \wedge x_{c_k}.
\end{equation}
\end{definition}

\begin{definition}({\it Graded Varchenko-Gel'fand ring})
For a simple oriented matroid $\OM$ on $E=\{1,2,\ldots,n\}$,
define its {\it graded Varchenko-Gel'fand ring} over $\kk$ as the commutative quotient 
$$
\VG(\OM):=\kk[x_1,\ldots,x_n]/I_{\VG(\OM)}
$$
where $\kk[x_1,\ldots,x_n]$ is the polynomial algebra over $\kk$.  The graded Varchenko-Gel'fand ideal 
\begin{equation}
\label{VG-ideal-generators1}
I_{\VG(\OM)}=
(x_1^2,\ldots,x_n^2) + (\partial^\pm(x_C): C \in \cC)
\end{equation}
contains the squares $\{ x_i^2\}_{i=1}^n$ along with one generator $\partial^\pm(x_C)$ for each circuit $C$ in $\cC$, with $\partial^\pm(x_C)$ defined by choosing one of the two signed circuits\footnote{The choice is immaterial -- making the other choice replaces $\partial^\pm(x_C)$ by its negative.}
$(C_+,C_-)$ in $\cC$ with $C=C_+ \cup C_-$, and setting
\begin{equation}
\label{VG-ideal-generators2}
\partial^\pm(x_C) := \sum_{c_j \in C_+ \cup C_-} \sgn{C}{c_j}  \cdot
x_{c_1}  \cdots  x_{c_{j-1}}  \widehat{x_{c_j}} 
 x_{c_{j+1}} \cdots x_{c_k}.
\end{equation}
Here $\sgn{C}{c_j}=\pm 1$, namely $+1$ when $c_j \in C_+$
and $-1$ when $c_j \in C_-$.
\end{definition}

\begin{remark}
\label{rmk: simplicity-is-no-restriction}
    Our assumption that $M, \OM$ are simple really presents no restriction.  In either case,
    \begin{itemize}
        \item a loop $i$ in $E$ would give a circuit $C=\{i\} \in \cC$, causing the collapse $\OS(M)=0=\VG(\OM)$ since $I_{\OS(M)}$ or $I_{\VG(\OM)}$ contains the generator $\partial(x_C)=1$ or $\partial^\pm(x_C)=1$, and
        \item parallel elements $i,j$ in $E$ would give rise to a circuit $C=\{i,j\} \in \cC$, making $x_i=\pm x_j$ in the rings $\OS(M)$ or $VG(\OM)$  because $I_{\OS(M)}$ or $I_{\VG(\OM)}$ contains a generator $\partial(x_C)$ or $\partial^\pm(x_C)$ of 
        of the form $x_i \pm x_j$.
    \end{itemize}
Thus our assumption in Section~\ref{sec: koszul-algebras} that
our standard graded $\kk$-algebras are minimally generated by the variables $x_1,\ldots,x_n$ is consistent with assuming that $M, \OM$ are simple matroids.
\end{remark} 

\subsection{Flat decomposition}
\label{sec: flat-decomposition}
An important feature of both $\OS(M)$ and $\VG(\OM)$
is that their $\NN$-grading is refined by a $\kk$-vector space decomposition
indexed by the matroid flats $F$ in $\cF$.

\begin{definition}
\label{def: flat-decomp-for-polynomials}
Given matroid $M$ or oriented matroid $\OM$ on $E = \{1,\dots, n\}$ with flats $\cF$, abbreviating the variable sets $\xx=(x_1,\ldots,x_n)$, consider the $\kk$-vector space decompositions
\begin{align*}
T(V) &=\kk\langle \xx\rangle= \bigoplus_{F \in \cF} \underbrace{T(V)_F}_{=\kk\langle \xx\rangle_F},\\
& \\
\Sym(V)& = \kk[\xx]=\bigoplus_{F \in \cF} \underbrace{\Sym(V)_F}_{=\kk[\xx]_F},\\
 & \\
\wedge(V) &= \wedge(\xx) = \bigoplus_{X\in \cF} \underbrace{\wedge(V)_F}_{= \wedge(\xx)_F},\\
\end{align*}
where $\kk\langle \xx\rangle_F, 
\kk[\xx]_F, \wedge(\xx)_F$ are the
$\kk$-spans of monomials $x_{j_1} x_{j_2} \cdots x_{j_k}$ with $\{j_1\}  \vee \cdots \vee \{j_k\}=F$.

%its lattice of flats. Let $V = \textrm{span}\{x_1,\dots, x_n\}$, and for a set $J = \{j_1, j_2,  \dots, j_k\}$ write $x_J = x_{j_1}\otimes x_{j_2} \otimes \dots \otimes x_{j_k}$. Then the tensor algebra $T(V)$ has a vector space decomposition $$T(V) = \bigoplus_{X\in \cF} T(V)_X $$ where $T(V)_X \coloneqq \kk \{x_J : \textrm{span}(J) = X\}$. The symmetric and exterior algebras on $V$ have analogous decompositions: $$\Sym(V) = \bigoplus_{X\in \cF} \Sym(V)_X, \quad  \wedge V = \bigoplus_{X\in \cF} (\wedge V)_X. $$
\end{definition}

Both $\OS(M), \VG(\OM)$ inherit these $\kk$-vector space decompositions by flats; for $\OS(M)$, see
\cite{OrlikTerao}*{Thm 3.26, Cor. 3.27}, \cite{DimcaYuzvinsky}*{\S2.3}, \cite{Yuzvinsky}*{\S2.3}, and for $\VG(\OM)$ see \cite{Brauner}*{Theorem 5.5}.
\begin{prop}
\label{prop: flat-decomp}
For a matroid $M$ or oriented matroid $\OM$, the ideals $I_{\OS(M)}, I_{\VG(\OM)}$ are homogeneous with respect to the decomposition
in Definition~\ref{def: flat-decomp-for-polynomials}, that is,
\begin{align*}
I_{\OS(M)} &=\bigoplus_{F \in \cF} \kk(\xx)_F \cap I_{\OS(M)},\\
I_{\VG(\OM)} &=\bigoplus_{F \in \cF} \kk[\xx]_F \cap I_{\VG(\OM)}.
\end{align*}
Hence they induce $\kk$-vector space decompositions of the quotients $\OS(M),\VG(\OM)$:
\begin{align}
\label{OS-flat-decompsition}
\OS(M) &= \bigoplus_{F\in \cF} \OS(M)_F,\\
\label{VG-flat-decomposition}
\VG(\OM) &= \bigoplus_{F\in \cF} \VG(\OM)_F.
\end{align}
\end{prop}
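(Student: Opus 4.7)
The plan is to reduce the proposition to two elementary facts about the flat grading on $\wedge(\xx)$ and $\kk[\xx]$:

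\begin{enumerate}
    \item[(a)] \emph{Multiplicativity:} the products satisfy
    $\wedge(\xx)_{F_1}\cdot \wedge(\xx)_{F_2}\subseteq \wedge(\xx)_{F_1\vee F_2}$
    and similarly for $\kk[\xx]$;
    \item[(b)] \emph{Homogeneity of the generators:} each listed generator of $I_{\OS(M)}$ and $I_{\VG(\OM)}$ lies in a single piece $\wedge(\xx)_F$ or $\kk[\xx]_F$.
\end{enumerate}

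For (a), I would check it on monomials. A monomial $m$ with $\supp(m)\subseteq E$ sits in the piece indexed by $F=\bigvee_{i\in \supp(m)}\{i\}$, which is the matroid closure $\overline{\supp(m)}$, by atomicity of the geometric lattice $\cF$. For two monomials $m_1,m_2$ of supporting flats $F_1,F_2$, the (exterior or commutative) product $m_1 m_2$, when nonzero, has support $\supp(m_1)\cup \supp(m_2)$ and hence lies in the piece indexed by $\overline{\supp(m_1)\cup\supp(m_2)}=F_1\vee F_2$. Extending $\kk$-bilinearly gives the claim.

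For (b), the key observation is the matroid identity $\overline{C\smallsetminus\{c_j\}}=\overline{C}$ for every circuit $C\in\cC$ and every $c_j\in C$: the inclusion $\subseteq$ is trivial, and since $C$ is a dependent set with every proper subset independent, the element $c_j$ lies in the closure of $C\smallsetminus\{c_j\}$, giving $\supseteq$. Writing $\partial(x_C)=\sum_j (-1)^{j-1}m_j$ as in~\eqref{OS-ideal-generators2}, each term $m_j$ is a monomial supported on $C\smallsetminus\{c_j\}$, so sits in $\wedge(\xx)_{\overline{C}}$. Hence $\partial(x_C)\in \wedge(\xx)_{\overline{C}}$. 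The identical argument, applied to the signed version~\eqref{VG-ideal-generators2}, shows $\partial^\pm(x_C)\in \kk[\xx]_{\overline{C}}$. For the remaining $\VG$-generators, $x_i^2\in \kk[\xx]_{\{i\}}$, where $\{i\}$ is a flat because $\OM$ is simple (no loops and no parallel elements).

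Combining (a) and (b) finishes the argument. Any $f\in I_{\OS(M)}$ is a finite $\kk$-linear sum of products $h\cdot \partial(x_C)$; decomposing $h=\sum_{F'} h_{F'}$ via (a) shows $h\cdot \partial(x_C)\in \bigoplus_{F'}\wedge(\xx)_{F'\vee\overline{C}}$, so projecting $f$ onto any flat component of $\wedge(\xx)$ keeps it inside $I_{\OS(M)}$. This yields $I_{\OS(M)}=\bigoplus_F \wedge(\xx)_F\cap I_{\OS(M)}$, and the same reasoning handles $I_{\VG(\OM)}$. Passing to quotients then gives the decompositions~\eqref{OS-flat-decompsition} and~\eqref{VG-flat-decomposition}. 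The only non-formal input is the matroid closure identity $\overline{C\smallsetminus\{c_j\}}=\overline{C}$; everything else is elementary bookkeeping, so I expect no real obstacle.
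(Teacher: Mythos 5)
The paper does not actually give its own proof of this proposition: it cites Orlik--Terao (Thm.~3.26, Cor.~3.27), Dimca--Yuzvinsky, Yuzvinsky for $\OS(M)$, and Brauner (Thm.~5.5) for $\VG(\OM)$. Your argument is a correct, self-contained reconstruction along the same lines that these references use, and I don't see a gap.

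Your reduction to the two claims (a) multiplicativity of the flat grading and (b) flat-homogeneity of the listed generators is exactly right, and (a)+(b) do immediately force homogeneity of the (anti-)commutative ideal by the projection argument you give. The one lemma that carries real content, $\overline{C\setminus\{c_j\}}=\overline{C}$ for a circuit $C$, is correctly proved: $C\setminus\{c_j\}$ is independent and $C$ is not, so $c_j$ lies in the closure of $C\setminus\{c_j\}$. The multiplicativity check $\overline{\supp m_1\cup\supp m_2}=F_1\vee F_2$ uses both inclusions of atomicity ($F_1\vee F_2=\overline{F_1\cup F_2}\supseteq\overline{\supp m_1\cup\supp m_2}$ is clear; the reverse uses $F_i=\overline{\supp m_i}\subseteq\overline{\supp m_1\cup\supp m_2}$), and you correctly note the vacuity of the case $m_1 m_2=0$. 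One tiny presentational suggestion: when you write that any $f\in I_{\OS(M)}$ is a sum of products $h\cdot\partial(x_C)$, it is worth flagging that one-sided and two-sided ideals coincide here precisely because $\wedge(\xx)$ and $\kk[\xx]$ are (anti-)commutative; this is implicit in your argument but readers coming from the noncommutative $\kk\langle\xx\rangle$ framing of Section~\ref{sec: grobner} might otherwise expect $h_1\cdot g\cdot h_2$.
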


We note here an implication for quadratic duals that will become important later, in Section~\ref{sec: holonomy-lie-algebra}.  When considering
$\OS(M),\VG(\OM)$ as quotients $A=\kk\langle \xx\rangle/I$ of the tensor algebra for a two-sided ideal $I$, the quadratic
part $I_2 \subset T^2(V)=\kk\langle \xx\rangle_2$ inherits the flat decomposition $I_2=\bigoplus_{F \in \cF} [T^2(V)_F \cap I]$
from $T^2(V)=\bigoplus_{F \in \cF} T^2(V)_F$.
On the other hand, if one defines the
analogous flat decomposition for the dual tensor algebra and its  dual variables $\yy=(y_1,\ldots,y_n)$
$$
T(V^*) =\kk\langle \yy\rangle= \bigoplus_{F \in \cF} \underbrace{T(V)_F}_{=\kk\langle \yy\rangle_F},
$$
then the pairing
$
T^2(V^*) \times T^2(V) \rightarrow \kk
$
from \eqref{bilinear-pairing-on-2-tensors} 
makes $T^2(V^*)_F$ and $T^2(V)_{F'}$ orthogonal for $F \neq F'$.  This implies that the computation of
$J_2:=I_2^\perp$ can be done flat-by-flat:
\begin{equation}
\label{flat-by-flat-shriek-computation}
J_2=\bigoplus_{F \in \cF} [T^2(V^*)_F \cap J_2]
\quad \text{ where } \quad [T^2(V^*)_F \cap J_2] :=  [ T^2(V)_F \cap I_2 ]^\perp.
\end{equation}
In particular, whenever $A=\OS(M),\VG(\OM)$ are Koszul, or even just quadratic algebras $A=\kk \langle \xx \rangle/I$ with $I=(I_2)$,
their quadratic duals $A^!=\kk \langle \xx \rangle/J$ where $J=(J_2)=(I_2^\perp)$ inherit a flat decomposition:
\begin{equation}
\label{flat-decomposition-for-shrieks}
A^!=\bigoplus_{F \in \cF} A^!_F.
\end{equation}

%%%
\subsection{Symmetry}
\label{sec:symmetry-of-OS-and-VG}
%%%

Symmetries of a matroid $M$ or oriented matroid $\OM$ lead to $\kk$-algebra
automorphisms of $\OS(M)$ or $\VG(\OM)$,
as we explain next.

\begin{definition}
Let $M$ be a matroid  on $E=\{1,2,\ldots,n\}$
with circuits $\cC$.
A permutation $\sigma$ in the symmetric group $\symm_n$ is an {\it automorphism} of $M$, written $\sigma \in \Aut(M)$, if
$\sigma(\cC)=\cC$, that is,
for every $C$ in $\cC$, one has $\sigma(C) \in \cC$.
\end{definition}

One can then check that for any
matroid $M$ and $\sigma$ in $\Aut(M)$, if $\sigma$
acts on $\wedge(x_1,\ldots,x_n)$ by 
permuting subscripts of the variables, that is,
$
\sigma(x_i):=x_{\sigma(i)},
$
then the generator $\partial(x_C)$ for the Orlik-Solomon ideal $I_{\OS(M)}$ has 
$$
\sigma(\partial(x_C)) =\pm \partial(x_{\sigma(C)}).
$$
Consequently, $\sigma$ preserves $I_{\OS(M)}$ and induces a graded $\kk$-algebra automorphism of $\OS(M)$.

\begin{definition}
\label{defn: OM-automorphisms}
Let $\OM$ be an oriented
matroid on $E=\{1,2,\ldots,n\}$.  
Its automorphism group $\Aut(\OM)$
will be a subgroup of the {\it hyperoctahedral group} 
$\symm^\pm_n$;  this is the set of all
{\it signed permutations} $\sigma$ of $\{\pm 1,\pm 2,\ldots,\pm n\}$, meaning those permutations
which commute with the involution $+i \leftrightarrow -i$, or in other words, 
$\sigma(\pm i)=-\sigma(\mp i)$. 
As notation, for $i,j \in \{1,2,\ldots,n\}$,
define
\begin{align*}
|\sigma(i)|&:=j  \quad \text{ if }\sigma(+i) \in \{\pm j\},\\
\epsilon(\sigma(i))&=
\begin{cases} +& \text{ if }\sigma(+i) =+j,\\
-& \text{ if }\sigma(+i) =-j.
\end{cases}
\end{align*}
Then a signed permutation $\sigma$ is an {\it automorphism} of $\OM$ if
for every signed circuit $(C_+,C_-)$ in $\cC^\pm$, the following pair $(C'_+,C'_-)$ is also a signed circuit in $\cC^\pm$, where

\begin{equation}
\label{automoprhic-image-of-a-signed-circuit}
\begin{aligned}
    C'_+&:=\{|\sigma(i)|: i \in C_+ \text{ and }\epsilon(\sigma(i))=+\} \sqcup
\{|\sigma(i)|: i \in C_- \text{ and } \epsilon(\sigma(i))=-\},
\\
C'_-&:=\{|\sigma(i)|: i \in C_- \text{ and } \epsilon(\sigma(i))=+\} \sqcup
\{|\sigma(i)|: i \in C_+ \text{ and } \epsilon(\sigma(i))=-\}.
\end{aligned}
\end{equation}

\end{definition}

For $\sigma$ in $\Aut(\OM)$, let $\sigma$
act on $\kk[x_1,\ldots,x_n]$
via 
$$
\sigma(x_i):=\epsilon(\sigma(i)) \cdot x_{|\sigma(i)|}.
$$
One can then check that for
signed circuits $(C_+,C_-), (C'_+,C'_-)$ related as in \eqref{automoprhic-image-of-a-signed-circuit},
if $C=C_+ \cup C_-$ and $C'=C'_+ \cup C'_-$, then
the generator $\partial^\pm(x_C)$ for the ideal 
$I_{\VG(\OM)}$ has 
$$
\sigma(\partial^\pm(x_C)) =\pm \partial^\pm(x_{C'}).
$$
Consequently, $\sigma$ gives rise to a graded $\kk$-algebra automorphism of $\VG(\OM)$.

In this way, when $M, \OM$ have some group $G$ of automorphisms, we consider $A=\OS(M), \VG(\OM)$ as graded $\kk G$-modules, and study
their equivariant Hilbert series as in \eqref{generic-equivariant-Hilbert-series}.
Similarly, when these algebras $A$ are Koszul,
we will study the equivariant Hilbert series for their Koszul dual $A^!$.  Note that in the dual setting, the dual variables $y_1,\ldots,y_n$ that give a basis for $V^*$ obey the same rules 
\begin{align*}
    \sigma(y_i)&=y_{\sigma(i)} \text{ for }\OS(M)^!,\\
    \sigma(y_i)&=\epsilon(\sigma(i)) \cdot y_{|\sigma(i)|} \text{ for }\VG(\OM)^!.
\end{align*}
This is because $V^*$ carries the contragredient representation to $V$, where the matrix for the action of $\sigma$
in the basis of $y_1,\ldots,y_n$ is the inverse transpose $(A^{-1})^t$ of the matrix $A$ for its action on $x_1,\ldots,x_n$.  However, signed (or unsigned) permutation matrices $A$ are orthogonal: $(A^{-1})^t=A$.

%%%
\subsection{Gr\"obner bases and broken circuits}
\label{sec:broken-circuits}
%%%
It turns out that the above generators for the ideals presenting $\OS(M)$ and $\VG(\OM)$ are actually Gr\"obner bases, with easily-identified standard monomials.

\begin{definition}
Given a matroid $M$ on $E=\{1,2,\ldots,n\}$ and any circuit $C=\{c_1 < c_2 < \cdots < c_k\}$ in $\cC$, the associated {\it broken circuit} is 
$$
C \setminus \{\min(C)\}= C \setminus \{c_1\}=\{c_2< \cdots <c_k\}.
$$
A subset $I \subset E$ is an {\it NBC (no-broken-circuit) set} if
it contains none of the sets $\{ C \setminus \{\min(C)\} \}_{C \in \cC}$.
\end{definition}

\begin{theorem}
Fix a matroid $M$ and oriented matroid $\OM$ on $E=\{1,2,\ldots,n\}$,
with circuits $\cC$. Choose any monomial orders $\prec$ on 
$\wedge(x_1,\ldots,x_n)$ and $\kk[x_1,\ldots,x_n]$ having $x_1 \prec x_2 \prec \cdots \prec x_n$.

\begin{enumerate}
    \item[(i)] \cite[Thm. 2.8]{Yuzvinsky} The generators $\GG=\{\partial(x_C)\}_{C \in \cC}$ in \eqref{OS-ideal-generators1} form a Gr\"obner basis for $I_{\OS(M)}$ with respect to $\prec$.
    \item[(ii)] \cite[Thm 1]{Dorpalen-Barry}
    The generators $\GG=\{x_i^2\}_{i=1}^n \cup \{\partial^\pm(x_C)\}_{C \in \cC}$ in \eqref{VG-ideal-generators1} form a Gr\"obner basis for $I_{\VG(\OM)}$ with respect to $\prec$.
\end{enumerate}
Furthermore, in both cases, if $C=\{c_1 <c_2< \cdots < c_k\}$ in $\cC$, 
then the $\prec$-initial term $\init_\prec(\partial(x_C))$
or $\init_\prec(\partial^\pm(x_C))$ is the monomial 
$x_{c_2} \cdots x_{c_k}$,
supported on the broken circuit associated to $C$.
Consequently, in either case, the $\GG$-standard monomials 
are the NBC monomials 
$$
\{ x_I=x_{i_1} \cdots x_{i_\ell} : 
\text{NBC sets }I=\{ i_1,\ldots,i_\ell \} \subseteq E\}.
$$
In particular, $\OS(M)$ and $\VG(\OM)$ have the same Hilbert series, given by
$$
\Hilb(\OS(M),t) = \Hilb(\VG(\OM),t) = \sum_{\text{NBC sets } I \subseteq E} t^{|I|}.
$$
\end{theorem}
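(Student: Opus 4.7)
The plan is to invoke Proposition~\ref{prop: graded-GB-via-Hilb-trick} in both cases (i) and (ii): this reduces the Gr\"obner basis assertion to a Hilbert series equality, and along the way yields the initial-term computation and the NBC-basis statement automatically.

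First I would pin down the initial terms. For a circuit $C=\{c_1<c_2<\cdots<c_k\}$ in $\cC$, both $\partial(x_C)$ and $\partial^\pm(x_C)$ from \eqref{OS-ideal-generators2} and \eqref{VG-ideal-generators2} are signed $\kk$-linear combinations of the $k$ squarefree monomials $x_{C\setminus\{c_j\}}$, all of the same degree $k-1$. Among squarefree monomials of a fixed degree whose supports differ in exactly one omitted variable, any monomial order $\prec$ refining $x_1\prec\cdots\prec x_n$ must rank them by the omitted variable, placing on top the one that omits the smallest $x_{c_1}$. Hence $\init_\prec(\partial(x_C))=\init_\prec(\partial^\pm(x_C))=x_{c_2}x_{c_3}\cdots x_{c_k}$, the broken circuit monomial of $C$. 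In the exterior case every monomial is automatically squarefree, so the $\GG$-standard monomials are exactly $\{x_I : I\subseteq E\text{ is NBC}\}$. In the commutative case the additional generators $x_i^2$ in $\init_\prec(\GG)$ enforce squarefreeness as well, so again the standard monomials are exactly the NBC monomials. In both settings, $R/(\init_\prec \GG)$ therefore has Hilbert series $\sum_{I\text{ NBC}} t^{|I|}$.

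By Proposition~\ref{prop: graded-GB-via-Hilb-trick} it then remains to show
\[
\Hilb(\OS(M),t) \;=\; \Hilb(\VG(\OM),t) \;=\; \sum_{I\subseteq E \text{ NBC}} t^{|I|}.
\]
The spanning inequality $\dim_\kk(R/I)_d \le \#\{I\text{ NBC with }|I|=d\}$ is automatic from the inclusion $(\init_\prec(\GG))\subseteq\init_\prec(I)$, so NBC monomials at least span $\OS(M)$ and $\VG(\OM)$. The main obstacle is the reverse inequality, namely the linear independence of these NBC monomials in each ring.

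To handle the hard direction I would exploit the flat decomposition \eqref{OS-flat-decompsition}--\eqref{VG-flat-decomposition} of Proposition~\ref{prop: flat-decomp}, which reduces the question to showing, for each flat $F\in\cF$, that $\dim_\kk(R/I)_F$ equals the number of NBC sets whose join is $F$, a quantity known by Rota--Whitney to equal $|\mu_{\cF}(\hat 0, F)|$. I would then set up a deletion-contraction short exact sequence along a chosen element $e\in E$ (with $e=\max E$, to stay compatible with the broken-circuit description), and induct on $|E|$; the resulting recursion is precisely the one obeyed by the NBC generating function $\sum_F |\mu_{\cF}(\hat 0, F)| t^{r(F)}$. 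Once the two Hilbert series agree, Proposition~\ref{prop: graded-GB-via-Hilb-trick} simultaneously delivers parts (i) and (ii), the broken-circuit description of the initial terms, the NBC-basis claim, and the common Hilbert series formula.
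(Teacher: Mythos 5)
The initial-term analysis is correct and complete: for a fixed degree, squarefree monomials of the form $x_{C\setminus\{c_j\}}$ are ranked by the omitted variable under any monomial order refining $x_1\prec\cdots\prec x_n$, and the multiplicative property of monomial orders carries this through for both the exterior and commutative cases. The identification of the $\GG$-standard monomials as the NBC monomials, and the reduction of the Gr\"obner basis claim to a Hilbert series equality via Proposition~\ref{prop: graded-GB-via-Hilb-trick}, are also sound. Note the paper itself does not prove this theorem but cites Yuzvinsky and Dorpalen-Barry, who (as far as one can tell from the literature) proceed by directly exhibiting the NBC monomials as a basis or by an explicit division argument, rather than by the Hilbert-series-comparison route.

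The place where your argument has a genuine gap is in the hard direction. You need to establish, independently of (i) and (ii), that
\[
\Hilb(\OS(M),t)\;=\;\Hilb(\VG(\OM),t)\;=\;\sum_{\text{NBC }I}t^{|I|},
\]
and this is itself essentially the content of the theorem. Your deletion-contraction plan is a legitimate route for $\OS(M)$ --- the short exact sequence
$0 \to \OS(M\setminus e)\to\OS(M)\to\OS(M/e)(-1)\to 0$
for $e=\max E$ not a coloop is classical (Orlik--Terao), and the induction works once one handles coloops and the base cases. But you present this as if it applies verbatim to $\VG(\OM)$, and there it does not come for free. The VG ring is commutative with the extra relations $x_i^2$, and the analogous deletion-contraction sequence
$0\to\VG(\OM\setminus e)\to\VG(\OM)\to\VG(\OM/e)(-1)\to 0$
requires showing that multiplication by $x_e$ injects $\VG(\OM/e)(-1)$ into $\VG(\OM)$ with image the kernel of the ``set $x_e=0$'' surjection; the annihilator computation here is not identical to the exterior case and needs its own proof (it appears in Cordovil's and Dorpalen-Barry's work, but would need to be established or cited). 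Until that step is supplied, the claimed Hilbert series equality for $\VG(\OM)$, and hence Proposition~\ref{prop: graded-GB-via-Hilb-trick} applied to (ii), is not justified. A secondary point: the interplay you propose between the flat decomposition of Proposition~\ref{prop: flat-decomp}, Rota--Whitney, and deletion-contraction is somewhat redundant --- deletion-contraction on the total Hilbert series already yields the NBC generating function without passing through the flat-by-flat refinement, and invoking both without explaining how they mesh (e.g., whether the exact sequences respect the flat grading) muddies the argument.
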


\begin{remark}
\label{rem: NBC-bases-respect-flats}
    One can readily check that the NBC standard monomial bases for $\OS(M),\VG(\OM)$ respect the flat decompositions \eqref{OS-flat-decompsition}, \eqref{VG-flat-decomposition} in this sense: for each flat $F \in \cF$, the components $\OS(M)_F, \VG(M)_F$ both have as $\kk$-bases the monomials 
    $\{x_I:  I \text{ an NBC set with }\vee_{i \in I} \{i\}=F\}$.
\end{remark}

For supersolvable $M$, one has {\it quadratic} Gr\"obner bases,
making $\OS(M), \VG(\OM)$ Koszul, as we explain next.  Bj\"orner, Edelman and Ziegler \cite{BEZ} gave a useful alternate characterization of the modular complete flags of flats witnessing supersolvability.  To state it, recall that a flat $F$ with $r(F)=r(M)-1$ is called a {\it coatom} in $\cF$.  Also recall that for a matroid $M$ on $E$ and subset $A \subseteq E$, the {\it restriction} $M|_A$ is the matroid on ground set $A$ defined with circuits 
$
\{ C \in \cC: C \subseteq A \}.
$
\begin{prop} \cite[Thm. 4.3]{BEZ}
\label{thm:BEZ}
Let $M$ be a simple matroid on ground set $E$.
\begin{itemize}
\item[(i)] For flats $F$ which are coatoms, being a modular element is equivalent to the following condition:  for any $j \neq k$ in $E \setminus F$, there exists $i$ in $F$ with $\{i\} \leq \{j\} \vee \{k\}$.
\item[(ii)] The flats in a complete flag 
%A complete flag of flats (I reworded this -- Sheila Oct 30)
$\underline{F}=(\varnothing =F_0 \subsetneq F_1 \subsetneq \cdots \subsetneq F_{r(M)-1} 
\subsetneq F_{r(M)}=E)
$ 
are all modular if and only if $F_{i-1}$ is
a modular coatom within $M|_{F_i}$
for each $i=1,2,\ldots,r(M)$.
\end{itemize}
\end{prop}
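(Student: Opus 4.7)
The plan is to verify each direction of each part by a direct rank calculation, with the modularity equation $r(F \vee F') + r(F \wedge F') = r(F) + r(F')$ as the central tool. For part (i), the forward direction is one step: I would apply modularity of $F$ to $F' := \{j\} \vee \{k\}$, which has rank $2$ since $M$ is simple and $j \neq k$. Because $j, k \notin F$ and $F$ is a coatom, $F \vee F' = E$, so the equation forces $r(F \wedge F') = r(F) + 2 - r(M) = 1$ and produces the required atom $\{i\} \leq F \wedge (\{j\} \vee \{k\})$.

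For the converse of (i), I would verify modularity for an arbitrary flat $F'$. If $F' \leq F$ both sides agree trivially; otherwise $F \vee F' = E$ since $F$ is a coatom, so the equation reduces to $r(F \wedge F') = r(F') - 1$, of which the $\leq$ direction is upper semimodularity. I would prove the $\geq$ direction by induction on $r(F')$. The base cases $r(F') = 1, 2$ use the hypothesis directly. For the inductive step with $r(F') \geq 3$, pick an atom $k \leq F'$ with $k \notin F$ and decompose $F' = G \vee \{k\}$ with $r(G) = r(F') - 1$; if $G \leq F$ then $F \wedge F' \geq G$ and we are done. Otherwise the inductive hypothesis gives $r(F \wedge G) = r(F') - 2$, and by picking $i \in G \setminus F$ and applying the hypothesis to the pair $i, k$ one obtains an atom $a \in F$ with $a \leq \{i\} \vee \{k\} \leq F'$. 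Since $M$ is simple, the rank-$2$ flat $\{i\} \vee \{k\}$ meets $G$ in just $\{i\}$, so $a \neq i$ forces $a \not\leq G$; hence $a \in F \wedge F'$ but $a \not\leq F \wedge G$, yielding $r(F \wedge F') \geq r(F \wedge G) + 1 = r(F') - 1$.

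For part (ii), the forward direction is immediate: if $F_i$ is modular in $M$, then because the rank function of $M$ agrees with that of $M|_{F_i}$ on flats contained in $F_i$, modularity of $F_{i-1}$ in $M|_{F_i}$ follows from its modularity in $M$, and $F_{i-1}$ is a coatom of $M|_{F_i}$ by rank count. For the converse, I would use downward induction on $i$, starting from $F_{r(M)} = E$ (trivially modular in $M$). Assuming $F_i$ is modular in $M$ and $F_{i-1}$ is a modular coatom in $M|_{F_i}$, I would derive modularity of $F_{i-1}$ in $M$ for an arbitrary flat $F'$ by chaining four inputs: modularity of $F_i$ in $M$ applied to both $F'$ and $F_{i-1} \vee F'$; the lattice modular identity $F_i \wedge (F_{i-1} \vee F') = F_{i-1} \vee (F_i \wedge F')$, which holds because $F_{i-1} \leq F_i$ and matroid-modularity of $F_i$ implies its lattice-modularity; modularity of $F_{i-1}$ in $M|_{F_i}$ applied to $F_i \wedge F'$; and the identity $F_{i-1} \wedge F' = F_{i-1} \wedge (F_i \wedge F')$. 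The resulting rank chain collapses to $r(F_{i-1} \vee F') + r(F_{i-1} \wedge F') = r(F_{i-1}) + r(F')$.

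The main obstacle I anticipate is the inductive step in the converse of (i): leveraging the rank-$2$ hypothesis to produce an atom of $F \wedge F'$ strictly above $F \wedge G$ requires the observation, specific to simple matroids, that $\{i\} \vee \{k\}$ intersects $G$ in exactly $\{i\}$ when $i \in G, k \notin G$. Part (ii) is then essentially a bookkeeping exercise in rank identities, provided one is willing to invoke the standard equivalence between matroid-modularity of an element and its satisfaction of the lattice-modular law with respect to elements below it.
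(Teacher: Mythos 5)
The paper does not prove this proposition; it simply cites it to Björner--Edelman--Ziegler~\cite{BEZ}*{Thm.~4.3}, so there is no proof of record in the paper to compare against. That said, your argument is correct and self-contained. The forward direction of (i) and the rank computations in part (ii) are straightforward. Your inductive step for the converse of (i) is exactly right: picking a coatom $G$ of $[\varnothing,F']$ avoiding $k$, and observing that the rank-two flat $\{i\}\vee\{k\}$ meets $G$ only in $\{i\}$ (where you correctly use simplicity to guarantee the rank and the ``two distinct atoms span'' fact), gives an atom of $F\wedge F'$ outside $F\wedge G$, delivering the rank jump.

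One simplification worth noting in part (ii): you do not actually need the full lattice-modular \emph{identity} $F_i\wedge(F_{i-1}\vee F')=F_{i-1}\vee(F_i\wedge F')$ --- only the easy containment $F_{i-1}\vee(F_i\wedge F')\le F_i\wedge(F_{i-1}\vee F')$, which holds in any lattice. From this containment you get the rank inequality
\[
r\bigl(F_{i-1}\vee(F_i\wedge F')\bigr)\;\le\; r\bigl(F_i\wedge(F_{i-1}\vee F')\bigr),
\]
and feeding in modularity of $F_{i-1}$ in $M|_{F_i}$ on the left and modularity of $F_i$ (applied to $F_{i-1}\vee F'$ and to $F'$) on the right collapses to $r(F_{i-1})+r(F')\le r(F_{i-1}\vee F')+r(F_{i-1}\wedge F')$, which with semimodularity gives equality. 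This avoids having to invoke the (true, but not free) theorem that a rank-modular element of a geometric lattice satisfies the lattice-modular law with respect to elements below it. Your version is still correct --- you do flag the dependence --- but the inequality-only route is a bit lighter. For the base cases in (i), $r(F')=1$ is vacuous (the meet is $\varnothing$), and $r(F')=2$ with $F'\not\le F$ needs a small case split on $|F'\setminus F|$ that you compress into ``use the hypothesis directly''; it goes through but would benefit from being spelled out.
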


Bj\"orner and Ziegler \cite{BjornerZiegler} later elaborated on this,
proving the following.

\begin{prop}\cite{BjornerZiegler}*{Theorem 2.8}
\label{prop: bjorner-ziegler-characterization} 
Let $M$ be any simple matroid of rank $r$ on ground set $E$.
The following are equivalent:
\begin{enumerate}
\item[(i)] $M$ is supersolvable, say with a modular
complete flag of flats $\underline{F}=(F_i)_{i=0,1,\ldots,r}$.
 
\item[(ii)] There exists an ordered set partition $\underline{E}=(E_1, E_2, \dots, E_r)$ of $E=E_1 \sqcup \cdots \sqcup E_r$ such that if $j,k$ in $E_q$ with $j \neq k$, then there exists $p<q$ and $i$ in $E_p$ with $C=\{i,j,k\}$ in $\cC$.

\item[(iii)] One can reindex/order $E=\{1< 2 < \cdots < n\}$ so that the minimal broken circuits (with respect to inclusion) are all of size $2$.
\end{enumerate}

Furthermore, when these conditions hold, 
\begin{itemize}
    \item[(a)] a modular flag $\underline{F}$
as in (i) gives an
ordered set partition $\underline{E}$ as in (ii) via $E_i:=F_i \setminus F_{i-1}$, and
\item[(b)] an ordered set partition $\underline{E}$ as in (ii) gives an ordering $\prec$ on $E$ as in (iii) by extending the partial order that makes
elements of $E_p$ come $\prec$-earlier than elements of $E_q$ when $p<q$,
\item[(c)] the minimal broken circuits with respect inclusion are all pairs of the form $\{j,k\}$ in some set $E_q$ for $q=1,2,\ldots,r$;
hence the NBC sets $I \subset E$
are the subsets containing at most one element from each $E_p$ for $p=1,2,\ldots,r$.
\end{itemize}
\end{prop}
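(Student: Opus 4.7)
The plan is to prove the cycle of implications (i) $\Rightarrow$ (ii) $\Rightarrow$ (iii) $\Rightarrow$ (i), reading off the auxiliary statements (a), (b), (c) along the way.

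For (i) $\Rightarrow$ (ii), set $E_i := F_i \setminus F_{i-1}$ for $i=1,\ldots,r$; this is visibly an ordered set partition of $E$, which is (a). For distinct $j,k \in E_q$, Proposition~\ref{thm:BEZ}(ii) yields that $F_{q-1}$ is a modular coatom of the restriction $M|_{F_q}$, and then Proposition~\ref{thm:BEZ}(i) applied to the two atoms $\{j\}, \{k\}$ of $M|_{F_q}$ not below $F_{q-1}$ supplies $i \in F_{q-1}$ with $\{i\} \leq \{j\} \vee \{k\}$. Hence $\{i,j,k\}$ is dependent of rank $2$; simplicity rules out circuits of size $\le 2$, so $\{i,j,k\}$ itself is the required circuit, with $i \in E_p$ for some $p < q$.

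For (ii) $\Rightarrow$ (iii), order $E$ so that every element of $E_p$ $\prec$-precedes every element of $E_q$ when $p<q$, extending arbitrarily within each $E_p$; this is (b). Any pair $\{j,k\} \subset E_q$ with $j \prec k$ is then a broken circuit: the circuit $\{i,j,k\}$ with $i \in E_p$, $p<q$, has $i \prec j \prec k$, so its broken circuit is exactly $\{j,k\}$. Such pairs are minimal among broken circuits, because a smaller candidate would come from a circuit of size at most $2$, ruled out by simplicity. Moreover, every broken circuit contains such a pair: for any circuit $C$, let $q$ be maximal with $C \cap E_q \neq \varnothing$; then $|C \cap E_q| \geq 2$, since otherwise the sole element of $C \cap E_q$ would depend on $C \setminus E_q \subseteq F_{q-1}$ and so lie in $F_{q-1}$, contradicting $E_q \cap F_{q-1} = \varnothing$. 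A quick case split (on whether $\min_\prec C \in E_q$ or in an earlier part) confirms that the broken circuit $C \setminus \{\min_\prec C\}$ contains at least two elements of $E_q$. This also proves the NBC description in (c).

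The implication (iii) $\Rightarrow$ (i) is the main obstacle. The plan is to reconstruct a modular flag directly from $\prec$: for each $q=1,\ldots,n$, set $F_q := \bigvee_{i=1}^q \{i\}$, and extract the subchain of strictly increasing flats $F_{q_0} \subsetneq F_{q_1} \subsetneq \cdots \subsetneq F_{q_r} = E$. Then verify modularity of each $F_{q_i}$ inside $M|_{F_{q_{i+1}}}$ using Proposition~\ref{thm:BEZ}(i): given distinct $j,k \in F_{q_{i+1}} \setminus F_{q_i}$, each satisfies some circuit dependence involving only elements of $\{1,\ldots,q_{i+1}\}$, and the hypothesis that minimal broken circuits have size two forces the $\prec$-top pair of each such dependence to admit a smaller witness inside $F_{q_i}$. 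Iterated application of circuit elimination (axiom {\sf C3}) to the dependences for $j$ and $k$ should then produce a three-element circuit $\{\ell,j,k\}$ with $\ell \in F_{q_i}$, supplying modularity. The technical heart of the argument --- making the iterated elimination terminate cleanly at a triangle rather than growing uncontrollably --- is where the real work lies and is the main obstacle. Once each $F_{q_i}$ is shown modular, Proposition~\ref{thm:BEZ}(ii) packages them into a modular complete flag, establishing (i).
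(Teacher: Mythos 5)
The paper does not actually prove this proposition; it is quoted directly from Bj\"orner--Ziegler \cite{BjornerZiegler}*{Theorem 2.8}, so there is no in-paper argument to compare with. Evaluating your proof on its own merits: it is substantially incomplete, and one of the ``completed'' steps has a gap.

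Your step (i) $\Rightarrow$ (ii) is correct: restricting to $M|_{F_q}$, invoking Proposition~\ref{thm:BEZ}(ii) to see that $F_{q-1}$ is a modular coatom there, and then applying Proposition~\ref{thm:BEZ}(i) to two atoms $j,k$ outside $F_{q-1}$ produces the required $i \in F_{q-1}$ with $\{i,j,k\}$ dependent of rank two, hence a circuit by simplicity. Good.

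Your step (ii) $\Rightarrow$ (iii), however, quietly assumes that $F_{q-1} := E_1 \cup \cdots \cup E_{q-1}$ is a \emph{flat} (``would depend on $C \setminus E_q \subseteq F_{q-1}$ and so lie in $F_{q-1}$''). That is exactly what (i) provides via (a), but it is not an assumption in (ii); an ordered set partition satisfying (ii) is not visibly a chain of flats. The fact is true but requires an argument. One route: show by induction that $r(E_1 \cup \cdots \cup E_q) \le q$ (using the triangles from (ii) to see that every element of $E_q$ lies in $\overline{F_{q-1} \cup \{j\}}$ for a single fixed $j \in E_q$); since $r(F_r) = r$, each step increases rank by exactly one, and then if some $e \in E_m$ with $m > q$ lay in $\overline{F_q}$, the same triangle argument forces all of $E_m$ into $\overline{F_{m-1}}$, giving $r(F_m) = m-1$, a contradiction. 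Once the $F_q$ are known to be flats, the condition in (ii) is precisely Proposition~\ref{thm:BEZ}(i), so the $F_q$ are modular and (ii) $\Rightarrow$ (i) follows directly; you would then not need (iii) $\Rightarrow$ (i) at all, and your (ii) $\Rightarrow$ (iii) argument becomes a harmless corollary of (i). You missed this shorter route.

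The step (iii) $\Rightarrow$ (i), which you identify as ``the main obstacle,'' is not actually carried out: you describe a plan (form $F_q = \bigvee_{i \le q}\{i\}$, extract the strictly increasing subchain, verify modularity via iterated circuit elimination) and then say the termination of the elimination process ``is where the real work lies.'' That is the entire difficulty, and leaving it unresolved means the implication cycle is not closed. As written, the argument proves (i) $\Rightarrow$ (ii) and, modulo the flatness gap above, (i) $\Rightarrow$ (iii) together with (a), (b), (c); but no reverse implication is established, so the equivalence is not proved.
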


\begin{definition}
    For a supersolvable matroid $M$, with $\underline{F}, \underline{E}$ as in Proposition~\ref{prop: bjorner-ziegler-characterization},    
    denote by
    $\cC_{\BEZ}(\underline{E}) \subseteq \cC$  the circuits $C=\{i,j,k\}$ with $i \in E_p$ and $j \neq k \in E_q$ for $p<q$ from Proposition~\ref{prop: bjorner-ziegler-characterization}(ii).
\end{definition}

\begin{cor}
\label{cor: primal-quadratic-GB}
Let $M, \OM$ be supersolvable simple matroids or oriented matroids on $E$, with $\underline{E}$ as in Proposition~\ref{prop: bjorner-ziegler-characterization}.   Fix a field $\kk$, and monomial
orderings $\prec$ on $\wedge(x_1,\ldots,x_n), \kk[x_1,\ldots,x_n]$
with $x_1 \prec x_2 \prec \cdots \prec x_n$. 
\begin{itemize}
\item \cite{Peeva}, \cite[\S  6.3]{Yuzvinsky} $I_{\OS(M)}$ has quadratic Gr\"obner basis $\GG=\{\partial(x_C)\}_{C \in \cC_{\BEZ}}$, 
where
\begin{equation}
\label{supersolvable-OS-quadratic-GB}
\partial(x_C)= x_i \wedge x_j - x_i \wedge x_k + \underline{x_j \wedge x_k}.
\end{equation}
\item \cite{Dorpalen-Barry} $I_{\VG(\OM)}$ has quadratic Gr\"obner basis 
$\GG=\{x_i^2\}_{i=1}^n \cup \{\partial^\pm(x_C)\}_{C \in \cC_{\BEZ}}$, 
where
\begin{equation}
\label{supersolvable-VG-quadratic-GB}
\partial^\pm(x_C)=\sgn{C}{k} \cdot x_i  x_j 
+ \sgn{C}{j} \cdot x_i x_k 
+ \sgn{C}{i} \cdot \underline{x_j x_k}.
\end{equation}
\end{itemize}
In both cases, 
\begin{itemize}
    \item the $\prec$-initial terms of the elements of $\GG$ are shown underlined above, 
    \item the $\GG$-standard monomial basis 
    $\{x_I\}$ are indexed by the NBC sets $I \subseteq E$, which are exactly those sets containing at most one element from each $E_p$ for $p=1,2,\ldots,r$,
    \item $\OS(M), \VG(\OM)$ are Koszul algebras, 
    \item with the same Hilbert series
\begin{equation}
\label{supersolvable-primal-Hilb}
\Hilb(\OS(M),t) = 
\Hilb(\VG(\OM),t) = 
(1+e_1 t) (1+e_2t) \cdots (1+e_r t)
\end{equation}
where $e_p = |E_p|$ for $p=1,2,\ldots,r$.
\end{itemize}
\end{cor}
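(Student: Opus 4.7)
The plan is to bootstrap from the (non-quadratic) Gröbner basis result cited just before this corollary, which already shows that $\{\partial(x_C)\}_{C\in\cC}$ (together with the squares $\{x_i^2\}$ in the $\VG$ case) is a Gröbner basis of $I_{\OS(M)}$ or $I_{\VG(\OM)}$, with initial terms supported on broken circuits. The new content is to trim that GB down to its quadratic subset $\GG$, identify the standard monomials, and read off the Hilbert series.

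First I would verify the explicit expressions by unpacking the definitions \eqref{OS-ideal-generators2} and \eqref{VG-ideal-generators2} for a $3$-element circuit $C=\{i,j,k\}$ with $i\in E_p$ and $j,k\in E_q$, $p<q$. By the extension rule in Proposition~\ref{prop: bjorner-ziegler-characterization}(b), $i$ is the $\prec$-minimum of $C$, so the associated broken circuit is $\{j,k\}$ and the $\prec$-initial monomial is $x_j\wedge x_k$ (resp.\ $x_j x_k$), as shown underlined; the coefficients then read off directly from the defining sums.

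The key step is to show that restricting to $\cC_{\BEZ}$ still yields a Gröbner basis. Writing $\GG'$ for the full GB provided by the earlier theorem and $\GG$ for the candidate quadratic subset, Proposition~\ref{prop: bjorner-ziegler-characterization}(c) says the broken circuits which are minimal under inclusion are exactly the $2$-element sets $\{j,k\}\subseteq E_q$ arising from triples in $\cC_{\BEZ}$. Hence every initial monomial of an element of $\GG'$ is divisible by some initial monomial of an element of $\GG$, giving
$$
\init_\prec(I) \;=\; (\init_\prec(\GG')) \;=\; (\init_\prec(\GG)),
$$
so $\GG$ is itself a Gröbner basis. In the $\VG$ case one must be careful to keep the squares $\{x_i^2\}$ inside $\GG$, since $x_i^2$ is not divisible by any $x_j x_k$ with $j\neq k$; the analogous worry does not arise in the exterior case, where $x_i\wedge x_i=0$ already. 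The $\GG$-standard monomials therefore coincide with the $\GG'$-standard monomials, namely the NBC monomials, which by Proposition~\ref{prop: bjorner-ziegler-characterization}(c) are precisely the squarefree monomials selecting at most one variable from each $E_p$.

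Finally, Koszulity follows immediately from the folklore half of the quadratic-GB theorem in Section~\ref{sec: grobner}, and the Hilbert series is obtained by choosing, independently for each $p=1,\dots,r$, either no element of $E_p$ (factor $1$) or one of its $e_p$ elements (factor $e_p t$), yielding $\prod_{p=1}^r(1+e_p t)$. The principal obstacle I anticipate is really only verifying that trimming a GB to a subset with the same initial ideal is legitimate (which it is, by the definition of GB or by Proposition~\ref{prop: graded-GB-via-Hilb-trick}), and tracking the two parallel arguments in the anti-commutative and commutative settings simultaneously; both cases are governed by the same combinatorics of minimal broken circuits supplied by Bjørner--Ziegler.
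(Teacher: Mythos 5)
Your proof is correct, and it fills in the argument that the paper leaves implicit (the paper cites Peeva, Yuzvinsky, and Dorpalen--Barry for this corollary without reproducing a proof). The route you take — starting from the already-established Gr\"obner basis $\GG'=\{\partial(x_C)\}_{C\in\cC}$ (resp.\ together with the squares in the $\VG$ case), then using Proposition~\ref{prop: bjorner-ziegler-characterization}(c) to observe that every broken-circuit leading monomial is divisible by a quadratic one coming from $\cC_{\BEZ}$, so that $(\init_\prec(\GG))=(\init_\prec(\GG'))=\init_\prec(I)$ and $\GG$ is itself a Gr\"obner basis — is precisely the standard argument, and your identification of the standard monomials and the resulting factorization $\prod_p (1+e_p t)$ of the Hilbert series both follow immediately. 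You also correctly flag the one asymmetry between the two cases, namely that the squares $x_i^2$ must be retained in the $\VG$ Gr\"obner basis since they are not divisible by any squarefree quadratic monomial, whereas in the exterior algebra they vanish automatically. There is nothing missing.
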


The integers $(e_1,e_2,\ldots,e_p)$ are often called the {\it exponents} of the supersolvable matroid $M$, due to their connection with the theory of {\it free hyperplane arrangements} and the exponents
of {\it reflection arrangements}; see Orlik and Terao \cite[\S4.2]{OrlikTerao}.

%%%%
\subsection{Quadratic Gr\"obner basis for the Koszul dual}
\label{sec:GB-for-shrieks}
%%%%

We next prove a counterpart to Corollary~\ref{cor: primal-quadratic-GB}
for the Koszul duals $A^!$ of $A=\OS(M), \VG(\OM)$
in the supersolvable case.  Since $A=\OS(M)$ or $\VG(\OM)$ are Koszul algebras, one can view them
as noncommutative quotients $A=\kk\langle x_1,\ldots,x_n\rangle/I$, and form their
Koszul duals   $A^!=\kk\langle y_1,\ldots,y_n\rangle/J$, as in Section~\ref{sec: koszul-algebras}.
Certain relations in $A^!$ will play a key role.

\begin{definition}
\label{Kohno-relations-defn}
Let $M$ be a simple matroid on $E=\{1,2,\ldots,n\}$.  For each rank two flat $F \subset E$ and each $j$ in $F$, define an element of $\kk\langle \yy\rangle :=\kk\langle y_1,\ldots,y_n\rangle$ by
\begin{align}
\label{unoriented-Kohno-relation}
r(j,F):=\sum_{k \in F \setminus \{j\}} [y_j,y_k]_+
=\sum_{k \in F \setminus \{j\}} (y_jy_k-y_k y_j).
\end{align}
Let $\OM$ be a simple oriented matroid on $E=\{1,2,\ldots,n\}$. For each rank two flat $F \subset E$, pick one of the two chirotopes
$\chi_{\OM|_F}:F^2 \rightarrow \{0,\pm 1\}$  on the restriction $\OM|_F$, which are the same up to
the overall scaling by $\pm 1$. Then for each $j$ in $F$ define an element of $\kk\langle \yy\rangle$ by
\begin{align}
\label{oriented-Kohno-relation}
r^\pm(j,F):=\sum_{k \in F \setminus \{j\}} 
\chi_{\OM|_F}(j,k) \cdot [y_j,y_k]_-
=\sum_{k \in F \setminus \{j\}} 
\chi_{\OM|_F}(j,k) \cdot (y_jy_k + y_k y_j).
\end{align}
\end{definition}

The relations \eqref{unoriented-Kohno-relation} appear in work of Kohno 
\cite{kohno1983holonomy}
presenting the {\it holonomy Lie algebra} for the complement of any complex hyperplane arrangement; see Section~\ref{sec: holonomy-lie-algebra} for further discussion. As far as we know, relations \eqref{oriented-Kohno-relation} are new.  Certain subsets of these relations in \eqref{unoriented-Kohno-relation} or \eqref{oriented-Kohno-relation} play a distinguished role in the supersolvable case.

\begin{definition}
\label{retrograde-pair-defn}
Let $M, \OM$ be supersolvable simple matroids or oriented matroids, and $\underline{E}=(E_1,\ldots,E_r)$ a choice of an ordered partition of its ground set $E$ as in Proposition~\ref{prop: bjorner-ziegler-characterization}.
Call $(j,i)$ in $E^2$ a {\it retrograde (ordered) pair} with respect to $\underline{E}$ if
$i \in E_p$ and $j \in E_q$ with $p<q$.

For each retrograde pair $(j,i)$, let $F:=\{j\} \vee 
\{i\}$ be the rank two flat that they span, and  denote by $r(j,i), r^\pm(j,i)$ the following two relations, equivalent to $r(j,F)$
from \eqref{unoriented-Kohno-relation} and  $r^\pm(j,F)$ from  \eqref{oriented-Kohno-relation}:
\begin{align}
    \label{unoriented-retrograde-relation}
r(j,i)&:=\underline{y_j y_i}-y_i y_j+\sum_{k \in F \setminus \{i,j\}} [y_j,y_k]_+.\\
    \label{oriented-retrograde-relation}
r^\pm(j,i)&=  \underline{y_j y_i}-y_i y_j 
+ \chi_{\OM|_F}(j,i) \sum_{k \in F \setminus \{i,j\}} \chi_{\OM|_F}(j,k) 
 \cdot [y_j,y_k]_-.
\end{align}
\end{definition}

The following key point will be
used in the proofs of Theorems~\ref{thm: shriek-presentations} and \ref{thm: shrieks-have-right-NZD}.

\begin{lemma}
    \label{rank-two-flat-lemma}
    In the context of Definition~\ref{retrograde-pair-defn} of a retrograde pair $(j,i)$ with $i \in E_p$
    and $j \in E_q$ for $p<q$,  the rank two flat $F:=\{j\} \vee \{i\}$ has $F \setminus \{i,j\} \subset E_q$.
    
    Consequently, \eqref{unoriented-retrograde-relation} and  \eqref{oriented-retrograde-relation} can
     be viewed as rewriting rules that replace the underlined term $y_j y_i$ by the term $y_i y_j$ together with a sum
     of monomials $y_j y_k, y_k y_j$ whose subscripts $j,k$ both lie in $E_q$.
\end{lemma}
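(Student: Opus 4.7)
The plan is to show, for any $k \in F \setminus \{i,j\}$, that $k \in E_q$ by inspecting the broken circuit of $\{i,j,k\}$ in light of Proposition~\ref{prop: bjorner-ziegler-characterization}(c). The ``consequently'' clause will then follow by reading off Definition~\ref{retrograde-pair-defn}: the only monomials appearing in \eqref{unoriented-retrograde-relation} and \eqref{oriented-retrograde-relation} beyond $y_j y_i$ and $y_i y_j$ are $y_j y_k$ and $y_k y_j$ for $k \in F \setminus \{i,j\}$, so once we know such $k$ lie in $E_q$, the rewriting interpretation is immediate.

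First I would observe that since $M$ (and hence $\OM$) is simple, no two distinct elements are parallel and there are no loops; therefore any three distinct elements of a common rank-two flat form a circuit, being dependent but containing no proper dependent subset. Thus for each $k \in F \setminus \{i,j\}$ the set $C_k := \{i,j,k\}$ lies in $\cC$.

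Next I would fix the linear order $\prec$ from Proposition~\ref{prop: bjorner-ziegler-characterization}(b), for which $E_p$ entirely $\prec$-precedes $E_{p'}$ whenever $p < p'$. The broken circuit $B_k := C_k \setminus \{\min_\prec C_k\}$ has cardinality $2$, hence is automatically a \emph{minimal} broken circuit, since in a simple matroid every broken circuit has size at least two. By part (c) of that proposition, $B_k$ must be contained in a single block $E_\ell$. Writing $k \in E_s$ and splitting on how $s$ compares with $p$ and $q$, one checks that in every case other than $s = q$ the two elements of $B_k$ land in two \emph{different} blocks among $\{E_p, E_q, E_s\}$, contradicting $B_k \subseteq E_\ell$; for instance, if $s < p$ then $\min_\prec C_k = k$ and $B_k = \{i,j\}$ with $i \in E_p$ and $j \in E_q$, while if $s > q$ then $\min_\prec C_k = i$ and $B_k = \{j,k\}$ spans $E_q$ and $E_s$. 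Hence $s = q$, proving $k \in E_q$.

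The only potential obstacle I foresee is keeping the finite case split for $\min_\prec C_k$ organized, but once one invokes the fact that the inter-block order dominates any intra-block order, each case collapses to a one-line check. Everything else is direct inspection of Definitions~\ref{Kohno-relations-defn} and \ref{retrograde-pair-defn}.
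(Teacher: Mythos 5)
Your proof is correct and follows essentially the same route as the paper: form the three-element circuit $\{i,j,k\}$ using simplicity and rank two, identify its broken circuit, invoke Proposition~\ref{prop: bjorner-ziegler-characterization}(c) to force that broken circuit into a single block $E_\ell$, and deduce $k \in E_q$. The only cosmetic difference is that the paper shortcuts the case analysis by first observing $j \neq \min C$ (so the broken circuit must contain $j$, leaving just $\{j,i\}$ vs. $\{j,k\}$), whereas you enumerate over the block index $s$ of $k$; both reduce to the same one-line checks.
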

\begin{proof}
    Any $k \in F \setminus \{i,j\}$ leads to a circuit $C=\{i,j,k\}$ since $M$ is a simple matroid and $F$ has rank two.
    As $j > i$, one knows $j \neq \min C$, so the associated broken circuit $B \subset C$ is either $B=\{j,i\}$ or $B=\{j,k\}$. But assertion (c) in Proposition~\ref{prop: bjorner-ziegler-characterization} implies
    $B$ contains a pair lying in some set $E_{q'}$.  This implies
    $q'=q$, and $\ell \neq i$ since $i \in E_p \neq E_q$.
    Thus $B=\{j,k\}$, and $k$ lies in $E_q$.
\end{proof}

\begin{theorem}
\label{thm: shriek-presentations}    
Let $M, \OM$ be matroids and oriented matroids which are supersolvable, with ground set $E=\{1,2,\ldots,n\}$ and $\underline{E}$ as in Proposition~\ref{prop: bjorner-ziegler-characterization}.  Consider the Koszul algebras $A=\OS(M)$ or $\VG(\OM)$, and their Koszul dual
$A^!=\kk\langle y_1,\ldots,y_n\rangle/J$.
Then there exist monomial orderings $\prec$
on $\kk\langle y_1,\ldots,y_n\rangle$
with these properties.
\begin{enumerate}
\item[(i)] $A^!=\OS(M)^!=\kk\langle \yy \rangle/J$
has 
$
\{ r(j,F): j \in F \text{ a rank two flat }\}
$
as a Gr\"obner basis for $J$, and a reduced  Gr\"obner basis 
$$
\GG:=\{ r(j,i):\text{ retrograde pairs }(j,i)\}
$$
with the $\prec$-initial term of 
$r(j,i)$ underlined in \eqref{unoriented-retrograde-relation}.

%for all $C=\{i<j<k\} \in \cC_{BEZ}(\underline{E})$,
%\begin{align*}
%    [y_k, y_i + y_j]_+ &= [y_k, y_i]_+ + [y_k, y_j]_+\\
%    &=\underline{y_k y_i} - y_i y_k +y_k y_j - y_j y_k, \\
%    & \\
%    [y_j,y_i + y_k]_+ &= [y_j,y_i]_+ + [y_j, y_k]_+\\ %&=\underline{y_j y_i} - y_i y_j + y_j y_k - y_k y_j,
%\end{align*}
%along with the commutators
%$$
%[y_j,y_i]_+ = \underline{y_j y_i} - y_i y_j
%$$
%for all $\{i<j\}$ with $i \in E_p$ and $j \in E_q$ for $p<q$ such that $\{i,j\} \not\subseteq C$ for
%any $C$ in $\cC_{BEZ}(\underline{E})$.

\item[(ii)]  $A^!=\VG(\OM)^!=\kk\langle \yy \rangle/J$
has 
$
\{ r^\pm(j,F): j \in F \text{ a rank two flat }\}
$
as a Gr\"obner basis for $J$, and a reduced  Gr\"obner basis 
$$
\GG:=\{ r^\pm(j,i):\text{ retrograde pairs }(j,i)\}
$$
with the $\prec$-initial term of 
$r^\pm(j,i)$ underlined in \eqref{oriented-retrograde-relation}.

%contains, for all $C=\{i<j<k\} \in \cC_{BEZ}(\underline{E})$,
%\begin{align*}
%    [y_j,y_k]_- - \epsilon(C,i,j) [y_i,y_k]_ &= y_j y_k + y_k y_j - \epsilon(C,i,j) y_i y_k - \epsilon(C,i,j) \underline{y_k y_i} \\
%      [y_k, y_i- \epsilon y_j]_-&=
%      [y_k, y_i]_- - \epsilon[y_k, y_j]_-\\       
%      &= \underline{y_k y_i} + y_i y_k - \epsilon y_k y_j - \epsilon y_j y_k  \quad \text{ with }\epsilon:=\sgn{C}{i} \cdot \sgn{C}{j},\\
%             & \\
%       [y_j, y_i- \epsilon y_k]_-
%       &=  [y_j, y_i]_- - \epsilon  [y_j, y_k]_-\\
%       &= \underline{y_j y_i} + y_i y_j - \epsilon y_j y_k - \epsilon y_k y_j  \quad \text{ with }\epsilon:=\sgn{C}{i} \cdot \sgn{C}{k},
%    [y_j,y_k] - \epsilon(C,i,k) [y_i,y_j] &= y_j y_k + y_k y_j - \epsilon(C,i,k) y_i y_j - \epsilon(C,i,k) \underline{y_j y_i} \\
%\end{align*}
%along with the anticommutators
%$$[y_j,y_i]_- = \underline{y_j y_i} + y_i y_j$$
%for all $\{i<j\}$ with $i \in E_p$ and $j \in E_q$ for $p<q$ such that $\{i,j\} \not\subseteq C$ for any $C$ in $\cC_{BEZ}(\underline{E})$.
\end{enumerate}
In particular, 
\begin{itemize}
    \item[(iii)] their ideals $J$ share the same initial monomials
$
% \init_\prec(J) = ( \init_\prec(\GG) )
% = ( 
\{y_j y_i : \text{retrograde pairs }(i,j)\}
% )
,
$
\item[(iv)] and hence the same $\GG$-standard monomial $\kk$-basis 
for $A^!$, of the form
$
\{ m_1 \cdot m_2 \cdots m_{r-1} \cdot m_r\}
$ 
where each $m_p$ is any noncommutative monomial in the variable set $\{y_j\}_{j \in E_p}$,
\item[(v)]
and they have the same Hilbert series
\begin{equation}
    \label{supersolvable-dual-Hilb}
\Hilb(\OS(M)^!,t)=
\Hilb(\VG(\OM)^!,t)=
\frac{1}{(1-e_1t)(1-e_2t) \cdots (1-e_rt)}
\end{equation}
where $e_p=|E_p|$ are the exponents from Corollary~\ref{cor: primal-quadratic-GB}.
\end{itemize}
\end{theorem}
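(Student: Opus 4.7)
The plan is to pin down $J$ via the flat decomposition~\eqref{flat-by-flat-shriek-computation} and then promote the result to a Gr\"obner basis assertion through Proposition~\ref{prop: graded-GB-via-Hilb-trick}. Writing $J_2 = I_2^\perp = \bigoplus_{F \in \cF} (T^2(V)_F \cap I_2)^\perp$, only the rank two flats contribute nontrivially. For a rank two flat $F$ of size $s$, Corollary~\ref{cor: primal-quadratic-GB} and its NBC basis give $\dim A_F^{(2)} = s - 1$, so $\dim(T^2(V)_F \cap I_2) = s(s-1) - (s-1) = (s-1)^2$ and consequently $\dim(T^2(V^*)_F \cap J_2) = s - 1$.

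Next I would verify that each $r(j, F)$, respectively $r^\pm(j, F)$, lies in $T^2(V^*)_F \cap J_2$ by evaluating the pairing~\eqref{bilinear-pairing-on-2-tensors} against each generator of $T^2(V)_F \cap I_2$. The pairings against (anti-)commutators and the squares $x_i \otimes x_i$ vanish by routine index-chasing. The pairings against the circuit generators $\partial(x_C)$, respectively $\partial^\pm(x_C)$, for 3-circuits $C \subseteq F$ also vanish; the oriented case hinges on the rank-two Grassmann--Pl\"ucker identity relating the circuit signs $\sgn{C}{e}$ to chirotope values $\chi_{\OM|_F}$ on the complementary pair in $C \setminus \{e\}$. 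The family $\{r(j, F)\}_{j \in F}$ contains $s$ relations satisfying the single linear dependence $\sum_{j \in F} r(j, F) = 0$, coming from antisymmetry of the Lie bracket (and for $\{r^\pm(j, F)\}$, from alternatingness of the chirotope), so it spans exactly $T^2(V^*)_F \cap J_2$.

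Now I would invoke Lemma~\ref{rank-two-flat-lemma}: in the supersolvable setting every rank two flat takes the form $F = \{i\} \sqcup F'$ with $\{i\} = F \cap E_p$ and $F' = F \cap E_q$ for some $p < q$. The case $F \subseteq E_q$ is ruled out, since any two elements of $F \cap E_q$ would form a broken circuit by Proposition~\ref{prop: bjorner-ziegler-characterization}(c), forcing a third flat element into an earlier block; the case $|F \cap E_p| \geq 2$ would contradict the lemma applied to any retrograde pair through $F$. The retrograde pairs attached to $F$ are then exactly $(j, i)$ for $j \in F'$, giving $s - 1$ retrograde relations. Taking $\prec$ to be a degree-reverse-lex order in which later-block variables are ordered above earlier-block ones (with any tiebreak inside each block), one checks directly that $\init_\prec(r(j, i)) = y_j y_i$ as underlined in~\eqref{unoriented-retrograde-relation}, and analogously for $r^\pm(j, i)$ in~\eqref{oriented-retrograde-relation}. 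The $s - 1$ leading monomials $\{y_j y_i : j \in F'\}$ are distinct, so the retrograde relations are linearly independent and form a $\kk$-basis of $T^2(V^*)_F \cap J_2$; summing over flats, the retrograde family $\GG$ is a $\kk$-basis of $J_2$ and generates $J$.

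Finally, Proposition~\ref{prop: graded-GB-via-Hilb-trick} upgrades $\GG$ to a Gr\"obner basis. The standard monomials with respect to $(y_j y_i : (j, i) \text{ retrograde})$ are noncommutative words in $y_1, \ldots, y_n$ whose block labels are nondecreasing left-to-right, equivalently concatenations $m_1 m_2 \cdots m_r$ with each $m_p$ an arbitrary word in $\{y_j\}_{j \in E_p}$, giving generating function $\prod_p 1/(1 - e_p t)$. By Corollary~\ref{cor:Koszul-dual-Hilbert-series} and~\eqref{supersolvable-primal-Hilb}, this equals $\Hilb(A^!, t)$, so $\GG$ is a Gr\"obner basis, yielding (iii), (iv), (v). Reducedness is immediate: each non-leading monomial $y_i y_j$, $y_j y_k$, or $y_k y_j$ appearing in $r(j, i)$ is either block-sorted or has both letters in $E_q$, hence is never of retrograde form. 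The larger family $\{r(j, F) : j \in F\}$ is then automatically a Gr\"obner basis too, since each non-retrograde $r(i, F)$ is a $\kk$-linear combination of the retrograde $r(k, i)$'s via $\sum_j r(j, F) = 0$ and shares its leading monomial with the ``smallest'' such $r(k^*, i)$. I expect the main technical hurdle to be the flat-by-flat pairing verification in the oriented case, which reduces to the rank-two Grassmann--Pl\"ucker identity.
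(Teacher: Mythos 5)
Your argument is correct and follows essentially the same route as the paper's: verify that the claimed relations lie in $J_2=I_2^\perp$ by pairing against the generators of $I_2$, then invoke Proposition~\ref{prop: graded-GB-via-Hilb-trick} via the standard-monomial Hilbert series computation and Corollary~\ref{cor:Koszul-dual-Hilbert-series}, with Lemma~\ref{rank-two-flat-lemma} handling both the identification of leading terms and reducedness. A few remarks on where you differ in emphasis. Your flat-by-flat dimension count (showing $\dim(T^2(V^*)_F\cap J_2)=s-1$ and exhibiting the single dependence $\sum_{j\in F} r(j,F)=0$) is logically redundant once you appeal to Proposition~\ref{prop: graded-GB-via-Hilb-trick}: that proposition only needs $\GG\subset J$ plus equality of Hilbert series, and containment of the ideal generated by $\GG$ then comes for free. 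The paper therefore skips the dimension count entirely; keeping it is harmless and pleasantly explains ``why'' the numbers match, but you could excise it. Your appeal to the rank-two Grassmann--Pl\"ucker identity for the vanishing $\sgn{C}{m}\chi_{\OM|_F}(j,\ell)+\sgn{C}{\ell}\chi_{\OM|_F}(j,m)=0$ is the clean conceptual formulation; the paper instead does a four-case check of rank-two vector configurations in $\RR^2$, which amounts to the same thing. Finally, your phrase ``degree-reverse-lex with later-block variables ordered above earlier-block ones'' needs to be pinned down in writing (noncommutative ``revlex'' is not standardized): under a graded order comparing letters from the right with lower variables producing larger monomials, your order agrees with the paper's graded-lex-from-the-right order with $y_1\succ\cdots\succ y_n$ on all the comparisons that matter, so this is a presentational point rather than a gap.
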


\begin{proof}
First let us specify a monomial order $\prec$ on $\kk\langle y_1,\ldots,y_n\rangle$ for which the underlined terms in \eqref{unoriented-retrograde-relation}, \eqref{oriented-retrograde-relation} are their $\prec$-initial terms. 
Recall that our indexing has $i<j$ for each retrograde pair $(j,i)$.  We claim that it suffices to let $\prec$ be a graded version of a lexicographic order having $y_1 \succ y_2 \succ \cdots \succ y_n$ that reads monomials from the right.  More precisely, this means that for two unequal monomials 
\begin{align*}
    m =y_{i_1} \cdots y_{i_d},\\
    m'=y_{j_1} \cdots y_{j_e},
\end{align*}
one has
$m \prec m'$ if either $\deg(m) = d < e =\deg(m')$, or if $d=e$ and there exists some $k\in \{1,2,\ldots,d\}$
with $i_d=j_d,i_{d-1}=j_{d-1},\ldots,i_{k+1}=j_{k+1}$
but $i_k > j_k$.  It follows from Lemma~\ref{rank-two-flat-lemma} that for any retrograde pair $(j,i)$ with $F=\overline{\{j,i\}}$, every $k$ in $F \setminus \{i,j\}$ lies in $E_q$, so that $k > i$ and $y_j y_k \prec y_j y_i$.  Since also $j > i$, this makes
$y_j y_i$ the $\prec$-initial term 
in either \eqref{unoriented-retrograde-relation}
or \eqref{oriented-retrograde-relation}.

We next check that the relations $r(j,F), r^\pm(j,F)$
lie in $J_2=I_2^\perp \subset V^* \otimes V^*$,
with the pairing
defined by $(y_i y_j, x_k x_\ell)=\delta_{(i,j),(k,\ell)}$.
We  do the check here for $r^\pm(j,F)$;  the check for $r(j,F)$ is similar, but slightly easier.  One must check that $r^\pm(j,F)$ is orthogonal to three types of
generators of $I$ in $\VG(\OM)=\kk \langle \xx \rangle/I$:
\begin{align}
\label{squares-generators}
   x_k^2  & \text{ for } k=1,2,\ldots,n,\\
\label{commutator-generators}
   x_k x_\ell-x_\ell x_k
         & \text{ for }1 \leq k \leq \ell \leq n,\\
\label{signed-3-circuit-generators}
     \partial^\pm(C):=  \sgn{C}{m} x_k x_\ell
        +  \sgn{C}{\ell} x_k x_m 
        +  \sgn{C}{k} x_\ell x_m  & 
       \text{ for circuits }
         C=\{k,\ell,m\}
         \text{ of size three.}
\end{align}

Note that $r^\pm(j,F)$ pairs to zero with any commutator in \eqref{commutator-generators}, because  $r^\pm(j,F)$ is a sum of anti-commutators $[y_a, y_b]_-=y_a y_b + y_b y_a$.
Note also that whenever quadratic monomials $f(\yy), g(\xx)$ have disjoint $E^2$-support sets
\begin{align*}
\supp f(\yy)&:=\{ (i,j) \in E^2: y_i y_j \text{ appears in }f\text{ with nonzero coefficient}\},\\
\supp g(\xx)&:=\{ (k,\ell)\in E^2: x_k x_\ell \text{ appears in }f\text{ with nonzero coefficient}\},
\end{align*}
then one will have $(f(\yy),g(\xx))=0$. This already implies $r^\pm(j,F)$ pairs to zero with the $x_k^2$ in \eqref{squares-generators}.  It also shows that 
in order for $r^\pm(j,F)$ to have nonzero pairing with some $\partial^\pm(x_C)$ in \eqref{signed-3-circuit-generators}, one must have that $C=\{k,\ell,m\}$ satisfies $F=\{k\}\vee \{\ell\} \vee \{m\}$, and furthermore one must have $j \in C$.  In other words,
without loss of generality, $C=\{j,\ell,m\} \subset F$.  It remains to check that $r^\pm(j,F)$ still pairs to zero with $\partial^\pm(x_C)$ 
in this situation. Calculating the pairing, one finds 
\begin{align}
\notag
&( \partial^\pm(x_C), r^\pm(j,F))\\
\notag
&=
\left(
 \sgn{C}{m} x_j x_\ell
        +  \sgn{C}{\ell} x_j x_m 
        +  \sgn{C}{j} x_\ell x_m \,\, , \,\,
\sum_{ h \in F\setminus \{j\} } 
\chi_{\OM|_F}(j,h) \cdot [y_j,y_h]_-
\right)\\
\label{rank-two-OM-sign-vanishing}
&=
  \sgn{C}{m} \cdot \chi_{\OM|_F}(j,\ell)
+
\sgn{C}{\ell} \cdot \chi_{\OM|_F}(j,m).
\end{align}
Vanishing of the sum in \eqref{rank-two-OM-sign-vanishing} can be checked based on cases for the signed circuit $C=C_+ \sqcup C_-$ supported by $C=\{j,\ell,m\}$. One can relabel so that $|C_+|\geq|C_-|$, and hence $(|C_+|,|C_-|)=(3,0)$ or $(2,1)$.  As the indices $\ell,m$ play a symmetric role in \eqref{rank-two-OM-sign-vanishing}, one may
assume without loss of generality that the oriented matroid $\OM|_{\{j,\ell,m\}}$  matches that of one of these vector configurations in $\RR^2$:
\begin{center}
\begin{tikzpicture}
  \draw[line width=2pt,black,-stealth](0,0)--(1,1) node[anchor=south west]{$\ell$};
    \draw[line width=2pt,black,-stealth](0,0)--(-1,1) node[anchor=south west]{$m$};
  \draw[line width=2pt,black,-stealth](0,0)--(0,-1) node[anchor=north east]{$j$};
 \draw[line width=2pt,black,-stealth](4,0)--(5,1) node[anchor=south west]{$m$};
    \draw[line width=2pt,black,-stealth](4,0)--(3,1) node[anchor=south west]{$\ell$};
  \draw[line width=2pt,black,-stealth](4,0)--(4,1) node[anchor=south west]{$j$};

  \draw[line width=2pt,black,-stealth](8,0)--(9,1) node[anchor=south west]{$m$};
    \draw[line width=2pt,black,-stealth](8,0)--(7,1) node[anchor=south west]{$j$};
  \draw[line width=2pt,black,-stealth](8,0)--(8,1) node[anchor=south west]{$\ell$};

  \draw[line width=2pt,black,-stealth](12,0)--(13,1) node[anchor=south west]{$j$};
    \draw[line width=2pt,black,-stealth](12,0)--(11,1) node[anchor=south west]{$\ell$};
  \draw[line width=2pt,black,-stealth](12,0)--(12,1) node[anchor=south west]{$m$};
\end{tikzpicture}
\end{center}
\noindent
In each case, one can check that the sum in \eqref{rank-two-OM-sign-vanishing} vanishes.

Once one has checked that the elements of $\GG$ lie in $J_2$,
Proposition~\ref{prop: graded-GB-via-Hilb-trick}
together with the following Hilbert series calculations will show that they form a quadratic (noncommutative) GB for $J$.  First note
that the $\cG$-standard monomials $m$ in $y_1,\ldots,y_n$ are those that
avoid all factors $y_j y_i$ in which $(j,i)$ are a retrograde pair, and these are exactly the monomials described in (iv).  Thus one has  
\begin{align*}
\Hilb(\kk\langle\yy\rangle / (\init_\prec(\GG)) ,t)
&= \sum_{\substack{\GG\text{-standard}\\ \text{monomials }m}} t^{\deg(m)} \\
&\overset{(a)}{=} \left( 1 + e_1 t + e_1^2 t^2 + e_1^3t^3+\cdots \right)
\cdots \left( 1 + e_r t + e_r^2 t^2 + e_r^3t^3+\cdots \right) \text{ with }e_p=|E_p|\\
&= \frac{1}{(1-e_1t) \cdots (1-e_rt)} \\
&\overset{(b)}{=}\frac{1}{\Hilb(A,-t)}\\
&\overset{(c)}{=}\Hilb(A^!,t)
=\Hilb(\kk\langle\yy\rangle/J,t).
\end{align*}
where equalities (a), (b), (c) above are justified as follows. Equality (a) follows from the description in (iv) of $\GG$-standard monomials as 
$m=m_1 \cdot m_2 \cdots m_r$
where $m_p$ is any noncommutative monomial 
in the variable set $\{y_j\}_{y \in E_p}$.
Equality (b) comes from \eqref{supersolvable-primal-Hilb}, and equality (c)  from Corollary~\ref{cor:Koszul-dual-Hilbert-series}.

Finally, to see that $\cG$ is a {\it reduced} Gr\"obner basis, note that Lemma~\ref{rank-two-flat-lemma} implies that for each retrograde pair $(j,i)$, the initial term $y_j y_i$ for the relations $r(j,i), r^\pm(j,i)$ cannot appear as a term in any of the other $r(k,\ell), r^\pm(k,\ell)$ with $(k,\ell) \neq (j,i)$.
\end{proof}

%%%%%%
\subsection{Acyclicity and injectivity}
\label{sec: acyclicity-injectivity}

As an application of the Gr\"obner basis presentations for the algebras $A^!=\OS(M)^!, \VG(\OM)^!$ in Theorem~\ref{thm: shriek-presentations}, we explore a counterpart to 
an interesting
fact about $A=\OS(M), \VG(\OM)$:  their
Hilbert series contains a factor of $1+t$,
\begin{equation}
\label{primal-Hilbert-series-has-(1+t)-factor}
\Hilb(\OS(M),t)=\Hilb(\VG(\OM),t)=(1+t)\cdot H(t)
\end{equation}
and the remaining polynomial factor
$H(t) \in \ZZ[t]$ always has {\it nonnegative}  coefficients.  

This fact has several explanations: combinatorial, topological, and algebraic.  One algebraic explanation views  the Orlik-Solomon algebra $A=\OS(M)$ as an algebraic cochain complex
\begin{equation}
\label{OS-algebra-as-complex}
0 \rightarrow A_0 \overset{d}{\rightarrow} A_1 \overset{d}{\rightarrow} \cdots \overset{d}{\rightarrow} A_{r-1} \overset{d}{\rightarrow} A_{r} \rightarrow 0 
\end{equation}
whose differential $d$ is given by multiplication by an element $x=\sum_{i=1}^n c_i x_i$
in $A_1$.  The fact that $A$ is a quotient of an exterior algebra implies that $x^2=0$ in $A$,
so that indeed $d\circ d=0$.  

\begin{theorem} \cite[Thm. 7.2]{Yuzvinsky}
\label{thm: OS-exactness}
 The cochain complex \eqref{OS-algebra-as-complex} on $A=\OS(M)$ is exact whenever $x=\sum_{i=1}^n c_i x_i$ has
coefficients $c_i$ satisfying the following {\it genericity} condition:
    $\sum_{i \in F}c_i \neq 0$ in $\kk$ for all flats $F$ whose restriction $M|_F$ is not a nontrivial direct sum.
\end{theorem}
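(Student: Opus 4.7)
The plan is to prove exactness of \eqref{OS-algebra-as-complex} by induction on the rank $r = r(M)$, after verifying it is a complex. Since $\OS(M)$ is an anti-commutative quotient of the exterior algebra and $x \in \OS(M)_1$, we have $x \wedge x = 0$, so $d \circ d = 0$ and \eqref{OS-algebra-as-complex} is a genuine cochain complex.

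The key reduction is to the case where $M$ is connected. If $M = M_1 \oplus M_2$ is a nontrivial direct sum with $E = E_1 \sqcup E_2$, then $\OS(M) \cong \OS(M_1) \otimes_\kk \OS(M_2)$ as graded $\kk$-algebras, and writing $x = x^{(1)} + x^{(2)}$ with $x^{(j)} \in \OS(M_j)_1$, the complex $(\OS(M), \cdot x)$ is the (DGA) tensor product of the complexes $(\OS(M_j), \cdot x^{(j)})$. Every flat of $M$ splits as $F = F_1 \oplus F_2$ with $F_j$ a flat of $M_j$, and $M|_F$ fails to be a nontrivial direct sum only when one of the $F_j$ is empty; hence the genericity hypothesis for $M$ is equivalent to the conjunction of the hypotheses for $M_1$ and $M_2$. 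By induction on $|E|$, each factor complex is exact, and the tensor product of an exact complex with any other is exact.

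So assume $M$ is connected, in which case the hypothesis forces $\sum_{i \in E} c_i \neq 0$. For the induction on $r(M)$, the base case $r(M) = 1$ forces $|E| = 1$ by simplicity, and the complex reduces to the isomorphism $\kk \to \kk x_1$ given by multiplication by $c_1 \neq 0$. For $r(M) \geq 2$, I would use the flat decomposition $\OS(M) = \bigoplus_F \OS(M)_F$ of Proposition~\ref{prop: flat-decomp}, filtering by the rank of the flat. Since multiplication by $x_i$ sends $\OS(M)_F$ into $\OS(M)_{F \vee \{i\}}$, this filtration is respected by $d = \cdot x$, and the associated graded differential on each piece $\OS(M)_F$ corresponds to multiplication by $\sum_{i \in F} c_i x_i$ inside the Brieskorn summand for $F$. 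For each proper flat $F \subsetneq E$ the inductive hypothesis applies to $M|_F$ (directly if $M|_F$ is connected, and via the direct-sum reduction above otherwise), while the top piece $F = E$ uses the hypothesis $\sum_{i \in E} c_i \neq 0$ explicitly.

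The main obstacle is the spectral sequence (or filtered-complex) bookkeeping required to lift exactness on the associated graded back to exactness of the total complex: the differential $d = \cdot x$ preserves only the filtration, not the flat decomposition itself. An alternative route would be to choose a single element $e \in E$ and use the deletion-contraction short exact sequence $0 \to \OS(M \setminus e) \to \OS(M) \to \OS(M/e)[-1] \to 0$ of cochain complexes, where the induced differentials on the outer terms come from $x - c_e x_e$ and its image in $\OS(M/e)$; but verifying that the genericity hypothesis is inherited for both $M \setminus e$ and $M/e$ would be the delicate part of this alternative, since coefficients may sum to zero over a flat after deletion or contraction.
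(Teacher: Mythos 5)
Your reduction to the connected case and the base case are correct, but the proposed inductive step via the flat filtration has a genuine conceptual gap. Each Brieskorn summand $\OS(M)_F$ is concentrated in a \emph{single} cohomological degree, namely $r(F)$, since its $\kk$-basis consists of the NBC monomials $x_I$ with $\overline{I}=F$, all of size $r(F)$. Therefore the filtration by flat rank is identical to the filtration by cohomological degree, the differential $d=\cdot x$ raises it by exactly one, and the associated-graded differential is zero -- the spectral sequence is degenerate and gives no information. Moreover the claim that the associated-graded differential on $\OS(M)_F$ is ``multiplication by $\sum_{i\in F}c_ix_i$'' is false: for $a\in\OS(M)_F$ and $i\in F$ one has $ax_i=0$, since $ax_i$ would have to lie in $\OS(M)_F$ in degree $r(F)+1$, which is zero. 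Your deletion-contraction alternative stalls for exactly the reason you suspect: the inclusion $\OS(M\setminus e)\hookrightarrow\OS(M)$ intertwines $\cdot\sum_{i\neq e}c_ix_i$ with $\cdot x$ only when $c_e=0$, and forcing $c_e=0$ can destroy genericity on the contraction.

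The missing idea (and the one behind Yuzvinsky's proof) is a contracting homotopy, which makes the connected case a one-line argument rather than an induction. The Orlik--Solomon boundary $\partial$ on $\wedge(\xx)$, the degree $-1$ derivation with $\partial x_i=1$, preserves the ideal $(\partial x_C)_{C\in\cC}$ because $\partial(\omega\cdot\partial x_C)=\partial\omega\cdot\partial x_C$ by $\partial^2=0$; hence $\partial$ descends to $\OS(M)$. The Leibniz rule then gives, in $\OS(M)$,
$\partial(xa)+x\,\partial a=\bigl(\sum_{i\in E}c_i\bigr)a$.
When $M$ is connected the genericity hypothesis (applied to $F=E$) gives $\sum_{i\in E}c_i\neq0$, so $\bigl(\sum_i c_i\bigr)^{-1}\partial$ is a null-homotopy of the identity on the cochain complex $(\OS(M),\cdot x)$, proving exactness. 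Combining this with your tensor-product reduction for disconnected $M$ completes the proof; in that reduction the hypothesis applied to each connected flat is precisely what is needed on each tensor factor.
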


Thus whenever $x$ is generic, multiplication by $x$ on $A=OS(M)$ is ``as injective as possible", given the constraint that $x^2=0$.
This algebraically interprets the factor $H(t)$ in \eqref{primal-Hilbert-series-has-(1+t)-factor}, since
$tH(t)$ is the Hilbert series for the subspace of cocycles (= coboundaries) in the above cochain complex.

For $M, \OM$ supersolvable, the Koszul duals $A^!=\OS(M)^!,\VG(\OM)^!$
inherit a similar factorization
\begin{equation}
\label{shriek-Hilbert-series-has-a-factor}
\Hilb(A^!,t)=
\frac{1}{\Hilb(A,-t)}=\frac{1}{1-t} \cdot \frac{1}{H(-t)}=
(1+t+t^2+t^3+\cdots) \cdot H(-t)^{-1}.
\end{equation}
There is nothing that says, {\it a priori}, the rightmost factor $H(-t)^{-1}$ above should have
nonnegative coefficients.  However, this is a consequence of our next result.

\begin{definition}
    Let $M,\OM$ be supersolvable matroids or oriented matroids of rank $r$ 
    on the ground set $E=\{1,2,\ldots,n\}$,
with partition $\underline{E}$ as in Proposition~\ref{prop: bjorner-ziegler-characterization}.
For a fixed $d\geq 1$, say that the power sum $p_d(\yy)=\sum_{i=1}^n c_i y_i^d \in A^!_d \subset A^!=\OS(M)^!$ or $\VG(\OM)^!$ is {\it \underline{E}-generic} if
for each $q=1,2,\ldots,r$,
there exists $i \in E_q$ with the coefficient $c_i\neq 0$.

\end{definition}

\begin{theorem}
\label{thm: shrieks-have-right-NZD}
Let $M,\OM$ be supersolvable matroids or oriented matroids of rank $r$ on $E$,
with partition $\underline{E}$ as in Proposition~\ref{prop: bjorner-ziegler-characterization}.
Then for either $A^!=\OS(M)^!$ or $\VG(\OM)^!$,
right-multiplication $a \longmapsto ay$
by any \underline{E}-generic element $p_d(\yy)$ in $A^!_d$
gives an injective map $A^!  \longrightarrow  A^!$.
That is, every $\underline{E}$-generic $y$ is a right-non-zero-divisor on $A$.
\end{theorem}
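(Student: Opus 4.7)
The plan is to exploit the Gr\"obner basis from Theorem~\ref{thm: shriek-presentations} to set up a lex weight filtration on $A^!$ under which right-multiplication by the leading contribution of $p_d(\yy)$ is visible at the top of the filtration. Using the standard monomial basis from part~(iv), every $\alpha \in A^!$ is uniquely $\sum_m c_m m$ where each $m = m_1 m_2 \cdots m_r$ has $m_p$ a noncommutative word in $\{y_j : j \in E_p\}$. To such an $m$ I assign the weight $w(m) = (d_1(m), d_2(m), \ldots, d_r(m)) \in \ZZ^r$ with $d_p(m) = \deg(m_p)$, and compare weights lexicographically reading from position~$1$. Write $e_q$ for the $q$th standard basis vector of $\ZZ^r$.

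The key local claim is that, for any standard $m$ and any $i \in E_q$, the reduction of $m \cdot y_i^d$ to standard form contains a unique \emph{main term} $m_1 \cdots m_{q-1}(m_q y_i^d) m_{q+1} \cdots m_r$ with coefficient $+1$ and weight $w(m) + d\,e_q$, together with \emph{correction terms} of weight strictly lex-smaller than $w(m) + d\,e_q$. The main term is what one obtains by always taking the linear part $y_j y_i \mapsto y_i y_j$ of the retrograde relation $r(j,i)$ or $r^\pm(j,i)$ each time a $y_i$ is pushed past an $m_{q'}$ with $q' > q$. By Lemma~\ref{rank-two-flat-lemma} the quadratic correction of every such substitution lives entirely in variables with indices in $E_{q'}$, so each correction step converts one $E_q$-variable into one $E_{q'}$-variable: it strictly lowers position~$q$ of the weight vector while leaving positions $1, 2, \ldots, q-1$ untouched. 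This stability under iteration is exactly what keeps lex order controlled even after further Gr\"obner reductions within the modified $m_{q'}$ blocks.

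Granting the claim, fix $\alpha \ne 0$, let $w^* = \max_{c_m \ne 0} w(m)$, choose any $m^*$ attaining $w^*$, and use $\underline{E}$-genericity at $q = 1$ to pick $i^* \in E_1$ with $c_{i^*} \ne 0$. Set $m^\flat := (m^*_1 y_{i^*}^d)\, m^*_2 \cdots m^*_r$, a standard monomial of weight $w^* + d\,e_1$. Using $w(m) \le w^*$ for every contributing $m$, together with the fact that $e_1$ strictly dominates $e_q$ for $q > 1$ at position~$1$, one checks $w(m) + d\,e_{q_i} \le w^* + d\,e_1$ for every pair $(m,i)$ with $c_m c_i \ne 0$, and equality forces $w(m) = w^*$ and $i \in E_1$; corrections are strictly below their own main, hence never reach $w^* + d\,e_1$. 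Finally, uniqueness of noncommutative words in $E_1$ of length $d_1^*+d$ forces $m_1 y_i^d = m^*_1 y_{i^*}^d$ only when $m = m^*$ and $i = i^*$, so the coefficient of $m^\flat$ in $\alpha \cdot p_d(\yy)$ is precisely $c_{m^*} \cdot c_{i^*} \ne 0$. The hard part is verifying the local claim carefully: one must track how iterated retrograde reductions redistribute degree between $E_q$ and the later blocks, and the uniform statement that every correction strictly decreases position~$q$ while preserving positions~$< q$ is what makes this single leading-term argument succeed for both the OS and the VG case simultaneously.
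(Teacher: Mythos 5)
Your proof is correct, and it takes a genuinely different route from the paper's. The paper proceeds by a double induction: an outer induction on the rank $r$, peeling off the modular coatom $F = F_{r-1}$ to reduce to $M|_F$, and an inner induction on the ``late degree'' $\ell$, using the coarse filtration $A^! = \bigoplus_\ell A^!_{(\ell)}$ by degree in the $E_r$-block alone. You instead refine that filtration to the full $r$-tuple of block-degrees $(d_1,\ldots,d_r)$ ordered lexicographically starting from $E_1$, and then make a direct single-shot leading-term argument: Lemma~\ref{rank-two-flat-lemma} guarantees that each Gr\"obner correction step moves weight from a smaller block to a strictly larger one, so the lex-weight is a strict monovariant, and the unique top-weight term in the reduction of $m\cdot y_i^d$ is the ``commute-only'' term $m_1\cdots(m_q y_i^d)\cdots m_r$ with coefficient $+1$ (because commutes preserve each $E_p$-subword). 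This lets you read off a nonzero coefficient of $m^\flat$ in $\alpha\cdot p_d(\yy)$ without any induction. What the paper's approach buys is a structural picture (the filtration by $E_r$-degree and reduction to a smaller supersolvable matroid), which it re-uses elsewhere; what yours buys is brevity and transparency, and it also makes visible that genericity is only actually required in the first block $E_1$ (you only invoke $\underline{E}$-genericity at $q=1$), a mild strengthening of the stated theorem. Two small points worth tightening in a final write-up: the phrase ``each correction step converts one $E_q$-variable into one $E_{q'}$-variable'' is stated only for the original block $q$, whereas what makes the iteration safe in general is the uniform fact that \emph{every} correction, applied to a retrograde pair $(j,i')$ with $i'\in E_p$, $j\in E_{q'}$, decreases position $p$ and preserves all positions $<p$ (in your situation one can in fact check that $i'$ is always the original $i$, because commutes and corrections both preserve standardness of the subword of non-$y_i$ letters, but that deserves a sentence); and the coefficient-$+1$ claim should be anchored to the uniqueness of the Gr\"obner remainder, since the division algorithm itself makes choices.
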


\begin{proof}

Proceed by induction on the rank $r$.  In the base case $r=1$, the ring $A^!=\kk\langle y \rangle \cong \kk[y]$ is a univariate polynomial ring, and $y^d$ is a nonzero element of $A^!_d$, so $y^d$ is a nonzero divisor.

Preparing for the inductive step, segregate $E=F \sqcup E_r$
where $F:=F_{r-1}=E_1 \sqcup E_2 \sqcup \cdots \sqcup E_{r-1}$ is the modular coatom in the modular flag $\underline{F}$, and define the {\it early} and {\it late} variables:
$$
\{y_1,\ldots,y_n\}
=\underbrace{\{y_i\}_{i \in F}}_{\text{early}} \,\,  \sqcup \,\, 
\underbrace{\{y_j\}_{j \in E_r}}_{\text{late}}.
$$
Note that, by Theorem~\ref{thm:BEZ}, the restriction $M|_F$ is a rank $r-1$ supersolvable matroid
to which induction applies.  Also, note that the presentations in Theorem~\ref{thm: shriek-presentations} and the standard monomial bases
show that the early variables
generate a subalgebra of $A^!$ isomorphic to the Koszul dual $\OS(M|_F)^!$ or $\VG(\OM|_F)^!$.

The standard monomial basis shows that every $a$ in $A^!$ has a unique decomposition
\begin{equation}
\label{unique-decomp-by-last-hand-monomial}
a=\sum_{m} a(m) \cdot m
\end{equation}
where $m$ runs over all monomials in the late variables, and each $a(m)$ lies in the subalgebra generated by the early variables.
Grouping this more coarsely via $\deg(m)$, one obtains
a unique decomposition 
\begin{equation}
\label{late-degree-decomposition}
a=\sum_{\ell=0}^\infty a^{(\ell)} \quad \text{ where }
a^{(\ell)}:=\sum_{\substack{m:\\ \deg(m)=\ell}} a(m) \cdot m.
\end{equation}

In particular, 
$
p_d(\yy)=\sum_{i=1}^n c_i y_i^d = y^{(0)} + y^{(d)}.
$

Let $A^!_{(\ell)}$ denote the set of 
elements of the form $a^{(\ell)}$ above, so
there is a $\kk$-vector space decomposition
$$
A^!=\bigoplus_{\ell=0}^\infty A^!_{(\ell)}
$$
and also define
\begin{equation}
\label{handy-direct-sum-decomposition}
A^!_{(\geq \ell)}:=\bigoplus_{p=\ell}^\infty A^!_{(p)}
= A^!_{(\ell)} \oplus A^!_{(\geq \ell+1)}.
\end{equation}

We will use these two facts, justified below:
\begin{align*}
A^!_{(0)} \cdot A^!_{(0)} &\subseteq A^!_{(0)},\\
A^!_{(\geq p)} \cdot A^!_{(\geq q)}
&\subseteq A^!_{(\geq p+q)}.
\end{align*}
These follow ultimately from 
Lemma~\ref{rank-two-flat-lemma}, as we now explain.  One can use the Gr\"obner basis relations $r(j,i),r^\pm(j,i)$ for retrograde pairs $(j,i)$ appearing in Theorem~\ref{thm: shriek-presentations} as rewriting rules, performing the
division $f \rightarrow_{\GG} r$ and 
rewriting
$f$ as a sum $r$ of
$\GG$-standard monomials.  Lemma~\ref{rank-two-flat-lemma} implies that at each division step, one is always replacing
\begin{itemize}
    \item quadratic initial terms with no late variables by a sum of terms
with no late variables, 
\item quadratic initial terms with one late variable by a sum of terms with one or two late variables. 
\end{itemize}

Continuing the inductive step, assume $a \in A_q^!$ has
$a\cdot p_d(\yy)=0$, and we want to conclude that $a=0$.
Writing $a=\sum_{\ell=0}^q a^{(\ell)}$ as in \eqref{late-degree-decomposition},
we will show each $a^{(\ell)}=0$ via an inner induction on $\ell$.

In the inner induction base case $\ell=0$, write
$$
0=a\cdot p_d(\yy)={a^{(0)} \cdot y^{(0)}}
+ \underbrace{a^{(0)} \cdot y^{(d)} 
+ \sum_{\ell=1}^q a^{(\ell)} \cdot p_d(\yy)}_{\in A^!_{(\geq d)}}.
$$
so that $0 \equiv a^{(0)} \cdot y^{(0)} \bmod A^!_{(\geq d)}$.
By the direct sum decomposition \eqref{handy-direct-sum-decomposition}, this means $a^{(0)} \cdot y^{(0)}=0$.
By induction on the rank applied to $M|_F$, since $y^{(0)}$ is still generic for $M|_F$, this implies $a^{(0)}=0$.

In the inner inductive step,
assume $a\cdot p_d(\yy)=0$ and that 
$a^{(0)} = a^{(1)} = \cdots = a^{(\ell-1)} = 0$,
that is, $a$ lies in $A^!_{(\geq \ell)}$.  We wish to show that $a^{(\ell)} = 0$. 
Write
\begin{align*}
    0=a\cdot y &= (a^{(\ell)} + a^{(\ell+1)} + \cdots )\cdot (y^{(0)} + y^{(1)})\\
    &= a^{(\ell)}\cdot y^{(0)} 
    + \underbrace{a^{(\ell)}\cdot y^{(d)} 
    + ( a^{(\ell+1)} + a^{(\ell+2)} + \dots)\cdot p_d(\yy)}_{\in A^!_{(\geq \ell+1)}}
\end{align*}
so that $0 \equiv  a^{(\ell)}\cdot y^{(0)} \bmod A^!_{(\geq \ell+d)}$.

Write $a^{(\ell)} = \sum_{m} a(m) m$ as in \eqref{late-degree-decomposition}, so that $m$ runs
through all degree $\ell$ monomials in the late variables.  Note that for any early variable $y_i$ and any monomial $m$ of degree $\ell$ in the late variables, the division algorithm $f \rightarrow_\GG r$ and the form of the relations $r(j,i), r^\pm(j,i)$ in $\GG$ (again using Lemma~\ref{rank-two-flat-lemma}) will rewrite
$$
m \cdot y_i^d \equiv y_i^d \cdot m \bmod A^!_{(\geq \ell+d)}.
$$
Since $y^{(0)}$ is a sum of early variables,
similarly 
$
m \cdot y^{(0)} \equiv y^{(0)} \cdot m \bmod A^!_{(\geq \ell+d)},
$
which implies
\begin{align*}
    a^{(\ell)}\cdot y^{(0)}
    = \sum_{m} a(m) \cdot m \cdot y^{(0)} 
    \equiv \sum_{m} a(m) \cdot y^{(0)} \cdot m  \bmod A^!_{(\geq \ell+d)}
\end{align*}

Hence one concludes that
$0\equiv \sum_{m} a(m) \cdot y^{(0)} \cdot m \bmod A^!_{(\geq \ell+d)}$.
Since $\sum_{m} a(m)\cdot y^{(0)} \cdot m$ lies in $A^!_{(\ell)}$, by the direct sum decomposition \eqref{handy-direct-sum-decomposition}, it must vanish.  But by the uniqueness in \eqref{unique-decomp-by-last-hand-monomial}, this implies each $a(m) \cdot y^{(0)}=0$.  Then by induction on $r$, each $a(m)=0$.
Hence $a^{(\ell)}=0$, as desired.
\end{proof}

One has the following corollary to
Theorems~\ref{thm: OS-exactness} and \ref{thm: shrieks-have-right-NZD}.

\begin{cor}
\label{cor: equiv-hilb-factorizations}
Let $M, \OM$ be a matroid or oriented matroid, 
and $G$ a group of automorphisms, that is, 
a subgroup of $\Aut(M)$ or $\Aut(\OM)$.
Consider $G$ as a group of $\kk$-algebra automorphisms of $A:=\OS(M)$ or $\VG(\OM)$.

\begin{enumerate}
\item[(i)] Any $x \in A_1$ which is $G$-fixed and generic
in the sense of Theorem~\ref{thm: OS-exactness}
(e.g., $x=\sum_{i=1}^n x_i$ when $\kk$ has characteristic zero) gives rise to
a factorization in $R_\kk(G)[[t]]$
$$
\Hilb_{\eq}(\OS(M),t) 
= (1+t) \cdot H(t) 
$$
where $tH(t)$ is the equivariant Hilbert series
for the cocycles (=coboundaries) of the $\kk G$-module complex in \eqref{OS-algebra-as-complex}.

\item[(ii)]
Assuming that $M$ is supersolvable with
decomposition $\underline{E}$ as in Proposition~\ref{prop: bjorner-ziegler-characterization}, any $y \in A^!_1$ which is $G$-fixed and $\underline{E}$-generic
(e.g., $y=\sum_{i=1}^n  y_i$)
gives a factorization in $R_\kk(G)[[t]]$
$$
\Hilb_{\eq}(\OS(M)^!,t) 
=  \frac{1}{1-t} \cdot H^!(t)
$$
where $H^!(t)$ is the equivariant Hilbert series for
the quotient $\kk G$-module $A^!/A^! y$.

\item[(iii)]
Assuming that $\OM$ is supersolvable with
decomposition $\underline{E}$ as in Proposition~\ref{prop: bjorner-ziegler-characterization}, any $p_2(\yy) \in A^!_2$ which is $G$-fixed and $\underline{E}$-generic\footnote{One cannot always find such $G$-fixed $\underline{E}$-generic elements in $A_1^!$, e.g., $\sum_{i=1}^n y_i$ is $\underline{E}$-generic, but not always $G$-fixed.}
(e.g., $p_2(\yy)=\sum_{i=1}^n  y_i^2$)
gives a factorization in $R_\kk(G)[[t]]$
$$
\Hilb_{\eq}(\VG(\OM)^!,t) 
=  \frac{1}{1-t^2} \cdot H^!(t)
$$
where $H^!(t)$ is the equivariant Hilbert series for
the quotient $\kk G$-module $A^!/A^!\cdot p_2(\yy)$.
\end{enumerate}
\end{cor}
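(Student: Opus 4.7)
The plan is that all three factorizations package a $G$-equivariant exactness or injectivity fact (already supplied by Theorem~\ref{thm: OS-exactness} or Theorem~\ref{thm: shrieks-have-right-NZD}) as an Euler--Poincar\'e identity in $R_\kk(G)[[t]]$, using Proposition~\ref{easy-facts-on-Grothendieck-ring-prop}. The conceptual heavy lifting has already been done; what remains is essentially bookkeeping, which I would carry out uniformly in all three cases.

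For part (i), multiplication by a $G$-fixed $x \in A_1$ is a $G$-equivariant graded degree-one self-map of $A=\OS(M)$, and the genericity hypothesis makes the complex \eqref{OS-algebra-as-complex} exact by Theorem~\ref{thm: OS-exactness}. Writing $B_i \subseteq A_i$ for the $\kk G$-submodule $x \cdot A_{i-1}$, exactness yields $G$-equivariant short exact sequences $0 \to B_i \to A_i \to B_{i+1} \to 0$ (with $B_0 = 0$), so $[A_i] = [B_i] + [B_{i+1}]$ in $R_\kk(G)$. Setting $H(t) := \sum_{i \geq 0} [B_{i+1}] t^i$, one computes $tH(t) = \sum_{j \geq 1} [B_j] t^j$ as the equivariant Hilbert series for the cocycles (= coboundaries), and
$$
\Hilb_{\eq}(A,t) = \sum_{i \geq 0} \bigl([B_i] + [B_{i+1}]\bigr) t^i = tH(t) + H(t) = (1+t) H(t).
$$

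For part (ii), right-multiplication by a $G$-fixed, $\underline{E}$-generic $y \in A^!_1$ is a $G$-equivariant degree-one self-map of $A^! = \OS(M)^!$, and it is injective by Theorem~\ref{thm: shrieks-have-right-NZD}. In each degree one thus has a $G$-equivariant short exact sequence
$$
0 \to A^!_{i-1} \xrightarrow{\,\cdot y\,} A^!_i \to (A^!/A^! y)_i \to 0,
$$
whence $[A^!_i] = [A^!_{i-1}] + [(A^!/A^! y)_i]$. Multiplying by $t^i$, summing over $i$, and setting $H^!(t) := \Hilb_{\eq}(A^!/A^! y, t)$, the identity $\Hilb_{\eq}(A^!,t) = t \cdot \Hilb_{\eq}(A^!,t) + H^!(t)$ rearranges to $\Hilb_{\eq}(A^!,t) = \tfrac{1}{1-t} H^!(t)$. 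Part (iii) is formally identical, but with the degree-two element $p_2(\yy) \in A^!_2$ in place of $y$, yielding $G$-equivariant short exact sequences of bidegree-shift two and hence the factor $\tfrac{1}{1-t^2}$.

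It remains only to check the concrete choices of $x$, $y$, $p_2(\yy)$. In characteristic zero, $\sum_{i \in F} 1 = |F| \neq 0$ for any nonempty flat, so $x = \sum_i x_i$ satisfies the genericity condition of Theorem~\ref{thm: OS-exactness}; the element $y = \sum_i y_i \in \OS(M)^!_1$ is $\Aut(M)$-fixed since $\sigma(y_i) = y_{\sigma(i)}$; and $p_2(\yy) = \sum_i y_i^2 \in \VG(\OM)^!_2$ is $\Aut(\OM)$-fixed since the signs in $\sigma(y_i) = \epsilon(\sigma(i)) y_{|\sigma(i)|}$ square out. All three are visibly $\underline{E}$-generic because each coefficient equals $1$. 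The main obstacle---producing the underlying equivariant exact/injective maps---was already surmounted in Theorems~\ref{thm: OS-exactness} and~\ref{thm: shrieks-have-right-NZD}, so no further difficulty arises.
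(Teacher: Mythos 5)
Your proof is correct, and it fills in precisely the bookkeeping the paper leaves implicit: the paper states Corollary~\ref{cor: equiv-hilb-factorizations} as an immediate consequence of Theorems~\ref{thm: OS-exactness} and~\ref{thm: shrieks-have-right-NZD} without writing out the short exact sequences. Your argument—packaging equivariant exactness of $(A,\cdot x)$ and equivariant injectivity of $(\cdot y)$ and $(\cdot p_2)$ as $R_\kk(G)[[t]]$ identities, then verifying $G$-fixedness and genericity of the standard choices—is exactly the intended one.
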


\noindent
Examples of the factorizations in the various parts of Corollary~\ref{cor: equiv-hilb-factorizations} appear later:
\begin{itemize}
    \item Part (i) is illustrated by \eqref{toy-OS-factorization}, \eqref{rank-one-illustration-of-OS-factorization} \eqref{rank-two-OS-equivariantly}.
    \item Part (ii) is illustrated by \eqref{toy-OS-shriek-factorization}, \eqref{rank-one-illustration-of-OS-shriek-factorization}, \eqref{rank-two-OS-shriek-equivariantly}.
    \item Part (iii) is illustrated by \eqref{rank-one-illustration-of-VG-factorization}, \eqref{type-B-inflated-Lusztig-Stanley-formula}. 
\end{itemize}

%%%%%%%%%%%%%%%%%%%%%%%%%%%%%%%%%%%%%%%%
\section{Examples:  Boolean matroids and matroids of low rank}
\label{sec: example-section}

Before developing further theory for supersolvable matroids and oriented matroids, we digress to discuss
the action of symmetries in a few of our earlier examples, illustrating the results so far.

%%%
\subsection{Boolean matroids}
\label{sec: Boolean-matroids-revisited}
%%%
We return to Example~\ref{ex: Boolean-matroid-1}
and the Boolean matroid $M=U_{n,n}$. Although
$M=U_{n,n}$ is orientable,
we will focus here on $\OS(M)$, where a bit more is known about the action of symmetries, rather than on $\VG(\OM)$. The discussion of $\VG(\OM)$ is deferred
to Example~\ref{Boolean-matroids-re-revisited} later.

The Boolean matroid $M$ of
of rank $n$ has no circuits,
so $A=\OS(M)=\wedge V=\wedge(x_1,\ldots,x_n)$,
and $A^!=\OS(M)^!=\Sym V=\kk[y_1,\ldots,y_n]$, swapping
the roles of $A,A^!$ from Examples~\ref{standard-example-exterior-symmetric}, \ref{standard-example-exterior-symmetric-2}, \ref{standard-example-exterior-symmetric-3}.
Here  $\Aut(M)=\symm_n$, and both $V, V^*$ carry the defining
representation of $\symm_n$ permuting the subscripts of the variables $x_i$ or $y_i$.

Thus $V$ is the defining representation of $\symm_n$ by permutation matrices.  Assuming that $\kk$ has characteristic zero, $V, V^*$ both decompose into irreducible $\kk \symm_n$-modules as 
$$
V \cong V^* \cong \specht^{(n)} \oplus \specht^{(n-1,1)}
$$
where $\specht^\lambda$ denotes the simple $\kk \symm_n$-module indexed by a partition $\lambda$ of $n$;  here $\specht^{(n)}$ is the trivial $\symm_n$-representation, while $\specht^{(n-1,1)}$ is the irreducible reflection representation of $\symm_n$.
Consequently, in this situation, 
\begin{align}
\label{wedge-tensor-decomp}
    A&= \wedge( \specht^{(n)} \oplus \specht^{(n-1,1)} ) 
    =  \wedge \specht^{(n)} \otimes \wedge \specht^{(n-1,1)} \\
\label{sym-tensor-decomp}
    A^!&= \Sym ( \specht^{(n)} \oplus \specht^{(n-1,1)} ) 
    =  \Sym \specht^{(n)} \otimes \Sym \specht^{(n-1,1)} ,
\end{align}
and the factorizations
in Corollary~\ref{cor: equiv-hilb-factorizations} become
\begin{align}
\label{toy-OS-factorization}
    \Hilb_\eq(A,t)&= (1+t) \Hilb_\eq( \wedge \specht^{(n-1,1)},t) \\
\label{toy-OS-shriek-factorization}
    \Hilb_\eq(A^!,t)&= \frac{1}{1-t} \Hilb_\eq( \Sym \specht^{(n-1,1)},t).
\end{align}
Both \eqref{toy-OS-factorization} and \eqref{toy-OS-shriek-factorization} can be refined to explicit $\kk\symm_n$-irreducible expansions.  For \eqref{toy-OS-factorization}, since it is known that $\wedge^i \specht^{(n-1,1)} \cong \specht^{(n-i,1^i)}$, one has
$$
\Hilb_\eq( \wedge \specht^{(n-1,1)},t) 
=\sum_{i=0}^{n-1} [\specht^{(n-i,1^i)} ] \, t^i.
$$
For \eqref{toy-OS-shriek-factorization}, one can extend the tensor decomposition \eqref{sym-tensor-decomp}.  The $\symm_n$-invariant subalgebra of $\kk[\yy]$ is $\kk[\yy]^{\symm_n}=\kk[e_1,e_2,\ldots,e_n]$ where $e_k=e_k(\yy)$ is the $k^{th}$ {\it elementary symmetric function} in the variables $\yy$, and the theory of Cohen-Macaulay rings gives a graded
$\kk\symm_n$-module tensor product decomposition
$$
\kk[\yy] \cong \kk[e_1,e_2,\ldots,e_n] \,\, \otimes \,\,  
\kk[\yy]/(e_1,e_2,\ldots,e_n)
$$
where $\kk[\yy]/(e_1,e_2,\ldots,e_n)$ is the type $A$ {\it coinvariant algebra}.  Hence one has
\begin{equation}
\label{Lusztig-Stanley-formula}
    \begin{aligned}
    \Hilb_\eq(A^!,t) &= \Hilb(\kk[e_1,e_2,\ldots,e_n],t) \cdot 
\Hilb_\eq(\kk[\yy]/(e_1,e_2,\ldots,e_n),t)\\
&=\frac{1}{(1-t)(1-t^2) \cdots (1-t^n)} \cdot 
\sum_Q [\specht^{\lambda(Q)}] \, t^{\mathrm{maj}(Q)}
\end{aligned}
\end{equation}
where the sum on the right, due to Lusztig and Stanley \cite[Prop.~4.11]{Stanley-invariants}),
has 
$Q$ running over all {\it standard Young tableaux} with $n$ cells, with $\lambda(Q)$ the partition shape of $Q$, and $\mathrm{maj}(Q)$ the sum of all entries $i$ in $Q$ for which $i+1$ appears
weakly southwest of $i$ (using English notation for tableaux).

We note for future reference in Section~\ref{sec: braid-OS-permutation-reps} that $\symm_n$ permutes the monomial basis 
$
\{ \yy^\aa= y_1^{a_1} \cdots y_n^{a_n} : \aa \in \NN^n\}
$
of $A^!=\kk[\yy]$, making each graded component $A^!_i$ of $A^!$ a {\it permutation representation}.

%%%
\subsection{Rank one matroids}
\label{sec: rank-one-matroids}
%%%
A simple rank one matroid $M$ has ground set $E=\{e\}$ of size one and no circuits.  It is always orientable, and has
\begin{align*}
    A&=\OS(M) \cong \VG(\OM) = \kk[x]/(x^2)\\
    A^! &= \kk[y].
\end{align*}
The only difference between $M, \OM$ arises when one takes into account symmetries.  The matroid $M$ has no nontrivial automorphisms, while the oriented matroid $\OM$ carries the action of the two-element group $G=\Aut(\OM) =\symm^\pm_1 \cong \ZZ/2\ZZ$.  Assuming that the characteristic of $\kk$ is not $2$, then the generator
of $G$ negates both $x,y$ when it acts on $A=\VG(\OM)=\kk[x]/(x^2)$ or  $A^!=\VG(\OM)^!=\kk[y]$.
Denoting the class of this nontrivial $1$-dimensional representation by $\epsilon$ in the Grothendieck ring
$$
R_\kk(G)\cong \ZZ[\epsilon]/(\epsilon^2-1)
$$
then in the power series ring $R_\kk(G)[[t]]$ one has 
\begin{align}
\label{rank-one-illustration-of-OS-factorization}
\Hilb_\eq(\OS(M),t)&=1+t,\\
\notag
\Hilb_\eq(\VG(\OM),t)&=1+\epsilon t,\\
\notag
& \\
\label{rank-one-illustration-of-OS-shriek-factorization}
\Hilb_\eq(\OS(M)^!,t)
&=1+ t + t^2 + t^3 + \cdots = \frac{1}{1-t}\\
\notag
\Hilb_\eq(\VG(\OM)^!,t)&=1+\epsilon t + \epsilon^2 t^2 + \epsilon^3 t^3 + \cdots  =\frac{1}{1-\epsilon t}\\
\label{rank-one-illustration-of-VG-factorization}
&=1+\epsilon t + t^2 + \epsilon t^3 + \cdots = \frac{1+\epsilon t}{1-t^2}.
\end{align}

%%%
\subsection{Rank two matroids}
\label{sec: rank-two-revisited}
%%%
As discussed in Example~\ref{ex: rank-two-matroid-1},
a simple rank two matroid $M$ on ground set 
$E=\{1,2,\ldots,n\}$ is always orientable, and supersolvable. Any rank $1$ flat, such as $F=\{1\}$ is a
modular coatom, and one has the corresponding set partition decomposition $\underline{E}=(E_1,E_2)=(\{1\},\{2,3,\ldots,n\})$ with $(e_1,e_2)=(1,n-1)$.
Therefore,
\begin{equation}
\label{rank-two-matroid-OS-Hilbs}
\begin{aligned}
\Hilb(\OS(M),t)&=\Hilb(\VG(\OM),t)=(1+t)(1+(n-1)t)
=1+nt +(n-1)t^2,\\
\Hilb(\OS(M)^!!,t)&=\Hilb(\VG(\OM)^!,t)\\
&=\frac{1}{(1-t)(1-(n-1)t)}\\
&=1 + (1+(n-1)) t + 
 (1+(n-1)+(n-1)^2) t^2 + \cdots \\
% &=\sum_{i=0}^\infty t^i (1 + (n-1) + (n-1)^2 + \cdots + (n-1)^i)\\
  &=\sum_{i=0}^\infty f(n,i) t^i \,\, \text{ where }f(n,i):=\sum_{j=0}^i (n-1)^i=
\frac{(n-1)^{i+1}-1}{n-2}.
\end{aligned}
\end{equation}

In considering symmetries, it is somewhat easier to compute with
$\OS(M)$, rather than $\VG(\OM)$.  The matroid $M$ has as its symmetries the full symmetric group $G=\Aut(M)=\symm_n$, arbitrarily permuting $E=\{1,2,\ldots,n\}$.
It is also helpful to introduce a notation $\varphi_\lambda$ for the class $\left[\kk[\symm_n/\symm_\lambda]\right]$ within $R_\kk(\symm_n)$ of the $\symm_n$-permutation representation on the cosets of the Young subgroup 
$
\symm_\lambda:=\symm_{\lambda_1} \times \cdots \times \symm_{\lambda_\ell}
$
where $\lambda=(\lambda_1,\lambda_2,\ldots,\lambda_\ell)$ is a partition of $n=|\lambda|:=\sum_{i=1}^\ell \lambda_i$.  Hence, if $\kk$ were a field of characteristic zero (which we do {\it not} assume here), then this class $\varphi_\lambda$ corresponds to the product of 
complete homogeneous symmetric functions 
$$
h_\lambda:=h_{\lambda_1} \cdots h_{\lambda_\ell}
$$
under the {\it Frobenius characteristic isomorphism}
$R_\kk(\symm_n) \cong \Lambda_n$, where $\Lambda_n$ are the degree $n$ homogeneous symmetric functions $\Lambda(z_1,z_2,\ldots)_n$ in infinitely many variables.

One finds that $\OS(M)_1$ carries the defining permutation representation of $\symm_n$ permuting coordinates in $\kk^n$,
whose class in $R_\kk(\symm_n)$ is $\varphi_{(n-1,1)}$.
Introducing the $\kk \symm_n$-submodule
\begin{equation}
\label{char-free-reflection-rep-defn}
\specht^{(n-1,1)}:=\{\xx \in \kk^n: x_1+\cdots+x_n=0\},
\end{equation}
the quotient $\varphi_{(n-1,1)}/\specht^{(n-1,1)}$ carries the trivial
$\kk \symm_n$-module $\specht^{(n)}$, giving this identity in $R_\kk(\symm_n)$:
\begin{equation}
    \label{defining-S_n-rep-decomposed}
[\OS(M)_1]=\varphi_{(n-1,1)}
=[\specht^{(n)}]+[\specht^{(n-1,1)}]
=1 + [\specht^{(n-1,1)}].
\end{equation}
The Hilbert series in \eqref{rank-two-matroid-OS-Hilbs} have the following equivariant lifts to $R_\kk(\symm_n)[[t]]$:
\begin{align}
\label{rank-two-OS-equivariantly}
\Hilb_\eq(\OS(M),t)&=(1+t)(1+[\specht^{(n-1,1)}] t)
=1+\varphi_{(n-1,1)} t + [\specht^{(n-1,1)}] t^2,\\
\label{rank-two-OS-shriek-equivariantly}
\Hilb_\eq(\OS(M)^!,t)&=\frac{1}{(1-t)(1-[\specht^{(n-1,1)}] t)}\\
\notag
&=1 + (1+[\specht^{(n-1,1)}]) t + 
 (1+ [\specht^{(n-1,1)}] +[\specht^{(n-1,1)}]^2) t^2 + \cdots \\
% \label{rank-two-OS-shriek-equiv-Hilb}
\label{rank-two-OS-shriek-as-geometric-series}
&=\sum_{i=0}^\infty t^i \left( \sum_{k=0}^i [\specht^{(n-1,1)}]^k \right)
  %(1 + [\specht^{(n-1,1)}] + [\specht^{(n-1,1)}]^2 + \cdots + [\specht^{(n-1,1)}]^i).
\end{align}

We find the next proposition somewhat unexpected. 
\begin{prop}
\label{prop: rank-two-OS-shrieks-are-perm-reps}
    In $R_\kk(\symm_n)$, the element
    $
    [\OS(M)^!_i] = \sum_{k=0}^i [\specht^{(n-1,1)}]^k 
    $$
    %1 + [\specht^{(n-1,1)}] + [\specht^{(n-1,1)}]^{2} + \cdots + [\specht^{(n-1,1)}]^{i}
    $
    is the class of a permutation representation, expressible in the following form:

    \[[\OS(M)^!_i]
=\begin{cases} 
\varphi_{(n)}+
\displaystyle\sum_{d=2}^i a_{d,i}\,  \varphi_{(n-d, 1^d)}, & i\text{ even},\\
\varphi_{(n-1,1)}+
\displaystyle\sum_{d=2}^i b_{d,i}\,  \varphi_{(n-d, 1^d)}, & i\text{ odd},
\end{cases}\]
where $\{a_{d,i}\}, \{b_{d,i}\}$ are positive integers, independent of $n$, given by sums of Stirling numbers:  
\begin{align*}
a_{d,i}&=\sum_{k=\lfloor\frac{d}{2}\rfloor}^{\frac{i}{2}} S(2k-1,d-1) \text{ for }i\text{ even,}\\
b_{d,i}&=\sum_{k=\lfloor\frac{d-1}{2}\rfloor}^{\frac{i-1}{2}} S(2k,d-1) \text{ for }i\text{ odd.}
\end{align*}

%with nonnegative integers $c_r$:    $$    [\OS(M)^!_i] =\begin{cases}      \varphi_{(m)}+\sum_{r=2}^m c_r \varphi_{(n-r,1^r)}& \text{ for }i\text{ even,}\\      \varphi_{(n-1,1)}+\sum_{r=2}^n c_r \varphi_{(n-r,1^r)}& \text{ for }i\text{ odd.}\\    \end{cases}   $$
\end{prop}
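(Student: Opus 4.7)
The starting point is the identity $[\OS(M)^!_i] = \sum_{k=0}^{i} [\specht^{(n-1,1)}]^k$ already established in \eqref{rank-two-OS-shriek-as-geometric-series}.  Writing $P_i:=[\OS(M)^!_i]$ and $V:=\varphi_{(n-1,1)} = 1 + [\specht^{(n-1,1)}]$, the plan is to show by induction on $i$ that $P_i$ has exactly the stated expansion as a nonnegative integer combination of the permutation representations $\varphi_{(n-d,1^d)}$.

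The first ingredient is a Pieri-type rule
\[
V \cdot \varphi_{(n-d,1^d)} \;=\; d\cdot \varphi_{(n-d,1^d)} + \varphi_{(n-d-1,1^{d+1})}
\]
in $R_\kk(\symm_n)$, which I would prove by identifying $\varphi_{(n-d,1^d)}$ with the permutation representation on the set $\mathrm{Inj}([d],[n])$ of injections and decomposing the $\symm_n$-orbits on $[n]\times \mathrm{Inj}([d],[n])$.  These orbits split according to whether $b\in[n]$ lies in the image of $\iota\in \mathrm{Inj}([d],[n])$: the in-image pairs form $d$ orbits each isomorphic to $\mathrm{Inj}([d],[n])$ (one per position $j\in[d]$, corresponding to the pairs $(\iota(j),\iota)$), while the out-of-image pairs form a single orbit isomorphic to $\mathrm{Inj}([d+1],[n])$.

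The second ingredient is the identity $P_i\cdot [\specht^{(n-1,1)}] = P_{i+1}-1$, which rearranges to $V\cdot P_i = P_{i+1} + P_i - 1$.  Writing $P_i = \sum_d c_d^{(i)}\varphi_{(n-d,1^d)}$ uniquely (the $\varphi_{(n-d,1^d)}$ are $\ZZ$-linearly independent in $R_\kk(\symm_n)$) and applying the Pieri rule to the left side, comparison of coefficients of $\varphi_{(n-d,1^d)}$ yields the scalar recurrence
\[
c_d^{(i+1)} \;=\; (d-1)\,c_d^{(i)} + c_{d-1}^{(i)} + \delta_{d,0},
\qquad c_d^{(0)} = \delta_{d,0},
\]
from which $c_0^{(i)}=1,\ c_1^{(i)}=0$ for $i$ even and $c_0^{(i)}=0,\ c_1^{(i)}=1$ for $i$ odd follow immediately.

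With these structural facts in hand, I would close the induction: assuming $c_d^{(i)} = a_{d,i}$ or $b_{d,i}$ for $d\ge 2$, the recurrence when $i$ is even reduces (using $c_1^{(i)} = 0$) to $b_{d,i+1} = (d-1)\,a_{d,i} + a_{d-1,i}$ for $d\ge 3$ and $b_{2,i+1}=a_{2,i}$, and symmetrically when $i$ is odd with the roles of $a$ and $b$ swapped and the contribution of $c_1^{(i)}=1$ accounting for the boundary.  These identities unravel termwise from the standard Stirling recurrence $S(m,d-1) = (d-1)\,S(m-1,d-1) + S(m-1,d-2)$, applied with $m=2k$ or $m=2k+1$.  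The main obstacle is the parity-dependent index bookkeeping: the lower summation bounds $\lfloor d/2\rfloor$ and $\lfloor (d-1)/2\rfloor$ disagree precisely when $d$ is even, producing phantom boundary terms of the form $S(d-3,d-1)$ that must be shown to cancel.  These vanish automatically because $S(j,k) = 0$ whenever $j<k$, and the induction closes without incident.
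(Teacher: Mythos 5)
Your proof is correct and takes a genuinely different route from the paper's. The paper telescopes the geometric series $\sum_{k=0}^i[\specht^{(n-1,1)}]^k$ in consecutive pairs, reducing everything to the identity $[\specht^{(n-1,1)}]^{j}+[\specht^{(n-1,1)}]^{j-1}=\sum_{d=2}^j S(j-1,d-1)\,\varphi_{(n-d,1^d)}$, which it re-proves over arbitrary $\kk$ by importing Sundaram's explicit decomposition $V_{1,n}^{\otimes j}=\sum_d S(j,d)\,V_{d,n}$ together with restriction and induction identities. You instead derive everything from an elementary Pieri rule $V\cdot\varphi_{(n-d,1^d)}=d\,\varphi_{(n-d,1^d)}+\varphi_{(n-d-1,1^{d+1})}$, obtained by a short count of $\symm_n$-orbits on pairs $(b,\iota)$ with $\iota\in\mathrm{Inj}([d],[n])$, and from the geometric-series recursion $V\cdot P_i=P_{i+1}+P_i-1$. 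Together these force the scalar recurrence $c_d^{(i+1)}=(d-1)c_d^{(i)}+c_{d-1}^{(i)}+\delta_{d,0}$, which matches the Stirling recurrence $S(m,d-1)=(d-1)S(m-1,d-1)+S(m-1,d-2)$ termwise, with the phantom boundary terms vanishing exactly as you say. This is more self-contained — no external tensor-power decomposition is needed — and works directly at the level of $G$-sets. One caution: the parenthetical claim that the $\varphi_{(n-d,1^d)}$ are $\ZZ$-linearly independent in $R_\kk(\symm_n)$ is not safe over fields of positive characteristic (e.g.\ already $\varphi_{(1,1)}=2\varphi_{(2)}$ in $R_{\FF_2}(\symm_2)$), so ``comparing coefficients'' is not literally available. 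This is easily repaired, however, by reversing the direction of the argument: define $c_d^{(i)}$ purely combinatorially by the scalar recurrence, set $Q_i:=\sum_d c_d^{(i)}\varphi_{(n-d,1^d)}$, check $Q_0=P_0$, and then use the Pieri rule and the recurrence to compute $V\cdot Q_i=Q_{i+1}+Q_i-1$; matching this against $V\cdot P_i=P_{i+1}+P_i-1$ gives $Q_{i+1}=P_{i+1}$ by induction with no appeal to uniqueness of the $\varphi$-expansion.
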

\begin{proof}  The following identity is established in \cite[Prop.~7.6, Thm~7.7]{Sundaram-reflection-rep}  for $j\ge 2$. 
\begin{equation}
\label{key-assertion-on-powers-of-reflection-rep}
    [\specht^{(n-1,1)}]^{j}+  [\specht^{(n-1,1)}]^{j-1}=\sum_{d=2}^j S(j-1,d-1) \,\, \varphi_{(n-d, 1^d)}.
\end{equation}
Since the proofs in \cite{Sundaram-reflection-rep} are phrased in terms of symmetric functions, over a ground field of characteristic zero, we explain  why this  identity still holds  in the Grothendieck ring $R_\kk(\symm_n)$ for arbitrary fields $\kk$. 
 Sundaram   constructs an explicit $\kk\symm_n$-module  realizing the $j$th  tensor power of the $\symm_n$-permutation module $V_{1,n}$, whose class is $\varphi_{(n-1,1)}$, and decomposes it in terms of the coset permutation submodules $V_{d,n}=\left(\kk \symm_d \right) \uparrow_{ \symm_{n-d} \times \symm_1^d }^{\symm_n}$ whose class is $\varphi_{(n-d,1^d)}$, %afforded by the cosets of the Young subgroup $\symm_{n-r}\times \symm_1^r$, 
 obtaining \cite[Eqn. 18, Lemma.~6.1]{Sundaram-reflection-rep})
 \begin{equation}\label{eqn:Eq.18-Lemma6.1-Sun21}
 V_{1,n}^{\otimes j}=\sum_{d=1}^{\min{(n,j)}} S(j, d)\, V_{d,n}.
 \end{equation}

In addition we will use the following three facts (1),(2),(3). 
 \begin{enumerate}
\item $[\specht^{(n-1,1)}\downarrow^{\symm_n}_{\symm_{n-1} \times \symm_1}]=[V_{1,n-1}]$. %$=\varphi_{(n-2,1)}$, 
We show this as follows.   

First, if $\specht^{(n-1)}$ is the span of a fixed standard basis vector, and $V_{1,n-1}$ is the span of the $n-1$ non-fixed standard basis vectors, we clearly have 
$V_{1,n}\downarrow =
\specht^{(n-1)} \oplus V_{1,n-1},$ which in turn gives  
\begin{equation}\label{eqn:restrict-V1n-A} 
[V_{1,n}\downarrow] =
[\specht^{(n-1)}] + [V_{1,n-1}].
\end{equation}

Now recall the definition in \eqref{char-free-reflection-rep-defn}
of $\specht^{(n-1,1)}$ and the  Grothendieck group identity
\eqref{defining-S_n-rep-decomposed}.
The discussion around  \eqref{char-free-reflection-rep-defn}
 and 
\eqref{defining-S_n-rep-decomposed} in effect establishes the existence of a short exact sequence of $\kk\symm_n$-modules 
\[0 \rightarrow \specht^{(n-1,1)}\rightarrow V_{1,n} \rightarrow \specht^{(n)} \rightarrow 0,\]
which restricts to the same sequence as $(\symm_{n-1}\times \symm_1)$-modules.
Hence we have 
\begin{equation}\label{eqn:restrict-V1n-B} 
[V_{1,n}\downarrow ]=[\specht^{(n)}\downarrow] +[\specht^{(n-1,1)}\downarrow ]
\end{equation}

Since $\specht^{(n)}\!\downarrow\,=\specht^{(n-1)}$, comparing \eqref{eqn:restrict-V1n-A} and \eqref{eqn:restrict-V1n-B} we  obtain $[\specht^{(n-1,1)}\downarrow ]=[V_{1,n-1}].$

\item \cite[Corollary 4.3.8, Part (2)]{webb} Transitivity of induction;
 \item  \cite[Corollary 4.3.8, Part (4)]{webb} For a finite group $G$ and subgroup $H$, and $\kk G$-module $U$, $\kk H$-module $V$, over any field $\kk$,
 \begin{equation*}\label{tensor-restriction-induction-relation}
U\otimes (V\uparrow_H^G)\, \cong (U\downarrow_H\otimes V)\uparrow_H^G.
\end{equation*}
\end{enumerate}

In the present situation we have $G=\symm_n$, $H=\symm_{n-1}\times\symm_1$, $V=1_{\symm_{n-1}\times\symm_1}$, so that the class of $V\uparrow_H^G$ is $\varphi_{(n-1,1)}$,  and 
$U={\specht^{(n-1,1)}}^{\otimes j-1}$. 
Following the proof of  \cite[Prop.~7.6]{Sundaram-reflection-rep}),  we have 
\begin{align*}
[\specht^{(n-1,1)}]^j
+[\specht^{(n-1,1)}]^{j-1}
&=[\specht^{(n-1,1)}]^{j-1} 
\left(
[\specht^{(n)}]
+[\specht^{(n-1,1)}]
\right)\\
&=[U \otimes V_{1,n}] \quad \text{ by \eqref{defining-S_n-rep-decomposed} and definition of }U,\\
&=[U \otimes \left( \one_H \right)\uparrow_H^G] \quad \text{ by definition of }H,G,V_{1,n},\\
&=[\left( U\downarrow^G_H \right)\uparrow_H^G] \quad \text{ by item (3) above,} \\
&=\left[
\left( (\specht^{ (n-1,1) })^{ \otimes (j-1)} \downarrow^{\symm_n}_{\symm_1 \times \symm_{n-1}} \right)
 \uparrow_{\symm_1 \times \symm_{n-1}}^{\symm_n}\right] \\
&=[V_{1, n-1}^{\otimes j-1}\uparrow_{\symm_1 \times \symm_{n-1}}^{\symm_n}] \ \text{by item (1) above,} \\
&=\sum_{d'=1}^{\min{(n-1,j-1)}} S(j-1,d')\,[V_{d',n-1}\big\uparrow_{\symm_{n-1}}^{\symm_n}]\ \text{using~\eqref{eqn:Eq.18-Lemma6.1-Sun21}}\notag\\
&=\sum_{d'=1}^{\min{(n-1,j-1)}} S(j-1,d') \, [V_{d'+1,n}]\ \text{by item (2) above,}\notag\\
&=\sum_{d=2}^{\min{(n,j)}} S(j-1, d-1)\,[V_{d,n}].\notag
\end{align*}

Hence for $i\ge 2$, we have 
$$
\begin{aligned}
[\OS(M)^!_i]= \sum_{k=0}^i [\specht^{(n-1,1)}]^k
&=\begin{cases}
[\specht^{(n)}]+\sum_{k=1}^{i/2}\left([\specht^{(n-1,1)}]^{2k}+  [\specht^{(n-1,1)}]^{2k-1} \right),& i\text{ even},\\
 [\specht^{(n)}]+[\specht^{(n-1,1)}]+\sum_{k=1}^{(i-1)/2} \left([\specht^{(n-1,1)}]^{2k+1}+  [\specht^{(n-1,1)}]^{2k} \right),& i\text{ odd},
\end{cases}\\
&= \begin{cases}
\varphi_{(n)}+
\sum_{k=1}^{i/2}\, (\sum_{d=2}^{2k} S(2k-1,d-1)\, \varphi_{(n-d, 1^d)}),& i\text{ even},\\
\varphi_{(n-1,1)}+
\sum_{k=1}^{(i-1)/2}\, (\sum_{d=2}^{2k+1} S(2k,d-1)\, \varphi_{(n-d, 1^d)}),& i\text{ odd}.
\end{cases}
\end{aligned}
$$
Interchanging the order of summation then gives the assertion of the proposition for $i \geq 2$. For $i=0,1$ it is easy to check that $[\OS(M)^!_0]=\varphi_{(n)}$ and $[\OS(M)^!_1]=\varphi_{(n-1,1)}$.
\end{proof}

\begin{remark}
Rather than all of $\symm_n$, one might restrict the action on $\OS(M)$ to the {\it dihedral group}\footnote{
These are symmetries of the rank two {\it oriented} matroid $\OM$, although we are ignoring the action on
$\VG(\OM)$ here.} 
$$
W=I_2(n)=\langle r,s: r^n=s^2=1,srs=r^{-1} \rangle
$$
of order $2n$.
Since $\symm_n$ acts on each $\OS(M)^!_i$ via permutation representations, the same must hold for $W$ by restriction.  One can check via character computations (omitted here)
that $\OS(M)^!_i$ is always a nonnegative combination of these four permutation
representations:
\begin{itemize}
\item the {\it trivial} representation, 
\item the {\it defining} representation $\rho_{\mathrm{def}}$ on $E=\{1,2,\ldots,n\}$ with $r=(12 \cdots n)$, $s(i)=n+1-i$,
\item 
the {\it regular representation} $\regrep:=\kk W$, and
\item when $n$ is even, the {\it half-regular representation} $\halfregrep:=\kk [W/Z_W]$ where $Z_W:=\{1,r^{\frac{n}{2}}\}$.
\end{itemize}
One has these expansions in $R_\kk(W)$, where $f(n,i):=\dim_\kk \OS^!_i=
\frac{(n-1)^{i+1}-1}{n-2}$ as in \eqref{rank-two-matroid-OS-Hilbs}:
$$
    [\OS(M)^!_i] = \begin{cases}
\frac{1}{2n} \left[ f(n,i) -1 \right] \cdot \regrep + 1 & \text{ if }n\text{ is odd, and }i \text{ is even},\\
    \frac{1}{2n} \left[ f(n,i) -n \right] \cdot \regrep + \rho_{\mathrm{def}} & \text{ if }n\text{ is odd, and }i \text{ is odd},\\ 
    & \\
\frac{1}{n} \left[ f(n,i) - n \cdot \frac{i}{2}- 1 \right] \cdot \halfregrep + \frac{i}{2} \cdot  \rho_{\mathrm{def}} + 1 & \text{ if }n\text{ is even, and }i \text{ is even},\\
    \frac{1}{n} \left[ f(n,i) - n \cdot \frac{i+1}{2} \right] \cdot \halfregrep + \frac{i+1}{2} \cdot  \rho_{\mathrm{def}} & \text{ if }n\text{ is even, and }i \text{ is odd}.\\    
    \end{cases}
    $$
\end{remark}

\begin{remark}
    It was observed earlier that uniform matroids
    $U_{r,n}$ are supersolvable if and only if $r \in \{1,2,n\}$.  This means that $M=U_{3,n}$ for $n \geq 4$ are {\it not supersolvable}, and in fact, $A=\OS(M)$ are not Koszul algebras, and not even quadratic.  If one nevertheless tries to define virtual $\symm_n$-characters 
    $\{ [A^!_i] \}_{i \geq 0}$ in terms of the genunine
    characters $\{[A_i]\}_{i \geq 0}$ 
    via the recurrence \eqref{equivariant-recurrence-for-shrieks},
    then already $[A^!_3]$ are not genuine characters once $n \geq 4$.
\end{remark}

%%%%%%%%%%%%%%%%%%%%%%%%%%%%%%%%%%%
\section{Branching rules for supersolvable matroids}
\label{sec: supersolvable-branching-rules}
%%%%%%%%%%%%%%%%%%%%%%%%%%%%%%%%%%%%

Let $M, \OM$ be supersolvable matroids or
oriented matroids of rank $r$ on ground set $E$, with a modular complete flag $\underline{F}$ and decomposition $\underline{E}$ as in Proposition~\ref{prop: bjorner-ziegler-characterization}.  If $F=F_{r-1}$ denotes the modular coatom within
the flag $\overline{F}$, then the restrictions $M|_F, \OM|_F$ are again supersolvable.  Furthermore, the formulas \eqref{supersolvable-primal-Hilb}, \eqref{supersolvable-dual-Hilb} for the Hilbert series of the rings $A=\OS(M), \VG(\OM)$ and their Koszul duals $A^!$ show that they are closely related to the Hilbert series of the same rings $B, B^!$ for $M|_F, \OM|_F$:
\begin{align*}
\Hilb(A,t)=(1+e_1t) \cdots (1+e_{r-1}t) (1+e_rt) &=\Hilb(B,t) \cdot (1+e_rt)\\
\Hilb(A^!,t)=\frac{1}{(1-e_1t) \cdots (1-e_{r-1}t) (1-e_rt)} 
&=\Hilb(B^!,t) \cdot \frac{1}{1-e_rt}
\end{align*}
which one can rewrite suggestively as follows, for comparison with Proposition~\ref{prop: general-branching}:

\begin{align}
\label{suggestive-primal-Hilb-rewriting}
\Hilb(A,t)&=\Hilb(B,t) +t \cdot e_r \cdot \Hilb(B,t)\\
\label{suggestive-shriek-Hilb-rewriting}
\Hilb(A^!,t)&=\Hilb(B^!,t) + t \cdot e_r \cdot \Hilb(A^!,t).
\end{align}
This suggests considering  a group $G$ of automorphisms of $M$ or $\OM$, and how its
action on $A, A^!$ restricts to the setwise $G$-stabilizer subgroup of the modular coatom $F$
$$
H:=\{g \in G: g(F)=F\}.
$$
Note that $H$ also permutes the ground set elements $E_r:=E \setminus F$; in the case where $G$ acts on the oriented matroid $\OM$, so that $G$ acts via signed permutations in $\symm_n^\pm$ on $E$ as in Definition~\ref{defn: OM-automorphisms}, then $H$ acts via signed permutations on $E_r$.  This gives rise to either a permutation or signed permutation $\kk H$-module $\mathcal{X}:=\kk[E_r]$, which in particular is self-contragredient.  Our goal in the next two subsections is to prove Theorem~\ref{thm: supersolvable-branching-exact-sequences} below,
which not only lifts \eqref{suggestive-primal-Hilb-rewriting}, \eqref{suggestive-shriek-Hilb-rewriting}
to these two branching relations in $R_\kk(H)$
\begin{align}
\label{supersolvable-primal-branching-relation}
    [A_i\downarrow] &= [B_i]+[{\mathcal X}] \cdot [B_{i-1}]\\
\label{supersolvable-shriek-branching-relation}
    [A^!_i\downarrow] &= [B^!_i]+[{\mathcal X}]\cdot \left( [A^!_{i-1}\downarrow ]\right).
\end{align}
(equivalent by Proposition~\ref{prop: general-branching} as $\cX^* \cong \cX$ ), but also lifts them to short exact sequences.

\begin{theorem}
\label{thm: supersolvable-branching-exact-sequences}
With the above notations, and letting $\kk$ be any field, one has the following short exact sequences of graded $\kk H$-modules:
\begin{itemize}
\item[(i)]
$ 0 \longrightarrow B \longrightarrow A\downarrow^G_H \longrightarrow \mathcal{X} \otimes B(-1)\longrightarrow 0, $
\item[(ii)]
$ 0 \longrightarrow \mathcal{X} \otimes \left( A^!\downarrow^G_H\right)(-1) \longrightarrow A^!\downarrow^G_H \longrightarrow B^! \longrightarrow 0. $
\end{itemize}

\end{theorem}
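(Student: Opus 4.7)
The plan is to build both short exact sequences directly from the standard monomial bases for $A$ (from Corollary~\ref{cor: primal-quadratic-GB}) and for $A^!$ (from Theorem~\ref{thm: shriek-presentations}), sliced according to the partition $E = F \sqcup E_r$ induced by the modular coatom $F$. In each case, $H$-equivariance will be automatic because $H$ preserves $F$ setwise, hence also $E_r$, and thus $H$ respects every construction built from this partition.

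For (i), I would first define a degree-preserving $\kk$-algebra map $\iota : B \to A\!\downarrow^G_H$ by $x_i \mapsto x_i$ for $i \in F$. This is well-defined because every circuit of $M|_F$ is a circuit of $M$, so each generator of $I_B$ appears among the generators of $I_A$; it is $H$-equivariant because $H$ acts by (signed) permutations on $\{x_i\}_{i \in F}$. In parallel, I would define the $\kk H$-linear map
$$
\phi : \mathcal{X} \otimes B(-1) \longrightarrow A\!\downarrow^G_H, \qquad j \otimes b \longmapsto x_j \cdot b.
$$
Invoking Proposition~\ref{prop: bjorner-ziegler-characterization}(c), every NBC set $I$ for $M$ meets $E_r$ in at most one element, so $I$ either lies entirely in $F$ (giving an NBC monomial of $M|_F$, hence an element of $\iota(B)$) or decomposes uniquely as $I = I' \sqcup \{j\}$ with $I'$ NBC for $M|_F$ and $j \in E_r$ (in which case $x_I$ agrees up to sign with $\phi(j \otimes x_{I'})$). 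This yields a $\kk$-basis bijection, hence an isomorphism of graded $\kk H$-modules $\iota \oplus \phi : B \oplus \mathcal{X} \otimes B(-1) \xrightarrow{\sim} A\!\downarrow^G_H$, and projecting onto the second summand produces the desired SES.

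For (ii), I would begin by defining $\pi' : A^! \twoheadrightarrow B^!$ as the $\kk$-algebra map induced by $y_i \mapsto y_i$ for $i \in F$ and $y_j \mapsto 0$ for $j \in E_r$. Well-definedness amounts to checking that each generator $r(j,i)$ or $r^\pm(j,i)$ from Theorem~\ref{thm: shriek-presentations} maps into $J_B$: when both $i,j$ lie in $F$, Lemma~\ref{rank-two-flat-lemma} confirms that all terms of the relation lie in $\kk\langle y_i : i \in F \rangle$, so the relation is itself a generator of $J_B$; when $j \in E_r$, Lemma~\ref{rank-two-flat-lemma} forces the remaining third variables of the rank-two flat $\overline{\{i,j\}}$ into $E_r$, so every monomial of the relation carries a factor from $E_r$ and is killed by $\pi'$. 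To identify $\ker(\pi')$, I would introduce the $\kk H$-linear map
$$
\psi : \mathcal{X} \otimes (A^!\!\downarrow^G_H)(-1) \longrightarrow A^!\!\downarrow^G_H, \qquad j \otimes a \longmapsto a \cdot y_j.
$$
Using Theorem~\ref{thm: shriek-presentations}(iv), every standard monomial $m_1 m_2 \cdots m_r$ of $A^!$ with $\deg(m_r) \geq 1$ factors uniquely as $(m_1 \cdots m_{r-1} m_r') \cdot y_j$, where $y_j$ is the rightmost letter of $m_r$ and $m_1 \cdots m_{r-1} m_r'$ is again a standard monomial; conversely, right-multiplying a standard monomial by $y_j$ with $j \in E_r$ produces a standard monomial without any further rewriting. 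This provides a $\kk$-basis bijection between $E_r \times \{\text{standard monomials of }A^!\}$ and the standard monomials in $\ker(\pi')$, so $\psi$ is an isomorphism of graded $\kk H$-modules onto $\ker(\pi')$, completing the SES.

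The main obstacle will be the well-definedness of $\pi'$ in (ii), which ultimately rests on the specific form of the Gröbner basis in Theorem~\ref{thm: shriek-presentations} combined with Lemma~\ref{rank-two-flat-lemma}. The ``clean'' claim that right-multiplication by $y_j$ with $j \in E_r$ never introduces a non-standard monomial should follow once one notes that within a standard monomial $m_1 \cdots m_r$ all retrograde adjacencies have been eliminated, and appending a variable from $E_r$ on the right preserves this property. The analogous step in (i) is more transparent, since the ideal generators $\partial(x_C)$ and $\partial^\pm(x_C)$ restrict cleanly to $M|_F$ whenever $C \subseteq F$, and the NBC basis does the rest.
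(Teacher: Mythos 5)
Your proposal matches the paper's own proof in structure and substance: both parts rest on the partition $E = F \sqcup E_r$ together with the NBC monomial basis for $A$ (yielding the direct-sum decomposition in (i)) and the standard monomial basis from Theorem~\ref{thm: shriek-presentations}(iv) for $A^!$ (yielding the kernel identification in (ii)), with the identical auxiliary maps $b \mapsto b$, $t_j \otimes b \mapsto bx_j$, $y_j \mapsto 0$, and $t_j \otimes a \mapsto ay_j$. The only superficial difference is that the paper packages the $A$-side decomposition and the $A^!$-side kernel computation into standalone propositions and, in part (i), invokes a Hilbert series identity of Stanley/Orlik--Terao for the directness of the sum, whereas you derive the same directness directly from the partition of the NBC basis; in part (ii) the paper's well-definedness check for $\pi'$ is stated more tersely, but rests on exactly the application of Lemma~\ref{rank-two-flat-lemma} that you spell out.
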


\subsection{Proof of Theorem~\ref{thm: supersolvable-branching-exact-sequences}(i)} 

The injective maps  
$B \rightarrow A$ from 
Theorem~\ref{thm: supersolvable-branching-exact-sequences}(i) are instances of injections of Orlik-Solomon and Varchenko-Gel'fand algebras coming from their flat decompositions 
\begin{align*}
\OS(M) &= \bigoplus_{F \in \cF} \OS(M)_F,\\
\VG(\OM) &= \bigoplus_{F \in \cF} \OS(M)_F
\end{align*}
discussed in
Section~\ref{sec: flat-decomposition}.
The NBC monomial $\kk$-bases
from Remark~\ref{rem: NBC-bases-respect-flats}
show that for any flat $F$ in $\cF$, one
has $\kk$-algebra inclusions  (see \cite[Prop.~3.30]{OrlikTerao}, \cite[Prop.~2.5]{Yuzvinsky}, and
\cite[Prop.~5.6]{Brauner})
\begin{align*}
\OS(M|_F) \cong \bigoplus_{F' \leq F} \OS(M)_{F'} \hookrightarrow \OS(M),\\
\VG(\OM|_F) \cong \bigoplus_{F' \leq F} \VG(\OM)_{F'} \hookrightarrow \VG(\OM).
\end{align*}

Since $A=\OS(M)$ or $\VG(\OM)$ and $B=\OS(M|_F)$ or $\VG(\OM|_F)$,
this explains the injection $B \rightarrow A$
from the sequence Theorem~\ref{thm: supersolvable-branching-exact-sequences}(i).
In fact, the entire sequence actually holds in slightly more generality, and for Orlik-Solomon algebras
was essentially observed by Orlik and Terao \cite[Lemma 3.80]{OrlikTerao}.
One does not need to assume that $M, \OM$ are supersolvable, only that $F$ is a modular coatom within its lattice of flats $\cF$.  Keeping the same notations so that $A=\OS(M), \VG(\OM)$, and $B=\OS(M|_F),\VG(\OM|_F)$,
with $H$ the setwise $G$-stabilizer of $F$
with a group of the autmorphisms $G$, one has the following.

\begin{prop}
Any modular coatom $F$ gives rise to a $\kk$-vector space direct sum decomposition
$$
A = B \oplus 
\left( \bigoplus_{j \in E \setminus F} B x_j\right),
$$
which can also be viewed as a graded $\kk H$-module isomorphism
$$
A \downarrow^G_H \,\, \cong \,\, 
B \,\, \oplus \,\, {\mathcal X} \otimes B (-1),
$$
where ${\mathcal X}$ is the permutation 
or signed permutation representation of $H$ on $E \setminus F$ as before.
\end{prop}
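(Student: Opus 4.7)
The plan is to exploit the NBC monomial $\kk$-basis for $A = \OS(M)$ and $A = \VG(\OM)$ from Section~\ref{sec:broken-circuits}, after choosing any linear order on $E = \{1, \ldots, n\}$ under which every element of $F$ precedes every element of $E \setminus F$. With this order, I claim the NBC subsets $I \subseteq E$ split into two disjoint families: (a) those contained entirely in $F$, which will coincide with the NBC sets of $M|_F$, and (b) those of the form $I' \sqcup \{j\}$ with $I'$ NBC in $M|_F$ and $j \in E \setminus F$. This partition of the NBC basis will immediately yield the $\kk$-vector space decomposition $A = B \oplus \bigoplus_{j \in E \setminus F} B \cdot x_j$, generalizing the Orlik--Terao observation to the $\VG$ setting.

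The first key step is to show that no NBC set of $M$ can contain two distinct elements $j, k \in E \setminus F$. This is where the modular coatom hypothesis enters: by Proposition~\ref{thm:BEZ}(i), there exists $i \in F$ with $\{i\} \leq \{j\} \vee \{k\}$, and simplicity of $M$ then forces $\{i, j, k\}$ to be a circuit. Since $i$ precedes $j, k$ in our order, $\{j, k\}$ is the associated broken circuit, so no NBC set can contain both.

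The second, more delicate step is to compare broken circuits of $M$ and of $M|_F$ that sit inside $F \cup \{j\}$ for some $j \in E\setminus F$. I would argue that any circuit $C$ with $C \subseteq F \cup \{j\}$ and $j \in C$ must have $C \setminus \{j\}$ independent of rank $|C|-1 = r(C)$, so its matroid closure $F^* \subseteq F$ coincides with $\overline{C}$, forcing the contradiction $j \in F^* \subseteq F$. Hence every circuit inside $F \cup \{j\}$ actually lies in $F$; consequently, broken circuits contained in $F$ are precisely the broken circuits of $M|_F$, and adjoining any $j \in E \setminus F$ to an NBC set of $M|_F$ yields an NBC set of $M$. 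I expect this closure-and-circuit accounting --- interlacing the flat property of $F$ with the modular coatom hypothesis --- to be the main technical obstacle.

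Finally, for the $\kk H$-module refinement, the subalgebra $B \subset A$ is realized via the flat decomposition of Proposition~\ref{prop: flat-decomp} as $\bigoplus_{F' \leq F} A_{F'}$, which is $H$-stable since $H$ preserves $F$ setwise. The complementary summand $\bigoplus_{j \in E \setminus F} B \cdot x_j$ is therefore also $H$-stable as a whole, and for each $h \in H$ the identity $h(b \cdot x_j) = h(b) \cdot \epsilon(h(j)) \, x_{|h(j)|}$ (with $\epsilon \equiv +1$ in the unoriented case) identifies this complement with $\mathcal{X} \otimes B(-1)$ as a graded $\kk H$-module, the shift $(-1)$ accounting for the degree raised by right-multiplication by $x_j$.
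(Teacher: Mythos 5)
Your proposal is correct, and it reaches the same conclusion by a slightly different and more self-contained route than the paper. Both proofs exploit the NBC monomial basis, the flat decomposition identifying $B \cong \bigoplus_{F' \leq F} A_{F'}$, and the observation (via the BEZ characterization of modular coatoms) that an NBC set of $M$ contains at most one element of $E \setminus F$. The crucial difference is how directness of the sum $A = B + \sum_{j} Bx_j$ is established. The paper settles this step by citing the known dimension identity $\Hilb(A,t) = (1+et)\,\Hilb(B,t)$ from Stanley and from Orlik--Terao and comparing dimensions. You instead prove directness by exhibiting an explicit partition of the NBC sets of $M$: your closure-and-circuit argument shows that any circuit $C \subseteq F \cup \{j\}$ with $j \in C$ would force $j \in \overline{C\setminus\{j\}} \subseteq \overline{F} = F$ (since $C\setminus\{j\}$ is independent of the same rank as $C$ and lies in the flat $F$), a contradiction; hence every circuit inside $F\cup\{j\}$ lies in $F$, so the broken circuits meeting $F\cup\{j\}$ are exactly those of $M|_F$, and therefore $I' \cup \{j\}$ is NBC in $M$ precisely when $I'$ is NBC in $M|_F$. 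This gives the exhaustive partition of NBC sets into (a) NBC sets of $M|_F$ and (b) $I'\cup\{j\}$ with $I'$ NBC in $M|_F$, from which directness and the identity $\Hilb(A,t) = (1+et)\Hilb(B,t)$ both fall out without appeal to the literature. Your approach is therefore a bit more work in the circuit accounting but avoids the external citation, and in fact reproves the cited Hilbert series factorization as a byproduct. The $H$-equivariant upgrading is handled identically in both proofs. One small imprecision: your sentence ``consequently, broken circuits contained in $F$ are precisely the broken circuits of $M|_F$'' also quietly uses that the chosen ordering places $F$ before $E \setminus F$ (so that any circuit $C$ meeting $F$ has $\min C \in F$, hence $C\setminus\{\min C\} \subseteq F$ implies $C \subseteq F$); this is easy but worth making explicit.
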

\begin{proof}
First note a consequence of Theorem~\ref{thm:BEZ}: if one orders/indexes $E=\{1,2,\ldots,n\}$ so that $i<j$ whenever $i \in F$ and $j \in E \setminus F$, then every pair $\{j,k\} \subseteq E \setminus F$ with $j \neq k$ is a broken-circuit, coming from the $3$-element circuit $\{i,j,k\}$ with $\{i\}:=F \cap (\{j\} \vee \{k\})$.  This implies NBC subsets for $M$ contain at most one element $j$ of $E \setminus F$, so NBC monomials for $M$ are either of the form 
\begin{itemize}
    \item[(a)] $x_{i_1} \cdots x_{i_p}$ for NBC sets $\{i_1,\ldots,i_p\} \subseteq F$, or 
    \item[(b)] $x_{i_1} \cdots x_{i_p} x_j$  for NBC sets $\{i_1,\ldots,i_p\} \subseteq F$, and $j \in E \setminus F$.
\end{itemize}
Identifying $B$ with $\bigoplus_{F' \subseteq F} \OS(M)_{F'}$ or $\bigoplus_{F' \subseteq F} \VG(\OM)_{F'}$ expresses $A$ as a $\kk$-vector space sum 
$$
A = B + 
\left( \sum_{j \in E \setminus F} B x_j\right).
$$
However, these sums are
{\it direct}, via dimension-counting:  if $e:=|E \setminus F|$, then one has
$$
\dim_\kk A = \dim_\kk B (1+e)
$$
as the $t=1$ specialization of the identity $\Hilb(A,t)=\Hilb(B,t)(1+et)$ (cf. \eqref{suggestive-primal-Hilb-rewriting} above) which follows either from Stanley \cite[Thm.~2]{Stanley-modular} or from Orlik and Terao \cite[Lem.~3.80]{OrlikTerao}.

Note that this dimension count also implies that the NBC monomials in (a),(b) above form $\kk$-bases for $B$ and $\bigoplus_{j \in E\setminus F} Bx_j$, respectively.  This lets one write a $\kk$-vector space isomorphism
$$
{\mathcal X} \otimes B 
\overset{f}{\longrightarrow}
\bigoplus_{j \in E\setminus F} 
Bx_j
$$
as follows: naming the $\kk$-basis elements $\{t_j: j \in E\setminus F\}$
for the permutation or signed permutation representation ${\mathcal X}$ of $H$, let the isomorphism $f$ map $t_j \otimes x_{i_1} \cdots x_{i_p} \longmapsto x_{i_1} \cdots x_{i_p} x_j$.  Since this means that
$f(t_j \otimes b) = b x_j$ for $b \in B$, the $H$-equivariance follows from this calculation: by definition, $g \in H$ has $g(t_j)=\pm t_k$ for $j,k \in E \setminus F$
if and only if $g(x_j)=\pm x_k$, with the
same $\pm$ signs for both.
\end{proof}

\subsection{Proof of Theorem~\ref{thm: supersolvable-branching-exact-sequences}(ii)} 

The surjective map $A^! \rightarrow B^!$ within the exact sequence of Theorem~\ref{thm: supersolvable-branching-exact-sequences}(ii)
is simple to define.  As before, let
$M, \OM$ be supersolvable matroids or oriented matroids on ground set $E=\{1,2,\ldots,n\}$ and let $\underline{E}$ be as in Proposition~\ref{prop: bjorner-ziegler-characterization}, with
$F=F_{r-1}=E_1 \sqcup \cdots \sqcup E_{r-1}$
and $E_r=E\setminus F$.
Let $A^!=\OS(M)^!$ or $\VG(\OM)^!$,
and $B^!=\OS(M|_F)^!$ or $\VG(\OM|_F)^!$.

\begin{prop}
\label{prop: zeroing-variables-surjection}
The surjective $\kk$-algebra map 
$$
\begin{array}{rcll}
\kk\langle y_1,\ldots,y_n\rangle
&\longrightarrow&
\kk \langle y_i \rangle_{i \not\in E_r}&\\
y_i&\longmapsto& y_i&\text{ if }i \not\in E_r\\
y_j&\longmapsto&0 &\text{ if }j \in E_r,
\end{array}
$$
induces a surjective $\kk$-algebra map $A ^!\twoheadrightarrow B^!$.
\end{prop}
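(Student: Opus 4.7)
The plan is to leverage the explicit Gr\"obner basis presentations from Theorem~\ref{thm: shriek-presentations}, which show $A^! = \kk\langle y_1,\ldots,y_n\rangle/J_A$ with $J_A$ generated by the relations $r(j,F')$ (in the Orlik-Solomon case) or $r^\pm(j,F')$ (in the Varchenko-Gel'fand case), indexed by pairs $(j,F')$ where $F'$ is a rank-two flat of $M$ and $j\in F'$; analogously $B^! = \kk\langle y_i\rangle_{i\in F}/J_B$, with $J_B$ generated by the relations attached to rank-two flats of $M|_F$, which are precisely the rank-two flats $F'$ of $M$ satisfying $F'\subseteq F$. Since $\pi$ is manifestly a surjective $\kk$-algebra map onto $\kk\langle y_i\rangle_{i\in F}$, the only real task is to verify $\pi(J_A)\subseteq J_B$, which by generation reduces to checking the image of each $r(j,F')$ or $r^\pm(j,F')$.

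First I would establish a dichotomy for rank-two flats $F'$ of $M$ using modularity of the coatom $F$. Either $F'\subseteq F$, in which case $r(j,F')$ involves only variables $y_i$ with $i\in F$, $\pi$ fixes it, and the image is a defining relation of $J_B$. Or $F'\not\subseteq F$, in which case the coatom property gives $F\vee F' = E$ and the modularity identity yields $r(F\cap F') = r(F)+r(F')-r(E) = (r-1)+2-r = 1$. Since $M$ is simple, the only rank-one flats are atoms, so $F\cap F' = \{i\}$ for a unique $i\in F$, and hence $F'\setminus\{i\}\subseteq E_r$.

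The transverse case then collapses by a term-by-term vanishing. The relation $r(j,F')$ or $r^\pm(j,F')$ is a $\kk$-linear combination of (anti-)commutators $[y_j,y_k]_\pm$ with $k$ ranging over $F'\setminus\{j\}$. If $j\in E_r$, then every term carries the factor $\pi(y_j)=0$; if instead $j=i$, then every $k\in F'\setminus\{i\}\subseteq E_r$ has $\pi(y_k)=0$. Either way $\pi(r(j,F'))=0\in J_B$. Combined with the contained case, this gives $\pi(J_A)\subseteq J_B$, so $\pi$ descends to a well-defined $\kk$-algebra map $A^!\to B^!$, which is surjective because its image contains every generator $y_i$ of $B^!$.

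I do not anticipate a serious obstacle here; once the presentations of Theorem~\ref{thm: shriek-presentations} are in hand, the argument is essentially forced. The one nontrivial ingredient is the modular rank calculation, and it is exactly this step that compels every ``transverse'' rank-two flat to meet $F$ in a single atom, and thereby makes the image of the corresponding relation vanish. The chirotope coefficients $\chi_{\OM|_{F'}}(j,k)$ appearing in the oriented case never intervene, since the vanishing is term-by-term.
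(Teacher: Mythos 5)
Your proof is correct and amounts to carefully unpacking the paper's one-line hint (``Check that in the quadratic Gr\"obner basis presentation\ldots if a quadratic term is divisible by a variable $y_j$ with $j\in E_r$, then every term is divisible by such a variable''). The dichotomy you establish---rank-two flats $F'$ with $F'\subseteq F$ versus those with $F\cap F'$ a single atom and $F'\setminus F\subseteq E_r$, which follows cleanly from modularity of the coatom $F$---is exactly the combinatorial fact behind that check, and your choice to work with the non-reduced generating set $\{r(j,F')\}_{j\in F'}$ rather than the reduced retrograde-pair basis is a harmless variant.
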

\begin{proof}
Check that in the quadratic Gr\"obner basis presentation
of Theoerem~\ref{thm: shriek-presentations} for $A^!$, if a quadratic term is divisible by a variable $y_j$ with $j\in E_r$, then every term is divisible by such a variable.
\end{proof}

It only remains to identify the kernel of the surjection in Proposition~\ref{prop: zeroing-variables-surjection}.  Recall that $H$ is the setwise $G$-stabilizer subgroup for the modular coatom $F$,
and $\mathcal X$ is the permutation or signed permutation representation of $H$ as it acts on $E_r$, with the $\kk$-basis of $\mathcal X$ denoted $\{t_j\}_{j \in E_r}$.

\begin{prop}
\label{prop: two-sided-ideal-image}
The $\kk$-linear map
$$
\begin{array}{rcl}
{\mathcal X} \otimes A^! & \longrightarrow & A^!\\
t_j \otimes y_{i_1} \cdots y_{i_p} & \longmapsto & y_{i_1} \cdots y_{i_p} y_j
\end{array}
$$
is injective, with image equal to the kernel of the surjection $A^! \twoheadrightarrow B^!$ in Proposition~\ref{prop: zeroing-variables-surjection}.
\end{prop}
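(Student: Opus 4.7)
My plan is to use the explicit standard monomial basis for $A^!$ established in Theorem~\ref{thm: shriek-presentations}(iv). Recall that every element of $A^!$ has a unique expansion in the $\kk$-basis of standard monomials of the form $m = m_1 m_2 \cdots m_r$, where each $m_p$ is a noncommutative word in the variables $\{y_i\}_{i \in E_p}$. Under the surjection $A^! \twoheadrightarrow B^!$ of Proposition~\ref{prop: zeroing-variables-surjection}, which kills every $y_j$ with $j \in E_r$, the standard monomials with $m_r = 1$ (empty last block) map bijectively to the standard monomial basis of $B^!$, and the standard monomials with $m_r \neq 1$ map to zero. Hence the kernel of $A^! \twoheadrightarrow B^!$ is precisely the $\kk$-span of those standard monomials having a nonempty last block.

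Next I would analyze the image of the map $f:\mathcal{X} \otimes A^! \to A^!$ sending $t_j \otimes a \mapsto a y_j$. The key observation is that right-multiplication by $y_j$ (for $j \in E_r$) preserves standardness: if $m = m_1 \cdots m_r$ is a standard monomial, then appending $y_j$ lengthens $m_r$ by a single letter from $E_r$, producing the standard monomial $m_1 \cdots m_{r-1}(m_r y_j)$, since the Gr\"obner basis relations of Theorem~\ref{thm: shriek-presentations} involve only retrograde pairs with indices in different blocks $E_p, E_q$. Therefore $f$ sends the $\kk$-basis $\{t_j \otimes m : j \in E_r,\ m \text{ standard}\}$ to the set of standard monomials of $A^!$ with nonempty last block.

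To see that this assignment is a bijection, I would note that a standard monomial $s = m_1 \cdots m_r$ with $m_r = y_{j_1} \cdots y_{j_k}$ and $k \geq 1$ can be recovered uniquely from the pair $(j_k, s') = (j_k,\ m_1 \cdots m_{r-1} y_{j_1} \cdots y_{j_{k-1}})$, since $j_k$ is the subscript of the final letter and $s'$ is the prefix. Consequently $f$ is injective, and its image is exactly the kernel of $A^! \twoheadrightarrow B^!$ identified above. The final step is to verify $H$-equivariance of $f$: for $g \in H$, the definition of the $H$-action on $\mathcal{X}$ and on $V^*$ (whether in the Orlik-Solomon or Varchenko-Gel'fand case) guarantees $g(t_j) = \epsilon \cdot t_{|g(j)|}$ and $g(y_j) = \epsilon \cdot y_{|g(j)|}$ with the same sign $\epsilon$, so $f(g(t_j \otimes a)) = \epsilon \cdot g(a) y_{|g(j)|} = g(a y_j) = g(f(t_j \otimes a))$.

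The argument is essentially bookkeeping with the Gr\"obner basis; the only mild obstacle is ensuring that the statement ``$my_j$ is again standard'' really holds, which requires the specific form of the relations in $\GG$ (they rewrite $y_j y_i$ for retrograde $(j,i)$ into expressions not introducing any new $E_r$-variables beyond those already present), and this is guaranteed by Lemma~\ref{rank-two-flat-lemma}. Once Theorem~\ref{thm: supersolvable-branching-exact-sequences}(ii) is assembled from Propositions~\ref{prop: zeroing-variables-surjection} and \ref{prop: two-sided-ideal-image} in this way, no further delicate arguments are needed.
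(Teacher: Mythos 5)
Your proof is correct and takes essentially the same approach as the paper: both use the explicit standard monomial basis of $A^!$ from Theorem~\ref{thm: shriek-presentations}(iv) to identify the kernel of $A^! \twoheadrightarrow B^!$ as the span of standard monomials with nonempty $E_r$-factor, and then observe these are in bijection with pairs (standard monomial, $E_r$-variable) by classifying according to the rightmost letter. Your write-up is a touch more explicit than the paper's (which phrases the kernel computation via the two-sided ideal $(y_j : j\in E_r)$ and the subspace $A^!_{(\geq 1)}$, and defers the $H$-equivariance check to a separate sentence after the proposition), but the underlying argument is identical.
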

\begin{proof}
Since the surjection $A^! \twoheadrightarrow B^!$ is induced by sending the variables $\{y_j\}_{j \in E_r}$ to zero, its kernel is the two-sided ideal $I=(y_j: j \in E_r) \subset A^!$ that they generate.  As in the proof of Theorem~\ref{thm: shrieks-have-right-NZD}, the presentation for $A^!$
described in Theorem~\ref{thm: shriek-presentations}  and its standard monomial 
$\kk$-basis identify this ideal $I$ as $A^!_{(\geq 1)}$, the span of standard monomials $m_1  m_2 \cdots m_{r-1} \cdot m_r$, with $m_p$ in the variable set $\{y_j\}_{j \in E_p}$, that have $\deg(m_r)\geq 1$.  Classifying such standard monomials according to their rightmost variable $y_j$ shows that $I=A^!_{(\geq 1)}$ is the image of the map in the current proposition.  The standard monomial basis for $A^!$ also shows that this map is injective.  
\end{proof}

Noting that the maps in Propositions~\ref{prop: zeroing-variables-surjection} and \ref{prop: two-sided-ideal-image} are both $H$-equivariant proves Theorem~\ref{thm: supersolvable-branching-exact-sequences}(ii).

%%%%%%%%%%%%%%%%%%%%%%%%%%%%%%%%%%%%%%%%%%%%%%
\section{Homotopy and holonomy Lie algebras}
\label{sec: deviations}
%%%%%%%%%%%%%%%%%%%%%%%%%%%%%%%%%%%%%%%%%%%%%%

In Section \ref{sec: koszul-algebras}, we defined a standard graded $\kk$-algebra to be Koszul if it had a (left-)free resolution of $\kk=A/A_+$ which is linear. It turns out (see \cite[Section 2.1]{PolishchukPositselski}) that this definition is equivalent to any of the following conditions:
\begin{enumerate}[(a)]
    \item $\Ext^i_A(\kk,\kk)_j = 0$ for $i\neq j$,
    \item $A$ is quadratic and $A^! \cong \Ext^\bullet_A(\kk,\kk)$,
    \item $A$ is quadratic and $(A_i^!)^\ast \cong \Tor_i^A(\kk,\kk)$,
    \item $A$ is generated by $A_1$ and the algebra $\Ext_A^\bullet(\kk,\kk)$ is generated by $\Ext^1_A(\kk,\kk)$.
\end{enumerate}

The next proposition explains why 
quadratic algebras $A$ that are either commutative or anti-commutative have quadratic duals $A^!$ which inherit a Hopf algebra structure from the tensor algebra, making $A^!$ the universal enveloping algebra of a graded Lie (super)-algebra.  It can be considered an elaboration on \cite[\S I.2 Examples 4,5]{PolishchukPositselski}.

\begin{prop}
\label{prop:P-and-P-implicit-statement}
When a quadratic algebra $A$ is anti-commutative (resp. commutative), its quadratic dual $A^!$ is not just a $\kk$-algebra, but actually a co-commutative Hopf algebra (resp. co-commutative signed Hopf algebra, in the sense of Cartier-Patras \cite[\S 3.9]{CartierPatras}).
Hence by the Cartier-Milnor-Moore Theorem, $A^!$ is the universal enveloping algebra $\cU(\cL)$ of the graded Lie algebra (resp. Lie superalgebra)  $\cL \subset A^!$ which is its $\kk$-subspace of primitive elements.
\end{prop}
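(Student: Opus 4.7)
The plan is to lift the cocommutative (super) Hopf algebra structure from the tensor algebra $T(V^*)$ down to $A^!=T(V^*)/J$ by verifying that $J$ is a (super) Hopf ideal.  Endow $T(V^*)$ with the cocommutative Hopf algebra structure in which every $y\in V^*$ is primitive, so $\Delta(y)=y\otimes 1+1\otimes y$.  When $A$ is commutative I will regard $T(V^*)$ as a super Hopf algebra by placing $V^*$ in odd parity, so that the product on $T(V^*)\otimes T(V^*)$ carries the Koszul sign; when $A$ is anti-commutative I will use ordinary (even) parity throughout.

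The central computation is the coproduct of a quadratic element.  Using that $\Delta$ is a (super) algebra map, for every $\xi\in T^2(V^*)=V^*\otimes V^*$ one finds
\[
\Delta(\xi)\;=\;\xi\otimes 1+1\otimes\xi+(\mathrm{id}+\epsilon\,\tau)(\xi),
\]
where $\tau$ is the flip on $V^*\otimes V^*$, and $\epsilon=+1$ (resp.\ $\epsilon=-1$) in the ordinary (resp.\ super) setting.  Because $J$ is generated by $J_2$ as a two-sided ideal and $\Delta$ is multiplicative, to conclude $\Delta(J)\subseteq J\otimes T(V^*)+T(V^*)\otimes J$ it suffices to treat $\xi\in J_2$.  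The boundary terms $\xi\otimes 1$ and $1\otimes\xi$ lie in that subspace automatically, but since $J_1=0$ forces the bidegree $(1,1)$ part of $J\otimes T(V^*)+T(V^*)\otimes J$ to vanish, I will need the middle term $(\mathrm{id}+\epsilon\,\tau)(\xi)$ itself to equal zero.

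The (anti-)commutativity hypothesis on $A$ supplies exactly this.  If $A$ is anti-commutative, then $I_2$ contains every symmetric $2$-tensor $x_i\otimes x_j+x_j\otimes x_i$ as well as each $x_i\otimes x_i$, so under the duality \eqref{bilinear-pairing-on-2-tensors} one has $J_2=I_2^\perp\subseteq\{\xi:\tau(\xi)=-\xi\}$ and hence $(\mathrm{id}+\tau)(\xi)=0$.  If $A$ is commutative, then $I_2$ contains every commutator $x_i\otimes x_j-x_j\otimes x_i$, so dually $J_2\subseteq\{\xi:\tau(\xi)=\xi\}$ and $(\mathrm{id}-\tau)(\xi)=0$.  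The antipode condition $S(J)\subseteq J$ reduces to the same symmetry check, since $S$ acts on $T^2(V^*)$ by a sign times the flip $\tau$.

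With $J$ a (super) Hopf ideal, $A^!$ inherits a connected graded cocommutative (super) Hopf algebra structure over $\kk$.  The Cartier--Milnor--Moore theorem in its ordinary or super version then identifies $A^!$ with the universal enveloping algebra $\cU(\cL)$, where $\cL\subset A^!$ is the subspace of primitive elements equipped with its induced (super) Lie bracket $[a,b]=ab-(-1)^{|a||b|}ba$.  The main bookkeeping obstacle is keeping careful track of Koszul signs in the super case; once the single symmetry identity on $J_2$ is checked, the rest follows from standard (super) Hopf algebra machinery together with the connectedness of the grading on $A^!$.
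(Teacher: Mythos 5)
Your proposal is correct and follows essentially the same strategy as the paper: both reduce the Hopf ideal condition to checking that elements of $J_2$ are primitive, then derive this from the observation that (anti-)commutativity of $A$ forces $I_2$ to contain the (anti-)symmetric tensors, hence $J_2=I_2^\perp$ lies in the opposite eigenspace of the flip $\tau$, which is exactly what makes the mixed degree-$(1,1)$ terms of $\Delta$ cancel. The paper phrases this concretely — it verifies by hand that every anti-commutator $[y_i,y_j]_-$ is primitive in the super Hopf algebra $T(V^*)$ — whereas you package the same computation abstractly as the vanishing of $(\mathrm{id}+\varepsilon\tau)$ on $J_2$; these are equivalent. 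Your extra remark about the antipode is harmless but unnecessary, since for a connected graded bialgebra the antipode exists automatically, so only the coideal condition needs verification.
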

\begin{proof}
Since $A^!:=\kk\langle \yy \rangle/J$ for a
two-sided (algebra) ideal $J$, it suffices to check
that $J$ is also a {\it co-ideal} for the co-product $\Delta$ on the Hopf algebra $H:=\kk\langle \yy \rangle=T(V)$, that is, $\Delta(J) \subseteq H\otimes J + J \otimes H$.  We give the argument for the case where $A$ is commutative; 
 the anti-commutative case is similar.  

 Since $J=H J_2 H$ is generated as a two-sided ideal by $J_2$, and since $\Delta: H \rightarrow H \otimes H$ is an algebra morphism, it suffices to check that $\Delta(J_2) \subseteq H\otimes J + J \otimes H$.
 Note that since the quadratic algebra $A=\kk \langle \xx \rangle/I$ is commutative, it must
 be that $I=(I_2)$ has $I_2$ containing the $\kk$-span of all commutators $[x_i,x_j]_+$.  Consequently, $J_2=I_2^\perp$ lies
 in the perp space of the span of all such commutators, which is the $\kk$-span of all anti-commutators $[y_i,y_j]_-$, allowing $i=j$ here.  
 
 {\bf Claim:} every anti-commutator $[y_i,y_j]_-$ is {\it primitive}, 
 meaning $\Delta [y_i,y_j]_-=1\otimes [y_i,y_j]_- + [y_i,y_j]_- \otimes 1$. 
 
 Assuming the claim,  every $j \in J_2$ is also primitive, so $\Delta j = 1\otimes j + j \otimes 1 \in H \otimes J + J \otimes H$, as desired.  Checking the claim is a standard calculation: when $a=y_i,$ and $b=y_j$ are both primitive, and of odd degree, then their anti-commutator is also primitive:
\begin{align*}
\Delta [a, b]_-
=\Delta(a b + b a)
& =(1 \otimes a + a \otimes 1)
(1 \otimes b + b \otimes 1)+
(1 \otimes b + b \otimes 1)(1 \otimes a + a \otimes 1)\\
&=1\otimes ab - b\otimes a + a\otimes b + ab \otimes 1
+ 1\otimes ba - a\otimes b + b\otimes a+ ba \otimes 1\\
&=1\otimes ab + ab \otimes 1
+ 1\otimes ba + ba \otimes 1\\
&=1 \otimes (ab+ba) + (ab+ba) \otimes 1
=1 \otimes  [a, b]_- +  [a, b]_- \otimes 1. \qedhere
\end{align*}
\end{proof}

\begin{remark}
If a quadratic algebra $A$ is neither commutative nor anti-commutative, then $A^!$ might not inherit a Hopf algebra structure from the tensor algebra. Consider the quadratic algebra
    $$
    A^! = \k[x,y]/(x^2+y^2) = \kk\langle x,y\rangle/(x^2+y^2, xy-yx).
    $$
    This is the quadratic dual of $A = \kk\langle x,y\rangle / (xy+yx,y^2-x^2)$, which is neither commutative nor anti-commutative.
    Notice that if the characteristic of $\kk$ is not equal to $2$, the ideal $J = (x^2+y^2, xy-yx) $ is \textit{not} a co-ideal for the coproduct $\Delta$ on the tensor algebra $H = \kk\langle x,y\rangle$:
    $$
    \Delta(x^2+y^2) = (x^2+y^2)\otimes 1 + 2 (x\otimes x +  y\otimes y) + 1\otimes (x^2+y^2),
    $$
    which one can check does not lie in $H\otimes J + J\otimes H$.
\end{remark}

%When $A$ is a (not necessarily quadratic) algebra which is (graded-)commutative, 

When $A=\bigoplus_{d=0}^\infty A_d$ is an associative standard graded $\kk$-algebra, so generated by $A_1$, and
is either commutative or anti-commutative, the
Yoneda algebra $\Ext^\bullet_A(\kk,\kk)$ 
has a natural coproduct giving it the structure of a graded Hopf algebra.  Therefore, $\Ext^\bullet_A(\kk,\kk)$ is also the universal enveloping algebra of a graded Lie (super-)algebra. See Avramov \cite[\S 10.1]{Avramov} for a discussion when $A$ is commutative, and Denham and Suciu \cite[\S 1]{DenhamSuciu} for the case where $A$ is anti-commutative.

\begin{definition}\label{def: homotopy-lie-algebra} 
In the above setting, the \textit{homotopy Lie algebra} $\pi_A$ is the graded Lie algebra or Lie superalgebra of primitive elements in the Yoneda algebra  $\Ext^\bullet_A(\kk,\kk)$ of $A$, that is, 
$$
\cU(\pi_A) \cong \Ext^\bullet_A(\kk,\kk).
$$
\end{definition}

\subsection{The holonomy Lie algebra} 
\label{sec: holonomy-lie-algebra}
Let $A=\bigoplus_{d=0}^\infty A_d$ be an associative graded $\kk$-algebra, with $\k$-basis $x_1,\dots, x_n$ for $V=A_1$; for the moment we do not assume that $A$ is generated by $A_1$. Then the \textit{decomposable} elements of $A_2$ are defined to be those in the image of the multiplication map
$$
\phi: A_1\otimes A_1 \to A_2.
$$
Letting $V=A_1^*$ have dual basis $y_1,\ldots,y_n$,
if one considers the dual of this multiplication map
$$
\phi^\ast: A_2^\ast \to (A_1\otimes A_1)^\ast \cong A_1^\ast \otimes A_1^\ast,
$$
% where the isomorphism follows because $A_1$ is assumed to be finite-dimensional.
then one has these identifications:
\begin{equation}
\label{eq: image-dual-multiplication-map}
\begin{aligned}
\im(\phi^\ast) 
&\cong \left(\frac{A_1\otimes A_1}{\ker\phi}\right)^\ast \cong \{f\in A_1^\ast \otimes A_1^\ast : f(\ker \phi) = 0\},\\
&\cong \left(\frac{T^2(V)}{I_2}\right)^\ast
\cong I_2^\perp =: J_2.
\end{aligned}
\end{equation}
Here we consider $J_2=I_2^\perp$ as a subspace of $T^2(V^*)=V^* \otimes V^*$,
with pairing $T^2(V^*) \times T^2(V) \rightarrow \kk$ just as in \eqref{bilinear-pairing-on-2-tensors}.
Now just as the proof of Proposition~\ref{prop:P-and-P-implicit-statement}, if one further assumes that $A$ is commutative (resp. anti-commutative), then $I_2$ contains the $\kk$-span of all commutators $[x_i,x_j]_+$
(resp. anti-commutators $[x_i,x_j]_-$).  
Consequently, $J_2=I_2^\perp$ lies
 in the perp space of the span of all such commutators or anti-commutators, which is 
 the $\kk$-span of all anti-commutators $[y_i,y_j]_-$,  allowing $i=j$ (resp. all commutators $[y_i,y_j]_+)$.
 In other words, $\im(\phi^*)=J_2$ is identified with
a subspace of $[A_1^*,A_1^*]_-$ or $[A_1^*,A_1^*]_+$ inside
$$
\Lie(V^*)=\Lie(A_1^*)=\Lie(y_1,\ldots,y_n) \subset T^*(A_1^*)
$$
where $\Lie(y_1,\ldots,y_n)$  denotes the {\it free Lie algebra} (resp. {\it free Lie superalgebra}) on $y_1,\ldots,y_n$ when $A$ is anti-commutative (resp. commutative).

%Because $A$ is (graded)-commutative by assumption, we know that for any $i\neq j$, the relation $x_i\otimes x_j - x_j \otimes x_i$ (in the commutative case) lies in $\ker\phi=I_2$; therefore, all $f\in \im (\phi^\ast)$ must lie in $[A_1^\ast, A_1^\ast]_- \subset \Lie(A_1^\ast)$, where $\Lie(A_1^\ast)$ denotes the free Lie super-algebra on the generators of $A_1^\ast$. 

\begin{definition}
\label{def: holonomy-lie-algebra} 
In the above context of an associative graded $\kk$-algebra $A$ which is either commutative or anti-commutative, define the \textit{holonomy Lie algebra} $\mathfrak h_A$ via the quotient
\begin{equation}
\label{eq: definition-holonomy-lie-alg}
\mathfrak h_A = 
\textrm{Lie}(A_1^\ast)/\langle \im(\phi^\ast)\rangle
=\textrm{Lie}(y_1,\ldots,y_n)/\langle J_2 \rangle.
\end{equation}
Here $\Lie(A_1^\ast)$ is the free Lie algebra (resp. free Lie superalgebra) on the $\kk$-basis $y_1, \dots, y_n$ for $V^*$ if $A$ is anti-commutative (resp. commutative), and $\langle J_2 \rangle=\langle I_2^\perp \rangle$ is the Lie ideal generated by $J_2=I_2^\perp$.
\end{definition}

The following result of L\"{o}fwall connects the holonomy and homotopy Lie algebras.

\begin{lemma} \cite{lofwall86}*{Theorem 1.1} 
\label{lem: Lofwall-result}
The universal enveloping algebra $\cU(\mathfrak h_A)$ of the holonomy Lie algebra $\mathfrak h_A$ equals the linear strand
of the Yoneda algebra $\Ext^\bullet_A(\kk,\kk)$.  That is,
\begin{align*}
\cU(\mathfrak h_A) &\cong
\bigoplus_{i\geq 0} \Ext_A^i(\kk,\kk)_i\\
&\subset \bigoplus_{i,j \geq 0} \Ext_A^i(\kk,\kk)_j = \Ext^\bullet_A(\kk,\kk).
\end{align*}
In particular, if $A$ is a Koszul algebra,
so $A^!=\Ext^\bullet_A(\kk,\kk)$ is equal to its own linear strand, one has 
$$
A^!=\cU(\cL) \text{ where }\cL={\mathfrak h}_A=\pi_A.
$$
\end{lemma}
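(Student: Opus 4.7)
The plan is to factor the statement through the \emph{quadratic cover} of $A$ and then reduce to the Hopf-algebra machinery already assembled in the excerpt. Write $V=A_1$ and let $\overline{A} := T(V)/(I_2)$, where $I_2 \subset V\otimes V$ is the quadratic part of the relations defining $A$. Then $\overline{A}$ is a quadratic algebra agreeing with $A$ through degree two, and it is commutative (resp.\ anti-commutative) whenever $A$ is. The two subgoals are (a) identify the linear strand $\bigoplus_{i\geq 0}\Ext^i_A(\kk,\kk)_i$ with the quadratic dual $\overline{A}^!$ as graded algebras, and (b) identify $\overline{A}^!$ with $\cU(\mathfrak h_A)$.

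For subgoal~(a), the idea is to examine a minimal graded free resolution $F_\bullet$ of $\kk$ over $A$: its linear strand depends only on the degree-$2$ part of the defining ideal of $A$, since the higher-degree relations contribute to non-linear entries further along the resolution. Equivalently, using $\Tor_i^A(\kk,\kk)_i \cong (\overline{A}^!_i)^*$ coming from the Priddy-type complex associated to the quadratic cover, one transfers the multiplicative structure on $\Ext^\bullet_A(\kk,\kk)$ in the diagonal bidegrees $(i,i)$ to the multiplicative structure on $\overline{A}^!$. This transfer of multiplicative structure is the technical heart of L\"ofwall's argument and is the step I expect to be the main obstacle: matching graded pieces is easy, but matching the algebra structures requires careful comparison via the bar or Priddy resolution of $\overline{A}$.

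For subgoal~(b), apply Proposition~\ref{prop:P-and-P-implicit-statement} to $\overline{A}$: its quadratic dual $\overline{A}^!$ inherits a cocommutative Hopf-algebra structure from the tensor algebra, so Cartier-Milnor-Moore yields $\overline{A}^! \cong \cU(\cL)$ with $\cL$ the subspace of primitive elements. Since $V^* \subset \overline{A}^!$ lies in $\cL$ and generates it as a Lie (super-)algebra, there is a surjection $\Lie(V^*) \twoheadrightarrow \cL$; moreover, the proof of Proposition~\ref{prop:P-and-P-implicit-statement} exhibited $J_2$ as consisting of primitives lying inside $\Lie(V^*)$ that vanish in $\overline{A}^!$, so they also vanish in $\cL$, descending the surjection to $\mathfrak h_A = \Lie(V^*)/\langle J_2\rangle \twoheadrightarrow \cL$. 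Conversely, from the presentation one has $\cU(\mathfrak h_A) = T(V^*)/\langle J_2\rangle_{\mathrm{alg}} = \overline{A}^!$, since passing from a quotient of a free Lie (super-)algebra by a Lie ideal to its universal enveloping algebra yields the corresponding associative quotient; PBW then forces equal Hilbert series on both sides, hence $\mathfrak h_A \cong \cL$.

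Finally, the ``in particular'' clause is immediate: when $A$ is Koszul one has $\Ext^i_A(\kk,\kk)_j = 0$ for $i \neq j$, so $A$ is quadratic with $\overline{A} = A$, the linear strand is all of $\Ext^\bullet_A(\kk,\kk) = A^!$, and the identifications collapse to $A^! = \cU(\mathfrak h_A) = \cU(\pi_A)$, forcing $\mathfrak h_A = \pi_A$.
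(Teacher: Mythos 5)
The paper itself gives \emph{no} proof of this lemma: it is simply cited to L\"ofwall \cite{lofwall86}*{Theorem 1.1}. So there is no internal proof to compare against; what follows evaluates your sketch on its own terms.

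Your two-step reduction (first identify the diagonal of $\Ext^\bullet_A(\kk,\kk)$ with $\overline{A}^!$ for the quadratic cover $\overline{A}=T(V)/(I_2)$, then identify $\overline{A}^!$ with $\cU(\mathfrak h_A)$) is the right decomposition, and it is essentially L\"ofwall's. One preliminary point you should make explicit but which does hold: $\mathfrak h_A$ depends on $A$ only through $I_2=\ker(\phi\colon V\otimes V\to A_2)$, so $\mathfrak h_A=\mathfrak h_{\overline A}$ by construction, which is what makes the passage to $\overline A$ legitimate. (A minor imprecision: $\overline A$ need not agree with $A$ in degree two when $A$ is not generated by $A_1$; rather $\overline A_2=\operatorname{im}\phi\hookrightarrow A_2$. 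This does not affect the argument, since only $I_2$ is used.) Your subgoal~(b) is carried out correctly: the identification $\cU(\Lie(V^*)/\langle J_2\rangle_{\mathrm{Lie}})\cong T(V^*)/(J_2)_{\mathrm{alg}}=\overline A^!$ follows from the universal property; the fact that $(J_2)$ is a Hopf ideal (the content of Proposition~\ref{prop:P-and-P-implicit-statement}) makes this a Hopf-algebra isomorphism; and the PBW comparison of Hilbert series forces the surjection $\mathfrak h_A\twoheadrightarrow\cL$ to be an isomorphism. (Implicitly this is a characteristic-zero argument, consistent with the paper's use of Cartier--Milnor--Moore in that proposition.)

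The genuine gap is subgoal~(a), and you acknowledge it yourself. That step is \emph{not} a routine transfer --- it is precisely where the content of L\"ofwall's theorem lives, and a proof cannot end by announcing it as ``the main obstacle.'' To close it one should argue, e.g., via the normalized bar complex computing $\Tor^A_\bullet(\kk,\kk)$: in internal degree $i$, the homological-degree-$i$ term of $\operatorname{Bar}(A)$ is exactly $V^{\otimes i}$ (the only composition of $i$ into positive parts of length $i$ is $(1,\dots,1)$), the incoming differential vanishes for degree reasons, and the outgoing differential is the alternating sum of adjacent multiplications $V\otimes V\to A_2$, whose kernel is $I_2$. Hence
$$
\Tor^A_i(\kk,\kk)_i \;=\; \bigcap_{j=1}^{i-1} V^{\otimes(j-1)}\otimes I_2\otimes V^{\otimes(i-j-1)} \;\cong\; \bigl((\overline A^!)_i\bigr)^{\ast},
$$
which depends only on $I_2$, as needed. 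Dualizing identifies the diagonal of $\Ext^\bullet_A(\kk,\kk)$ with $\overline A^!$ as graded vector spaces; to match the algebra structures one must then check that the Yoneda product on the diagonal corresponds under this identification to the tensor multiplication descending to $\overline A^!$, which can be done by writing Yoneda composition at the chain level for the bar resolution. Until that verification is supplied, the proposal is a correct plan but an incomplete proof.
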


We next give a simple presentation for the holonomy Lie algebra ${\mathfrak h}_A$ when $A= \OS(M)$ or $A=\VG(\OM)$.  In the case of $\OS(M)$, this is a well-known result of Kohno \cite{kohno1983holonomy}, but as far as the authors are aware, for $\VG(\OM)$ the presentation is new.
\begin{theorem} 
\label{thm:holonomy-lie-algebra-presentation}
The holonomy Lie algebra of $\OS(M)$ (resp. $\VG(\OM)$) for any simple (oriented) matroid $M$ (resp. $\OM$) is generated by the relations (\ref{unoriented-Kohno-relation}) (resp. (\ref{oriented-Kohno-relation}).
\end{theorem}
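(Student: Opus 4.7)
By Definition~\ref{def: holonomy-lie-algebra}, $\mathfrak{h}_A = \Lie(V^*)/\langle J_2\rangle$ with $J_2 = I_2^\perp \subset T^2(V^*)$, so it suffices to identify $J_2$ as a subspace of $T^2(V^*)$ and show it is spanned by the Kohno-type relations \eqref{unoriented-Kohno-relation} or \eqref{oriented-Kohno-relation}. The plan is to exploit the flat-by-flat decomposition \eqref{flat-by-flat-shriek-computation}
\[
J_2 \;=\; \bigoplus_{F \in \cF}\bigl(T^2(V^*)_F \cap J_2\bigr), \qquad T^2(V^*)_F \cap J_2 \;=\; \bigl(T^2(V)_F \cap I_2\bigr)^{\perp},
\]
and reduce the problem to a local computation at each flat $F$.

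Only rank-one and rank-two flats can contribute. For $F$ of rank $\ge 3$, no pair of ground-set elements spans $F$, so $T^2(V)_F = 0$; for rank-one $F=\{i\}$, the one-dimensional space $T^2(V)_{\{i\}}=\kk\,x_i\otimes x_i$ lies entirely in $I_2$ because $x_i^2=0$ in both $\OS(M)$ and $\VG(\OM)$, yielding $T^2(V^*)_{\{i\}}\cap J_2=0$. Thus $J_2$ decomposes as a direct sum indexed by the rank-two flats.

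Next I would fix a rank-two flat $F$ with $e:=|F|$ and observe that simplicity forces $M|_F\cong U_{2,e}$. The NBC basis of Remark~\ref{rem: NBC-bases-respect-flats} applied to this uniform matroid consists of the $e-1$ pairs $\{\min F,i\}$, so $\dim \OS(M)_F=\dim \VG(\OM)_F=e-1$. Subtracting from $\dim T^2(V)_F=e(e-1)$ gives
\[
\dim\bigl(T^2(V^*)_F\cap J_2\bigr) \;=\; e(e-1)-(e-1)^2 \;=\; e-1.
\]
On the other hand, the $e$ relations $\{r(j,F)\}_{j\in F}$ (resp.\ $\{r^\pm(j,F)\}_{j\in F}$) satisfy exactly one linear dependence: $\sum_j r(j,F)=0$ by pairwise cancellation of $[y_j,y_k]_+$ with $[y_k,y_j]_+$, and $\sum_j r^\pm(j,F)=0$ using the rank-two chirotope antisymmetry $\chi_{\OM|_F}(k,j)=-\chi_{\OM|_F}(j,k)$. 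This yields $e-1$ linearly independent relations, matching the dimension.

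It then remains to verify that each Kohno relation actually lies in $J_2$, by pairing against the relevant quadratic generators of $I_2\cap T^2(V)_F$: the anti-commutators $[x_j,x_k]_+$ in the $\OS$ case (resp.\ the commutators $[x_j,x_k]_-$ and squares $x_i^2$ in the $\VG$ case), and the circuit relations $\partial(x_C)$ or $\partial^\pm(x_C)$ for the three-circuits $C\subseteq F$. Pairings against (anti-)commutators and squares vanish tautologically, since $r(j,F)$ is a sum of Lie commutators while $r^\pm(j,F)$ is a sum of anti-commutators. The orthogonality against three-circuit relations in the $\VG$ case is exactly the rank-two sign-cancellation computation \eqref{rank-two-OM-sign-vanishing} from the proof of Theorem~\ref{thm: shriek-presentations}; the analogous check for $\OS$ proceeds by the same short case analysis without chirotope signs. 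Once this orthogonality is confirmed, the Kohno relations span $T^2(V^*)_F\cap J_2$ for every rank-two flat $F$, so $J_2$ equals their total span and $\langle J_2\rangle$ in $\Lie(V^*)$ is the Lie (super)ideal they generate, which is the asserted presentation of $\mathfrak{h}_A$. The main obstacle is essentially bookkeeping: the only substantive step, orthogonality against the three-circuit relations, is local to a rank-two flat and has already been carried out in Section~\ref{sec:GB-for-shrieks}, so no genuinely new computation is required.
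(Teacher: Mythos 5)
Your proof is correct and follows essentially the same strategy as the paper's: reduce via the flat-by-flat decomposition of $J_2 = I_2^\perp$ to a single rank-two flat, do a dimension count, and exhibit the Kohno relations as a spanning set, verifying they lie in $J_2$ by the pairing computation already carried out in the proof of Theorem~\ref{thm: shriek-presentations}. Your bookkeeping differs slightly — you segregate the rank-one flats (where the squares $x_i^2$ force $T^2(V^*)_F\cap J_2=0$) and deduce $\dim(T^2(V)_F\cap I_2)=(e-1)^2$ from the NBC count $\dim A_F=e-1$, whereas the paper works directly with $U_{2,n}$ and enumerates $n^2-n+1$ linearly independent generators of $I_2$ — but both yield $\dim(T^2(V^*)_F\cap J_2)=e-1$ and the rest of the argument is identical. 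One assertion you should spell out (the paper leaves the analogous linear-independence claim implicit as well): that the $e$ relations $\{r(j,F)\}_{j\in F}$ satisfy \emph{exactly} one dependence. This follows since the monomial $y_a y_b$ appears with nonzero coefficient only in $r(a,F)$ and $r(b,F)$, and with opposite signs (resp.\ with chirotope signs $\chi(a,b)$ and $\chi(b,a)=-\chi(a,b)$ in the oriented case), so any linear dependence forces all coefficients to be equal, making $\sum_{j}r(j,F)=0$ the unique dependence up to scalar.
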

\begin{proof}
We give a proof for $\VG(\OM)$ analogous to 
L\"ofwall's proof \cite{lofwall-matroid} of Kohno's result for $\OS(M)$.
By \Cref{eq: image-dual-multiplication-map}, we can identify $\im(\phi^\ast)$ with $J_2:=I_2^\perp \subset \kk\langle \yy \rangle$, where $A=\kk\langle x_1,\ldots,x_n \rangle/I$.
There are three families of quadratic relations in the ideal $I$  presenting $\VG(\OM)$ to consider:
\begin{align}
\label{eq: variable-squared-relations}
   x_k^2  & \text{ for } k=1,2,\ldots,n,\\
\label{eq: commutativity-relations}
x_k x_\ell-x_\ell x_k
         & \text{ for }1 \leq k \leq \ell \leq n,\\
         \label{eq: relations-rank-2-flat}
     \partial^\pm(C):=  \sgn{C}{m} x_k x_\ell
        +  \sgn{C}{\ell} x_k x_m 
        +  \sgn{C}{k} x_\ell x_m  & 
       \text{ for circuits }
         C=\{k,\ell,m\}
         \text{ of size three.}
\end{align}
Recall from \eqref{flat-by-flat-shriek-computation} in 
\Cref{prop: flat-decomp} that the pairing $T^2(V^*) \otimes T^2(V) \rightarrow \kk$ makes $\kk\langle \xx\rangle_F$ and $\kk\langle \yy \rangle_{F'}$ pair to zero unless $F=F'$,
so that one can compute $J_2=I_2^\perp$ 
flat-by-flat, obtaining 
$$
[J_2 \cap \kk\langle \yy\rangle]_F=[I_2 \cap \kk\langle \xx\rangle]_F^\perp
$$
for all rank $2$ flats $F$ in $\cF$.
From this one sees that it suffices to prove 
the result assuming $\OM=\OM|_F \cong U_{2,n}$,  
a uniform rank $2$ matroid on $E=\{1,2,\ldots,n\}$ with one rank 2 flat $F=E$.

The quadratic part $I_2$ contains these $n + \binom{n}{2} + \binom{n-1}{2}=n^2-n+1$ elements among 
\eqref{eq: variable-squared-relations},
\eqref{eq: commutativity-relations},
\eqref{eq: relations-rank-2-flat}: 
\begin{itemize}
\item $n$ of the form $x_i^2$,
\item $\binom{n}{2}$ of the form $x_i x_j - x_j x_i$ for $i<j$, and
\item $\binom{n-1}{2}$ of the form $\partial^\pm(C)$ for circuits $C=\{1,i,j\}$ with $1 < i < j \leq n$.
\end{itemize}
One can also easily check that they are $\kk$-linearly independent inside $T^2(V)$. 
Consequently one has 
$$
\dim_\kk J_2=\dim_\kk I_2^\perp= \dim_\kk T^2(V^*) - \dim_\kk I_2
\leq n^2- (n^2-n+1)=n-1.
$$
On the other hand, the proof of \Cref{thm: shriek-presentations} showed each of these elements
from \eqref{oriented-Kohno-relation} lies in $I_2^\perp$:
$$
r^\pm(j,E):=\sum_{\substack{1 \leq k \leq n:\\k \neq j}}
\chi_{\OM|_F}(j,k) \cdot [y_j,y_k]_-
=\sum_{\substack{1 \leq k \leq n:\\k \neq j}} 
\chi_{\OM|_F}(j,k) \cdot (y_jy_k + y_k y_j),
$$
But the subset $\{ r^\pm(j,E) \}_{1\leq j\leq n-1}$ gives $n-1$ such elements 
which are linearly independent in $T^2(V^*)$, and therefore they span $J_2=I_2^\perp$.
\end{proof}

\subsection{PBW decomposition} When the Koszul algebra $A$ is commutative
or anti-commutative, we can use variants of the Poincar\'e-Birkhoff-Witt (PBW) Theorem to relate the ($G$-equivariant) Hilbert series for $A^! = \cU(\cL)$ and that of the graded Lie algebra
$\cL=\bigoplus_{d=0}^\infty \cL_d$.

\begin{remark} We state the results in this section assuming that the characteristic of $\kk$ is zero; however, these results  can be extended to arbitrary characteristic by replacing the symmetric algebra $\Sym(V)$ or symmetric powers $\Sym^k(V)$ with the \textit{divided power algebra} $D(V)$ or a divided power $D^k(V)$ in every place it appears.
\end{remark}

\subsubsection{The anti-commutative case}
When $A$ is \textit{anti}-commutative, 
and $\kk$ has characteristic zero, 
the PBW Theorem gives a graded $\kk$-vector space isomorphism 
$A^! = \cU(\cL) \cong \Sym(\cL)$.
Therefore, we have the Hilbert series relation
\begin{equation}\label{eq: hilbert-series-A-shriek}
    \Hilb(A^!,t)
    =\Hilb(\cU(\cL),t)
    =\Hilb(\Sym(\cL),t)
    =\prod_{d=1}^\infty\frac{1}{(1-t^d)^{\varphi_d}},
\end{equation}
where $\varphi_d=\dim_\kk \cL_d$;  see \cite[\S 2.2 Example 2]{PolishchukPositselski}.

\begin{remark}
The \textit{lower central series} (LCS) of a finitely-generated group $G$ is a chain of normal subgroups $G = G_1 \geq G_2 \geq \dots $ defined recursively by $G_k = [G_{k-1},G]$. Kohno \cite{Kohno} used the topological interpretation of the Orlik-Solomon algebra that we will discuss in \Cref{sec: topology} to investigate the LCS of the homotopy group of the complement of a complex hyperplane arrangement. By studying the holonomy Lie algebra of the Orlik-Solomon algebra of the braid arrangement, Kohno proved that the ranks $\varphi_d$ of the successive quotients in the lower central series of the homotopy group of the complement of the braid arrangement satisfy \Cref{eq: hilbert-series-A-shriek}. Falk and Randell \cite{falk-randell-LCS} later showed that this formula also holds for supersolvable arrangements, and Shelton and Yuzvinsky \cite{shelton1997koszul} proved that an LCS formula of the form in \Cref{eq: hilbert-series-A-shriek}  holds if and only if the Orlik-Solomon algebra of the arrangement is Koszul. Peeva \cite{Peeva} gave another proof that the LCS formula of this form holds for supersolvable arrangements using the fact that they have a quadratic Gr\"{o}bner basis.
% This is  related to Lower Central Series Formulas discussed by Kohno \cite{Kohno}, Peeva \cite{Peeva}, and Yuzvinsky \cite[\S 6.6]{Yuzvinsky}.
\end{remark}

Any group $G$ of graded $\kk$-algebra symmetries of $A$, and therefore of $A^!$, will also act
as graded Lie algebra symmetries of $\cL$.  
The PBW Theorem then gives
these equalities in $R_\kk(G)[[t]]$:
\begin{equation}
\label{skew-commutative-primitive-formula}
\begin{aligned}
\Hilb_\eq(A^!,t)=\Hilb_\eq(\cU(\cL),t)
&=\Hilb_\eq(\Sym(\cL),t)\\
 &= \Hilb_\eq\left( \Sym\left( \bigoplus_{d=0}^\infty \cL_d \right),t \right)\\
        &=\sum_{\lambda=(1^{m_1} 2^{m_2} \cdots)} t^{|\lambda|} 
   \prod_{j \geq 1} \left[ \Sym^{m_j}\cL_j \right]. 
\end{aligned}
\end{equation}
In \Cref{sec: rep-stability}, we will use this description to investigate representation stability for $\cL$ in the setting where $A$ is anti-commutative.

\subsubsection{The commutative case}
Similarly, when $A$ is \textit{commutative},
Polishchuk and Positselski discuss in \cite[\S 1.2, Example 4]{PolishchukPositselski} how $A^! = \cU(\cL)$ for the (graded) Lie superalgebra $\cL=\bigoplus_{d=}^\infty \cL_d$ over $\kk$,
in which the parity is induced by the grading, that is,
\begin{align}
\cL_{\mathrm{even}}&=\bigoplus_{d \equiv 0 \bmod{2}} \cL_d,\\
\cL_{\mathrm{odd}}&=\bigoplus_{d \equiv 1 \bmod{2}} \cL_d.
\end{align}

The graded version of the PBW Theorem (see Milnor and Moore \cite[Thm. 5.15] {MilnorMoore}, Ross \cite[Thm. 2.1]{Ross},  Scheunert \cite[\S 2.3 Thm. 1]{Scheunert}),
asserts that when $\kk$ is a field of characteristic zero, one 
has a graded $\kk$-vector space isomorphism
$$
A^! = \cU(\cL) \cong \Sym_\pm (\cL) 
:= \Sym( \cL_{\textrm{even}} ) \otimes \wedge( \cL_{\textrm{odd}} )
$$ 
and hence a Hilbert series relation
$$
\Hilb(A^!,t)
=\Hilb(\cU(\cL),t)
=\Hilb(\Sym_\pm(\cL),t)
=\frac{ \prod_{d \text{ odd}} (1+t^d)^{\varphi_d} }
{ \prod_{d \text{ even}, d\geq 2} (1-t^d)^{\varphi_d} }
$$
where $\varphi_d=\dim_\kk \cL_d$.  

\begin{remark}
For \textit{any} formal power series $P(t) = 1+ \sum_{j\geq 1} b_j t^j$ with $b_j\in \ZZ$, there exist uniquely defined $\varphi_d$ such that
$$
P(t) = \frac{ \prod_{d \text{ odd}} (1+t^d)^{\varphi_d} }
{ \prod_{d \text{ even}, d\geq 2} (1-t^d)^{\varphi_d} }.
$$
If $P(t)=\sum_{d=0}^\infty \dim_\kk \Tor_j^R(\kk,\kk)$ is the {\it Poincar\'e series} of a Noetherian commutative ring $R$ in either
\begin{itemize}
    \item the {\it local setting}, where $(R,\mathfrak m)$ is a local ring with residue field $\kk=R/\mm$, or 
    \item the {\it graded setting}, where $R=\bigoplus_{d=0}^\infty$
    is an $\NN$-graded commutative $\kk$-algebra with $R_0=\kk$,
\end{itemize}
the exponent $\varphi_d$ is called the $d^{th}$ \textit{deviation} of the ring $R$.  This is because the nonvanishing of the $\varphi_d$ measures whether $R$ ``deviates'' from being a regular ring or a complete intersection in precise senses: 
\begin{itemize}
    \item $R$ is \textit{regular} if and only if $\varphi_2 = \varphi_3 = \dots = 0$; see \cite[7.3.2]{Avramov}
    \item $R$ is a \textit{complete intersection} if and only if $\varphi_3 = \varphi_4 = \dots = 0$; see \cite[7.3.3]{Avramov}.
\end{itemize}
Moreover, in the local setting one can always resolve $\kk$ over $R$ via an \textit{acyclic closure}; this was first proven in \cite{gulliksenLevin69}. See \cite[\S 6.3, \S 7, \S 10.2]{Avramov} for an in-depth discussion in the local setting; analogous results hold for commutative Noetherian graded $\kk$-algebras.
Informally, an acyclic closure is built by recursively adjoining formal variables to represent boundaries of any cycles that appear while computing an $R$-free resolution of $\kk$. The number of formal variables that one must adjoin in homological degree $d$ is exactly $\varphi_d$, which predicts the dimension of the $d^{th}$ graded component of the indecomposables within $\Tor^R(\kk,\kk)$.
Since the graded dual of $\Tor^R(\kk,\kk)$ is exactly $\Ext_R(\kk,\kk)$, the space of indecomposables of $\Tor^R(\kk, \kk)$ is the graded dual to the space of primitives in $\Ext_R(\kk,\kk)$, so that $\varphi_d = \dim_\kk \cL_d$.
\end{remark}

Again, in the presence of a group $G$ of  graded $\kk$-algebra symmetries acting on $A, A^!$, one also has these 
equalities in $R_\kk(G)[[t]]$: 
\begin{equation}
\label{commutative-primitive-formula}
\begin{aligned}
\Hilb_\eq(A^!,t)&=\Hilb_\eq(\cU(\cL),t)\\
&=\Hilb_\eq(\Sym_\pm(\cL),t)\\
 &= \Hilb_\eq\left( \wedge\left( \cL_{\mathrm{odd}} \right) \otimes  \Sym\left( \cL_{\mathrm{even}} \right),t \right)\\
        &=\sum_{\lambda=(1^{m_1} 2^{m_2} \cdots)} t^{|\lambda|}  
   \prod_{j\text{ odd}} \left[ \wedge^{m_j}\cL_j \right] \cdot
   \prod_{j\text{ even},j \geq 2} \left[ \Sym^{m_j}\cL_j \right]. 
\end{aligned}
\end{equation}
We will use this description in Section~\ref{sec: rep-stability} to investigate representation stability for $\cL$ in the setting where $A$ is commutative.

\begin{example}
\label{Boolean-matroids-re-revisited}
Let us return to the Boolean matroid $U_{n,n}$
discussed in Example~\ref{ex: Boolean-matroid-1} and Section~\ref{sec: Boolean-matroids-revisited}, but now considered as an {\it oriented matroid} represented by the standard basis vectors $v_1,\ldots,v_n$ in $\RR^n$. Since
the $\{v_i\}$ are linearly independent, there are no circuits, and the graded Varchenko-Gel'fand ring $A=\VG(U_{n,n})$ and its
Koszul dual $A^!$ have these descriptions:
\begin{align*}
A&=\kk[x_1,\ldots,x_n]/(x_1^2,\ldots,x_n^2)\\
&=\kk\langle x_1,\ldots,x_n\rangle/(x_k^2, x_i x_j-x_j x_i)_{1 \leq k \leq n, 1 \leq i < j \leq n}\\
&\\
A^!&=\kk\langle y_1,\ldots,y_n\rangle/
(y_i y_j+y_j y_i)_{1 \leq i < j \leq n}
\end{align*}
The oriented matroid automorphisms is $\Aut(\OM)$ are the full hyperoctahedral group $\symm_n^\pm$, in which a signed permutation $w$ with  $w(v_i)=\pm v_j$ acts on the variables via $w(x_i)=\pm x_j, w(y_i)=\pm y_j$.

We next analyze the graded $\kk \symm_n^\pm$-modules $A, A^!$ when $\kk$ has characteristic zero. To do this, first recall (e.g., from Geissinger and Kinch \cite{Geissinger}, Macdonald \cite[Chap.~1.~App. B]{Macdonald}) that
irreducible $\kk \symm_n^\pm$-modules
$\specht^{(\lambda^+,\lambda^-)}$
are indexed by ordered pairs of partitions 
$(\lambda^+,\lambda^-)$ where 
$|\lambda^+|=n_+,|\lambda|=n_-$ with $n_+ + n_-=n$.
One can construct $\specht^{(\lambda^+,\lambda^-)}$ using the irreducible $\kk \symm_n$-modules $\{ \specht^\mu\}$ as building blocks as follows. Introduce the operation of {\it inflation} $U \longmapsto U\Uparrow$ of a $\kk \symm_n$-module
$U$ to a $\kk\symm_n^\pm$-module by precomposing with the group surjection $\pi: \symm_n^{\pm} \longrightarrow \symm_n$ that ignores the $\pm$ signs in a signed permutation.
Also introduce the one-dimensional character
$\chi_\pm: \symm_n^{\pm} \rightarrow \{\pm 1\}$
sending a signed permutation $w$ to the product of its $\pm 1$ signs, that is, $\chi_\pm(w):=\det(w)/\det(\pi(w))$.
Then starting with irreducible $\kk \symm_n$-modules
$\specht^\lambda$, one builds $\specht^{(\lambda^+,\lambda^-)}$ as follows:
$$
\specht^{(\lambda^+,\lambda^-)}
:= \left(
\specht^{\lambda^+}\Uparrow
\quad \otimes \quad
\left(
\chi_\pm  \otimes  ( \specht^{\lambda^-} \Uparrow )\right)
\right)\big\uparrow_{\symm_{n_+}^\pm \times \symm_{n_-}^\pm}^{\symm_n^\pm}.
$$

For example, this identifies the graded component $A_i$ of $A=\kk[x_1,\ldots,x_n]/(x_1^2,\ldots,x_n^2)$
as the irreducible $\kk \symm_n^\pm$-module
$\specht^{((n-i),(i))}$.  This is because it is a direct sum of the $\binom{n}{i}$ lines which are the $\symm_n^\pm$-images
of the line $L:=\kk \cdot x_1 x_2 \cdots x_i$.
This line $L$ is 
stabilized setwise by the subgroup $\symm_{n-i}^\pm \times \symm_i^\pm$, with the $\symm_{n-i}^\pm$ factor acting trivially, and the
$\symm_i^\pm$ factor acting via $\chi_\pm$.
Hence one has
$$
\Hilb_\eq(A,t)=\sum_{i=0}^n t^i \cdot [\specht^{((n-i),(i))}].
$$

We next analyze $A^!$ as a $\kk \symm_n^\pm$-module.
Since $(x_1^2,\ldots,x_n^2)$ is a regular sequence in $\kk[\xx]$, the quotient $A$ is a complete intersection, and $\cL_1=V^*=\mathrm{span}_\kk\{y_1,\ldots,y_n\}$ and $\cL_2=\mathrm{span}_\kk\{ y_1^2,\ldots,y_n^2\}$.  This gives a $\kk\symm_n^\pm$-module isomorphism
\begin{align}
A^! &\cong \wedge_\kk \cL_1 \otimes_\kk \Sym \cL_2 \notag\\
&\label{boolean-VG-shriek-tensor-factors}
= \wedge_\kk (y_1,\ldots,y_n)  \otimes_\kk \kk[y_1^2,\ldots,y_n^2].
\end{align}

One can analyze each tensor factor in \eqref{boolean-VG-shriek-tensor-factors} separately.  An analysis similar to the
one for $A_i$ gives an $\kk \symm_n^\pm$-module isomorphism
$$
\wedge^i_\kk (y_1,\ldots,y_n)
\cong 
\specht^{((n-i),(1^i))}.
$$
In the other tensor factor of \eqref{boolean-VG-shriek-tensor-factors}, the action of $\symm_n^\pm$ on 
$\kk[y_1^2,\ldots,y_n^2]$ is inflated through
the surjection $\pi:\symm_n^\pm \longrightarrow \symm_n$, letting one compute its $\symm_n^\pm$-equivariant Hilbert series from the one for $\symm_n$ on $\kk[y_1,\ldots,y_n]$ given
in \eqref{Lusztig-Stanley-formula}, and doubling the grading.  The upshot is this equivariant Hilbert series:
\begin{align}
\notag
\Hilb_\eq(A^!,t)&= 
\Hilb_\eq(\kk[y_1^2,\ldots,y_n^2],t) \cdot
\Hilb_\eq(\wedge \{y_1,\ldots,y_n\},t) \\
\notag
&=
\frac{1}{(1-t^2)(1-t^4)\cdots(1-t^{2n})}\left(
\sum_{Q} t^{2 \mathrm{maj}(Q)} \cdot [\specht^{(\lambda(Q),\varnothing)}]
\right)
\left(
\sum_{i=0}^n t^i  \cdot [\specht^{((n-i),(1^i))}]
\right)\\
\label{type-B-inflated-Lusztig-Stanley-formula}
&=
\frac{
\sum_{i=0}^n \sum_{Q} t^{2 \mathrm{maj}(Q)+i} \cdot [\specht^{(\lambda(Q),\varnothing)}]
 \cdot [\specht^{((n-i),(1^i))}]
}{(1-t^2)(1-t^4)\cdots(1-t^{2n})}
\end{align}
where in the sums above, $Q$ ranges over all standard Young tableaux with $n$ cells.
\end{example}

%%%%%%%%%%%%%%%%%%%%%%%%%%%%%%%
\section{Topological interpretations of $\OS(M), \VG(\OM)$ and Koszul duality}
\label{sec: topology}

Orlik-Solomon algebras $\OS(M)$ have their origins
in the following result.
\begin{theorem}
\label{OS-theorem}
    \cite{OrlikSolomon}
For an arrangement $\cA=\{H_1,\ldots,H_n\}$ of
linear hyperplanes in $\CC^r$ with normal vectors
$v_1,\ldots,v_n$ representing a matroid $M$, the cohomology ring of their complement 
$
X:=\CC^r \setminus \bigcup_{H \in \cA} H
$
has presentation (using any coefficient ring $\kk$) as
$$
H^*(X,\kk) \cong \OS(M).
$$
\end{theorem}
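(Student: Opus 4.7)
The plan is to construct an explicit algebra homomorphism $\varphi\colon \OS(M) \to H^*(X;\kk)$ using logarithmic one-forms, verify that it is well-defined, and then check that it is an isomorphism by an inductive deletion-restriction argument on $|\cA|=n$.

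First, choose linear forms $\alpha_i\colon \CC^r \to \CC$ with $H_i = \ker \alpha_i$ and dual normal vectors $v_i$, and consider the closed holomorphic one-forms $\omega_i := \frac{1}{2\pi i}\,\frac{d\alpha_i}{\alpha_i}$ on $X$, with cohomology classes $[\omega_i] \in H^1(X;\kk)$. I would define $\varphi$ on generators by $x_i \mapsto [\omega_i]$. Anti-commutativity in $\OS(M)$ is automatic from the wedge product on one-forms. For a circuit $C=\{c_1<\cdots<c_k\}$ in $\cC$, the linear dependence $\sum_j \lambda_j v_{c_j}=0$ (with all $\lambda_j\neq 0$) translates into a linear relation $\sum_j \mu_j \alpha_{c_j} = 0$ among the defining forms on $\CC^r$. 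A direct computation with logarithmic differentials then shows
\begin{equation*}
\sum_{j=1}^{k}(-1)^{j-1}\,\omega_{c_1}\wedge\cdots\wedge \widehat{\omega_{c_j}}\wedge \cdots\wedge \omega_{c_k} = 0
\end{equation*}
on the nose in $\Omega^{k-1}(X)$. This matches the boundary relation $\partial(x_C)$ defining $I_{\OS(M)}$, so $\varphi$ descends to an algebra map from $\OS(M)$.

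To see that $\varphi$ is an isomorphism, I would induct on $n=|\cA|$, the base case $n=0$ being $X=\CC^r$ (contractible). For the inductive step, choose a hyperplane $H_n\in\cA$, and set $\cA' := \cA\setminus\{H_n\}$, $\cA'' := \{H\cap H_n : H\in \cA'\}$, with matroid deletion $M' = M\setminus\{n\}$ on $\cA'$ and restriction $M'' = M/\{n\}$ modeling $\cA''$ (inside $H_n\cong \CC^{r-1}$). One has complements $X',X''$. On the topological side, a Gysin/Leray-residue argument produces a short exact sequence of graded $\kk$-modules
\begin{equation*}
0 \longrightarrow H^{\bullet}(X';\kk) \longrightarrow H^{\bullet}(X;\kk) \overset{\mathrm{Res}}{\longrightarrow} H^{\bullet-1}(X'';\kk) \longrightarrow 0,
\end{equation*}
where the right-hand map is the residue along $H_n$. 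On the algebraic side, the analogous deletion-restriction sequence
\begin{equation*}
0 \longrightarrow \OS(M') \longrightarrow \OS(M) \longrightarrow \OS(M'') \longrightarrow 0
\end{equation*}
(with the right map sending $x_n\mapsto 1$ and $x_i\mapsto x_i$ for $i<n$, with a shift by one in degree) is standard. A diagram chase, plus verification that $\varphi$ intertwines these two short exact sequences (the residue of $\omega_n$ along $H_n$ is constant, and the residues of $\omega_i$ for $i<n$ reproduce the logarithmic forms for $\cA''$), reduces the claim to the inductive hypothesis on $X'$ and $X''$.

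The principal obstacle is establishing the topological short exact sequence and identifying its maps with the algebraic ones, i.e., Brieskorn's lemma that $H^*(X;\kk)$ is generated by the classes $[\omega_i]$. Geometrically, one can either realize the inclusion $X\hookrightarrow X'$ via a tubular neighborhood of $H_n\cap X'$ and run Mayer-Vietoris, or exhibit $X$ as a quasi-fibration over a smaller complement with punctured-line fibers as in Arnol'd's original braid-arrangement argument. Once that splitting is in hand and $\varphi$ is known to be surjective, injectivity follows immediately from dimension-counting, since Corollary~\ref{cor: primal-quadratic-GB} and the NBC-basis description give $\dim_\kk \OS(M)_i = \dim_\kk H^i(X;\kk)$ in every degree by induction.
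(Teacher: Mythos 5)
The paper does not prove this theorem; it only cites \cite{OrlikSolomon}. Your sketch is therefore a self-contained argument where the paper has only a reference, and it follows the classical deletion--restriction route of Orlik--Solomon and Orlik--Terao. The outline is essentially sound, with a few remarks worth making.

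First, your claim that $\varphi(\partial x_C)=0$ holds on the nose as an identity of logarithmic forms (not merely in cohomology) is correct and is the key well-definedness step: if $\sum_j \lambda_j\alpha_{c_j}=0$ with all $\lambda_j\neq 0$, a direct computation shows the alternating sum of the $(k-1)$-fold wedges factors as a scalar multiple of $\bigl(\sum_j\lambda_j\alpha_{c_j}\bigr)\cdot d\alpha_{c_1}\wedge\cdots\wedge d\alpha_{c_{k-1}}\big/\prod_j\alpha_{c_j}$, which vanishes. This is the Arnol'd/Orlik--Solomon identity.

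Second, your citation of Corollary~\ref{cor: primal-quadratic-GB} at the end is misplaced: that corollary assumes supersolvability, while this theorem is for an arbitrary (representable) matroid $M$. The NBC-basis description you want is the unnumbered theorem in Section~\ref{sec:broken-circuits}, which holds for every matroid. More to the point, once you have the two parallel short exact sequences and have checked that $\varphi$ intertwines them, the five-lemma closes the induction directly; you do not actually need to count dimensions or know the NBC basis at all. The dimension comparison is an alternative, but then both Poincar\'e polynomials should just be seen to satisfy the same deletion--restriction recurrence with the same base case, again without NBC.

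Third, you correctly identify the true obstacle: establishing the topological short exact sequence, i.e.\ Brieskorn's lemma that the classes $[\omega_i]$ generate $H^*(X;\kk)$ together with the splitting of the deletion--restriction (Gysin) long exact sequence. Your outline gestures at the tubular-neighborhood/Mayer--Vietoris or quasi-fibration arguments but does not carry them out; that is precisely the content of the cited paper and of Chapter~5 of Orlik--Terao, so the sketch is incomplete exactly where the original proof is hard.
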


\noindent
An analogue for $\VG(\OM)$ was given
by de Longueville and Schultz \cite{DelonguevilleSchultz} and later 
Moseley \cite{Moseley}.

\begin{theorem}
\label{Moseley-theorem}
  \cite[Cor. 5.6]{DelonguevilleSchultz},  
Moseley \cite{Moseley}
For an arrangement $\cA=\{H_1,\ldots,H_n\}$ of
linear hyperplanes in $\RR^r$ with normal vectors
$v_1,\ldots,v_n$  representing an oriented matroid $\OM$, the cohomology ring of their ``$\RR^3$-thickened" complement 
$
X_{\RR^3}:=\left( \RR^r \otimes_\RR \RR^3\right)
\setminus \bigcup_{H \in \cA} \left( H\otimes_\RR \RR^3 \right)
$
has presentation (using any coefficient ring $\kk$) as
$$
H^*(X_{\RR^3},\kk) \cong \VG(\OM),
$$
with the cohomology concentrated in even
degrees, so that the isomorphism halves the grading.
\end{theorem}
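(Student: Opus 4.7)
The plan is to compute $H^*(X_{\RR^3}, \kk)$ using a Goresky--MacPherson--style formula for the cohomology of complements of real subspace arrangements, then identify the ring structure via explicit degree-two cohomology classes, and match this against the presentation of $\VG(\OM)$ from its definition.

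First, I would exploit the crucial geometric observation that each subspace $H_i \otimes_\RR \RR^3$ has real codimension $3$ in $\RR^r \otimes \RR^3$, so its normal link is an $S^2$ rather than the $S^1$ appearing in Theorem~\ref{OS-theorem}. Tensoring with $\RR^3$ does not change the intersection pattern, so the intersection lattice of $\{H_i \otimes \RR^3\}_{i=1}^n$ is canonically identified with the lattice of flats $\cF$ of $M$ via $F \mapsto \bigcap_{i \in F} H_i \otimes \RR^3$. Applying the Goresky--MacPherson formula then yields a $\kk$-module decomposition
\[
H^*(X_{\RR^3}, \kk) \;\cong\; \bigoplus_{F \in \cF} \tilde H^{\,\bullet}\!\bigl(\Delta(\hat 0, F);\, \kk\bigr),
\]
with a degree shift of $2\,\mathrm{rk}(F)$ coming from the codimensions of the strata. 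This alone forces all cohomology into even degrees, and gives the $F$-summand total dimension equal to the unsigned M\"obius number $|\mu(\hat 0, F)|$, which matches $\dim_\kk \VG(\OM)_F$ using the NBC basis from Section~\ref{sec:broken-circuits} together with the flat decomposition of Proposition~\ref{prop: flat-decomp}.

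Next, to get the ring structure, I would build explicit classes $e_i \in H^2(X_{\RR^3}, \kk)$ as pullbacks of the $S^2$ fundamental class along the ``normalized normal'' maps $X_{\RR^3} \to S^2$ sending $v \mapsto \phi_i(v)/\|\phi_i(v)\|$, where $\phi_i \colon \RR^r \otimes \RR^3 \to \RR^3$ is induced by a defining linear functional for $H_i$. Since the cohomology ring $H^*(S^2)$ is generated in degree $2$ by a class squaring to zero, one immediately obtains $e_i^2 = 0$, recovering the square generators $x_i^2$ of $I_{\VG(\OM)}$. The signed-circuit relations $\partial^\pm(x_C) = 0$ reduce, via the flat decomposition, to a local check on the rank-two flat spanned by each minimal dependent set $C$, where the identity is a multilinear relation among the Gauss map pullbacks whose coefficients are exactly the chirotope values $\chi_{\OM|_F}$ (the same calculation that appears in the vanishing computation \eqref{rank-two-OM-sign-vanishing} in the proof of Theorem~\ref{thm: shriek-presentations}). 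Together, these yield a surjective $\kk$-algebra map $\VG(\OM) \twoheadrightarrow H^*(X_{\RR^3}, \kk)$, and the Hilbert-series equality from the Goresky--MacPherson computation (halving the topological grading) upgrades surjectivity to an isomorphism.

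The main obstacle is the signed-circuit check: extracting the exact $\chi_{\OM|_F}$-signs from pullbacks of spherical volume forms requires careful orientation bookkeeping, and is where the oriented matroid structure (rather than merely the underlying matroid) enters in an essential way. A cleaner workaround, which I would adopt if the de Rham calculation becomes unwieldy, is to replace the Gauss-map picture at this final step by a Salvetti-type regular CW model for $X_{\RR^3}$, in which each codimension-$3$ subspace contributes a $2$-cell per region-pair and the cellular cup product makes the signed-circuit identity a purely combinatorial sign computation in the cellular cochain complex.
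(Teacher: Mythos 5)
The paper does not prove this theorem itself but cites it to de~Longueville--Schultz and Moseley, so I am assessing the internal soundness of your sketch rather than comparing against a proof in the paper. Your outline --- Goresky--MacPherson for the $\kk$-module structure (codimension-$3$ strata, hence even-degree concentration and the correct flat-by-flat dimensions matching $\VG(\OM)_F$), Gauss-map classes $e_i\in H^2(X_{\RR^3},\kk)$ with $e_i^2=0$ coming for free from $H^4(S^2)=0$, and a Hilbert-series argument to finish --- is the right ``Orlik--Solomon style'' strategy and is in line with how de~Longueville--Schultz handle $c$-arrangements.

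Two genuine gaps remain. First, you assert but never argue the surjectivity of the algebra map $\VG(\OM)\to H^*(X_{\RR^3},\kk)$. Well-definedness only says the $e_i$ satisfy the Varchenko--Gel'fand relations; it does not say they generate. That is exactly the nontrivial statement that the cohomology ring is generated in degree $2$. The Hilbert-series match makes injectivity and surjectivity equivalent for this graded homomorphism, but one of the two still needs an actual argument --- either by producing explicit homology cycles dual to the NBC monomials (compatible with the Goresky--MacPherson decomposition) so that a nondegenerate pairing gives injectivity, or by passing to a combinatorial CW model in which the $2$-cocycles visibly generate. Second, your phrase ``a local check on the rank-two flat spanned by each minimal dependent set $C$'' only applies to circuits of size $3$: a circuit of size $k+1$ spans a flat of rank $k$, and for a non-quadratic arrangement (e.g.\ $U_{3,4}$) the corresponding relation $\partial^\pm(x_C)=0$ in degree $2k$ is not implied by the rank-two relations. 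The relation over a higher-rank flat requires its own topological input, typically the single $\RR$-linear dependence among $\{\phi_i\}_{i\in C}$, which forces the joint Gauss map to $(S^2)^{|C|}$ to factor through a proper subvariety; the chirotope signs enter when identifying the cohomology of that subvariety. Your Salvetti-type CW workaround would, if carried out, address both of these points at once, but as written the Gauss-map line of argument does not close them.
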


\begin{remark}
\label{rem: thickening-in-general}
The result of de Longueville and Schultz
  \cite[Cor. 5.6]{DelonguevilleSchultz}
proves more generally that, for any $d \geq 2$, under the same assumptions on $\cA \subset \RR^r$,
the ``$\RR^d$-thickened" complement 
$
X_{\RR^d}:=\left( \RR^r \otimes_\RR \RR^d\right)
\setminus \bigcup_{H \in \cA} \left( H\otimes_\RR \RR^d \right)
$
has presentation (using any coefficient ring $\kk$) as
$$
H^*(X_{\RR^d},\kk) \cong 
\begin{cases}
\OS(M) & \text{ for }d=2,4,6,\ldots,\text{ even},\\
\VG(\OM) & \text{ for }d=3,5,7,\ldots,\text{ odd},
\end{cases}
$$
with cohomology concentrated in degrees divisible by $d_1$, so the isomorphism 
divides the grading by $d-1$.  Here $M, \OM$ are
the matroid, oriented matroid associated to the
normal vectors $v_1,\ldots,v_n$.
\end{remark}

\begin{remark}
The type $A_n$ and $B_n$ reflection arrangements are both supersolvable and realizable over $\RR$ (and therefore $\CC$) and therefore we can apply this topological interpretation of the Orlik-Solomon and Varchenko-Gel'fand rings of these families of arrangements. In fact, for the type $A$ reflection arrangements, we can also view the Orlik-Solomon and Varchenko-Gel'fand rings as cohomology rings of configuration spaces of points in $\RR^d$; this perspective will be discussed further in \Cref{subsec: configuration-spaces}.

In general, not every arrangement realizable over $\CC$ is realizable over $\RR$, and there exist matroids (including supersolvable ones) that are represented only
in positive characteristics, and some not representable over any field.  See for example, some of the
matroids discussed in Sections~\ref{sec: projective-geometries} and \ref{sec: dowling}.
%for instance, consider the supersolvable matroid described in (\ref{eq: matroid-representable-over-C-not-R}) in \Cref{sec: dowling}. Section \Cref{sec: dowling} also includes a discussion of a class of supersolvable matroids called \textit{Dowling geometries} which are only representable over finite fields.
One may visualize some of the implications as follows:
$$     \begin{array}{ccc}
     \text{matroid }M & \Leftarrow & \text{oriented matroid }\OM\\
     \Uparrow &  &\Uparrow\\
     \cA\text{ realized over }\CC& \Leftarrow &\cA \text{ realized over }\RR
     \end{array}
     $$
% The Dowling geometries discussed in Section~\ref{sec: dowling} give a good example of supersolvable matroids, with lots of symmetry, hitting three different parts of the above hierarchy:  nonrealizable matroids, matroids realizable over $\CC$ but not $\RR$, and those realizable over $\RR$, like the type $B$ reflection arrangement.
\end{remark}

If the cohomology ring $H^\ast(X,\kk)$ of a simply connected topological space $X$ is a Koszul $\kk$-algebra (as in the case of the Orlik-Solomon and Varchenko-Gel'fand rings for supersolvable arrangements), then the Koszul dual $H^\ast(X,\kk)^!$ can be interpreted as the homology ring $H_\ast(\Omega X,\kk)$ of the based loop space $\Omega X$.

\begin{prop}[See \cites{Berglund, berglund2017free}]
\label{prop: shrieks-for-Koszul-spaces}
Let $X$ be a simply connected topological space such that its cohomology ring $A:=H^\ast(X,\kk)$ is a Koszul $\kk$-algebra over a field $\kk$. Then
$$
A^! = H^\ast(X,\kk)^! = H_\ast(\Omega X, \kk)
$$
where $\Omega X$ is the basepointed loop space of $X$.
\end{prop}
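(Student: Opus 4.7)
The plan is to prove the proposition via the Eilenberg--Moore spectral sequence associated to the path-loop fibration $\Omega X \to PX \to X$. Since $X$ is simply connected and $PX$ is contractible, this is precisely the setting in which the Eilenberg--Moore spectral sequence converges strongly, yielding
$$E_2^{p,q} = \Tor^{H^*(X,\kk)}_{p,q}(\kk,\kk) \Longrightarrow H^{p+q}(\Omega X, \kk)$$
as bigraded algebras, where $p$ denotes homological (resolution) degree and $q$ records the internal grading inherited from $A = H^*(X,\kk)$.

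The next step is to invoke the Koszulity hypothesis on $A$. Using the equivalent formulation of Koszulity recalled at the opening of Section~\ref{sec: deviations} (condition (a) or (c)), one has $\Tor^A_{p,q}(\kk,\kk) = 0$ unless $p = q$, so the $E_2$ page is concentrated on the diagonal. This concentration forces all higher Eilenberg--Moore differentials, which shift off the diagonal by one in the relevant direction, to vanish, and leaves no room for nontrivial multiplicative extensions. Hence the spectral sequence collapses at $E_2$ and yields an isomorphism of graded algebras $H^*(\Omega X, \kk) \cong \bigoplus_{i} \Tor^A_{i,i}(\kk,\kk) \cong (A^!)^*$, the last identification being the standard one for Koszul algebras.

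Passing to graded duals, which is legitimate because each homogeneous component is finite-dimensional, transforms this isomorphism into $H_*(\Omega X, \kk) \cong A^!$ as graded $\kk$-algebras. The final verification is that the Pontryagin product on loop space homology matches, under the Eilenberg--Moore identification, the Yoneda product on $\Ext^\bullet_A(\kk,\kk) = A^!$; this compatibility is built into the multiplicative structure of the spectral sequence and is standard.

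The main obstacle is careful bookkeeping of the several gradings in play: the cohomological grading on $X$, the (homological, internal) bigrading on $\Tor$ and $\Ext$, and the topological grading on $H_*(\Omega X)$, which is related to the cohomological grading on $X$ by the loop-space degree shift. A secondary technical point is checking convergence of the Eilenberg--Moore spectral sequence in sufficient generality, which is standard for simply connected $X$ but must be noted. It is worth emphasizing that one does \emph{not} need to assume $X$ is formal: Koszulity of $H^*(X,\kk)$ provides just enough rigidity for the Eilenberg--Moore collapse argument to identify $H_*(\Omega X, \kk)$ directly in terms of cohomology, without passing through a cochain-level model.
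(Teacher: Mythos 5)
Your proof is correct and takes essentially the same approach as the paper's: both use the Eilenberg--Moore spectral sequence for the path-loop fibration $\Omega X \to PX \to X$, observe that Koszulity concentrates the $E_2$ page on the diagonal forcing collapse, and conclude by dualizing. The only cosmetic difference is that the paper dualizes to the $\Ext$-spectral sequence converging to $H_*(\Omega X)$ before observing collapse, whereas you observe collapse in the $\Tor$-spectral sequence converging to $H^*(\Omega X)$ and dualize afterward.
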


\begin{proof} The authors thank Craig Westerland
for communicating the following proof to them.
Observe that these spaces participate in the path-loop fibration 
$$
\Omega X \rightarrow PX \rightarrow X,
$$
where $PX \coloneqq \{f: I\to X : f(0) = \ast \text{ and } f \text{ continuous}\}$ is the space of based maps from an interval into $X$; note that $PX$ is contractible.
In general, for a Serre fibration $F\rightarrow E \rightarrow B$ having $B$ simply connected, the Eilenberg-Moore spectral sequence for cohomology is
% The Eilenberg-Moore spectral sequence for cohomology of a fibration $F \rightarrow E \rightarrow B$ is
$$
E^{*,*}_2=\Tor_{H^*(B)}(\kk , H^*(E))\Rightarrow  H^*(F)
$$
%$$\Tor_{H^*B}(H^*E,H^H^*(E), \kk) \Rightarrow H^*(F)$$
where $H^*(E)$ is a $H^*(B)$-module by the map in the fibration, and $\kk$ is our base field (or ring, if everything is suitably flat over $\kk$).  
In the case of the path-loop fibration, since $PX$ is contractible,      
$$
\Tor_{H^*(X)}(\kk,\kk) \Rightarrow H^*(\Omega X).
$$
If $\kk$ is a field, 
we can apply $\Hom_\kk(-,\kk)$ to get
$$
\Ext_{H^*(X)}(\kk,\kk) \Rightarrow H_*(\Omega X).
$$
If $H^*(X)$ is Koszul, then $\Ext_{H^*(X)}(\kk,\kk) \cong H^*(X)^!$.  Further, as $\Ext_{H^*(X)}(\kk,\kk)$ is concentrated in diagonal bidegrees, its differentials are zero, so the spectral sequence collapses at $E_2$, giving
$$
H_*(\Omega X,\kk) = H^*(X,\kk)^!. \qedhere
$$    
\end{proof}

\begin{remark}
Under the hypotheses of Proposition~\ref{prop: shrieks-for-Koszul-spaces}, the terminology from Definition~\ref{def: homotopy-lie-algebra} of {\it homotopy Lie algebra} for the $\kk$-subspace of primitives $\cL \subset A^!=\Ext_A^\bullet(\kk,\kk)$ is consistent
with the same terminology in {\it rational homotopy theory}, where a simply connected space $X$ has {\it homotopy Lie algebra} defined as the $\kk$-subspace of primitives $\cL \cong \pi_*(\Omega X)\otimes \kk$ inside the Hopf algebra $H_*(\Omega X,\kk)$; see, e.g., F\'elix, Halperin and Thomas \cite[\S 21(d), Thm. 21.5]{FelixHalperinThomas}.
\end{remark}
%%%%%%%%%%%%%%%%%%%%%%%%%%%%%%%%%%%%%%%%%%%%%%
\section{Representation stability and Koszul algebras}
\label{sec: rep-stability}
%%%%%%%%%%%%%%%%%%%%%%%%%%%%%%%%%%%%%%%%%%%%%%

We wish to show how sequences of Koszul algebras $\{A(n)\}_{n\ge 1}$ with $\symm_n$-actions
that satisfy representation stability in the sense of Church and Farb \cite{ChurchFarb}
lead to representation stability for their Koszul
duals $\{A(n)^!\}_{n\ge 1}$, and for the
primitive parts $\{\cL(n)\}_{n\ge 1}$ of the duals.
Useful references on representation stability
are Church and Farb \cite{ChurchFarb}, Church, Ellenberg and Farb \cite{CEF}
and Matherne, Miyata, Proudfoot and Ramos \cite{MatherneMiyataProudfootRamos}.

In this section, $\kk$ is a field of characteristic zero.
Recall this definition from \cite{ChurchFarb}.

\begin{definition}
For a partition $\mu$ of $k$, recall that  $\specht^\mu$ denotes the irreducible $\kk \symm_k$-module indexed by $\mu$.
Given a partition $\lambda=(\lambda_1,\lambda_2,\ldots,\lambda_\ell)$ and $n \geq |\lambda|+\lambda_1$, define a partition of $n$ by 
$$
\lambda[n]:=(n-|\lambda|,\lambda_1,\lambda_2,\ldots,\lambda_\ell).
$$
Say that a sequence $\{V_n\}_{n\ge 1}$ of
    $\kk \symm_n$-modules\footnote{Such a sequence of $\kk \symm_n$-modules is equivalent to what is called an {\it $FB$-module} in \cite{MatherneMiyataProudfootRamos}.} is {\it representation stable} if there is a list of
    (not necessarily distinct) partitions $\{ \lambda^{(i)}\}_{i=1}^t$ and an integer $N$ 
     such that for $n \geq N$, one has
    $$
     V_n = \bigoplus_{i=1}^t \specht^{\lambda^{(i)}[n]}.
    $$
Say that $\{V_n\}_{n\ge 1}$ is {\it representation stable past $N$} when the above equality holds for $n \geq N$.  
\end{definition}

The following easy observations are recorded in \cite[\S 3]{MatherneMiyataProudfootRamos}.

\begin{prop}
When $\{V_n\}, \{W_n\}$ are representation stable sequences, then so is $\{V_n\oplus W_n\}$.  
On the other hand,
if the virtual modules $[U_n]=[V_n]-[W_n]$
come from genuine $\kk \symm_n$-modules $\{U_n\}$, then $\{U_n\}$ is also a representation stable sequence.
\end{prop}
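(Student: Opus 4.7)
The plan is to exploit the fact that, since $\kk$ has characteristic zero, the category of finite-dimensional $\kk\symm_n$-modules is semisimple, and the classes $\{[\specht^\mu] : \mu \vdash n\}$ form a $\ZZ$-basis for $R_\kk(\symm_n)$. Both assertions will then reduce to combining finite multi-sets of partitions.

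For the first claim, suppose $\{V_n\}$ is representation stable past $N_V$ with stable partition list $\{\lambda^{(i)}\}_{i=1}^s$, and $\{W_n\}$ is representation stable past $N_W$ with list $\{\mu^{(j)}\}_{j=1}^t$. Then for $n \geq \max(N_V, N_W)$, I would simply take the direct sum:
\[
V_n \oplus W_n \,=\, \bigoplus_{i=1}^s \specht^{\lambda^{(i)}[n]} \,\oplus\, \bigoplus_{j=1}^t \specht^{\mu^{(j)}[n]},
\]
which exhibits $\{V_n \oplus W_n\}$ as representation stable with stable list $\{\lambda^{(i)}\} \sqcup \{\mu^{(j)}\}$.

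For the second claim, I would again pass to $n \geq \max(N_V, N_W)$ and collect terms. Let $\{\nu^{(k)}\}_{k=1}^r$ be the set of distinct partitions appearing in either list, and set
\[
c_k \,=\, \#\{i : \lambda^{(i)} = \nu^{(k)}\} \,-\, \#\{j : \mu^{(j)} = \nu^{(k)}\} \,\in\, \ZZ,
\]
so that $[V_n] - [W_n] = \sum_{k=1}^r c_k \,[\specht^{\nu^{(k)}[n]}]$ in $R_\kk(\symm_n)$. Now because $\{U_n\}$ is a genuine sequence, its class decomposes as $[U_n] = \sum_\mu m_\mu(n) [\specht^\mu]$ with nonnegative multiplicities $m_\mu(n) \in \NN$. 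Comparing these two expressions for $[U_n]$ using the $\ZZ$-basis of irreducibles forces $m_{\nu^{(k)}[n]}(n) = c_k$ for each $k$, and $m_\mu(n) = 0$ for all other partitions $\mu$ of $n$. Nonnegativity then forces $c_k \geq 0$, so
\[
U_n \,\cong\, \bigoplus_{k=1}^r c_k \cdot \specht^{\nu^{(k)}[n]},
\]
showing $\{U_n\}$ is representation stable with the stable list consisting of each $\nu^{(k)}$ repeated $c_k$ times.

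Neither step presents a serious obstacle; the only subtlety is verifying that for $n$ sufficiently large, the partitions $\lambda[n]$ arising from distinct $\lambda$ are themselves distinct, so that $\ZZ$-linear independence of $\{[\specht^{\nu[n]}]\}$ can be invoked to pin down each $c_k$ separately. This is built into the definition of the padding $\lambda[n] = (n-|\lambda|, \lambda_1,\ldots,\lambda_\ell)$: the tail $(\lambda_1,\ldots,\lambda_\ell)$ determines $\lambda$, so distinct $\lambda$'s yield distinct $\lambda[n]$'s whenever $n \geq |\lambda|+\lambda_1$.
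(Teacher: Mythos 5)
Your proof is correct. It is worth noting that the paper itself offers no proof of this proposition; it only cites Matherne--Miyata--Proudfoot--Ramos~\S 3 and labels these ``easy observations,'' so there is nothing in the source to compare against directly. Your argument is the natural reconstruction: in characteristic zero the category of $\kk\symm_n$-modules is semisimple, so a genuine module's class uniquely determines its nonnegative multiplicities; concatenating the stable lists handles direct sums, and the $\ZZ$-linear independence of the classes $[\specht^{\nu[n]}]$ pins down the coefficients $c_k$ so that genuineness of $U_n$ forces $c_k \geq 0$. Your final remark correctly isolates the one nontrivial ingredient, namely that the map $\lambda \mapsto \lambda[n]$ is injective once $n \geq |\lambda| + \lambda_1$ because the tail $(\lambda_1,\dots,\lambda_\ell)$ recovers $\lambda$, so distinct partitions in the stable lists do index distinct Specht modules for $n$ past the stability threshold.
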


It is less obvious what happens for tensor products.  The following precise version of a result of Murnaghan was proven by Briand, Orellana, Rosas
 \cite[Thm. 1.2]{BriandOrellanaRosas}.

\begin{theorem} 
\label{BriandOrellanaRosasBound}
The sequence
$
\{ \specht^{\alpha[n]} \otimes \specht^{\beta[n]} \}
$
is representation stable past  $|\alpha|+|\beta|+\alpha_1+\beta_1$.
\end{theorem}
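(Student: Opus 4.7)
The plan is to reduce the theorem to a sharp stabilization statement for Kronecker coefficients. Decomposing into irreducibles,
\[
\specht^{\alpha[n]} \otimes \specht^{\beta[n]} \;\cong\; \bigoplus_{\gamma}\, \bigl(\specht^{\gamma[n]}\bigr)^{\oplus g_n(\alpha,\beta,\gamma)},
\]
where $g_n(\alpha,\beta,\gamma) := g(\alpha[n],\beta[n],\gamma[n])$ is the Kronecker coefficient. Thus representation stability past $N := |\alpha|+|\beta|+\alpha_1+\beta_1$ is equivalent to the conjunction of two facts: (i) for fixed $\alpha,\beta$ only finitely many $\gamma$ contribute; and (ii) for each such $\gamma$, the integer $g_n(\alpha,\beta,\gamma)$ is independent of $n$ once $n \geq N$.

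For (i), I would use a standard degree/support argument at the level of symmetric functions: under the Frobenius characteristic, $\chi^{\alpha[n]} \otimes \chi^{\beta[n]}$ corresponds to the internal product $s_{\alpha[n]} * s_{\beta[n]}$, and expressing things in the power-sum basis via $s_\lambda = \sum_\mu z_\mu^{-1} \chi^\lambda_\mu p_\mu$ shows that the Schur-expansion of $s_{\alpha[n]} * s_{\beta[n]}$ is supported on partitions $\gamma[n]$ with $|\gamma| \leq |\alpha|+|\beta|$. For the eventual constancy in (ii), I would invoke Murnaghan's classical stability theorem, which guarantees that the limits
\[
\overline{g}(\alpha,\beta,\gamma) := \lim_{n\to\infty} g_n(\alpha,\beta,\gamma)
\]
exist (the so-called \emph{reduced Kronecker coefficients}), without yet saying anything about when the limit is attained.

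The main obstacle, and the entire substance of the theorem, is the sharp onset of stability at exactly $n = N$. My plan here is to follow the approach indicated by the target reference. Work inside the ring of symmetric functions and expand $s_{\alpha[n]}$ via the Littlewood-type identity $s_{\alpha[n]} = \sum_{\delta} (-1)^{|\delta|} s_{\alpha/\delta} \cdot s_{(n-|\alpha|,\,\text{augmented by } \delta)}$ (and analogously for $s_{\beta[n]}$), so that the internal product $s_{\alpha[n]} * s_{\beta[n]}$ becomes a finite alternating sum of internal products of Schur functions indexed by bounded partitions against a pair of ``long rows''. Then apply the tensor/internal-product formula $s_\lambda * (s_\mu \cdot s_\nu) = \sum (s_\lambda\!\downarrow) * s_\mu \cdot \ldots$ obtained from the coproduct on $\Lambda$, and track when every resulting Littlewood-Richardson coefficient becomes $n$-independent. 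The contributions that \emph{fail} to stabilize come only from configurations forcing $n - |\alpha|$ or $n - |\beta|$ to be outpaced by some fixed part from $\alpha$ or $\beta$; the worst case is governed by $\alpha_1 + \beta_1$ on top of the size bound $|\alpha| + |\beta|$, which produces exactly the threshold $N$. The delicate part will be verifying that this is simultaneously achievable and sharp, which I expect will require the explicit combinatorial model for reduced Kronecker coefficients from Briand--Orellana--Rosas rather than any softer argument.
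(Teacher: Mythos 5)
First, note that the paper itself gives no proof of this theorem: it is stated and immediately attributed to Briand, Orellana, and Rosas \cite[Thm.~1.2]{BriandOrellanaRosas}, and then used as a black box (e.g.\ to derive Lemma~\ref{tensoring-sums-stability-bounds-lemma}). So there is no ``paper's own proof'' to compare against.

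Your plan correctly identifies the right framework: decompose into irreducibles, observe that representation stability of the sequence $\{\specht^{\alpha[n]}\otimes\specht^{\beta[n]}\}$ is exactly the statement that (i) only finitely many $\gamma$ contribute and (ii) each Kronecker coefficient $g(\alpha[n],\beta[n],\gamma[n])$ eventually stabilizes, and recognize that eventual constancy is Murnaghan's classical theorem. However, the entire content of the theorem as stated here is the \emph{sharp threshold} $N=|\alpha|+|\beta|+\alpha_1+\beta_1$, and your proposal does not establish it. The passage where you sketch expanding $s_{\alpha[n]}$ via a ``Littlewood-type identity'' and then ``tracking when every resulting Littlewood--Richardson coefficient becomes $n$-independent'' is a plausible direction, but it ends by conceding that the delicate verification ``will require the explicit combinatorial model for reduced Kronecker coefficients from Briand--Orellana--Rosas.'' That is, you defer the essential step to the very reference being cited, so the proposal is a road map rather than a proof. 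A secondary issue: the power-sum argument you invoke for point (i) does not directly deliver the support bound $|\gamma|\le|\alpha|+|\beta|$; the $p_\mu$ appearing in the expansion of $s_{\alpha[n]}$ run over all $\mu\vdash n$, not only ones of bounded size, so one needs the triangle-inequality constraint on reduced sizes (due to Murnaghan), which is itself a nontrivial fact. Since the paper only cites this result, the honest conclusion is that your proposal is consistent with the paper's usage but does not constitute an independent proof of the theorem.
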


This consequence was noted
by Matherne, Miyata, Proudfoot and Ramos 
 \cite[Thm. 3.3]{MatherneMiyataProudfootRamos}.
 
\begin{lemma}
\label{tensoring-sums-stability-bounds-lemma}
If the $\{V_n\}, \{W_n\}$ are representation stable past $A,B$, respectively, then $\{V_n \otimes W_n\}$ is representation stable past $A+B$.
\end{lemma}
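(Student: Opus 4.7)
The plan is to reduce to the single-summand case handled by Theorem~\ref{BriandOrellanaRosasBound}, and then check that the bounds assemble as claimed. First, use the assumption that $\{V_n\}$ is representation stable past $A$ to write $V_n=\bigoplus_{i=1}^{s}\specht^{\alpha^{(i)}[n]}$ for all $n\geq A$, and similarly $W_n=\bigoplus_{j=1}^{t}\specht^{\beta^{(j)}[n]}$ for all $n\geq B$. Crucially, the very fact that the partitions $\alpha^{(i)}[A]$ exist forces $A\geq |\alpha^{(i)}|+\alpha^{(i)}_1$ for each $i$ (by definition of $\lambda[n]$ one needs $n\geq|\lambda|+\lambda_1$), and analogously $B\geq |\beta^{(j)}|+\beta^{(j)}_1$ for each $j$. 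Adding these inequalities gives
\[
A+B \;\geq\; |\alpha^{(i)}|+|\beta^{(j)}|+\alpha^{(i)}_1+\beta^{(j)}_1 \quad\text{for all }i,j.
\]

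For $n\geq A+B$, both stable decompositions apply, so
\[
V_n\otimes W_n \;=\; \bigoplus_{i=1}^{s}\bigoplus_{j=1}^{t}\specht^{\alpha^{(i)}[n]}\otimes \specht^{\beta^{(j)}[n]}.
\]
By Theorem~\ref{BriandOrellanaRosasBound}, each summand $\{\specht^{\alpha^{(i)}[n]}\otimes\specht^{\beta^{(j)}[n]}\}_n$ is representation stable past $|\alpha^{(i)}|+|\beta^{(j)}|+\alpha^{(i)}_1+\beta^{(j)}_1$, and by the inequality above this bound is at most $A+B$. Finally, invoke the earlier observation that a finite direct sum of representation stable sequences is representation stable, with onset no later than the maximum of the individual onsets, to conclude that $\{V_n\otimes W_n\}$ is representation stable past $A+B$.

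There is essentially no obstacle here beyond bookkeeping: the heavy lifting is done by Briand--Orellana--Rosas. The only subtlety worth spelling out, and the one place a reader might wish to pause, is the observation that the hypothesis ``stable past $A$'' automatically forces $|\alpha^{(i)}|+\alpha^{(i)}_1\leq A$; this is what makes the bound $A+B$ (rather than something weaker like $A+B+\max_i(|\alpha^{(i)}|+\alpha^{(i)}_1)+\max_j(|\beta^{(j)}|+\beta^{(j)}_1)$) come out cleanly.
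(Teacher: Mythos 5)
Your proof is correct, and it is the natural argument one would expect (the paper itself omits a proof and simply cites Matherne--Miyata--Proudfoot--Ramos). The decomposition $V_n\otimes W_n=\bigoplus_{i,j}\specht^{\alpha^{(i)}[n]}\otimes\specht^{\beta^{(j)}[n]}$ holds for $n\geq\max(A,B)$, and in particular for $n\geq A+B$; each summand stabilizes past $|\alpha^{(i)}|+|\beta^{(j)}|+\alpha^{(i)}_1+\beta^{(j)}_1$ by Briand--Orellana--Rosas; and the finite direct sum then stabilizes past the maximum of these bounds. The one genuinely load-bearing observation -- that ``stable past $A$'' already forces $A\geq|\alpha^{(i)}|+\alpha^{(i)}_1$ because $\alpha^{(i)}[A]$ must be a well-defined partition of $A$ -- is exactly the right point to isolate, and you state it and use it correctly. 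This is what keeps the bound at $A+B$ rather than something depending on the partitions themselves. No gaps.
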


The following observation is occasionally useful for pinpointing the onset of representation stability.

\begin{lemma} \cite[Lemma 2.2]{HershReiner}
\label{lem: Hemmer-like}
Let $\{ V_n \}_{n\ge N}$ be
$\kk\symm_n$-modules defined via a finite direct sum
$$
V_n \cong \bigoplus_\mu \left( \specht^{\mu} \uparrow_{\symm_{|\mu|} \times \symm_{n-|\mu|}}^{\symm_n} \right)^{\oplus c_\mu}
$$
with $|\mu|\geq N$,
and integers $c_\mu \geq 1$.  
Then $\{ V_n \}$ is representation stable, 
stabilizing exactly at 
$$
n=\max_\mu \{ |\mu|+\mu_1 \}.
$$
\end{lemma}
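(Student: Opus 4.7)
The plan is to reduce the lemma to a computation for a single induced module via Pieri's rule, then combine using the hypothesis $c_\mu \geq 1$. First I would invoke Pieri:
\[
\specht^\mu \uparrow_{\symm_{|\mu|}\times \symm_{n-|\mu|}}^{\symm_n} \;=\!\! \bigoplus_{\substack{\nu\, \vdash\, n\\ \nu/\mu\text{ horizontal strip}}}\!\! \specht^\nu.
\]
The horizontal-strip condition $\nu_i \geq \mu_i \geq \nu_{i+1}$ means that setting $\lambda := (\nu_2, \nu_3, \ldots)$ identifies the $\lambda$'s appearing as exactly those partitions \emph{interleaving} $\mu$, in the sense that $\mu_{i+1} \leq \lambda_i \leq \mu_i$; crucially, this constraint on $\lambda$ is independent of $n$.

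Next I would show that once $n \geq |\mu|+\mu_1$, every such $\nu$ is of the form $\nu = \lambda[n]$ for $\lambda$ interleaving $\mu$. Indeed one has $|\lambda| \leq |\mu|$ and $\lambda_1 \leq \mu_1$, so $\nu_1 = n - |\lambda| \geq n - |\mu| \geq \mu_1 \geq \lambda_1$, which forces $\lambda[n]$ to be a genuine partition containing $\mu$; conversely every interleaving $\lambda$ yields a valid $\nu = \lambda[n]$ in the Pieri sum. Thus
\[
\specht^\mu \uparrow_{\symm_{|\mu|}\times \symm_{n-|\mu|}}^{\symm_n} \;\cong\; \bigoplus_{\lambda\text{ interleaves }\mu} \specht^{\lambda[n]} \qquad (n \geq |\mu|+\mu_1),
\]
so each induced summand is representation stable past $|\mu|+\mu_1$. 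Among all interleaving $\lambda$, the quantity $|\lambda|+\lambda_1$ is maximized uniquely at $\lambda = \mu$, with value $|\mu|+\mu_1$.

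Putting these together: with $N := \max_\mu\{|\mu|+\mu_1\}$, the whole direct sum $V_n = \bigoplus_\mu c_\mu \,\specht^\mu\!\uparrow$ is representation stable past $N$. For sharpness, I would pick some $\mu^*$ with $c_{\mu^*} \geq 1$ realizing this maximum; then $\specht^{\mu^*[n]}$ appears in $V_n$ with multiplicity at least $c_{\mu^*} \geq 1$ for every $n \geq N$, while at $n = N-1$ the tuple $\mu^*[n] = (\mu^*_1 - 1, \mu^*_1, \mu^*_2, \ldots)$ fails to be a partition, so $\specht^{\mu^*[N-1]}$ cannot appear in $V_{N-1}$. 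This rules out any representation-stable expansion of $\{V_n\}$ past $n = N-1$, pinning the onset to exactly $N$.

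The main technical point requiring care is verifying sharpness of the threshold $n \geq |\mu|+\mu_1$ within a single Pieri expansion, so that the onset for $\specht^\mu\!\uparrow$ alone is exactly $|\mu|+\mu_1$ rather than something smaller; once that is in hand, strict positivity of the coefficients $c_\mu$ precludes any cancellation among different $\mu$-summands from shifting the onset of $V_n$ earlier, and the passage from a single summand to the direct sum is automatic.
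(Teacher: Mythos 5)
Your proof is correct, and since the paper only cites \cite[Lemma 2.2]{HershReiner} without reproducing an argument, the natural comparison is with the standard proof via Pieri's rule, which is exactly what you give: the key observations that the interleaving condition on $\lambda$ is $n$-independent, that every interleaving $\lambda$ appears once $n \geq |\mu|+\mu_1$, and that $|\lambda|+\lambda_1$ is maximized uniquely at $\lambda = \mu$ so that positivity of the $c_\mu$ plus the non-partition shape $\mu^*[N-1]$ pins down sharpness. This is the intended argument.
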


We next use some of the foregoing observations to show how representation stability of families of Koszul algebras $\{A(n)\}$ passes
to their Koszul duals $\{A(n)^!\}$.

\begin{cor}
\label{cor: shrieks-have-rep-stability-cor}
Let $\{A(n)\}_{n\ge 1}$ 
be a sequence of Koszul algebras, with
Koszul duals $\{A(n)^!\}$.
\begin{enumerate}
    \item[(i)]
If for each fixed $i \geq 0$, the sequence $\{A(n)_i\}$ is representation stable,
then so is each $\{A(n)^!_i\}$.

\item[(ii)]
If furthermore there exists a constant $c$ (independent of $i$) such that each sequence $\{A(n)_i\}$ is representation stable past $ci$, then each 
$\{A(n)^!_i\}$ is also representation stable past $ci$.
\end{enumerate}
\end{cor}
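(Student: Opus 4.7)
The plan is to reduce the statement to the unraveled identity \eqref{unraveled-shriek-recurrence} from Corollary~\ref{cor:equivariant-Koszul-dual-Hilbert-series}, which expresses
\[
[(A(n)^!_d)^*] = \sum_{\alpha=(\alpha_1,\ldots,\alpha_\ell)\vDash d} (-1)^{d-\ell} \, [A(n)_{\alpha_1}]\cdot [A(n)_{\alpha_2}]\cdots [A(n)_{\alpha_\ell}]
\]
in $R_\kk(\symm_n)$, a finite alternating sum of tensor products of graded pieces of $A(n)$. Since $\kk$ has characteristic zero, every irreducible $\kk\symm_n$-module is self-dual, so representation stability for $\{(A(n)^!_d)^*\}$ is equivalent to representation stability for $\{A(n)^!_d\}$ with the same onset. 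This reduces the problem to analyzing each tensor-product summand and then summing.

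Next, I would establish representation stability of each summand sequence $\{A(n)_{\alpha_1}\otimes \cdots \otimes A(n)_{\alpha_\ell}\}_{n\geq 1}$ by iterating Lemma~\ref{tensoring-sums-stability-bounds-lemma}. For part (i), the hypothesis gives some (composition-dependent) bound $N_{\alpha_i}$ past which $\{A(n)_{\alpha_i}\}$ is stable, and iterated tensoring produces stability past $\sum_i N_{\alpha_i}$. For part (ii), the uniform hypothesis promotes this to stability past $\sum_{i=1}^\ell c\alpha_i = cd$, which is the desired bound and crucially is independent of the composition $\alpha$.

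Finally, since there are only finitely many compositions of $d$, the alternating sum of these representation stable sequences is representation stable as a virtual $\kk\symm_n$-module class, past $cd$ in the setting of (ii) and past $\max_{\alpha\vDash d}\sum_i N_{\alpha_i}$ in the setting of (i). The key observation, already recorded in the text preceding the corollary, is that when a representation stable virtual class $[U_n]$ happens to be realized by an honest $\kk\symm_n$-module $U_n$, then $\{U_n\}$ is representation stable in the genuine sense. Applying this to $U_n = (A(n)^!_d)^*$ completes the proof, and then the self-duality remark converts the conclusion to $\{A(n)^!_d\}$.

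The main obstacle I anticipate is purely bookkeeping: ensuring that the stability onsets add correctly under tensor products (so that the uniform linear bound $cd$ genuinely drops out in (ii)), and ensuring that the transition from virtual to genuine representation stability is justified even though the intermediate expression involves negative signs. Both points are handled by Lemma~\ref{tensoring-sums-stability-bounds-lemma} and the observation about virtual modules that become genuine, so no new representation-theoretic input is required beyond what has already been set up.
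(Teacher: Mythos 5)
Your proposal is correct and matches the paper's proof in its essential structure: both rest on the unraveled identity \eqref{unraveled-shriek-recurrence} expressing $[(A(n)^!_d)^*]$ as a finite alternating sum over compositions of tensor products of the $[A(n)_{\alpha_p}]$, with Lemma~\ref{tensoring-sums-stability-bounds-lemma} supplying stability past $\sum_p c\alpha_p = cd$ for each summand. You simply spell out two points the paper leaves tacit — self-duality of $\kk\symm_n$-modules in characteristic zero to pass from $(A^!_d)^*$ to $A^!_d$, and the fact that a representation stable virtual class realized by a genuine module is itself representation stable — both of which are correct and already available from the surrounding text.
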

\begin{proof}
We prove (ii); the proof for (i) is the same, ignoring the bounds involving $ci$.  
But (ii) is immediate using equation \eqref{unraveled-shriek-recurrence} that appeared in \Cref{cor:equivariant-Koszul-dual-Hilbert-series}, along with Lemma~\ref{tensoring-sums-stability-bounds-lemma}, since each factor $A_{\alpha_p}=A_{\alpha_p}(n)$ in the right-hand side of \eqref{unraveled-shriek-recurrence}, now in characteristic zero, has the sequence $\{A_{\alpha_p}(n)\}$ representation stable past $c{\alpha_p}$.
\end{proof}

\begin{comment}
To prove (ii), proceed by induction on $i$, with easy base case $i=0$.
    In the inductive step, one has
    \begin{equation}
    \label{recursion-showing-rep-stability}
    [A(n)^!_i]= \sum_{j=1}^i (-1)^{j-1} [A(n)_j] [A(n)^!_{i-j}].
    \end{equation}
    For each fixed $j \geq 1$, our hypotheses give that  $
    \{A(n)_j \}
    $ is representation stable past $cj$, while induction gives
    representation stability for
    $
    \{ A(n)^!_{i-j} \}
    $
    past $c(i-j)$.  Applying Lemma~\ref{tensoring-sums-stability-bounds-lemma} then shows that the sequences
    $\{ A(n)_j \otimes A(n)^!_{i-j}\}$ are representation stable past 
    $cj+c(i-j)=ci$.
    \end{comment}

\begin{example}
Rank two matroids $U_{2,n}$ were discussed in
Example~\ref{ex: rank-two-matroid-1} and Section~\ref{sec: rank-two-revisited}.  Their group of matroid automorphisms is $\Aut(U_{2,n})=\symm_n$, and \eqref{rank-two-OS-equivariantly} showed that as $\kk \symm_n$-modules, one has
\begin{align*}
[\OS(U_{2,n})_0] &\cong [\specht^{(n)}],\\
[\OS(U_{2,n})_1] &\cong [\specht^{(n)}]+[\specht^{(n-1,1)}],\\
[\OS(U_{2,n})_2] &\cong [\specht^{(n-1,1)}],
\end{align*}
which are representation stable past $n=2$.
Consequently, applying Corollary~\ref{cor: shrieks-have-rep-stability-cor}(ii) with $c=2$ implies that the Koszul duals
$\{\OS(U_{2,n})^!_i\}$, which are the $\symm_n$-permutation representations discussed in Proposition~\ref{prop: rank-two-OS-shrieks-are-perm-reps},
should be representation stable past $n=2i$.  In fact, one has the following.

\begin{prop}
\label{prop: rank-two-shriek-stabilization-onset }
For $i \geq 0$, representation stability 
of $\{\OS(U_{2,n})^!_i\}$
starts {\it exactly} at $n=2i$. 
\end{prop}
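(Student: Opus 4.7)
The plan is to apply Lemma~\ref{lem: Hemmer-like} to the decomposition supplied by Proposition~\ref{prop: rank-two-OS-shrieks-are-perm-reps}. Once each permutation representation $\varphi_{(n-d,1^d)}$ appearing there is rewritten as a $\ZZ_{>0}$-combination of modules of the form $\specht^\mu\uparrow_{\symm_{|\mu|}\times\symm_{n-|\mu|}}^{\symm_n}$, the lemma will pin down the onset of stability as $\max\{|\mu|+\mu_1\}$ taken over the partitions $\mu$ of positive multiplicity in the resulting expansion.

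The first step is the rewriting. Since $\varphi_{(n-d,1^d)}=\kk[\symm_n/(\symm_{n-d}\times\symm_1^d)]$, inducing in stages through $\symm_{n-d}\times\symm_d$ and decomposing the regular representation $\kk[\symm_d]\cong\bigoplus_{\mu\vdash d}(\dim\specht^\mu)\,\specht^\mu$ yields
$$
\varphi_{(n-d,1^d)}\;\cong\;\bigoplus_{\mu\vdash d}(\dim\specht^\mu)\cdot\specht^\mu\uparrow_{\symm_d\times\symm_{n-d}}^{\symm_n}.
$$
Every multiplicity $\dim\specht^\mu$ is a positive integer, and in particular $\mu=(d)$ always appears with multiplicity $1$.

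Next, I would verify that in the formula of Proposition~\ref{prop: rank-two-OS-shrieks-are-perm-reps} each coefficient $a_{d,i}$ or $b_{d,i}$ for $d$ in the stated range is strictly positive. This is really the only non-formal step and is immediate from the Stirling-number expressions: the summand at the top of the range (namely $k=i/2$ or $k=(i-1)/2$) equals $S(i-1,d-1)$, which is positive since $d-1\leq i-1$. In particular $a_{i,i}=b_{i,i}=S(i-1,i-1)=1$, so $d=i$ always lies in the support.

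Combining these facts, $[\OS(U_{2,n})^!_i]$ is a $\ZZ_{>0}$-linear combination of modules $\specht^\mu\uparrow_{\symm_{|\mu|}\times\symm_{n-|\mu|}}^{\symm_n}$ whose supporting partitions satisfy $|\mu|\leq i$ and include $\mu=(i)$. For $\mu\vdash d$ one has $|\mu|+\mu_1\leq 2d$, with equality precisely when $\mu=(d)$; thus the maximum of $|\mu|+\mu_1$ over the support is $2i$, attained at $\mu=(i)$. Lemma~\ref{lem: Hemmer-like} then delivers the conclusion that the onset of representation stability is exactly $n=2i$. The main (small) obstacle is the uniform positivity of the $a_{d,i},b_{d,i}$ across all $d$ in the summation; beyond that, the argument is mechanical.
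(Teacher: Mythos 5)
Your proposal is correct and follows essentially the same route as the paper's own proof: both expand each $\varphi_{(n-d,1^d)}$ as $\kk\symm_d \uparrow_{\symm_d\times\symm_{n-d}}^{\symm_n}$ (decomposing the regular representation of $\symm_d$ into Specht modules), note that the coefficient of $\varphi_{(n-i,1^i)}$ is $1$, and invoke Lemma~\ref{lem: Hemmer-like} to read off the stabilization point $\max_\mu\{|\mu|+\mu_1\}=2i$. The only additional content in your write-up is re-verifying by hand that the Stirling-number coefficients $a_{d,i},b_{d,i}$ are positive, which Proposition~\ref{prop: rank-two-OS-shrieks-are-perm-reps} already asserts and is not re-derived in the paper's proof; the verification is correct but redundant given what you are citing.
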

\begin{proof}
Recall that 
Proposition~\ref{prop: rank-two-OS-shrieks-are-perm-reps}  expresses $[\OS(U_{2,n})^!_i]$
as a nonnegative combination
of classes $\varphi_{(n-d,1^d)}$ for various $d$
in the range $2\leq d \leq i$, where $\varphi_\lambda$ is the class
of the coset representation $\kk[\symm_n/\symm_\lambda]$.
Furthermore, the coefficient 
on $\varphi_{(n-i,1^i)}$ is $1$.
Since one can write
$$
\varphi_{(n-d, 1^d)}
=\left[ \left( \kk \symm_d \right) \uparrow_{\symm_d \times \symm_{n-d}}^{\symm_n} \right] \quad \text{ where }\quad
\kk \symm_d \cong \bigoplus_{\mu: |\mu|=d} (\specht^{\mu})^{\oplus \dim \specht^\mu},
$$
Lemma~\ref{lem: Hemmer-like} shows $\{\varphi_{(n-d,1^d)}\}_n$
stabilizes exactly at $n=2d$, and 
$\{[\OS(U_{2,n})^!_i]\}$ exactly at $n=2i$.
\end{proof}

\end{example}

Recall from Section~\ref{sec: deviations} 
that when $A$ is Koszul and commutative or anti-commutative,
then $A^! = \cU(\cL)$ is the universal enveloping algebra for a graded Lie algebra or superalgebra $\cL$.
We next show how representation stability of families of Koszul algebras $\{A(n)\}$ passes to the Lie (super-)algebras $\{\cL(n)\}$.
The following lemma will be useful for this purpose.

\begin{lemma}
\label{schur-functor-rep-stability-lemma}
For any representation stable sequence
$\{V_n\}$ and any partition $\mu$ giving rise to a Schur functor $\schurfunctor^\mu(-)$, the sequence
$\{\schurfunctor^\mu(V_n)\}$ is also representation stable.  
In particular, for each fixed $m=0,1,2,\ldots$, the
sequences $\{\wedge^m(V_n)\}, \{\Sym^m(V_n)\}$
are representation stable.
\end{lemma}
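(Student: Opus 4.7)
The plan is to use the character polynomial characterization of representation stability (Church--Ellenberg--Farb) together with the classical character formula for Schur functors, which sidesteps the combinatorial complications of decomposing $\schurfunctor^\mu(V_n)$ into irreducibles directly.

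By \cite{CEF}, a sequence $\{W_n\}$ of $\symm_n$-modules is representation stable if and only if there exists a character polynomial $Q\in\mathbb{Q}[X_1,X_2,\dots]$, where $X_k(\sigma)$ counts the $k$-cycles of $\sigma$, such that $\chi_{W_n}(\sigma)=Q(X_1(\sigma),X_2(\sigma),\dots)$ for all $n$ sufficiently large and all $\sigma\in\symm_n$. I will therefore assume $\chi_{V_n}(\sigma)=P(X_1(\sigma),X_2(\sigma),\dots)$ for a fixed polynomial $P$ and $n\ge N$, and aim to produce an analogous character polynomial $P^\mu$ for $\{\schurfunctor^\mu(V_n)\}$.

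To do this, I would use the $\symm_n\times\symm_d$-decomposition $V_n^{\otimes d}\cong\bigoplus_{\nu\vdash d}\schurfunctor^\nu(V_n)\otimes\specht^\nu$ (with $d=|\mu|$) and isolate the $\specht^\mu$-isotypic component via orthogonality of $\symm_d$-characters, obtaining the standard formula
\[
\chi_{\schurfunctor^\mu(V_n)}(g)\;=\;\frac{1}{d!}\sum_{\sigma\in\symm_d}\chi^\mu(\sigma)\prod_{c\in\mathrm{cyc}(\sigma)}\chi_{V_n}\!\left(g^{|c|}\right).
\]
Next I would observe that for $g\in\symm_n$, each $j$-cycle of $g$ breaks into $\gcd(j,k)$ cycles of length $j/\gcd(j,k)$ under the $k$-th power, so cycle statistics of $g^k$ are $\mathbb{Z}$-linear in those of $g$:
\[
X_m(g^k)\;=\;\sum_{j\,:\,j/\gcd(j,k)=m}\gcd(j,k)\,X_j(g).
\]
Hence for $n\ge N$, $\chi_{V_n}(g^k)=P(X_1(g^k),X_2(g^k),\dots)$ becomes a polynomial $P_k(X_1(g),X_2(g),\dots)$ independent of $n$. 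Substituting into the Schur character formula yields a character polynomial $P^\mu\in\mathbb{Q}[X_1,X_2,\dots]$ with $\chi_{\schurfunctor^\mu(V_n)}=P^\mu$ for $n\ge N$, and invoking the CEF characterization in the converse direction gives representation stability of $\{\schurfunctor^\mu(V_n)\}$.

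The main obstacle I expect is conceptual rather than technical: a direct attack through the irreducible decompositions of $\schurfunctor^\mu(V_n)$ is hampered by the interaction of Schur--Weyl, Littlewood--Richardson, and plethysm coefficients, which are combinatorially opaque. Encoding stability in a single class-function polynomial replaces this with the lone linear-algebraic observation about cycle statistics of $g^k$. The ``in particular'' statement is then immediate via the identifications $\Sym^m=\schurfunctor^{(m)}$ and $\wedge^m=\schurfunctor^{(1^m)}$.
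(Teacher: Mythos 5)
Your proof is correct, and it takes a genuinely different route from the paper's argument. The paper proves the lemma by induction on the number $t$ of stable summands: the base case $t=1$ is cited from CEF (Prop.~3.4.5, showing $\{\schurfunctor^\mu(\specht^{\lambda[n]})\}$ is representation stable), and the inductive step uses the decomposition $\schurfunctor^\mu(X\oplus Y)\cong\bigoplus_{\nu\subseteq\mu}\schurfunctor^\nu(X)\otimes\schurfunctor^{\mu/\nu}(Y)$ together with the Briand--Orellana--Rosas theorem on Murnaghan stability for Kronecker products to handle the tensor factors. Your approach replaces all of this with the character-theoretic reformulation: pass to the (eventual) character polynomial $P$ for $\{V_n\}$, apply the classical Schur-function character formula $\chi_{\schurfunctor^\mu(V_n)}(g)=\tfrac{1}{d!}\sum_{\sigma\in\symm_d}\chi^\mu(\sigma)\prod_c\chi_{V_n}(g^{|c|})$, and observe that cycle counts $X_m(g^k)$ are $\ZZ$-linear in cycle counts of $g$, so the result is again a character polynomial. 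This sidesteps both the skew-Schur expansion and the Kronecker-product machinery, and is arguably cleaner and more uniform. One small caveat: the "representation stable $\Longleftrightarrow$ eventual character polynomial'' equivalence you invoke is a true and standard consequence of CEF's results (the forward direction because each $\chi_{\specht^{\lambda[n]}}$ is eventually a character polynomial, and the converse because those character polynomials are eventually a basis and $V_n$ is a genuine rather than virtual module, forcing the coefficients to be the stabilizing nonnegative multiplicities), but it is not literally a single statement in CEF, so the citation deserves a sentence of justification. A second observation: the paper's route, because it goes through the explicit Briand--Orellana--Rosas bound, would more readily yield quantitative control over the onset of stability if one chased the constants; your approach gives stability but does not directly expose such bounds. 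Neither matters for this lemma as stated, since the paper does not track bounds here either.
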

\begin{proof}
Express the representation stable sequence 
$\{V_n\}$ for $n \gg 0$ as
$
V_n=\bigoplus_{i=1}^t \specht^{\lambda^{(i)}[n]},
$
and proceed by induction on $t$ to show
$\{\schurfunctor^\mu(V_n)\}$ is representation stable for all $\mu$. 

The case $t=1$ was proven by
Church, Ellenberg, Farb \cite[Prop. 3.4.5]{CEF}
who showed that for any partitions $\lambda, \mu$, the sequence $\{ \schurfunctor^\mu(\specht^{\lambda[n]}) \}$ is representation stable. In the inductive step,
write
$
V_n=U_n \oplus \specht^{\lambda^{(i)}[n]}
$
for $n \gg 0$,
where $U_n:= \bigoplus_{i=1}^{t-1} \specht^{\lambda^{(i)}[n]}$, so that induction applies to the representation stable sequence $\{U_n\}$.
Using the general isomorphism (see, e.g., \cite[Thm.~II.4.11]{AkinBuchsbaumWeyman})
$$
\schurfunctor(X \oplus Y) \cong \bigoplus_{\nu \subseteq \mu}
\schurfunctor^\nu(X) \otimes 
\schurfunctor^{\mu/\nu}(Y)
$$
one concludes that, for $n \gg 0$,
$$
\begin{aligned}
\schurfunctor^\mu(V_n)
=\schurfunctor^\mu( U_n \oplus \specht^{\lambda[n]} )
\cong \bigoplus_{\nu \subseteq \mu}
\schurfunctor^\nu(U_n) \otimes 
\schurfunctor^{\mu/\nu}(\specht^{\lambda[n]} )
\cong \bigoplus_{\nu,\mu,\rho}
\left( \schurfunctor^\nu(U_n) \otimes 
\schurfunctor^{\rho}(\specht^{\lambda[n]}) 
\right)^{\oplus c^{\mu/\nu}_\rho}
\end{aligned}
$$
for some nonnegative integer (Littlewood-Richardson) coefficients $c^{\mu/\nu}_\rho$.
By induction on $t$, the sequences $\{ \schurfunctor^\nu(U_n)\}$ are representation stable, and by the $t=1$ case, the same holds for
$\{\schurfunctor^{\rho}(\specht^{\lambda[n]})\}$.
Hence by Theorem~\ref{BriandOrellanaRosasBound},
each summand $\{\schurfunctor^\nu(U_n) \otimes \schurfunctor^{\rho}(\specht^{\lambda[n]})\}$ on the right side is a representation
stable sequence, and the same holds for the entire direct sum.
\end{proof}

We now apply this to the sequences of
Lie (super-)algebras $\{\cL(n)\}$.

\begin{cor}
\label{cor: primitives-inherit-rep-stability}
Let $\{A(n)\}$ be a family of Koszul algebras, all commutative (resp. anti-commutative), with $\{ \cL(n) \}$ defined by
$A(n)^!=\cU(\cL(n))$.  If for
each fixed $i=0,1,2,\ldots$, the sequence $\{A(n)_i\}$ is representation stable, then for each fixed $i=1,2,\ldots$,
the sequence $\{ \cL(n)_i \}$ is also 
representation stable.
\end{cor}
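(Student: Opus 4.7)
The plan is to induct on $i$, using the PBW-based formulas \eqref{skew-commutative-primitive-formula} or \eqref{commutative-primitive-formula} to solve for $[\cL(n)_i]$ in terms of $[A(n)^!_i]$ together with Schur functors applied to lower-degree pieces $[\cL(n)_j]$ with $j<i$.

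First, Corollary~\ref{cor: shrieks-have-rep-stability-cor} applied to the hypothesis tells us that each sequence $\{A(n)^!_i\}_n$ is representation stable. Next, extract the coefficient of $t^i$ in \eqref{skew-commutative-primitive-formula} (anti-commutative case) or \eqref{commutative-primitive-formula} (commutative case). In either case the partition $\lambda=(i)$ contributes a single factor $[\Sym^1\cL(n)_i]=[\cL(n)_i]$ (or $[\wedge^1\cL(n)_i]=[\cL(n)_i]$ when $i$ is odd in the commutative case), while every other partition $\lambda$ of $i$ has all parts $\le i-1$. Isolating this term yields an identity in $R_\kk(\symm_n)$ of the form
\[
[\cL(n)_i] \;=\; [A(n)^!_i] \;-\; \sum_{\substack{\lambda\vdash i\\ \lambda\neq(i)}} \prod_{j\ge 1} \bigl[\schurfunctor^{\mu_j(\lambda)}\cL(n)_j\bigr],
\]
where each $\schurfunctor^{\mu_j(\lambda)}$ is either a symmetric power $\Sym^{m_j}$ or an exterior power $\wedge^{m_j}$, determined by the parity of $j$ and which of the two cases we are in, and every factor on the right involves only $\cL(n)_j$ with $j\le i-1$.

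Now do induction on $i$, with the base case $i=1$ being immediate because $\cL(n)_1=A(n)^!_1$. For the inductive step, assume $\{\cL(n)_j\}_n$ is representation stable for every $j<i$. By Lemma~\ref{schur-functor-rep-stability-lemma} each of the sequences $\{\Sym^{m_j}\cL(n)_j\}_n$ and $\{\wedge^{m_j}\cL(n)_j\}_n$ is representation stable, and by Lemma~\ref{tensoring-sums-stability-bounds-lemma} (applied iteratively) each tensor product indexed by a fixed $\lambda$ is representation stable, hence so is the finite sum. Thus the right-hand side of the displayed identity is a difference of two representation stable sequences. Since $\cL(n)_i\subseteq A(n)^!_i$ is a genuine $\kk\symm_n$-module (not merely a virtual one), the general fact recalled just before Theorem~\ref{BriandOrellanaRosasBound}---that a virtual sequence which is a difference of representation stable sequences and which is itself genuine is automatically representation stable---completes the induction.

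The main obstacle I foresee is purely bookkeeping: carefully checking that the PBW isomorphism $A(n)^!\cong\Sym(\cL(n))$ or $\Sym_\pm(\cL(n))$ is $\symm_n$-equivariant so that the Hilbert series identities of Section~\ref{sec: deviations} really can be read as identities of $\symm_n$-characters, and that the $\lambda=(i)$ summand is the only one contributing $[\cL(n)_i]$ linearly (so the recursion genuinely solves for $[\cL(n)_i]$). Once that is nailed down, the Schur functor stability (Lemma~\ref{schur-functor-rep-stability-lemma}) and the tensor stability (Lemma~\ref{tensoring-sums-stability-bounds-lemma}) do all the heavy lifting. Unlike in Corollary~\ref{cor: shrieks-have-rep-stability-cor}(ii), I do not expect to get a uniform linear bound on the onset of stability without more work, because the bounds coming from Lemma~\ref{tensoring-sums-stability-bounds-lemma} and the Church--Ellenberg--Farb bound for Schur functors of stable sequences stack up multiplicatively through the recursion.
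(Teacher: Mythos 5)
Your proof is correct and follows essentially the same route as the paper's: induct on $i$, extract the coefficient of $t^i$ from the equivariant PBW Hilbert series identity \eqref{skew-commutative-primitive-formula} or \eqref{commutative-primitive-formula}, isolate the $\lambda=(i)$ term to solve for $[\cL(n)_i]$, then feed Corollary~\ref{cor: shrieks-have-rep-stability-cor}, Lemma~\ref{schur-functor-rep-stability-lemma}, and the tensor-stability result into the recursion. The only cosmetic differences are that you invoke Lemma~\ref{tensoring-sums-stability-bounds-lemma} where the paper cites Theorem~\ref{BriandOrellanaRosasBound} directly, and you make explicit the appeal to the fact that a genuine sequence realizing a virtual difference of stable sequences is itself stable, which the paper leaves implicit.
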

\begin{proof}
In either case where $\{A(n)\}$ are commutative or anti-commutative, use induction on $i$.  In the base case $i=1$, 
one has this string of equalities, justified below:
$$
[\cL(n)_1]\overset{(a)}{=}[A(n)^!_1]\overset{(b)}{=}[\left( A(n)_1\right) ^*]\overset{(c)}{=}[A(n)_1].
$$
Equality (a) comes from comparing coefficients of $t^1$ on either side of
\eqref{commutative-primitive-formula} or \eqref{skew-commutative-primitive-formula}, equality (b) comes from \eqref{equivariant-recurrence-for-shrieks}, and equality (c) comes from the fact that $\kk\symm_n$-modules are all self-contragredient.  Since the right sides $\{A(n)_1\}$ are representation stable, so are the left sides $\{\cL(n)_1\}$ .

In the induction step where $i \geq 2$, rewrite the equalities that come from comparing the coefficient of $t^i$ on either side of \eqref{commutative-primitive-formula} or \eqref{skew-commutative-primitive-formula}, isolating the summand $[\cL(n)_i]$ on the right
corresponding to $\lambda=(i)$.  For fixed $n$, this expresses $\cL(n)_i$ recursively in terms of
$A(n)^!_i$ and $\cL(n)_1,\cL(n)_2,\ldots, \cL(n)_{i-1}$:
$$
[\cL(n)_i]=[A(n)^!_i]- 
\begin{cases}
 \displaystyle \sum_{\substack{\lambda \vdash i:\\\lambda=(1^{m_1} 2^{m_2} \cdots i^{m_i})\\ \lambda \neq (i)}}
  \prod_{1 \leq j <i} [ \Sym^{m_j}(\cL(n)_j) ] 
   \quad\quad\quad\quad\text{ for }A(n)\text{ anti-commutative}.\\
   &\\
\displaystyle \sum_{\substack{ \lambda \vdash i:\\\lambda=(1^{m_1} 2^{m_2} \cdots i^{m_i})\\ \lambda \neq (i)}} 
\prod_{\substack{1 \leq j < i\\j \text{ odd }}} [\wedge^{m_j}(\cL(n)_j) ]
\prod_{\substack{2 \leq j < i\\j\text{ even }}} [\Sym^{m_j}(\cL(n)_j)]  \text{ for }A(n)\text{ commutative}.
\end{cases}
$$
Now use Corollary~\ref{cor: shrieks-have-rep-stability-cor} 
asserting that each sequence $\{A(n)^!_i\}$
is representation stable.  Induction on $i$ shows each sequence $\{\cL(n)_j\}$ for $j \leq i-1$ appearing on the right is representation stable.  
Lemma~\ref{schur-functor-rep-stability-lemma},
then implies the same for all sequences $\{\wedge^{m_j}(\cL(n)_j\}, \{\Sym^{m_j}(\cL(n)_j\}$
appearing on the right.  Then Theorem~\ref{BriandOrellanaRosasBound} gives
the same for their tensor products.  Thus
every summand on the right is a representation
stable sequence in $n$, and hence so is $\{\cL(n)_i\}$.
\end{proof}

\begin{remark}
The above proof shows the following statement for
a sequence of graded Lie (super-)algebras and $\kk\symm_n$-modules 
$\cL(n)$, with universal enveloping algebras $\cU(\cL(n))$:
one has for all $i \geq 1$ that $\{\cL(n)_i\}$
is representation stable if and only if 
one has for all $i \geq 0$ that $\{\cU(\cL(n))_i\}$
is representation stable.
\end{remark}

\begin{remark}
   Unlike Corollary~\ref{cor: shrieks-have-rep-stability-cor}, we have not seriously tried to bound the onset of stability for the sequences $\{\cL(n)_i\}$, in terms of a given bound for the onset of stability in $\{A(n)_i\}$. However, {\tt Sage} computations up to $i = 10$ suggest the following for uniform matroids $U_{2,n}$ of rank $2$.
\end{remark}

\begin{conj}
    Defining $\cL(n)_i$ by $\OS(U_{2,n})^! = \cU(\cL(n))$, the sequence $\{\cL(n)_i\}$ is representation stable past $n = 2i-1$ for fixed $i\geq 3$.
\end{conj}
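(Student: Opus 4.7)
The plan is to first identify the $\kk\symm_n$-equivariant structure of $\cL(n)$ explicitly, and then reduce the stability question to a plethystic calculation involving the reflection representation $\specht^{(n-1,1)}$. The key structural claim is that as graded $\kk\symm_n$-modules,
\[
\cL(n)_i \;\cong\; \mathfrak{L}^i\bigl(\specht^{(n-1,1)}\bigr) \qquad \text{for all } i\ge 2,
\]
where $\mathfrak{L}^i(W)$ denotes the degree-$i$ component of the free Lie algebra on $W$. The dimension count is automatic from the Falk--Randell/Kohno LCS formula applied to the exponents $(1,n-1)$ of $U_{2,n}$: it yields $\dim\cL(n)_i$ equal to the Witt number of Lyndon words of length $i$ on $n-1$ letters, matching $\dim\mathfrak{L}^i(W)$ when $\dim W=n-1$. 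For the $\kk\symm_n$-equivariance, I would argue either topologically---the complement of $U_{2,n}$ is a $\CC^\ast$-bundle over $\mathbb{C}\mathbb{P}^1\setminus\{n\text{ points}\}$, and the resulting $\symm_n$-equivariant fibration exhibits the holonomy Lie algebra as a semidirect product whose non-abelian part is free on $\specht^{(n-1,1)}$---or inductively, by matching the anti-commutative PBW recursion of \Cref{cor: primitives-inherit-rep-stability} against the PBW expansion of $\cU(\mathfrak{L}(\specht^{(n-1,1)}))$ term by term, using the explicit formula for $[A(n)^!_i]$ from \Cref{prop: rank-two-OS-shrieks-are-perm-reps}.

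Granting this identification, the Frobenius characteristic of $\cL(n)_i$ is the plethysm $\ell_i[s_{(n-1,1)}]$, where $\ell_i = \tfrac{1}{i}\sum_{d\mid i}\mu(d)p_d^{i/d}$ is the characteristic of the Lie representation $\mathrm{Lie}_i$. Expanding $\ell_i = \sum_{\mu\vdash i}a_\mu s_\mu$ with $a_\mu$ the multiplicity of $\specht^\mu$ in $\mathrm{Lie}_i$ yields
\[
\cL(n)_i \;\cong\; \bigoplus_{\mu\vdash i}\bigl(\schurfunctor^\mu \specht^{(n-1,1)}\bigr)^{\oplus a_\mu}.
\]
Representation stability then follows from \Cref{schur-functor-rep-stability-lemma}, and the exact onset of stability equals the maximum of $|\nu|+\nu_1$ taken over partitions $\nu$ appearing with positive multiplicity in the stable irreducible decomposition $\schurfunctor^\mu\specht^{(n-1,1)} = \sum_\nu d_{\mu\nu}\,\specht^{\nu[n]}$ for some $\mu\vdash i$ with $a_\mu>0$.

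The crux is to pin down this maximum as exactly $2i-1$ for $i\ge 3$. For the lower bound, the natural candidate is $\nu=(i-1,1)$, yielding the irreducible $\specht^{(n-i,i-1,1)}$ with $|\nu|+\nu_1=2i-1$; one expects it to appear in $\schurfunctor^{(i-1,1)}\specht^{(n-1,1)}$ with positive multiplicity, contributing to $\cL(n)_i$ because the multiplicity $a_{(i-1,1)}$ of $\specht^{(i-1,1)}$ in $\mathrm{Lie}_i$ is positive for $i\ge 3$ (by the classical Klya\v{c}ko/Kraskiewicz--Weyman theorem on Lie characters). For the upper bound, one must show that no $\nu$ with $|\nu|+\nu_1\ge 2i$ occurs; the extremal obstructing case is $\nu=(i)$, yielding $\specht^{(n-i,i)}$, which \emph{does} appear in $[A(n)^!_i]$ as the ``top'' irreducible summand of $\varphi_{(n-i,1^i)}$ and is responsible for the $n=2i$ bound in the stability of $\{\OS(U_{2,n})^!_i\}$. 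The main obstacle is verifying that every copy of $\specht^{(n-i,i)}$ in $[A(n)^!_i]$ is fully absorbed by the symmetric-power terms $\prod_j\Sym^{m_j}\cL(n)_j$ (for compositions $\lambda\ne(i)$) in the PBW expansion, leaving none in $\cL(n)_i$. This amounts to controlled plethystic bookkeeping in $\ell_i[s_{(n-1,1)}]$; the pattern should be testable for small $i$ by direct computation against the data in \Cref{appendix: tables}, with the general case likely requiring an inductive argument that tracks hook-plethysm identities for Schur functions applied to $s_{(n-1,1)}$.
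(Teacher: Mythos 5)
The statement you've chosen is a \emph{conjecture} in the paper, offered in the remark just above it as a pattern observed in {\tt Sage} computations up to $i=10$; the authors say explicitly they have not seriously tried to bound the onset of stability for the sequences $\{\cL(n)_i\}$. So there is no argument in the paper to compare against, and a complete proof would be a genuine contribution.

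Your key structural observation is correct and is the right starting point. For $U_{2,n}$ there is a unique rank-two flat $F=E$, and the Kohno relations of Definition~\ref{Kohno-relations-defn} read $r(j,E)=\sum_{k\ne j}[y_j,y_k]_+=[y_j,s]$ with $s:=\sum_k y_k$; imposing them in $\Lie(V^*)$ precisely makes $s$ central. In characteristic zero the $\symm_n$-equivariant splitting $V^*=\kk s\oplus W$ with $W\cong\specht^{(n-1,1)}$ then gives $\mathfrak h_{\OS(U_{2,n})}\cong \Lie(W)\times\kk s$, so $\cL(n)_i$ is the degree-$i$ component of the free Lie algebra on $\specht^{(n-1,1)}$ for all $i\ge 2$. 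The Schur--Weyl decomposition $\cL(n)_i\cong\bigoplus_{\mu\vdash i}\bigl(\schurfunctor^\mu\specht^{(n-1,1)}\bigr)^{\oplus a_\mu}$ with $a_\mu=\langle\ell_i,s_\mu\rangle$ is likewise correct, and combined with Lemma~\ref{schur-functor-rep-stability-lemma} it reproves stability --- though that much is already known from Corollary~\ref{cor: primitives-inherit-rep-stability}. (One slip: the Frobenius characteristic of $\cL(n)_i$ is \emph{not} the symmetric-function plethysm $\ell_i[s_{(n-1,1)}]$, which has degree $in$ rather than $n$ and so cannot be the characteristic of a $\symm_n$-module; what you actually use is the polynomial-functor decomposition just stated.)

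The genuine gap is exactly what you defer to ``controlled plethystic bookkeeping,'' and it is the entire content of the conjecture. Proving the precise bound requires (a) exhibiting some $\nu$ with $|\nu|+\nu_1=2i-1$ appearing stably --- you propose $\nu=(i-1,1)$ and correctly observe $a_{(i-1,1)}=1>0$ for $i\ge 3$ by Kra\'skiewicz--Weyman, but you do not verify that $\specht^{(i-1,1)[n]}$ actually occurs with positive multiplicity in the stable decomposition of $\schurfunctor^{(i-1,1)}(\specht^{(n-1,1)})$; and (b) ruling out \emph{every} $\nu$ with $|\nu|+\nu_1\ge 2i$, of which $\nu=(i)$ is only the most obvious candidate: one must also exclude $(i-1,1,1)$, $(i,1)$, and anything else that could appear in $\schurfunctor^\mu(\specht^{(n-1,1)})$ for some $\mu\vdash i$ with $a_\mu>0$. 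Neither step is carried out. As written, the proposal reduces the conjecture to a concrete and plausible open problem about Schur functors of the reflection representation, but does not prove it.
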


\begin{remark} Although $[A(n)_i^!]$ is a permutation representation when $A(n) = \OS(U_{2,n})$ by \Cref{prop: rank-two-OS-shrieks-are-perm-reps}, the primitives $[\cL(n)_i]$ are generally \textbf{not} classes of permutation representations. This fails immediately for $n=3$ and $i=2$, where $[\cL(3)_2]$ is the sign representation.
% has Frobenius character
% $$
% s_{1,1,1} = h_1^3 - 2 h_2 h_1 + h_3
% $$
% and $[\cL(3)_2]$ cannot be expressed in the Burnside ring of permutation representations.    
%2023 Oct 11: In fact $\cL(3)_i$ is not a permutation module for $i=2,3,4$, since it is easily seen that  the character takes on negative values.  More generally, the data  indicates that $\cL(n)_i$ is \emph{rarely} a permutation module.
%
%It looks like there is this sort of stable pattern for $i\le 4$ (but it appears to break down at $i=5$):$\ch\, \cL(n)_2=h_{(n-2,1^2)} -h_{(n-2,2)}-h_{(n-1,1)}+h_n=h_{n-2} e_2-h_{(n-1,1)}+h_n, \ n\ge 4$,$\ch\, \cL(n)_3=h_{(n-3,2,1)} -h_{(n-3,3)}=h_{n-3} s_{(2,1)}, \ n\ge 3$,$\ch\, \cL(n)_4=h_{(n-4,2,1^2)} -h_{(n-4,2^2)}+h_{(n-3,1^3)} -h_{(n-3,2,1)},=h_{n-4} (h_2 e_2)+h_{n-3} (e_3+s_{(2,1)})\ n\ge 6$.Oct 14: This confirms that the point of stability is at $i+i=2i, i=2,3,4$.}
Also for braid matroids, if $A(n)=\OS(\Br_n)$,
and $A(n)^!=\OS(\Br_n)^!=\cU(\cL(n))$, one can check 
$$
\cL(n)_2=(\mathrm{sgn}_{\symm_3}\otimes \one_{\symm_{n-3}})\big\uparrow^{\symm_n}_{\symm_3 \times \symm_{n-3}}
$$
which is again not a permutation module.  
\end{remark}

%%%%%%%%%%%%%%%%%%%%%%%%%%%%%%%%%%%%%%%%%%%%%%
\section{The motivating example:  braid matroids and Stirling representations}
\label{sec: stirling-reps}
%%%%%%%%%%%%%%%%%%%%%%%%%%%%%%%%%%%%%%%%%%%%%%

As mentioned prior to Example~\ref{graphic-braid-example}, our motivation came from the {\it braid matroids} $M=\braid_n$, which are 
also known as the graphic matroids for complete graphs $K_n$.  They are also known as the matroids represented by the vectors $\{v_{ij}\}_{1 \leq i<j\leq n}$ with $v_{ij}:=e_i-e_j$ in $\RR^n$ which are the (positive) roots in the type $A_{n-1}$ root system, whose normal hyperplanes $H_{ij}=\{ \xx \in \RR^n: x_i=x_j\}$ are the reflecting hyperplanes for the transposition $(i,j)$ in the symmetric group $\symm_n$ when it acts on $V=\RR^n$
by permuting coordinates.  Thus $M=\braid_n$ is orientable,
and abusing notation slightly, we will also denote by $\OM=\braid_n$ the oriented matroid on the ground set $E=\{\{i,j\}: 1 \leq i < j \leq n\}$ 
represented by these vectors $\{ v_{ij} \}$.

%%%%%
\subsection{Comparison with cohomology of configuration spaces}\label{subsec: configuration-spaces}
%%%%%

It turns out that the algebras $\OS(\braid_n), \VG(\braid_n)$ had been studied historically earlier
as the cohomology rings of certain 
{\it configuration spaces} of $n$ ordered (labeled) points
in a space $X$
$$
\Conf_n(X):=\{(x_1,\ldots,x_n) \in X^n: x_i \neq x_j \text{ for }1\leq i< j\leq n\}.
$$
The arrangement of hyperplanes $H_{ij}$ in $V=\RR^n$
described above allow one to identify
$$
\Conf_n(\RR^d) = V \otimes_\RR \RR^d \setminus \bigcup_{1 \leq i < j \leq n} H_{ij} \otimes_\RR \RR^d,
$$
that is, as the complement of subspace arrangements coming from the reflection hyperplane arrangements ``thickened" by tensoring with $\RR^d$ as in Theorems~\ref{OS-theorem}, ~\ref{Moseley-theorem}
and Remark~\ref{rem: thickening-in-general}.
In fact, the special cases of those results for the braid arrangements, along with quadratic presentations for the associated algebras, were known to 
Arnol'd \cite{Arnold} for $\OS(\braid_n)$ and
Cohen \cite{CohenLadaMay} for $\VG(\braid_n)$:
\begin{align*}
%\OS(\braid_n) &\cong  \wedge_\kk(\{x_{ij}\}_{1 \leq i < j \leq n}) \,\, / \,\, ( \,\, \{x_{ij} x_{ik} - x_{ij} x_{jk} + \underline{x_{ik} x_{jk}}\}_{1 \leq i < j < k \leq n} \,\, ) \\
%\VG(\braid_n) &\cong \kk[ \{x_{ij}\}_{1 \leq i < j \leq n} ] \,\, / \,\, ( \,\, \{x_{ij} x_{ik} - x_{ij} x_{jk} + \underline{x_{ik} x_{jk}}\}_{1 \leq i < j < k \leq n} \cup \{x_{ij}^2\}_{1\leq i<j\leq n}\,\,)\\
%& \\
\OS(\braid_n) &\cong  
\wedge_\kk(x_{ij}) /  ( x_{ij} x_{ik} - x_{ij} x_{jk} + 
x_{ik} x_{jk}) \\
\VG(\braid_n) &\cong 
\kk[x_{ij}] / 
\,\, (x_{ij} x_{ik} - x_{ij} x_{jk} + x_{ik} x_{jk}, \,\, x_{ij}^2\,\,) 
\end{align*}
Here permutations $\sigma$ in $\symm_n$ act on the variables
by permuting subscripts, that is, $\sigma(x_{ij})=x_{\sigma(i),\sigma(j)}$, but with the convention that $x_{ji}=x_{ij}$ in $\OS(\braid_n)$, and $x_{ji}=-x_{ij}$ in $\VG(\braid_n)$.

Note that that these presentations are consistent with the general presentation coming from supersolvable matroids in Corollary~\ref{cor: primal-quadratic-GB}, using the modular complete flag $\underline{F}$ of flats chosen in
Example~\ref{graphic-braid-example}:  one checks that the corresponding decomposition $\underline{E}=(E_1,E_2,\ldots,E_{n-1})$ of $E=\{\{i,j\}\}_{1\leq i<j \leq n}$ has
\begin{equation}
\label{braid-matroid-hands}
\begin{aligned}
E_1&=\{\{1,2\}\},\\
E_2&=\{\{1,3\},\{2,3\}\},\\
E_3&=\{\{1,4\},\{2,4\},\{3,4\}\},\\
&\quad \vdots\\
%E_{n-2}&=\{\{1,n-1\},\{2,n-1\},\ldots,\{n-2,n-1\}\}.\\
E_{n-1}&=\{\{1,n\},\{2,n\},\ldots,\{n-2,n\},\{n-1,n\}\},\\
\end{aligned}
\end{equation}
and the subset of circuits 
$
\cC_{\BEZ}(\underline{E})=
\{ \{i,j\}, \{i,k\} ,\{j,k\} \}_{1 \leq i < j < k \leq n}.
$
Here the NBC monomial basis for either $\OS(\braid_n), \VG(\braid_n)$ are the products of $x_{ij}$ that 
choose at most one $\{i,j\}$ from each set $E_p$ with $p=1,2,\ldots,n-1$ above; Barcelo \cite[Thm. 2.1]{Barcelo} calls this {\it picking at most one finger $x_{ij}$ from each hand $E_p$}.  Since the exponents $e_p=|E_p|$ here are $(e_1,\ldots,e_r)=(1,2,\ldots,n-1)$,
one has these Hilbert series
\begin{align*}
\Hilb(\OS(\braid_n),t)
&=\Hilb(\VG(\braid_n),t)\\
&=(1+t)(1+2t)\cdots (1+(n-1)t)
=\sum_{i=0}^{n-1} t^i \, c(n,n-i)\\
\Hilb(\OS(\braid_n)^!,t)
&=\Hilb(\VG(\braid_n)^!,t)\\
&=\frac{1}{(1-t)(1-2t)\cdots (1-(n-1)t)}
=\sum_{i=0}^{\infty} t^i \, S((n-1)+i,n-1),
\end{align*}
where the coefficients $c(n,k), S(n,k)$ appearing here are the {\it (signless) Stirling numbers of the first kind} $c(n,k)$, counting permutations in $\symm_n$ with $k$ cycles, and the {\it Stirling numbers of the 2nd kind} $S(n,k)$, counting partitions of the set $\{1,2,\ldots,n\}$ into  $k$ blocks.
Comparing coefficients on powers of $t$, one has for either $A(n)=\OS(\braid_n)$ or $\VG(\braid_n)$ that 
\begin{align*}
\dim_\kk A(n)_i & =c(n,n-i),\\
%\text{ or  equivalently, }\\
c(n,k)&=\dim_\kk A(n)_{n-k},\\
& \\
\dim_\kk A(n)^!_i &=S((n-1)+i,n-1),\\
%\text{ or equivalently, }\\
S(n,k)&=\dim_k A(k+1)^!_{n-k}.
\end{align*}

\begin{definition} ({\it Stirling representations})
For either $A(n)=\OS(\braid_n)$ or $A(n)=\VG(\braid_n)$, 
%\begin{itemize}
%\item 
call $A(n)_i$ the {\it Stirling representations of the first kind}, and
%\item  
call $A(n)^!_i$ the {\it Stirling representations of the second kind}.
%\end{itemize}
When emphasizing their dimensions as representations, we will abbreviate them as 
\begin{align*}
    \cnkrep_\OS(n,k)&:=\OS(\braid_n)_{n-k},\\
    \cnkrep_\VG(n,k)&:=\VG(\braid_n)_{n-k}, \text{ both }\kk\symm_n\text{-modules},\\
    & \\
    \Snkrep_\OS(n,k)&:=\OS(\braid_{k+1})^!_{n-k},\\
    \Snkrep_\VG(n,k)&:=\VG(\braid_{k+1})^!_{n-k}, \text{ both }\kk\symm_{k+1}\text{-modules}.
\end{align*}
\end{definition}

\begin{remark}
\label{rem: Stirling-number-coincidence}
The coincidence between $\dim_\kk A(k+1)^!_{n-k}$ and $S(n,k)$, counting set partitions of $\{1,2,\ldots,n\}$ into $k$ blocks, is closely related to a well-known combinatorial encoding of set partitions via {\it restricted growth functions}, as we explain here;
see also Stanton and White \cite[Sec. 1.5]{StantonWhite}. % It's also explained implicitly by Ardila in \cite[Sec. 2.2.2 Example 13]{Ardila} without using the RGF terminology.

Given any $k$-block set partition $\pi=\{B_1,\ldots,B_k\}$
of $\{1,2,\ldots,n\}$, re-index the blocks so
that $\min B_1 < \min B_2 < \cdots < \min B_k$.
Then the {\it restricted growth function (rgf)}
encoding $\pi$ is the sequence $(i_1,i_2,\ldots,i_n)$
defined by $i_j=\ell$ if $j \in B_\ell$ for $j=1,2,\ldots,n$.  By definition, $i_1:=1$ and $i_j \leq 1+\max\{ i_0,i_1,\ldots,i_{j-1}\}$; 
 it is not hard to check that these two properties characterize the rgf's.  For example, with $n=15$ and $k=3$, this set partition
$$
\pi=
\{
\underbrace{\{\mathbf{1},2,3,5,8,10,15\}}_{B_1},
\underbrace{\{\mathbf{4},6,7,12\}}_{B_2},
\underbrace{\{\mathbf{9},11,13,14\}}_{B_3}
\}
$$
corresponds to this rgf $(i_1,i_2,\ldots,i_{15})$:
$$
\begin{array}{rcccccccccccccccl}
i_1&i_2&i_3&i_4&i_5&i_6&i_7&i_8&i_9&i_{10}&i_{11}&i_{12}&i_{13}&i_{14}&i_{15} \\
1&1&1&2&1&2&2&1&3&1&3&2&3&3&1
\end{array}
$$
We claim that these rgf's correspond bijectively to the standard monomial $\kk$-basis for $A(k+1)^!_{n-k}$ given
in Corollary~\ref{thm: shriek-presentations}.  To explain
this bijection, underline the first (leftmost) occurrence of each value $p=1,2,\ldots,k$ among the $i_j$, and  append an extra (underlined) $i_{n+1}:=k+1$ on the right, as a convention.
One then associates to $(i_1,\ldots,i_n)$ the product 
$m_2 \cdot m_3 \cdots m_k \cdot m_{k+1}$ where $m_p$ is the noncommutative monomial in variables $\{ y_{ip} \}_{i=1}^{p-1}$ obtained by replacing each 
non-underlined value $i_j$ above
with the variable $x_{i_j,p}$ if $p$ is the next underlined value to the right of $i_j$:
$$
\begin{array}{rcccccccccccccccl}
%i_1&i_2&i_3&i_4&i_5&i_6&i_7&i_8&i_9&i_{10}&i_{11}&i_{12}&i_{13}&i_{14}&i_{15}& \\
\underline{1}&1&1&\underline{2}&1&2&2&1&\underline{3}&1&3&2&3&3&1&\underline{4}\\
&y_{12}&y_{12}& \cdot &y_{13}&y_{23}&y_{23}&y_{13}&  \cdot &y_{14}&y_{34}&y_{24}&y_{34}&y_{34}&y_{14}&
\end{array}
$$
Since the number of non-underlined values $i_j$ is $n-k$,
this is a standard monomial in $A(k+1)^!_{n-k}$.
\end{remark}

\begin{remark}
The presentations of $\OS(\braid_n)^!, \VG(\braid_n)^!$ in Theorem~\ref{thm: shriek-presentations} are equivalent to what
Cohen and Gitler \cite{CohenGitler}
called {\it graded infinitesimal braid relations} in their presentation of the loop space homology algebra $H_*(\Omega X,\kk)$ where $X=\Conf_n(\RR^d)$; see also
Berglund \cite[Example 5.5]{Berglund}.
For the case of $\OS(\Br_n)^!$, considered as a universal enveloping algebra $\OS(\Br_n)^!=\cU(\cL)$, see also the discussion by Fresse \cite[Ch.~10]{Fresse} referring to $\cL$ as the {\it Drinfeld-Kohno Lie algebra} and $\cU(\cL)$ as the {\it algebra of chord diagrams}.
\end{remark}

%%%%
\subsection{Stirling representations of the first kind: generating functions}\label{sec:Stirlingrep-gfs}

The $\kk \symm_n$-module structure for either $A(n)=\OS(\braid_n)$ or $\VG(\braid_n)$ are well-studied.  Explicit irreducible decompositions are not known, but can be computed reasonably efficiently through symmetric function formulas involving plethysm and generating functions, given in work of
Sundaram and Welker \cite[Thm. 4.4(iii)]{SundaramWelker} and reviewed here;  see also the summary in Hersh and Reiner \cite[Sec. 2]{HershReiner}.

Let $\kk$ be a field of characteristic zero.
The {\it Frobenius characteristic isomorphism} $R_\kk(\symm_n) \cong \Lambda_n$, where $\Lambda_n$ are the degree $n$ homogeneous symmetric functions $\Lambda(z_1,z_2,\ldots)_n$, mentioned in Section~\ref{sec: rank-two-revisited} above, can be compiled for all $n$ to give a ring isomorphism
$$
\bigoplus_{n=0}^\infty R_\kk(\symm_n) \overset{\mathrm{ch}}{\longrightarrow} \bigoplus_{n=0}^\infty \Lambda_n = \Lambda.
$$
Here the product on the left is the {\it external} or {\it induction product}
$$
([U],[V]) \longmapsto \left[ (U \otimes_\kk V)\uparrow_{\symm_a \times \symm_b}^{\symm_{a+b}} \right],
$$
while the product on the right simply multiplies symmetric functions.  If one defines the power sum symmetric function $p_r:=z_1^r+z_2^r+\cdots$, and
the $\kk$-basis $\{ p_\lambda:=p_{\lambda_1} p_{\lambda_2} \cdots\}$ indexed by partitions $\lambda$ of $n$ for $\Lambda_n$, then for each $\kk\symm_n$-module $U$ with character $\chi_U$, the Frobenius isomorphism maps
$
[U] \overset{\mathrm{ch}}{\longmapsto} \frac{1}{n!} \sum_{\sigma \in \symm_n} \chi_U(\sigma) \, p_{\lambda(\sigma)}
$
where $\lambda(\sigma)$ is the cycle type partition of $\sigma$.

Let $\Lie_n$ denote the $n^{th}$ {\it Lie representation}: the $\symm_n$-representation
on the multilinear component of the free Lie algebra on $n$ variables. It has a formula due to Klyachko \cite{Klyachko} as
$
\Lie_n=\zeta \uparrow_{C_n}^{\symm_n},
$
where $\zeta$ is the one-dimensional representation of the cyclic group $C_n$
inside $\symm_n$ generated by an $n$-cycle $c$, 
that sends $c \mapsto e^{\frac{2\pi i}{n}}$.
Defining symmetric functions
$$
\begin{aligned}
    \ell_n &:= \ch(\Lie_n),\\
    \pi_n &:=  \ch(\mathrm{sgn}_n \otimes \Lie_n),
\end{aligned}
$$
%Both $OS(\cA_{n-1})$ and $\VG(\cA_{n-1})$ carry $\kk$-vector space decompositions indexed by set partitions: the image of  a monomial $e_{i_1 j_1} \cdots e_{i_k j_k}$ or $x_{i_1 j_1} \cdots x_{i_k j_k}$ lies in the component indexed by set partition $\pi={B_1,\ldots,B_\ell\}$ of $\{1,2,\ldots,n\}$ whose blocks correspond to  the transitive closure of the edges  $\{i_1,j_1\},\ldots, \{i_k,j_k\}$.  This $\kk$-vector space decomposition is respected by the $\symm_n$-action, leading to $\symm_n$-representation direct sum decompositions $$ \begin{aligned} \OS(\cA_{n-1}) &= \bigoplus_\lambda \OS_\lambda\\ \VG(\cA_{n-1}) &= \bigoplus_\lambda \VG_\lambda \end{aligned} $$ whose summands are indexed by number partitions  $\lambda = 1^{m_1} 2^{m_2} \dots $ of $n$; here $m_j$ denotes the number of parts of size $j$. The component indexed by $\lambda$ lies in the homogenous component $A(n)_i$ of the ring $A(n)$ in each case, where degree $i=n-\ell(\lambda).$
and letting $(f,g) \mapsto f[g]$ denote {\it plethystic composition} of symmetric functions \cite[\S I.8]{Macdonald}, one has the following plethystic expressions and product generating functions
(see Sundaram \cite[Thm. 1.8, and p.249]{Sundaram}, Sundaram and Welker \cite[Thm. 4.4(iii)]{SundaramWelker} and Hersh and Reiner \cite[\S 2, Thm. 2.17]{HershReiner}):

\begin{align}
\label{braid-VG-plethysm-formula}
1+\sum_{n=1}^\infty u^n \sum_{k=1}^n     \ch([\VG(\braid_n)_{n-k}]) t^k 
&= \sum_{\lambda=1^{m_1} 2^{m_2} \cdots }
u^{|\lambda|}  t^{\ell(\lambda)} 
\prod_{j\geq 1} h_{m_j}[\ell_j]  \\
\label{braid-VG-gf}
 &= \prod_{m=1}^\infty (1-u^m p_m)^{-a_m(t)}, \\
\label{braid-OS-plethysm-formula}    
1+\sum_{n=1}^\infty u^n \sum_{k=1}^n     \ch([\OS(\braid_n)_{n-k}]) t^k 
&= \sum_{\lambda=1^{m_1} 2^{m_2} \cdots }
u^{|\lambda|}  t^{\ell(\lambda)} \prod_{j\text{odd }} h_{m_j}[\pi_j] \prod_{\substack{j\text{ even}\\j\geq 2}} e_{m_j} [\pi_j],\\
\label{braid-OS-gf}
   &= \prod_{m=1}^\infty (1+(-u)^m p_m)^{a_m(-t)},
\end{align}
where here 
$
a_m(t) \coloneqq \frac{1}{m} \sum_{d|m} \mu(d) t^{\frac{m}{d}},
$
with $\mu(d)$ the number-theoretic M\"obius function.
Equivalently, define for a partition $\lambda=1^{m_1} 2^{m_2} \cdots $ of $n$ (written $\lambda \vdash n$) with $m_i$ parts equal to $i$, the $\symm_n$-representations $\OS_\lambda, \VG_\lambda$ whose Frobenius characteristics are
the products appearing above.  Then
\begin{equation}
\begin{array}{rclcccl}
\label{OS-and-VG-lambda} 
%\label{VG-lambda}
\ch\, \VG_\lambda&:=& \displaystyle\prod_{i} h_{m_i}[\ell_i], & \text{ so that }&\ch(\VG(\braid_n)_{n-k})
&=&\displaystyle\sum_{\substack{\lambda\vdash n\\\ell(\lambda)=k}} \VG_\lambda,\\
%\label{OS-lambda}
\ch\, \OS_\lambda&:=& \displaystyle\prod_{i\ \mathrm{odd}} h_{m_i}[\pi_i]\prod_{i\ \mathrm{even}} e_{m_i}[\pi_i], 
& \text{ so that } &\ch(\OS(\braid_n)_{n-k})
&=&\displaystyle\sum_{\substack{\lambda\vdash n\\\ell(\lambda)=k}} \OS_\lambda.\\
\end{array}
\end{equation}
Thus $\VG_{(n)}$ is the {\it Lie representation} with $\ch \VG_{(n)}=\ell_n$ mentioned above.
Similarly, $\{ \VG_\lambda \}$ are called {\it higher Lie characters}; see  
Schocker \cite{Schocker}. 
Also, the last equality in \eqref{OS-and-VG-lambda} implies that $\OS(\braid_n)_{n-k}$ coincides with the $\symm_n$-representation on the $(n-k)$th {\it Whitney homology} of the partition lattice, $1\le k\le n$; see Lehrer and Solomon \cite[Thm. 4.5]{LehrerSolomon}, Sundaram \cite[Thm. 1.8]{Sundaram}.

\subsection{Data on Stirling representations of the second kind}

In contrast to the above $\kk\symm_n$-descriptions of
$A(n)_i$ when $A(n)=\OS(\braid_n), \VG(\braid_n)$, for the Koszul duals $A(n)^!_i$, we currently lack formulas of this nature, although we can
tabulate $A(n)^!_i$ recursively from the $A(n)_i$ using \eqref{equivariant-recurrence-for-shrieks}. 

\begin{question}
Are there formulas like \eqref{braid-VG-plethysm-formula}, \eqref{braid-VG-gf}, \eqref{braid-OS-plethysm-formula}, \eqref{braid-OS-gf}
for the duals $\VG(\braid_n)^!, \OS(\braid_n)^!$?
\end{question}

%%%%%
\subsection{Branching rules for both kinds of Stirling representations}\label{sec:branching-Stirlingreps}
%%%%%

Stirling numbers of both kinds satisfy well-known recurrences, mentioned in the Introduction:
\begin{align}
\label{cnk-recursion}
c(n,k) &= (n-1)\cdot c(n-1,k) + c(n-1,k-1) \\
\label{Snk-recursion}
S(n,k) &= k\cdot S(n-1,k) + S(n-1,k-1)
\end{align}
Theorem~\ref{thm: supersolvable-branching-exact-sequences} will allow us to lift these to branching rules for the Stirling representations of both kinds.  We consider here the action of $G=\symm_n$ on the matroid and oriented matroid $\braid_n$.
In this case, the setwise $\symm_n$-stabilizer for the modular coatom 
$F=E_{n-2}$ in \eqref{braid-matroid-hands}
is the subgroup $H=\symm_{n-1}$.
Furthermore, the permutation action $\mathcal{X}$ of $\symm_{n-1}$ on the set
$$
E_{n-1}=E \setminus F=\{\{1,n\},\{2,n\},\ldots,\{n-1,n\}\}
$$
and its signed permtuation action on the real vectors representing $E_{n-1}$
in the oriented matroid $\braid_n$
$$
\{ v_{1n},v_{2n},\ldots,v_{n-1,n}\}=\{e_1-e_n, e_2-e_n,\cdots,e_{n-1}-e_n\},
$$
are both equivalent to the defining
$\symm_{n-1}$-permutation representation $\chi^{(n-1)}_{\mathrm{def}}$
via $(n-1) \times (n-1)$ permutation matrices.
Translating Theorem~\ref{thm: supersolvable-branching-exact-sequences} then immediately
implies the following.

\begin{cor}
\label{cor: braid-matroid-branching}
For any field $\kk$, the recurrences \eqref{cnk-recursion}, \eqref{Snk-recursion} lift equivariantly as follows.
\begin{itemize}
    \item[(i)] Letting $\cnkrep(n,k)$ denote either $\OS(\braid_n)_{n-k}$ or $\VG(\braid_n)_{n-k}$ as $\kk\symm_n$-module, the recurrence \eqref{cnk-recursion} lifts to two recurrences in $R_\kk(\symm_{n-1})$
\begin{equation}
\label{braid-primal-branching-recursion}
[\cnkrep(n,k)\downarrow^{\symm_n}_{\symm_{n-1}} ]
= [\chi^{(n-1)}_{\mathrm{def}}] \cdot  [\cnkrep(n-1,k)] + [\cnkrep(n-1,k-1)],
\end{equation}
reflecting two $\kk\symm_{n-1}$-module exact sequences
\begin{equation}
\label{braid-primal-branching-sequence}
0 \longrightarrow \cnkrep(n-1,k-1) \longrightarrow \cnkrep(n,k)\downarrow^{\symm_n}_{\symm_{n-1}} \longrightarrow \chi^{(n-1)}_{\mathrm{def}} \otimes \cnkrep(n-1,k) \longrightarrow 0.
\end{equation}

 \item[(ii)] Letting $\Snkrep(n,k)$ denote either $\OS(\braid_{k+1})^!_{n-k}$ or $\VG(\braid_{k+1})^!_{n-k}$ as $\kk\symm_{k+1}$-module, the recurrence \eqref{Snk-recursion} lifts to two relations in $R_\kk(\symm_{k})$ 
\begin{equation}
\label{braid-shriek-branching-recursion}
[\Snkrep(n,k)\downarrow^{\symm_{k+1}}_{\symm_{k}} ]
= [\chi^{(k)}_{\mathrm{def}}]  \cdot[\Snkrep(n-1,k)\downarrow^{\symm_{k+1}}_{\symm_{k}}] + [\Snkrep(n-1,k-1)],
\end{equation}

reflecting two $\kk\symm_k$-module exact sequences
\begin{equation}
\label{braid-shriek-branching-sequence}
0 \longrightarrow 
\chi^{(k)}_{\mathrm{def}} \otimes \Snkrep(n-1,k)\downarrow^{\symm_{k+1}}_{\symm_{k}}
 \longrightarrow \Snkrep(n,k)\downarrow^{\symm_{k+1}}_{\symm_{k}} \longrightarrow 
\Snkrep(n-1,k-1) 
 \longrightarrow 0.
\end{equation}

\end{itemize}
\end{cor}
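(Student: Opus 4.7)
The approach will be to specialize Theorem~\ref{thm: supersolvable-branching-exact-sequences} to $\braid_n$ (for part (i)) and to $\braid_{k+1}$ (for part (ii)), with the symmetric group acting by vertex permutations on the complete graph. The main preliminary task is to identify, in the braid case, both the setwise $\symm_n$-stabilizer $H$ of the chosen modular coatom and the $\kk H$-module $\mathcal{X}$ coming from the ground-set complement $E \setminus F$.

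For the identification, I would invoke Example~\ref{graphic-braid-example}: the modular flag for $\braid_n$ arises from the set-partition flag $\pi_1 < \pi_2 < \cdots < \pi_n$, and the corresponding decomposition \eqref{braid-matroid-hands} has
\[
F := E_1 \sqcup \cdots \sqcup E_{n-2} = \bigl\{\{i,j\}: 1 \leq i<j\leq n-1\bigr\},
\quad E \setminus F = E_{n-1} = \bigl\{\{i,n\}: 1 \leq i \leq n-1\bigr\}.
\]
The setwise $\symm_n$-stabilizer of $F$ is precisely the Young subgroup $\symm_{n-1} \times \symm_1 \cong \symm_{n-1}$ fixing the vertex~$n$. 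Moreover, since this subgroup fixes $n$, it acts on the spanning vectors $\{v_{in} = e_i - e_n\}_{i=1}^{n-1}$ representing $E_{n-1}$ by \emph{unsigned} permutation of the index $i$; thus $\mathcal{X} = \kk[E_{n-1}]$ is isomorphic to $\chi^{(n-1)}_{\mathrm{def}}$ as $\kk\symm_{n-1}$-modules, both in the matroid and oriented-matroid settings. Note also that $M|_F = \braid_{n-1}$ (resp. $\OM|_F = \braid_{n-1}$), so $B = \OS(\braid_{n-1})$ or $\VG(\braid_{n-1})$ and $B^! = \OS(\braid_{n-1})^!$ or $\VG(\braid_{n-1})^!$.

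For part (i), I would apply Theorem~\ref{thm: supersolvable-branching-exact-sequences}(i) with $M = \braid_n$ and extract the degree-$(n-k)$ graded component of the resulting short exact sequence. Using the index translation $\cnkrep(m,\ell) := A(m)_{m-\ell}$ one reads off
$B_{n-k} = A(n-1)_{n-k} = \cnkrep(n-1,k-1)$ and the shifted term $[\mathcal{X} \otimes B(-1)]_{n-k} = \chi^{(n-1)}_{\mathrm{def}} \otimes A(n-1)_{n-k-1} = \chi^{(n-1)}_{\mathrm{def}} \otimes \cnkrep(n-1,k)$; this yields \eqref{braid-primal-branching-sequence}, and the Grothendieck-group identity \eqref{braid-primal-branching-recursion} follows by taking classes in $R_\kk(\symm_{n-1})$.

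For part (ii), I would apply Theorem~\ref{thm: supersolvable-branching-exact-sequences}(ii) to $M = \braid_{k+1}$ (so that $A^!$ is a $\kk \symm_{k+1}$-module and $H = \symm_k$, $\mathcal{X} \cong \chi^{(k)}_{\mathrm{def}}$), and then take the degree-$(n-k)$ graded component of the sequence. Under the translation $\Snkrep(m,\ell) := A(\ell+1)^!_{m-\ell}$, one has $A^!_{n-k} = \Snkrep(n,k)$, $A^!_{n-k-1} = \Snkrep(n-1,k)$, and $B^!_{n-k} = \OS(\braid_k)^!_{n-k} = \Snkrep(n-1,k-1)$ (and similarly in the $\VG$ case), producing \eqref{braid-shriek-branching-sequence} and hence \eqref{braid-shriek-branching-recursion}. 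The whole argument is thus a direct translation: all technical difficulty sits inside Theorem~\ref{thm: supersolvable-branching-exact-sequences}, and no further obstacle arises beyond carefully matching the two indexing conventions $(m,\ell) \leftrightarrow (\deg, \text{rank})$.
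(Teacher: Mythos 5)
Your proposal is correct and takes essentially the same approach as the paper: both identify $H = \symm_{n-1}$ (resp.\ $\symm_k$) as the setwise stabilizer of the modular coatom $F = E_1 \sqcup \cdots \sqcup E_{n-2}$, identify $\mathcal{X}$ with the defining representation $\chi^{(n-1)}_{\mathrm{def}}$ in both the matroid and oriented-matroid settings, and then read off the graded pieces of the two short exact sequences in Theorem~\ref{thm: supersolvable-branching-exact-sequences}. The paper states the translation more tersely, but your explicit index bookkeeping $\cnkrep(m,\ell) = A(m)_{m-\ell}$ and $\Snkrep(m,\ell) = A(\ell+1)^!_{m-\ell}$ fills in exactly the routine step the paper leaves to the reader.
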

\begin{remark}
Proposition~\ref{prop: general-branching} implies the two versions of \eqref{braid-primal-branching-recursion} are equivalent to those of \eqref{braid-shriek-branching-recursion}.
\end{remark}

\begin{remark}
All of the assertions Corollary~\ref{cor: braid-matroid-branching} are new, as far as we know, when working over
fields $\kk$ of positive characteristic, and \eqref{braid-primal-branching-sequence},\eqref{braid-shriek-branching-recursion},\eqref{braid-shriek-branching-sequence} are new even when $\kk$ has characteristic zero.  
However, when $\kk$ has characteristic zero, it turns out that \eqref{braid-primal-branching-recursion} also follows from work of Sundaram in \cites{Sundaram, SuLiesup2}.
For example, the relation \eqref{braid-primal-branching-recursion} for $\cnkrep(n,k)=\OS(\braid_n)_{n-k}$ can be deduced by combining \cite[Thm~2.2, Part (2) and Prop. 1.9]{Sundaram}; we omit the details here.
Also it turns out that both cases of \eqref{braid-primal-branching-recursion}, when either $\cnkrep(n,k)=\OS(\braid_n)_{n-k}$ or $\VG(\braid_n)_{n-k}$, follow from the symmetric function branching result \cite[Thm. 4.10]{SuLiesup2}.  In the notation there,  choosing $F=\sum_{n\ge 1} \ell_n$, one takes $G^j_n=h_j[F]|_{\deg n}$ to deduce 
\eqref{braid-primal-branching-recursion} for
$\cnkrep(n,k)=\VG(\braid_n)_{n-k}$,
and one takes $G^j_n=e_j[F]|_{\deg n}$
to deduce \eqref{braid-primal-branching-recursion} for
$\cnkrep(n,k)=\OS(\braid_n)_{n-k}$. We
again omit the details here.
\end{remark}

%%%%%%
\subsection{Braid matroids and representation stability}
\label{sec: braid-matroid-rep-stability}
%%%%%%

Here we wish to apply the representation stability 
results of Section~\ref{sec: rep-stability} to the
braid matroids $\braid_n$.  A special case of the main result of Church and Farb \cite{ChurchFarb} shows, in our language, that for each fixed $i=0,1,2\ldots$, both sequences $\{A(n)_i\}$ where 
$A(n)=\OS(\braid_n), \VG(\braid_n)$ are representation stable.   Hersh and Reiner \cite[Thm. 1.1]{HershReiner} pinned down the onset of this representation stability.

\begin{theorem}
\label{HershR-theorem}
For each fixed $i \geq 1$,
both sequences $\{A(n)_i\}$ where 
$A(n)=\OS(\braid_n)$ and $\VG(\braid_n)$ are representation stable, past $3i$ for $\VG(\braid_n)$ and past $3i+1$ for $\OS(\braid_n)$.
%\begin{equation}\begin{cases}    3i &\text{ for }\VG,\\    3i+1 &\text{ for }\OS. \end{cases} \end{equation}
\end{theorem}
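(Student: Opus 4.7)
The plan is to leverage the plethystic expressions~\eqref{OS-and-VG-lambda} for $\ch A(n)_i$ in terms of Lie and "higher Lie" characters, reducing the stability problem to a Pieri-rule analysis of fixed "tail" representations.

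Fix $i \geq 1$ and decompose
$$
\ch A(n)_i \;=\; \sum_{\substack{\lambda \vdash n \\ \ell(\lambda) = n-i}} A_\lambda,
$$
where $A_\lambda$ is $\VG_\lambda$ or $\OS_\lambda$ as in~\eqref{OS-and-VG-lambda}. Writing $\lambda = 1^{m_1} 2^{m_2} 3^{m_3} \cdots$, the constraints $|\lambda|=n$ and $\ell(\lambda)=n-i$ amount to the single $n$-independent constraint $\sum_{j \geq 2}(j-1) m_j = i$ together with $m_1 = n - i - s$, where $s := \sum_{j \geq 2} m_j$. Thus the \emph{tail} $\underline{m}=(m_j)_{j \geq 2}$ varies over a finite set (depending only on $i$), and $n$ enters only through $m_1$. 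Since $\pi_1 = \ell_1 = h_1$, in either case
$$
A_\lambda \;=\; h_{m_1} \cdot T_{\underline{m}}, \qquad T_{\underline{m}} \in \Lambda_{d}, \qquad d := i + s,
$$
where $T_{\underline m}$ depends only on the tail $\underline m$ (for VG: $T_{\underline m} = \prod_{j \geq 2} h_{m_j}[\ell_j]$; for OS: $T_{\underline m} = \prod_{j\ge 3,\, j\text{ odd}} h_{m_j}[\pi_j] \cdot \prod_{j\ge 2,\,j \text{ even}} e_{m_j}[\pi_j]$).

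Next, expand $T_{\underline m} = \sum_{\nu \vdash d} c_\nu\, s_\nu$ with $c_\nu \in \mathbb{Z}_{\geq 0}$. Pieri's rule identifies
$$
h_{m_1}\,T_{\underline m} \;=\; \sum_\nu c_\nu \left( \specht^\nu \boxtimes \mathbf{1}_{\symm_{n-d}} \right) \big\uparrow_{\symm_d \times \symm_{n-d}}^{\symm_n},
$$
so Lemma~\ref{lem: Hemmer-like} shows $\{h_{m_1} T_{\underline m}\}_n$ is representation stable with onset exactly $n = d + \max\{\nu_1 : c_\nu > 0\}$. Taking the maximum over all admissible tails yields the onset for $A(n)_i$. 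The key extremal case for VG is the tail $(m_2)=(i)$, i.e., $\lambda = (2^i, 1^{n-2i})$, giving $T = h_i[\ell_2] = h_i[e_2]$; Littlewood's identity $h_i[e_2] = \sum_{\mu \vdash 2i:\,\text{all column lengths even}} s_\mu$ exhibits the partition $(i,i)$ with first row $i$, so $d + \nu_1 = 2i + i = 3i$. For OS with $i = 1$, the unique tail produces $T = e_1[\pi_2] = h_2 = s_{(2)}$, yielding onset $d + \nu_1 = 2 + 2 = 4 = 3i+1$; the extra $+1$ reflects the sign-twist $\pi_j = \omega(\ell_j)$ swapping rows and columns of Schur components. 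For $i \geq 2$, the OS version of the $(2^i)$-tail coincides with the VG version via $e_i[h_2] = h_i[e_2]$, again giving onset $3i$.

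The main obstacle is the case analysis that bounds $\max_{\underline m}(d + \nu_1)$ uniformly over all admissible tails $\underline m$, showing no tail exceeds $3i$ for VG or $3i+1$ for OS. This requires controlling the Schur components of plethysms $h_{m_j}[\ell_j]$ and $e_{m_j}[\pi_j]$ for $j \geq 3$. The main ingredients are: the maximum first-row length of any irreducible in $\ell_j$ is $j-1$ (realized by $\specht^{(j-1,1)}$ via the Kraskiewicz-Weyman SYT interpretation of Klyachko's formula), with corresponding bounds for $\pi_j$; Littlewood's formulas for $h_m[h_2]$ and $h_m[e_2]$ (sum over partitions with all even rows, resp.\ columns); and the observation that for a product $T = \prod_j U_j$ of characters $U_j \in \Lambda_{d_j}$ with maximum first rows $\alpha_j$, any Schur component $s_\nu$ in $T$ satisfies $\nu_1 \leq \sum_j \alpha_j$. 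Combining these, one reduces to checking that the only tails attaining the respective bounds are those identified above, completing the proof.
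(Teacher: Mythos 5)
The paper does not prove this theorem — it cites it to Hersh and Reiner \cite{HershReiner}. You are attempting to reprove a result the paper invokes. Your strategy (decompose $\ch A(n)_i$ into higher Lie characters via \eqref{OS-and-VG-lambda}, observe the "tail" $(m_j)_{j\geq 2}$ ranges over a finite $n$-independent set, and apply Lemma~\ref{lem: Hemmer-like} via Pieri after bounding first rows) is essentially the strategy of Hersh–Reiner, so the comparison is with their proof, not the paper's.

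There is, however, a concrete error in your treatment of the extremal OS tail. You write that ``for $i \geq 2$, the OS version of the $(2^i)$-tail coincides with the VG version via $e_i[h_2] = h_i[e_2]$, again giving onset $3i$.'' The identity $e_i[h_2] = h_i[e_2]$ is \emph{false}. The correct Littlewood identities are: $h_i[e_2]$ is supported on partitions of $2i$ all of whose column lengths are even (maximum first row $i$, attained by $(i,i)$), while $e_i[h_2]$ is supported on partitions with Frobenius coordinates $(a_1+1,\dots,a_r+1 \,|\, a_1,\dots,a_r)$ (maximum first row $i+1$, attained by $(i+1,1^{i-1})$ in the $r=1$ case $a_1=i-1$). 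Plugging the corrected value into Lemma~\ref{lem: Hemmer-like} gives onset $d+\nu_1 = 2i+(i+1) = 3i+1$ for the OS $(2^i)$-tail, not $3i$. That extra $+1$ is precisely the source of the discrepancy between the VG bound $3i$ and the OS bound $3i+1$ stated in the theorem (and Hersh–Reiner's Theorem 1.1 proves these onsets are sharp); your claimed onset of $3i$ for OS with $i\geq 2$ would contradict that sharpness. You should also note that for OS and $j=2$ one needs $e_{m_2}[h_2]$ rather than $h_{m_2}[e_2]$, which you do not list among your ingredients. Finally, you explicitly defer the uniform case analysis over all admissible tails, so even after fixing the identity, what you have is a sketch of the extremal cases rather than a complete proof.
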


One then deduces the following representation stability for their
Koszul duals.
\begin{cor}
\label{braid-matroid-shrieks-have-rep-stability-cor}
For each fixed $i \geq 1$,
both sequences $\{A(n)^!_i\}$ where 
$A(n)=\OS(\braid_n)$ and $\VG(\braid_n)$ are representation stable, past $3i$ for $\{\VG(\braid_n)^!_i\}$ and past $4i$ for $\{\OS(\braid_n)^!_i\}$.
%$$ \begin{cases}    3i &\text{ for }\VG,\\   4i &\text{ for }\OS. \end{cases}   $$
\end{cor}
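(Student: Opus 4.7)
The plan is to combine the Hersh--Reiner sharp onset bounds for the primal braid matroid algebras (Theorem~\ref{HershR-theorem}) with the general Koszul-dual stability transfer from Corollary~\ref{cor: shrieks-have-rep-stability-cor}. For $A(n)=\VG(\braid_n)$ the primal bound is exactly $3i$, which has the form $ci$ with $c=3$, so part (ii) of Corollary~\ref{cor: shrieks-have-rep-stability-cor} applies verbatim and yields stability of $\{\VG(\braid_n)^!_i\}$ past $3i$. For $A(n)=\OS(\braid_n)$ the primal bound $3i+1$ is not literally of the form $ci$, so a small amount of care is needed: one can either crudely use $3i+1 \leq 4i$ for $i \geq 1$ and invoke Corollary~\ref{cor: shrieks-have-rep-stability-cor}(ii) with $c=4$, or (better) rerun the argument behind that corollary and track the ``$+1$'' contributions to recover the refined bound $4i$.

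To carry this out, I would use the unraveled formula from Corollary~\ref{cor:equivariant-Koszul-dual-Hilbert-series},
\[
[(A^!_d)^*] \;=\; \sum_{\substack{\alpha=(\alpha_1,\ldots,\alpha_\ell)\\ \alpha_1+\cdots+\alpha_\ell=d}} (-1)^{d-\ell}\,[A_{\alpha_1}][A_{\alpha_2}]\cdots[A_{\alpha_\ell}],
\]
noting that in characteristic zero every $\kk\symm_n$-module is self-contragredient, so stability of $\{(A(n)^!_i)^*\}$ is equivalent to stability of $\{A(n)^!_i\}$. Since representation stability is preserved by direct sums and by subtraction of virtual modules coming from honest ones, and since by Lemma~\ref{tensoring-sums-stability-bounds-lemma} a tensor product of sequences stable past $B_1,\ldots,B_\ell$ is stable past $B_1+\cdots+B_\ell$, the combined stability bound for $[(A(n)^!_d)^*]$ is the maximum over compositions $\alpha\vdash d$ of the sum of the primal bounds at the parts $\alpha_j$.

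For $A(n)=\VG(\braid_n)$, Theorem~\ref{HershR-theorem} gives $\{A(n)_j\}$ stable past $3j$, and hence each composition contributes at most $3\alpha_1+\cdots+3\alpha_\ell = 3d$, giving the claimed bound $3d$. For $A(n)=\OS(\braid_n)$, the same theorem gives the primal bound $3j+1$ at each index $j\geq 1$, so a composition $\alpha$ of length $\ell$ contributes $3d+\ell$; since $1\le \ell \le d$, the maximum is realized by the composition $(1,1,\ldots,1)$ and equals $3d+d=4d$. This yields the onset bound $4i$ for $\{\OS(\braid_n)^!_i\}$.

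There is no serious obstacle here; the only mild subtlety is keeping track that the bound ``$3i+1$'' on the OS side is not uniformly of the form $ci$, which forces the $+\ell$ term in the sum and is what produces the weaker constant $4$ (rather than $3$) in the dual estimate. This is in fact what motivates Conjecture~\ref{conj: onset-stability-shrieks}: one expects these bounds (from this crude propagation argument) to be tight, but proving tightness would require an argument in the opposite direction, e.g., exhibiting an explicit irreducible summand $\specht^{\lambda[n]}$ in the dual whose appearance really does require $n$ to reach $3i$ or $4i$ respectively.
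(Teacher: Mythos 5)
Your proof is correct and follows essentially the same route as the paper: invoke the Hersh--Reiner onset bounds (Theorem~\ref{HershR-theorem}) and feed them into the Koszul-dual transfer of Corollary~\ref{cor: shrieks-have-rep-stability-cor}, which itself runs on the unraveled formula \eqref{unraveled-shriek-recurrence} and Lemma~\ref{tensoring-sums-stability-bounds-lemma}. The only difference is cosmetic: the paper simply bounds $3i+1\leq 4i$ to use the constant $c=4$, while you track the $+1$ contributions per part of each composition and observe the worst case $(1,\ldots,1)$ gives $3d+d=4d$, landing on the same bound $4i$.
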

\begin{proof}
Theorem \ref{HershR-theorem} gives the necessary hypotheses to apply  Corollary~\ref{cor: shrieks-have-rep-stability-cor}, using the constant $c=3$ for $\{\VG(\braid_n)_i\}$ and using the constant $c=4$ (since $3i+1\leq 4i$)
for $\{\OS(\braid_n)_i\}$.
\end{proof}

\begin{remark} 
The bounds in Corollary~\ref{braid-matroid-shrieks-have-rep-stability-cor} happen to be tight for $\OS(\braid_n)^!_i, \VG(\braid_n)^!_i$ when $i=0,1,2$.  To see this, one can apply Lemma~\ref{lem: Hemmer-like} to 
%Propositions~\ref{shrieks-in-degree-zero-prop},  
Propositions~\ref{prop:A!n1-S(n,n-1)}, \ref{prop:A!n2OS-S_OS(n+1,n-1)} and Remark~\ref{rem:A!n2VG-S_VG(n+1,n-1)} below (specifically, see equations \eqref{eqn:S-OS(n+1,n-1)-h-rep-stab-bound-is-8},
\eqref{eqn:S-VG(n+1,n-1)-h-rep-stab-bound-is-6}) to deduce that 
for $i=0,1,2$, the sequences $\{ \OS(\braid_n)_i^!\}$ start to stabilize exactly when $n \geq 4i$, and the sequences $\{ \VG(\braid_n)_i^!\}$ start to stabilize exactly when $n \geq 3i$. This suggests
the following conjecture, confirmed by {\tt Sage/Cocalc} for $\OS(\braid_n)_i^!$ up to $i=5$, and for $\VG(\braid_n)_i^!$ up to  $i=7$. 

\begin{conj}\label{conj: onset-stability-shrieks}
    The bounds for onset of stability in Corollary~\ref{braid-matroid-shrieks-have-rep-stability-cor} are tight:
    for $i \geq 0$, the sequences $\{\OS(\braid_n)_i^!\}$ and $\{ \VG(\braid_n)_i^!\}$
    start to stabilize exactly when $n = 4i$ and $n = 3i$, respectively.  
\end{conj}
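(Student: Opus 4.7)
The plan is to establish the lower bound on the onset of stability for each $i$: since the upper bound is already given by \Cref{braid-matroid-shrieks-have-rep-stability-cor}, it suffices to show that representation stability actually fails at $n$ one less than the claimed threshold. Recall that $\{V_n\}$ is stable past $N$ only if every irreducible $\specht^{\lambda[n]}$ in its stable decomposition satisfies $|\lambda|+\lambda_1 \leq N$. So the goal reduces to exhibiting a single witness irreducible $\specht^{\lambda[n]}$ in the stable decomposition of $A(n)^!_i$ with $|\lambda|+\lambda_1 = 4i$ for $\OS$, resp.\ $=3i$ for $\VG$.

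The main vehicle for locating such a witness is the unraveled formula \eqref{unraveled-shriek-recurrence}
\[
[(A^!_i)^*]=\sum_{(\alpha_1,\ldots,\alpha_\ell)} (-1)^{i-\ell} [A_{\alpha_1}]\cdots[A_{\alpha_\ell}],
\]
summed over compositions of $i$, combined with \Cref{HershR-theorem} of Hersh--Reiner pinpointing the onset of $\{A(n)_j\}$ at $3j+1$ for $\OS$ (resp.\ $3j$ for $\VG$), together with \Cref{BriandOrellanaRosasBound} of Briand--Orellana--Rosas controlling tensor products. For $\OS$, the contribution of a composition $\alpha$ of length $\ell$ has onset at most $\sum_p (3\alpha_p+1) = 3i+\ell$, which saturates $4i$ only when $\alpha=(1,1,\ldots,1)$. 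Thus modulo summands with strictly smaller onset, $[(\OS(\braid_n)^!_i)^*]$ is congruent to $[\OS(\braid_n)_1]^{\otimes i}$, and it suffices to find a witness there. Since $\OS(\braid_n)_1 \supseteq \specht^{(n-2,2)}$ and Briand--Orellana--Rosas is sharp on $\specht^{(n-2,2)} \otimes \specht^{(n-2,2)}$, iterating should produce in $[\specht^{(n-2,2)}]^{\otimes i}$ an irreducible $\specht^{\lambda[n]}$ with $|\lambda|+\lambda_1=4i$ that then persists, uncancelled, in $[\OS(\braid_n)^!_i]$.

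The $\VG$ case is the main obstacle, because every composition $\alpha$ of $i$ now yields a tensor-product contribution with the same onset bound $3i = \sum_p 3\alpha_p$, so a priori the many signed summands in the unraveled formula could mutually cancel all extremal irreducibles. Two routes offer promise. The first is to apply the Frobenius characteristic and work directly with the plethystic generating functions \eqref{braid-VG-plethysm-formula}, \eqref{braid-VG-gf}, extracting an explicit ``top-degree'' symmetric function component of $\ch[\VG(\braid_n)^!_i]$ in the stable range and verifying it is nonzero. The second route is to seek, by analogy with \Cref{thm: OS-perm-reps}, an expression of $[\VG(\braid_n)^!_i]$ as a nonnegative combination of coset permutation representations $\varphi_\mu$, whereupon \Cref{lem: Hemmer-like} gives the onset exactly as $\max_\mu (|\mu|+\mu_1)$, reducing the problem to a purely combinatorial identification of an extremal $\mu$.

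The explicit low-degree computations in \Cref{prop:A!n1-S(n,n-1)}, \Cref{prop:A!n2OS-S_OS(n+1,n-1)} and \Cref{rem:A!n2VG-S_VG(n+1,n-1)} provide base cases and should suggest the correct extremal shape to track inductively via the equivariant recurrence \eqref{equivariant-recurrence-for-shrieks}. The hardest step throughout is controlling the signed cancellations in the $\VG$ setting; nailing this down will likely require a structural reinterpretation of $\VG(\braid_n)^!_i$ parallel to the higher Lie character formula \eqref{OS-and-VG-lambda} available for $\VG(\braid_n)_i$, which we currently lack.
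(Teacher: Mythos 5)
The statement you are trying to prove is labelled a \emph{conjecture} in the paper, and the paper supplies no proof of it. The only supporting material in the text is: (a) a verification that the onset is exactly $4i$ (resp.\ $3i$) for $i=0,1,2$, obtained by applying Lemma~\ref{lem: Hemmer-like} to the explicit permutation-module decompositions in Proposition~\ref{prop:A!n1-S(n,n-1)}, Proposition~\ref{prop:A!n2OS-S_OS(n+1,n-1)} and Remark~\ref{rem:A!n2VG-S_VG(n+1,n-1)}; and (b) computer checks in {\tt Sage/Cocalc} for $\OS$ up to $i=5$ and for $\VG$ up to $i=7$. So there is no ``paper's own proof'' against which to compare your argument, and any complete proof you gave would in fact settle an open question.

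Your proposal is a proof sketch with genuine gaps that you yourself flag. The reduction in the $\OS$ case is sound in outline: the upper bound is Corollary~\ref{braid-matroid-shrieks-have-rep-stability-cor}, the lower bound requires a witness $\specht^{\lambda[n]}$ in the stable decomposition with $|\lambda|+\lambda_1=4i$, the only composition in~\eqref{unraveled-shriek-recurrence} whose onset bound reaches $4i$ is $\alpha=(1,\ldots,1)$, and the corresponding summand $[\OS(\braid_n)_1]^{\otimes i}$ appears with sign $+1$. But the final and essential step — that $[\specht^{(n-2,2)}]^{\otimes i}$ (or more carefully, $[h_{n-2}h_2]^{\ast i}$) actually contains an irreducible with $|\lambda|+\lambda_1=4i$, for example $\specht^{(n-2i,2i)}$ with nonzero multiplicity for all $i$ — is asserted with the word ``should,'' not proved. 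Sharpness of the Briand--Orellana--Rosas bound for a single Kronecker square does not automatically propagate to the $i$-fold Kronecker power; you would need an explicit Kronecker-coefficient computation or an inductive argument producing the extremal shape, and neither is supplied.

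For $\VG$ the gap is more serious, as you acknowledge: every composition $\alpha$ of $i$ has onset bound $\sum_p 3\alpha_p = 3i$, so the signed summands in~\eqref{unraveled-shriek-recurrence} could a priori cancel every extremal irreducible, and you propose two ``routes'' without carrying either through. Route two in particular cannot work as stated: Remark~\ref{rem:A!n2VG-S_VG(n+1,n-1)} records that $\VG(\braid_n)^!_2$ already fails to be a (scaled) permutation module for $n\geq 5$ and has negative character values from $n=8$, so there is no hope of a uniform $\varphi_\mu$-positive expansion to which Lemma~\ref{lem: Hemmer-like} applies. Route one — extracting a top-degree plethystic component of $\ch[\VG(\braid_n)^!_i]$ — is plausible in spirit, but the paper explicitly lacks a generating-function formula for the duals $\VG(\braid_n)^!$ analogous to~\eqref{braid-VG-gf}, so this route would require new machinery. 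As it stands, your proposal identifies the correct strategy and correctly locates the obstruction, but does not close the conjecture.
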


\end{remark}

Since $A(n)=\OS(\braid_n), \VG(\braid_n)$
are anti-commutative and commutative, respectively,
Corollary~\ref{cor: primitives-inherit-rep-stability} immediately implies the following.

\begin{cor}
\label{cor: braid-matroid-primitives-are-rep-stable}
Letting $A(n)=\OS(\braid_n),\VG(\braid_n)$, and defining
$\cL(n)_i$ by $A(n)^!=\cU(\cL(n))$, for each fixed $i=1,2,\ldots$, the sequence $\{ \cL(n)_i \}$ is  
representation stable.
\end{cor}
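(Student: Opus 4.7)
The plan is to invoke Corollary~\ref{cor: primitives-inherit-rep-stability} as a black box, since the statement is tailored to be an immediate consequence. That corollary requires two hypotheses on the family $\{A(n)\}$: first, that each $A(n)$ is either commutative or anti-commutative; second, that for each fixed $i \geq 0$, the sequence $\{A(n)_i\}$ of graded components is representation stable as a sequence of $\kk\symm_n$-modules. Once both are verified, the conclusion about $\{\cL(n)_i\}$ follows with no further work.

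First I would verify the algebraic hypothesis. For $A(n) = \OS(\braid_n)$, the algebra is a quotient of an exterior algebra, hence anti-commutative; for $A(n) = \VG(\braid_n)$, it is a quotient of a commutative polynomial ring, hence commutative. In either case the condition needed to form the homotopy Lie algebra (or Lie superalgebra) $\cL(n)$ with $\cU(\cL(n)) \cong A(n)^!$ via Proposition~\ref{prop:P-and-P-implicit-statement} is satisfied, and the decomposition formulas \eqref{skew-commutative-primitive-formula} and \eqref{commutative-primitive-formula} apply.

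Next I would cite Theorem~\ref{HershR-theorem} (Hersh-Reiner) for the representation stability hypothesis: for each fixed $i \geq 0$ and each of the two families $A(n) = \OS(\braid_n), \VG(\braid_n)$, the sequence $\{A(n)_i\}$ of $\kk\symm_n$-modules is representation stable (with explicit onsets $3i+1$ and $3i$, though the onsets are not needed for the qualitative statement). With both hypotheses in hand, Corollary~\ref{cor: primitives-inherit-rep-stability} yields directly that for each fixed $i \geq 1$ the sequence $\{\cL(n)_i\}$ is representation stable, completing the proof.

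Since the argument is essentially a one-line invocation, there is no real obstacle; the only minor point worth flagging is simply to confirm that $\kk$ has characteristic zero, which is the standing assumption of Section~\ref{sec: rep-stability} (and is required so that the PBW-style decompositions \eqref{skew-commutative-primitive-formula}, \eqref{commutative-primitive-formula} used inside the proof of Corollary~\ref{cor: primitives-inherit-rep-stability} are valid).
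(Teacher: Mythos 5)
Your proof is correct and follows exactly the paper's approach: the paper deduces the corollary as an immediate consequence of Corollary~\ref{cor: primitives-inherit-rep-stability}, using that $\OS(\braid_n)$ is anti-commutative, $\VG(\braid_n)$ is commutative, and that Theorem~\ref{HershR-theorem} supplies the representation stability of the sequences $\{A(n)_i\}$. Your extra remark about the characteristic-zero hypothesis is a reasonable point of care but adds nothing beyond what the paper already assumes in Section~\ref{sec: rep-stability}.
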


\begin{remark}
    The case of Corollary~\ref{cor: braid-matroid-primitives-are-rep-stable} for $A(n)=\OS(\braid_n)$ also follows from work of Church, Ellenberg and Farb \cite[Thm.~7.3.4]{CEF}.  They consider instead
    of $\cL(n)$ the {\it Malcev Lie algebra} $\mathfrak{p}_n$ associated to the fundamental group $\pi_1(X)$ for the configuration space $X=\Conf_n(\RR^2)=\Conf_n(\CC)$ considered in Section~\ref{subsec: configuration-spaces};  alternatively, $X$ is the complement of the complex braid arrangement $\cA$ as in Theorem~\ref{OS-theorem}.  These two Lie algebras
    $\mathfrak{p}_n$ and $\cL(n)$
    coincide due to the $1$-formality of complements of complex algebraic hypersurfaces; see, e.g., Suciu and Wang \cite[\S 6,7]{SuciuWang}.
\end{remark}

Computations in {\tt Sage/Cocalc} through $i=8$ suggest the following conjecture.

\begin{conj}\label{conj: onset-of-stability-primitives} Defining $\{\cL(n)_i\}$ by $A(n)^! = \cU(\cL(n))$, its onset of representation stability is:
\begin{itemize}
    \item $n = 2i$ for a fixed $i\geq 1$ when $A(n) = \OS(\braid_n)$,
    \item $n = 2i$ for a fixed $i\geq 3$ when $A(n) = \VG(\braid_n)$.
\end{itemize}
\end{conj}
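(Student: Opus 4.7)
The plan is to prove both directions separately: an upper bound showing representation stability past $n = 2i$, and a sharp lower bound showing stability fails at $n = 2i-1$. I would proceed by strong induction on $i$, with the base cases handled computationally (as suggested by the authors' check up to $i=8$), and the inductive step driven by the equivariant PBW formulas \eqref{skew-commutative-primitive-formula} and \eqref{commutative-primitive-formula}.

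For the upper bound, rearranging PBW to isolate the top-degree term $\lambda=(i)$ yields
\begin{equation*}
[\cL(n)_i] \;=\; [A(n)^!_i] \;-\; \sum_{\substack{\lambda \vdash i \\ \lambda \neq (i)}} \prod_{j<i} [\mathbb{S}^{m_j}(\cL(n)_j)],
\end{equation*}
where $\mathbb{S}^{m_j}$ is $\Sym^{m_j}$ or $\wedge^{m_j}$ depending on parity and on whether $A$ is commutative or anti-commutative. The inductive hypothesis gives that each $\cL(n)_j$ for $j<i$ is representation stable past $n = 2j$; combining Lemma~\ref{schur-functor-rep-stability-lemma} and Lemma~\ref{tensoring-sums-stability-bounds-lemma} bounds the onset of each Schur-functor product. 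The surprising content of the conjecture is that $A(n)^!_i$ and these Schur products conspire so that their \emph{difference} stabilizes already past $n = 2i$, while $A(n)^!_i$ alone only stabilizes past $3i$ or $4i$ (per Conjecture~\ref{conj: onset-stability-shrieks}). This forces a large cancellation that must be tracked explicitly.

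For the lower bound, the natural route is to exhibit an irreducible constituent $\specht^{\mu[n]}$ in $\cL(n)_i$ with $|\mu| + \mu_1 = 2i$, which by the very definition of $\mu[n]$ forces stability to fail at $n < 2i$. Candidates are best identified from the presentation in Theorem~\ref{thm:holonomy-lie-algebra-presentation}: for $\OS$, iterated Kohno brackets of $y_{ij}$'s supported on a $2 \times i$ grid of indices produce elements whose $\symm_n$-orbit spans (a quotient of) a module induced from a Young subgroup of shape forcing $|\mu|+\mu_1=2i$, after which Lemma~\ref{lem: Hemmer-like} pinpoints the onset exactly at $n=2i$. The analogous $\VG$ construction should use the oriented Kohno relations \eqref{oriented-Kohno-relation}, with the restriction to $i \geq 3$ in the conjecture reflecting degenerate small-$i$ behavior already visible in Example~\ref{Boolean-matroids-re-revisited}-style computations.

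The main obstacle is the sharp upper bound. Because the PBW cancellation is delicate, a black-box inductive argument through Schur-functor onset bounds alone is unlikely to yield the tight constant $2i$; the naive tensor bound overshoots. The most promising route is a direct structural description of $\cL(n)$. For $\OS(\braid_n)$ this is the Drinfeld-Kohno Lie algebra, which by Falk--Randell fits into split short exact sequences
\begin{equation*}
0 \;\longrightarrow\; \mathrm{Free}(y_{1n},\ldots,y_{n-1,n}) \;\longrightarrow\; \cL(n) \;\longrightarrow\; \cL(n-1) \;\longrightarrow\; 0
\end{equation*}
of graded Lie algebras. Combined with the branching sequence \eqref{braid-shriek-branching-sequence} and the $\symm_{n-1}$-equivariance of the free factor, this would let one control $\cL(n)_i$ as an $\symm_{n-1}$-module directly, then package with Proposition~\ref{prop: general-branching} to recover the $\symm_n$-structure. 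For $\VG(\braid_n)$, an analogous flat-coatom recursion using the presentation from Theorem~\ref{thm: shriek-presentations} should play the same role, with the restriction $i\geq 3$ accommodating the small-$i$ failure of the clean bound.
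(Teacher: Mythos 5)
The statement you are addressing is \Cref{conj: onset-of-stability-primitives}, which the paper presents explicitly as a \emph{conjecture}, supported only by {\tt Sage/Cocalc} computations through $i=8$. There is no proof in the paper to compare your proposal against, and the authors themselves remark just before the conjecture that they ``have not seriously tried to bound the onset of stability for the sequences $\{\cL(n)_i\}$.'' So the right benchmark here is whether your sketch closes the gap on its own merits — and, as you candidly acknowledge, it does not.

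Your plan is sensible in outline: rearranging \eqref{skew-commutative-primitive-formula}/\eqref{commutative-primitive-formula} to isolate $[\cL(n)_i]$ is exactly the move made in the proof of \Cref{cor: primitives-inherit-rep-stability}, and exhibiting a constituent $\specht^{\mu[n]}$ with $|\mu|+\mu_1 = 2i$ is the correct mechanism for a sharp lower bound via \Cref{lem: Hemmer-like}. But two gaps are genuine and, I think, harder than you suggest. First, the upper bound: \Cref{schur-functor-rep-stability-lemma} establishes that $\schurfunctor^\mu(V_n)$ is representation stable but gives no quantitative onset, and the Briand--Orellana--Rosas bound in \Cref{tensoring-sums-stability-bounds-lemma} would make each Schur-functor product term stabilize only well past $2i$, while $[A(n)^!_i]$ itself stabilizes past $3i$ or $4i$ (if \Cref{conj: onset-stability-shrieks} is correct). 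So even if every individual term on the right side of your PBW rearrangement is controlled, there is no mechanism in the paper's toolbox to certify that the cancellation in the \emph{difference} is so extensive that it drops the onset all the way to $2i$. This is not a technical inconvenience; it is the entire content of the conjecture, and you would need to compute the cancellation explicitly rather than bound each term separately. Second, the Falk--Randell route: the split exact sequence for the Drinfeld--Kohno Lie algebra is $\symm_{n-1}$-equivariant, not $\symm_n$-equivariant, and knowing a $\kk\symm_n$-module only after restriction to $\symm_{n-1}$ does not determine it as an $\symm_n$-module; \Cref{prop: general-branching} relates branching rules for $A$ and $A^!$ but does not invert restriction. A genuine proof along these lines would need either an $\symm_n$-equivariant refinement of the Falk--Randell decomposition or a direct multiplicity computation for the relevant irreducibles in $\cL(n)_i$, neither of which appears in the paper or in your sketch. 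Your lower-bound strategy (finding explicit bracket elements whose $\symm_n$-span forces $|\mu|+\mu_1=2i$) is the most promising concrete step and would be worth pursuing first, since it is independent of the cancellation problem.
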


%%%%%%
\subsection{Near-boundary cases for
Stirling representations of the second kind}
\label{sec: braid-OS-permutation-reps}
Stirling numbers $S(n,k)$ of the second
kind have more explicit formulas
when either $k$ or $n-k$ is small.
We similarly present here more explicit formulas,
in the language of symmetric functions 
%(see\cite{Macdonald} or \cite{StanleyEC2})
for the Stirling representations
\begin{align*}
    \OS(\braid_n)_i&=\Snkrep_\OS((n-1)+i,n-1),\\
   \VG(\braid_n)_i&=\Snkrep_\VG((n-1)+i,n-1),
\end{align*}
as $\kk\symm_n$-modules, when either $i$ or $n$ is small.  

Part of our motivation comes from the following
observations about when $\OS(M)_i, \VG(\OM)_i$
and their Koszul duals $\OS(M)^!_i, \VG(\OM)^!_i$ turn out to be {\it permutation representations}
of their automorphism groups $G=\Aut(M)$ or $\Aut(\OM)$.  The discussion of Boolean matroids Section~\ref{sec: Boolean-matroids-revisited} and low rank matroids
in Section~\ref{sec: rank-one-matroids} and Proposition~\ref{prop: rank-two-OS-shrieks-are-perm-reps} 
showed that 
\begin{itemize}
    \item $\OS(M)_i, \VG(\OM)_i$ are {\it rarely} permutation representations, 
    \item $\VG(\OM)^!_i$ is {\it not always} a permutation representation, 
    \item but $\OS(M)^!_i$ was {\it always} a $G$-permutation representation in
these previous examples.
\end{itemize}
For the braid matroids $\braid_n$, it  is 
{\it not always} true that $\OS(\braid_n)_i$ is
a permutation representation, but the next result shows that it happens
in many cases where $i$ or $n$ is small.

 \begin{theorem}
     \label{thm: OS-perm-reps}
     For $\kk$ of characteristic zero,
    the 
    $\kk \symm_n$-modules 
    $
    \OS(\braid_n)^!_i=\Snkrep_\OS((n-1)+i,n-1)
    $
    \begin{itemize}
       \item[(i)]  are permutation modules for $i=0,1$,
       \item[(ii)]
       are {\bf half}-permutation modules for $i=2$, meaning that
       $2 \cdot [\OS(\braid_n)^!_2] $ is the class of a permutation module in $R_\kk(\symm_n)$,
        \item[(iii)] are permutation modules\footnote{And
        $[\OS(\braid_n)_i]$ are even {\it $h$-positive} permutation modules when $n=1,2,3$, overlapping with the discussion 
       in Section~\ref{sec: rank-two-revisited} on rank two matroids, as $U_{2,3} = \braid_3$.} for $n=1,2,3,4,5$.
\end{itemize}
% However, 
% \[\Snkrep_\OS(10,5) =\OS(\braid_6)^!_5 \text{ with }n=6\] 
% is not a permutation module even after scaling  by positive integers, since it can be shown to have  negative character values.  
However, both
      \begin{align*}
             \Snkrep_\OS(10,5)&=\OS(\braid_6)^!_5,\\
           \Snkrep_\OS(11,6)&=\OS(\braid_7)^!_5
        \end{align*}
         fail to be  permutation modules, even after scaling them by positive integers, since they can be shown to have  negative character values.  
\footnote{Trevor Karn's Burnside Solver further shows that  $\Snkrep_\OS(5+i,5)$ is a permutation module for $i\le 4$ and $i=6, 7, 10$; it fails to be one at  $i=8,9$.}
 \end{theorem}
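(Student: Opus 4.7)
The plan is to split the theorem into its separate assertions and address each with a distinct tool from the preceding sections.

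For the easy positive parts, I would dispatch the boundary cases first. Part (i) with $i=0$ is tautological: $\OS(\braid_n)^!_0 = \kk$ is the trivial module. For $i=1$, the recurrence \eqref{equivariant-recurrence-for-shrieks} gives $[\OS(\braid_n)^!_1] = [\OS(\braid_n)_1]$ (using self-contragrediency of $\kk\symm_n$-modules), and by construction $\OS(\braid_n)_1$ has basis $\{x_{ij} : 1\le i<j\le n\}$ permuted by $\symm_n$; this is the class $\varphi_{(2,1^{n-2})}$. For part (iii), the cases $n=1,2$ are trivial since every graded piece of $\OS(\braid_n)^!$ is one-dimensional, and $n=3$ is immediate from Proposition~\ref{prop: rank-two-OS-shrieks-are-perm-reps} since $\braid_3 = U_{2,3}$.

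For part (ii) with $i = 2$, the starting point is the Priddy recurrence \eqref{equivariant-recurrence-for-shrieks}:
$$[\OS(\braid_n)^!_2] \;=\; [\OS(\braid_n)_1]^2 \;-\; [\OS(\braid_n)_2].$$
The first term is the permutation representation on ordered pairs of edges of $K_n$; for the second I would use \eqref{OS-and-VG-lambda} to write $\ch \OS(\braid_n)_2 = \ch\OS_{(3,1^{n-3})} + \ch\OS_{(2,2,1^{n-4})}$ and unfold the plethysms using $\pi_1 = p_1$, $\pi_2 = h_2$, $\pi_3 = s_{(2,1)}$. The goal is then to exhibit an explicit $\symm_n$-set whose class is the doubled virtual character; I expect the natural model to be built from ordered pairs of edges indexed by their joint rank-two flat support (disjoint edges, edges sharing a vertex, and collinear triples), with the doubling absorbing the sign cancellation coming from the Arnold relations. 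The main obstacle in this part is to produce a single uniform combinatorial model that works for all $n$ simultaneously, since the second Priddy differential mixes rank-two flats in a non-trivial fashion.

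For part (iii) with $n=4,5$, I would iterate the short exact sequence from Corollary~\ref{cor: braid-matroid-branching}(ii) to build up $\OS(\braid_n)^!_i$ from $\Snkrep_\OS(n-1,k-1)$ and $\chi^{(k)}_{\mathrm{def}} \otimes \Snkrep_\OS(n-1,k)\!\downarrow$. The Hilbert series $\Hilb(\OS(\braid_n)^!,t)=\prod_{p=1}^{n-1}(1-pt)^{-1}$ for $n\le 5$ has moderate growth, so one can compute the $\varphi_\lambda$-expansion of each graded piece by induction on $i$ via \eqref{equivariant-recurrence-for-shrieks}, starting from the known plethystic description of $[\OS(\braid_n)_i]$ in \eqref{OS-and-VG-lambda}, and verify non-negativity of the Young coefficients. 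Representation stability (Corollary~\ref{braid-matroid-shrieks-have-rep-stability-cor}) ensures that only finitely many irreducible multiplicities need be tracked, and the data collected in \Cref{appendix: tables} (via the code \cite{Almousa_StirlingRepresentations_2024}) supplies the base cases. I expect this is the most delicate verification in the theorem, since no uniform structural reason appears to force permutation-ness in this range.

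For the counterexamples at $\Snkrep_\OS(10,5) = \OS(\braid_6)^!_5$ and $\Snkrep_\OS(11,8) = \OS(\braid_9)^!_3$, I would exploit the fact that a genuine permutation representation has character equal to a fixed-point count, hence non-negative, and that this non-negativity is preserved under scaling by any positive integer. Thus it suffices to evaluate the virtual characters at a well-chosen conjugacy class and display a strictly negative value; the character values are accessible via the unraveled formula \eqref{unraveled-shriek-recurrence} applied to $[\OS(\braid_n)_i]$, whose characters come from \eqref{OS-and-VG-lambda} through the Frobenius characteristic. A single negative character value at any element rules out every positive rescaling being a permutation module simultaneously.
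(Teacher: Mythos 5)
Your proposal is reasonable in its easy parts, but it has two concrete gaps in the harder parts.

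The most serious gap is in part (iii) for $n=4,5$, where your plan of computing the $\varphi_\lambda$-expansion and ``verifying non-negativity of the Young coefficients'' is exactly the test that \emph{fails}. As Propositions~\ref{prop:A!4} and~\ref{prop:A!5} make explicit, the expansion of $\ch\,\Snkrep_{\OS}(n+3,3)$ and $\ch\,\Snkrep_{\OS}(n+4,4)$ in the basis $\{h_\lambda\}$ (equivalently, in $\{\varphi_\lambda\}$) has strictly negative coefficients, e.g.\ on $h_1^2 h_2$ and $h_3 h_1$. The paper sidesteps this by trading negative combinations of Young-subgroup coset modules for coset modules of non-Young subgroups, namely $\symm_{n-3}\times C_3$, $\symm_{n-4}\times I_2(4)$, $A_3\times\symm_2$, and the wreath product $\symm_2[G_2]$, using identities such as $h_3 + e_3 = \ch\,\one\!\uparrow_{C_3}^{\symm_3}$ and $h_2[h_2] = \ch\,\one\!\uparrow_{I_2(4)}^{\symm_4}$. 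A proof along your route needs these substitutions; checking $h$-positivity alone will not establish the modules are permutation modules, and in these cases it will incorrectly suggest they are not.

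The second gap is in the counterexample for $\Snkrep_\OS(11,8)=\OS(\braid_9)^!_3$: your proposed negative-character-value test does not apply there, because this module has all character values non-negative. (Having non-negative character values is necessary but far from sufficient for being, or being a scalar multiple of, a permutation module---see Remark~\ref{rem:A!n2VG-S_VG(n+1,n-1)}, where $\Snkrep_\VG(n+1,n-1)$ for $n=5,6,7$ has nonnegative character but still fails to be a permutation module.) The argument must instead show that no non-negative integer combination of the table of marks (classes $[\symm_n/H]$ over subgroups $H$) reproduces the module even after scaling; this is the content of the cited Burnside-solver computation. Your test is adequate only for $\Snkrep_\OS(10,5)$, which indeed has a negative character value.

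For part (ii) you correctly identify the starting identity $[\OS(\braid_n)^!_2] = [\OS(\braid_n)_1]^2 - [\OS(\braid_n)_2]$, but the hard work---producing the explicit expression in equation~\eqref{eqn:S-OS(n+1,n-1)-perm} with the half-integer coefficients on $h_{n-3}h_1^3$ and $h_{n-3}(h_3+e_3)$---is not supplied, and your guessed combinatorial model (ordered pairs of edges stratified by rank-two flats) does not visibly produce the $C_3$ and $I_2(4)$ stabilizers that the paper actually identifies. Finally, your treatment of parts (i) and the $n=1,2,3$ cases of (iii) is correct and matches the paper's Propositions~\ref{prop:A!n1-S(n,n-1)} and~\ref{n-at-most-3-braid-shrieks}.
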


%There follows a summary of what we have been able to establish  for the     $\kk \symm_n$-module $\OS(\braid_n)^!_i$.    The     $\kk \symm_n$-module $\Snkrep_\OS(n-1+i,n-1):=\OS(\braid_n)^!_i$ is    \begin{itemize}
%       \item  a permutation module for $i=0,1$;
%        \item a permutation module for $n=1,2,3,4,5$,        but fails for $n=6$, where one encounters $\Snkrep_\OS(10,5)$ as the  first bad example, that is not even a permutation module after scaling. (Trevor Karn's Burnside Solver further shows that  $\Snkrep_\OS(5+i,5)$ is a permutation module for $i\le 4$ and $i=6, 7$; it fails to be one at  $i=8,9,10$);
 %       \item  also (curiously) $\frac{1}{2}$ of a permutation module for $i=2$; 
%        \item  not always for $i=3$.  The first non-permutation module  found by the Burnside Solver is  $\Snkrep_\OS(11,8)$ for $\braid_9$,        even after scaling, since it shows negative rational coefficients. 
%        \item even an $h$-positive permutation module in the case $n=3$, overlapping with the discussion       in Section~\ref{sec: rank-two-revisited} on rank two matroids, as $} = \braid_3$.  \end{itemize}

\begin{table}[H]
\begin{tikzpicture}[scale = 0.56]
% AXES -------------------------------------------------------------------
\draw[gray, very thin] (0, 0.1) grid (10,9);
\draw[->] (-0.5,9) -- (10.5,9);
\draw[->] (0, 9) -- (0, 0); 
\node[right] (c) at (10.5, 9) {$i$};
\node[below] (r) at (0, 0) {$n$};

% POINTS -------------------------------------------------------------------
\foreach \c in {0,1,2,3,4,5,6,7,8} {
	\node[above] (\c) at (\c+1, 9) {\tiny \c};};
\foreach \c in {2,3,4,5,6,7,8,9} {
	\node[left] (\c +10) at (0, 10-\c) {\tiny {\c}};
	};
	
% Yes	
\foreach \c/\r in {
0/2,0/3,0/4,0/5,0/6,0/7,0/8,0/9,
1/2,1/3,1/4,1/5,1/6,1/7,1/8,1/9,
2/2,2/3,2/4,2/5,2/6,
3/2,3/3,3/4,3/5,3/6,3/7,3/8,
4/2,4/3,4/4,4/5,4/6,4/7,
5/2,5/3,5/4,5/5,
6/2,6/3,6/4,6/5,6/6,
7/2,7/3,7/4,7/5,7/6,
8/2,8/3,8/4,8/5
} {
	\draw[thick, fill = black!75] (\c + 1, 10 -\r) circle (0.25);
	};
	
% Half	
\foreach \c/\r in {
2/7,2/8,2/9, 3/9
} {
	\fill[black!75] (\c+1, 9.75 - \r) -- (\c+1, 10.25 - \r) arc (90:270:0.25) -- cycle;
	\draw[thick] (\c + 1, 10 - \r) circle (0.25);
	};
	
%No	
\foreach \c/\r in {
%3/9,
5/6,5/7
} {
	\draw[thick] (\c + 1, 10-\r) circle (0.25);
	};
	
% LEGEND -------------------------------------------------------------------
\draw[thick, fill = black!75] (13, 6) circle (0.25);
\node[right] (y) at (13.5, 6) {\small Permutation module};
\draw[thick] (13, 5) circle (0.25);
\node[right] (n) at (13.5, 5) {\small \textbf{Not} a ``fraction'' of a permutation module};
\fill[black!75] (13, 3.75) -- (13, 4.25) arc (90:270:0.25) -- cycle;
\draw[thick] (13, 4) circle (0.25);
\node[right] (m) at (13.5, 4) {\small Half-permutation module};
\end{tikzpicture}
\caption{When are $[\OS(\braid_n)^!_i] =\Snkrep_\OS((n-1)+i,n-1)$ permutation modules or ``fractions'' thereof?}
\label{table:of:when:Stirling:second:reps:are:perm:mods}
\end{table}

Table \ref{table:of:when:Stirling:second:reps:are:perm:mods} summarizes the results of \Cref{thm: OS-perm-reps}; an outline of the proof  appears in \Cref{appendix: OS-VG-perm-reps}.

%%%%%%%%%%%%%%%%%%%%%%%%%%%%%%

%%%%%%%%%%%%%%%%%%%%%%%%%%%%%%%%%%%%%%%%%%%%%%
\section{Further remarks and questions}
\label{sec: remarks-and-questions}
%%%%%%%%%%%%%%%%%%%%%%%%%%%%%%%%%%%%%%%%%%%%%%
We remark here on some further directions which
could merit exploration.

%%%%
\subsection{Projective geometries over finite fields}
\label{sec: projective-geometries}
%%%%

The Boolean matroids $U_{n,n}$ discussed in Example~\ref{ex: Boolean-matroid-1} and Section~\ref{sec: Boolean-matroids-revisited}
have a well-studied ``$q$-analogue": the {\it projective geometries} $PG(n,q)$, associated with the finite vector spaces $\FF_q^n$.  These $PG(n,q)$ are non-orientable simple matroids whose ground set $E=\PP(\FF_q^n)=\PP^{n-1}_{\FF_q}$ is the set of points in a finite projective space, so $|E|=[n]_q:=1+q+q^2+\cdots+q^{n-1}$, with poset of flats $\cF$ given by the lattice of all subspaces in $\FF_q^n$; see Oxley \cite[\S 6.1]{Oxley} and Orlik and Terao \cite[Example 4.33]{OrlikTerao}.  The lattices $\cF$ are {\it modular}, meaning that every flat is a modular element, so that every complete flag $\underline{F}$ is a modular complete flag.
Hence the matroids $PG(n,q)$ are  supersolvable, with exponents $(e_1,\ldots,e_n)=(1,q,q^2,\ldots,q^{n-1})$.
Consequently, the family of $\kk$-algebras $A(n):=\OS(PG(n,q))$ is  Koszul, 
satisfying
\begin{align*}
\Hilb(A(n),t)&=
(1+t)(1+qt)(1+q^2t)\cdots(1+q^{n-1}t) 
\quad \text{ with }
\dim_\kk A(n)^!_i=q^{\binom{i}{2}} \qbinom{n}{i}\\
\Hilb(A(n)^!,t)&=
\frac{1}{(1-t)(1-qt)(1-q^2t)\cdots(1-q^{n-1}t)}
\quad \text{ with }
\dim_\kk A(n)^!_i=\qbinom{n+i-1}{i},
\end{align*}
where $\qbinom{n}{k}:=\frac{[n]!_q}{[k]!_q [n-k]!_q}$
with $[n]!_q:=[n]_q[n-1]_q \cdots [2]_q [1]_q$; see Macdonald \cite[Example I.2.2]{Macdonald}.
% and Stanley \cite[\S 1.8, (1.87)]{Stanley-EC1}.

\begin{problem}
Study $A(n)=\OS(PG(n,q))$ and $A(n)^!=\OS(PG(n,q))^!$ as $GL_n(\FF_q)$-representations.
\end{problem}

For example, the {\it $q$-Pascal recurrences} for $A(n)_i=\qbinom{n}{i}$ and $A(n)^!_i=\qbinom{n+i-1}{i}$ will have lifts to branching rules via Proposition~\ref{prop: general-branching} and Theorem~\ref{thm: supersolvable-branching-exact-sequences}. There is also an appropriate
analogue here of {\it representation stability for $GL_n(\FF_q)$-representations} developed by Putman and Sam \cite{PutmanSam}.

%%%%
\subsection{Type $B$, wreath products, and Dowling geometries}
\label{sec: dowling}
%%%%
As mentioned in Section~\ref{sec: stirling-reps}, the braid matroids $\braid_n$ are represented by the 
root systems of type $A_{n-1}$, accounting for the action of the reflection group $\symm_n$ on them as symmetries. 

There are other real and complex reflection groups giving rise to matroids with large symmetry, but relatively few of these matroids are supersolvable; see Hoge and R\"ohrle \cite{HogeRohrle} for their classification.  They include the dihedral reflection groups giving rise to the 
rank two matroids already discussed in Example~\ref{ex: rank-two-matroid-1} and Section~\ref{sec: rank-two-revisited}.  
They also include the {\it reflection groups of type $B_n$ or $C_n$}, isomorphic to the {\it hyperoctahedral group} or {\it signed permutation group} $\symm^\pm_n$ that appeared in Section~\ref{sec:symmetry-of-OS-and-VG}. Their root systems can be realized over $\RR$, giving rise to an oriented matroid
from the positive roots
\begin{equation}
    \label{type-B-positive-roots}
\Phi_{B_n}^+:=\{+e_i \pm  e_j\}_{1 \leq i < j \leq n} 
\,\, \sqcup \,\,
\{e_i\}_{1 \leq i \leq n}.
\end{equation}
More generally, one has the complex reflection groups $\symm_n[\ZZ/m\ZZ] = (\ZZ/m\ZZ) \wr \symm_n$ for $m \geq 2$, also known as the groups $G(m,1,n)$ within Shephard and Todd's classification \cite{ShephardTodd}
of irreducible complex reflection groups.  Letting $\zeta_m:=e^{\frac{2 \pi i}{m}}$, their associated matroids can be represented by this list of vectors in $\CC^n$:
\begin{equation}\label{eq: matroid-representable-over-C-not-R}
\{e_i - \zeta^k e_j\}_{\substack{1 \leq i < j \leq n\\0 \leq k \leq m-1}} 
\,\, \sqcup \,\,
\{e_i\}_{1 \leq i \leq n}.
\end{equation}
These matroids are not realizable over $\RR$ (and not orientable) unless $m=2$ where they recover the type $B_n/C_n$ reflection groups.

Motivated by these examples,
Dowling \cite{Dowling} introduced a more general class of matroids, now known as 
the {\it Dowling geometries} $\DowlingGeometry{n}{G}$; see Oxley \cite[\S 6.10]{Oxley} for definitions and discussion.
Here $G$ is {\it any} finite group, and the matroid automorphisms of $\DowlingGeometry{n}{G}$
contain the wreath product
$\symm_n[G] = G \wr \symm_n$.
Interestingly, Dowling proved that the matroid $\DowlingGeometry{n}{G}$ is representable over a field $\FF$ {\it if and only if} the finite group $G$ is a subgroup of $\FF^\times$;  in particular, this forces $G$ to be cyclic,
as in the complex reflection
groups $\symm_n[\ZZ/m\ZZ]$ mentioned above. 

Dowling also showed that $\DowlingGeometry{n}{G}$
is supersolvable for any finite group $G$.  Consequently,
their Orlik-Solomon algebras $\OS(\DowlingGeometry{n}{G})$ are always Koszul, and when $|G|=2$, the same holds for the Varchenko-Gel'fand ring $\VG(\OM(B_n))$, e.g., if $\OM(B_n)$ is realized by the vectors in \eqref{type-B-positive-roots} above.

\begin{problem}
Study these families of Koszul algebras $A(n)=\OS(\DowlingGeometry{n}{G})$ and $\VG(\OM(B_n))$, along with their Koszul duals $A(n)^!$, 
as $\symm_n[G]$-representations.
\end{problem}

If $m:=|G|$, then the exponents for the supersolvable matroids $\DowlingGeometry{n}{G}$ turn out to be
$$
(e_1,e_2,\ldots,e_n)=(1,m+1,2m+1,\ldots,(n-1)m+1).
$$
Combining this with Dowling's formulas \cite[\S 4]{Dowling}, for the rank sizes\footnote{Also called the {\it Whitney numbers of the second kind for the poset.}} in the poset of flats of
$\DowlingGeometry{n}{G}$, 
one encounters a similar
coincidence to the equality 
$\dim_\kk \OS(\braid_{n})_i=S((n-1)+i,n-1)$ discussed in Remark~\ref{rem: Stirling-number-coincidence}:  the dimension of $\OS(\DowlingGeometry{n}{G})^!_i$ is
the size of the $(n-1)^{st}$ rank in the flat poset of $\DowlingGeometry{(n-1)+i}{G}$. 
This again reflects a bijection
between the standard monomial $\kk$-basis for 
$\OS(\DowlingGeometry{n}{G})^!_i$ from Theorem~\ref{thm: shriek-presentations}
and an encoding of flats in $\DowlingGeometry{n}{G}$  generalizing restricted growth 
functions, similar to work of
Komatsu, Bagno, and Garber \cite[\S 2.3]{BagnoGarberKomatsu}.
We omit the details here.

\subsection{Equivariant degree one injections}\label{sec:VG-injectivity}

Recall the following consequences of  Theorem~\ref{thm: shrieks-have-right-NZD}: By Part (ii) of Corollary~\ref{cor: equiv-hilb-factorizations}, 
for the {\it matroid} automorphism group $G=\Aut(M)$, there are $G$-equivariant degree one injections 
\begin{equation}
\label{OS-shriek-degree-one-injections}
[\OS(M)^!_i]\hookrightarrow [\OS(M)^!_{i+1}],\text{ for all }   i\ge 0 
\end{equation}
\noindent
while Part (iii)  of Corollary~\ref{cor: equiv-hilb-factorizations} asserts that 
for the full {\it oriented} matroid automorphism group $G=\Aut(\OM)$, there are $G$-equivariant degree \emph{two} injections 
\[
[\VG(\OM)^!_i]\hookrightarrow [\VG(\OM)_{i+2}^!], \text{ for all }  \ i\ge 0. 
\]

The latter injections arise from right-multiplication by a degree two $G$-invariant $\underline{E}$-generic power sum $p_2(\yy)$, such as $p_2(\yy)=\sum_i y_i^2$.  
Unfortunately, for some oriented matroids $\OM$, there are no degree one $\underline{E}$-generic element power sums $p_1(\yy)$ in $A^!_1$ that are also $G$-invariant. For example, $p_1(\yy)=\sum_i y_i$ is {\it not} always $G$-invariant.
In fact, the calculation for rank one oriented matroids $\OM=U_{1,1}$ in \eqref{rank-one-illustration-of-VG-factorization}
shows that in that case,
there are no
$G$-equivariant injections $\VG(\OM)^!_i \hookrightarrow \VG(\OM)_{i+1}^!$ for any $i$. 

Nonetheless, for the braid matroids $\OM=\braid_n$, {\tt Sage} calculations for $n \leq 10$ and $1\le i\le 9$ support the following conjecture.

\begin{conj}
\label{conj: braid-matroid-equivariant-injectivity}
%The answer to Question~\ref{prob: equivariant-injectivity} is affirmative
For the braid oriented matroid $\OM=\Br_n$,
there exist equivariant injections
\begin{align*}
&[\Snkrep_{\VG}((n-1)+i,n-1)]=[{\VG}(\Br_n)^!_i] \\
&\quad \hookrightarrow [\Snkrep_{\VG}((n-1)+i+1,n-1)]=[\VG(\Br_n)^!_{i+1}],\text{ for all } i\ge 1.
\end{align*}   
\end{conj}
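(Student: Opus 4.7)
The plan is to reformulate the conjecture as a character inequality and then attack it via a combination of the branching short exact sequences, the plethystic generating function for the primal algebras, and the holonomy Lie algebra.

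First I would reduce to characteristic zero, where $\kk\symm_n$ is semisimple and Schur's lemma implies that an $\symm_n$-equivariant injection $\VG(\Br_n)^!_i \hookrightarrow \VG(\Br_n)^!_{i+1}$ exists if and only if, for every partition $\lambda \vdash n$, the $\specht^\lambda$-multiplicity in $\VG(\Br_n)^!_i$ is at most that in $\VG(\Br_n)^!_{i+1}$. Equivalently, the virtual class $[\VG(\Br_n)^!_{i+1}] - [\VG(\Br_n)^!_i]$ is the class of a genuine $\kk\symm_n$-module. Positive characteristic can then be deduced, if possible, by a dimension-plus-lift argument using the Koszul presentation in Theorem~\ref{thm: shriek-presentations}.

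Next I would explain why the naive analogue of Theorem~\ref{thm: shrieks-have-right-NZD} cannot produce the map: right-multiplication by an $\symm_n$-invariant element of $\VG(\Br_n)^!_1$ would do the job, but a character computation (already for $n=3$, where $\VG(\Br_n)^!_1$ has Frobenius characteristic $h_1 e_2 = s_{(2,1)}+s_{(1,1,1)}$ containing no $s_{(n)}$-component) shows that $\VG(\Br_n)^!_1$ has no $\symm_n$-invariants. The same conclusion applies to $\cL_1 = \VG(\Br_n)^!_1$ in the holonomy Lie superalgebra, ruling out degree-one primitive multipliers as well. So the desired map must be genuinely an equivariant homomorphism not given by multiplication by a single element.

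The main constructive idea I would try is a double induction on $(n,i)$ using the branching short exact sequence from Corollary~\ref{cor: braid-matroid-branching}(ii):
\begin{equation*}
0 \to \chi^{(n-1)}_{\mathrm{def}} \otimes \VG(\Br_n)^!_{i-1}\!\downarrow \to \VG(\Br_n)^!_i\!\downarrow \to \VG(\Br_{n-1})^!_i \to 0,
\end{equation*}
which splits as $\kk\symm_{n-1}$-modules in characteristic zero. Combining the inductive injections $\VG(\Br_{n-1})^!_i \hookrightarrow \VG(\Br_{n-1})^!_{i+1}$ and $\VG(\Br_n)^!_{i-1}\!\downarrow \hookrightarrow \VG(\Br_n)^!_i\!\downarrow$ with the split structure yields an $\symm_{n-1}$-equivariant injection $\VG(\Br_n)^!_i\!\downarrow \hookrightarrow \VG(\Br_n)^!_{i+1}\!\downarrow$. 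In parallel, I would use the plethystic generating function \eqref{braid-VG-plethysm-formula} together with the Koszul-dual recurrence \eqref{equivariant-recurrence-for-shrieks} to obtain a tractable expression for $\sum_i [\VG(\Br_n)^!_i] t^i$ in terms of higher Lie characters $\VG_\lambda$, with the goal of writing $[\VG(\Br_n)^!_{i+1}] - [\VG(\Br_n)^!_i]$ as a manifestly nonnegative combination of induced representations. Representation stability from Corollary~\ref{braid-matroid-shrieks-have-rep-stability-cor} (onset past $n=3i$) should let me verify the multiplicity inequality uniformly in the stable range, leaving only finitely many $(n,i)$ to check for each $i$.

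The hard part, and the main obstacle, is upgrading the $\symm_{n-1}$-equivariant injection obtained from the branching argument to a full $\symm_n$-equivariant one; branching alone detects only restrictions of multiplicities and is blind to the new $\specht^{\lambda}$ with $\lambda_1$ small. A fallback is to exploit the fact that $\cL_2$ does carry $\symm_n$-invariants (visible already for $n=3$), so multiplication by an invariant $\zeta \in \cL_2$ gives a known equivariant degree-two injection whose image one might try to split off, reducing the problem to controlling the cokernel of the degree-two map and showing it admits an equivariant section into the next graded piece. Combining this with the branching SES and the stability bound should, in principle, close the argument, but identifying the correct stable combinatorial model for $[\VG(\Br_n)^!_i]$—analogous to the restricted-growth-function basis available on the primal side—is where I expect the real work to lie.
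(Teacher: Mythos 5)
This statement is a \emph{conjecture} in the paper, not a proved theorem: the authors leave it open, noting only that {\tt Sage} calculations verify it for $n\le 10$ and $1\le i\le 9$, and that Propositions~\ref{n-at-most-3-braid-shrieks} and~\ref{prop:VGBr4shriek-perm} establish it for $n\le 4$ in characteristic zero via explicit character formulas. There is therefore no paper proof to compare against, and what you have written is a research plan, not a proof; you are candid about this yourself.

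That said, several of your observations are accurate and worth keeping. The computation showing $\VG(\Br_n)^!_1$ has no $\symm_n$-invariants is correct: its Frobenius characteristic is $e_2 h_{n-2}$ (see \eqref{braid-VG-shriek-degree-one}), whose inner product with $h_n$ is zero, which is precisely why Theorem~\ref{thm: shrieks-have-right-NZD} together with Corollary~\ref{cor: equiv-hilb-factorizations}(iii) yields only a degree-\emph{two} invariant multiplier $p_2(\yy)=\sum_i y_i^2$ and hence a degree-two injection, not a degree-one one. Your diagnosis that any degree-one equivariant injection cannot come from right-multiplication by a single element is the correct starting point.

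The gaps in the plan, however, are real and you have not closed them. First, the branching short exact sequence in Corollary~\ref{cor: braid-matroid-branching}(ii) and the induction you build on it only produce an $\symm_{n-1}$-equivariant injection between restrictions; the multiplicity of $\specht^\lambda$ for $\lambda\vdash n$ is not recoverable from the restriction to $\symm_{n-1}$, so this cannot, even in principle, deliver the $\symm_n$-equivariant statement without a separate argument controlling the ``new'' constituents. Second, representation stability as developed in Corollary~\ref{braid-matroid-shrieks-have-rep-stability-cor} is stability in $n$ for fixed $i$; it says nothing about stabilization in $i$ for fixed $n$, which is the direction the conjecture's quantifier ``for all $i\ge 1$'' actually requires. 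So the suggested ``check the stable range and then finitely many cases'' does not reduce the problem to a finite computation, because for each fixed $n$ there remain infinitely many $i$ to handle. Third, using the $\symm_n$-invariant element of $\cL_2$ to split off a degree-two copy and then section the cokernel into degree $i+1$ is exactly where the substance would lie, and no mechanism for producing such a section is given. Until one of these three gaps is filled---most plausibly by finding a manifestly nonnegative closed form for $[\VG(\Br_n)^!_{i+1}]-[\VG(\Br_n)^!_i]$---this remains, as the paper says, a conjecture.
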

\noindent
Propositions~\ref{n-at-most-3-braid-shrieks} and~\ref{prop:VGBr4shriek-perm} establish Conjecture~\ref{conj: braid-matroid-equivariant-injectivity}  for $n\le 4$ in characteristic zero.

We close with some observations on
a consequence of the $G$-equivariant injections 
in \eqref{OS-shriek-degree-one-injections}:
they imply that the following alternating sum in $R_\kk(G)$ is always the class of a genuine $\kk G$-module:
%of $\kk G$-modules 
\begin{equation}
\label{eqn:alt-sum-OS-shriek}[\OS^!(M)_i]- [\OS^!(M)_{i-1}]+\ldots +(-1)^{i-1}[\OS^!(M)_0 
]\end{equation}

\begin{problem}
\label{prob: alternating-sum-rep}
 Investigate the genuine $\kk G$-modules \eqref{eqn:alt-sum-OS-shriek}. 
 Do they have interesting descriptions?
 \end{problem}
 
\noindent 
For example, for the braid matroid $M=\Br_n$, 
 the dimension of the genuine  module~\eqref{eqn:alt-sum-OS-shriek} is 
\begin{equation}
\label{Mansour-Munagi-number}
S(n\!-\!1\!+\!i, n\!-\!1)-S(n\!-\!2\!+\!i, n\!-\!1)+\cdots+(-1)^i S(n\!-\!1\!, n\!-\!1).
\end{equation}
This has an interpretation via a result of Mansour and Munagi \cite[Corollary 11]{MansourMunagi2014}: it is the number of set partitions of $\{1,2,\ldots,n+i\}$ into $n$ blocks, where no block contains a pair $j, j+1$ modulo $n+i$ for $1\le j\le n+i$. We know of
no accompanying $\kk \symm_n$-module 
built from these objects.

 We remark that for any matroid $M$, the alternating sum analogous to \eqref{eqn:alt-sum-OS-shriek} for $\OS(M)$, namely
\begin{equation}\label{eqn:alt-sum-OS}[\OS(M)_i]- [\OS(M)_{i-1}]+\ldots +(-1)^{i-1}[\OS(M)_0] \end{equation}
is always a genuine $\kk G$-module for $G=\Aut(M)$, isomorphic to the top homology of a rank-selected subposet of the lattice of flats.  We quickly sketch how this follows from combining these two results:
\begin{itemize}
\item \cite{OrlikSolomon} exhibits a $\kk G$-module isomorphism $\OS(M) \cong \Whit(\mathcal{L}_M)$, where $\Whit(\mathcal{L}_M)$ is the Whitney homology of the lattice of flats $\mathcal{L}_M$ of $M$, and 
\item \cite{Sundaram} If $G=\Aut(P)$ for a Cohen-Macaulay poset $P$, then the alternating sum %of $\kk G$-modules 
in $\RR_\kk(G)$ 
\[[\Whit_i(P)]-[\Whit_{i-1}(P)]+\ldots+(-1)^{i-1}[\Whit_0(P)]\]
is $\kk G$-isomorphic to the top homology of the rank-selected subposet of $P$ consisting of the bottom $i$ nonzero ranks.  The Hopf trace argument in \cite[Lemma 1.1]{Sundaram}, written for characteristic zero, can be replaced by applying, for  an arbitrary field $\kk$,
Proposition~\ref{easy-facts-on-Grothendieck-ring-prop}(ii) to Baclawski's complex. Similarly the arguments of Baclawski and Bj\"orner as cited in \cite[Theorem~1.2]{Sundaram} can be adapted for any field $\kk$, by appealing to the isomorphism in 
\cite[p. 262, Theorem 7.9.6]{bjorner1992homology}.  Finally, the  equivariant isomorphism with $\OS(M)$ follows from 
\cite[Theorem 7.10.2]{bjorner1992homology}, extending the argument of \cite[Theorem 4.3]{OrlikSolomon} to the whole Orlik-Solomon algebra.
\end{itemize}

\begin{comment}
For the Boolean matroid $U_{n,n}$, this says something that I thought was interesting: the alternating sum~\eqref{eqn:alt-sum-OS-shriek} is a true module, whose Frobenius characteristic is the inner plethysm 
\[\sum_{j=0}^i(-1)^{j-1} h_{i-j} \{\{h_1h_{n-1}\}\}, \]  where I'm writing $f\{\{g\}\}$   for (the Frobenius characteristic of) the composition of the definitely virtual $GL(V)$-character  
$f=\sum_{j=0}^i(-1)^{j-1} h_{i-j}$ with the $\symm_n$-module whose Frobenius characteristic is $g$.
\end{comment}

\begin{comment}
One has 

$\OS(U_n)_0=\Whit_0=h_n$, $\OS(U_n)_1=\Whit_1=e_1 h_{n-1}$, $\OS(U_n)_2=\Whit_2=e_2 h_{n-2}$ 
and $\OS(U_n)_3=\Whit_3=e_2 h_{n-2} -e_1 h_{n-1} +h_n = s_{(n-2,1,1)}$

One computes that $f_i=\OS(U_n)^!_i$ is given by:

$f_0=h_n, f_1=e_1 h_{n-1}$, $f_2=h_1 h_{n-1}+ h_2 h_{n-2}$ and 
\[f_3= h_n+ h_{n-2}(2 h_1^2-h_2) + 2h_1 h_2 h_{n-3} {\color{red} -  h_1^3 h_{n-3}}.\]
\end{comment}

%%%%%%%%%%%%%%%%%%%%%%%%%%%%%%%%%%%%%%%%%%%%%%
\section*{Acknowledgements}
The authors thank J\"{o}rgen Backelin, Vladimir Dotsenko, Ralf Fr\"{o}berg, Darij Grinberg, Shiyue Li, Ivan Marin, Anne Shepler, Keller VandeBogert, Peter Webb, Craig Westerland, and Sarah Witherspoon for helpful conversations and references. 
They are grateful to 
Trevor Karn for his wonderful {\tt Sage/Cocalc} code that checks whether a symmetric group representation is isomorphic to a permutation representation, which helped us create \Cref{table:of:when:Stirling:second:reps:are:perm:mods}. In particular, they are grateful to him for experimentally discovering the half-integers in the decomposition of  Proposition~\ref{prop:A!n2OS-S_OS(n+1,n-1)}.
They also thank two anonymous referees for their helpful suggestions.
First and second authors received partial support from NSF grants DMS-2053288 and DMS-1745638.
%%%%%%%%%%%%%%%%%%%%%%%%%%%%%%%%%%%%%%%%%%%%%%

%%%%%%%%%%%%%%%%%%%%%%%%%%%%%%%%
\newpage
\appendix
\section{Tables of irreducibles for Stirling representations}\label{appendix: tables}

\noindent This section consists of several tables for the decomposition into irreducible modules for $A(n)^!_i$ and its primitives $\cL(A(n))_i$ when $A(n) = \OS(\braid_n)$ or $\VG(\braid_n)$.
For each table, the observed onset of representation stability in each column is shaded in blue. The data is presented in terms of the Frobenius characteristics of the modules, expanded in the Schur basis.
All data was generated using SAGE code which is publicly available at \cite{Almousa_StirlingRepresentations_2024}.
\begin{table}[ht]
\centering
\resizebox{\columnwidth}{!}{
\begin{tabular}{|c||Hc|c|c|c|c|}
\hline
\diagbox[width = 0.75cm]{$n$}{$i$} &&1&2&3\\ \hline\hline
% 2 & $s_2$ & $s_2$ & $s_2$ & $s_2$
% \\ \hline
3 & $s_3$ & $s_{2,1} + s_{3}$ & $s_{1,1,1} + 2 s_{2,1} + 2 s_{3}$ & $2 s_{1,1,1} + 5 s_{2,1} + 3 s_{3}$
\\ \hline
4 & $s_4$ & \cellcolor{blue!15} $s_{2,2} + s_{3,1} + s_{4}$ &
$\begin{gathered}
    s_{1,1,1,1} + 2 s_{2,1,1} \\ + 3 s_{2,2} + 3 s_{3,1} + 3 s_{4}
\end{gathered}$ &
$\begin{gathered}
    4 s_{1,1,1,1} + 9 s_{2,1,1} \\ + 10 s_{2,2} + 11 s_{3,1} + 6 s_{4}
\end{gathered}$
\\ \hline
5 & $s_5$ & $s_{3,2} + s_{4,1} + s_{5}$ &
$\begin{gathered}
    s_{2,1,1,1} + 2 s_{2,2,1} + 2 s_{3,1,1} \\ + 4 s_{3,2} + 4 s_{4,1} + 3 s_{5}
\end{gathered}$
&
$\begin{gathered}
    2 s_{1,1,1,1,1} + 8 s_{2,1,1,1} + 13 s_{2,2,1} \\ + 15 s_{3,1,1} + 18 s_{3,2} + 16 s_{4,1} + 7 s_{5}
\end{gathered}$
\\ \hline
6 & $s_6$ & $s_{4,2} + s_{5,1} + s_{6}$ & $\begin{gathered}
    s_{2,2,2} + s_{3,1,1,1} \\ + 2 s_{3,2,1} + s_{3,3}  + 2 s_{4,1,1} \\ + 5 s_{4,2}  + 4 s_{5,1} + 3 s_{6}
\end{gathered}$ 
&
$\begin{gathered}
    3 s_{2,1,1,1,1} + 5 s_{2,2,1,1} + 7 s_{2,2,2} \\ + 10 s_{3,1,1,1} + 21 s_{3,2,1} + 8 s_{3,3} \\ + 17 s_{4,1,1} + 24 s_{4,2} + 17 s_{5,1} + 8 s_{6}
\end{gathered}$
\\ \hline
7 & $s_7$ & $s_{5,2} + s_{6,1} + s_{7}$ &
$\begin{gathered}
s_{3,2,2} + s_{4,1,1,1}  \\ + 2 s_{4,2,1} + 2 s_{4,3}  + 2 s_{5,1,1} \\ + 5 s_{5,2} + 4 s_{6,1} + 3 s_{7}
\end{gathered}$
& 
$\begin{gathered}
    s_{2,2,1,1,1} + 2 s_{2,2,2,1} \\ + 3 s_{3,1,1,1,1} + 7 s_{3,2,1,1} + 9 s_{3,2,2} \\ + 8 s_{3,3,1} + 10 s_{4,1,1,1} + 24 s_{4,2,1} + 14 s_{4,3} \\ + 17 s_{5,1,1} + 25 s_{5,2} + 18 s_{6,1} + 8 s_{7}
\end{gathered}$
\\ \hline
8 & $s_8$ & $s_{6,2} + s_{7,1} + s_{8}$ & \cellcolor{blue!15}
$\begin{gathered}
    s_{4,2,2} + s_{4,4} + s_{5,1,1,1} \\ + 2 s_{5,2,1} + 2 s_{5,3} + 2 s_{6,1,1} \\ + 5 s_{6,2} + 4 s_{7,1} + 3 s_{8}
\end{gathered}$
&
$\begin{gathered}
    s_{2,2,2,2} + s_{3,2,1,1,1} + 2 s_{3,2,2,1} + 2 s_{3,3,1,1} \\ + 2 s_{3,3,2}  + 3 s_{4,1,1,1,1} + 7 s_{4,2,1,1} + 10 s_{4,2,2}  \\ + 11 s_{4,3,1} + 6 s_{4,4} + 10 s_{5,1,1,1} + 24 s_{5,2,1} \\ + 15 s_{5,3} + 17 s_{6,1,1} + 26 s_{6,2} + 18 s_{7,1} + 8 s_{8}
\end{gathered}$
\\ \hline
9 & $s_9$ & $s_{7,2} + s_{8,1} + s_{9}$ &
$\begin{gathered}
s_{5,2,2} + s_{5,4} + s_{6,1,1,1} \\ + 2 s_{6,2,1} + 2 s_{6,3} + 2 s_{7,1,1} \\ + 5 s_{7,2} + 4 s_{8,1} + 3 s_{9}
\end{gathered}$
& 
$\begin{gathered}
    s_{3,2,2,2} + s_{4,2,1,1,1} + 2 s_{4,2,2,1} \\ + 2 s_{4,3,1,1} + 3 s_{4,3,2} + 3 s_{4,4,1} \\ + 3 s_{5,1,1,1,1} + 7 s_{5,2,1,1} + 10 s_{5,2,2} \\ + 11 s_{5,3,1} + 7 s_{5,4} + 10 s_{6,1,1,1} + 24 s_{6,2,1} \\ + 16 s_{6,3} + 17 s_{7,1,1} + 26 s_{7,2} + 18 s_{8,1} + 8 s_{9}
\end{gathered}$
\\ \hline
10 & $s_{10}$ & $s_{8,2} + s_{9,1} + s_{10}$ & 
$\begin{gathered}
    s_{6,2,2} + s_{6,4} + s_{7,1,1,1} \\ + 2 s_{7,2,1} + 2 s_{7,3} + 2 s_{8,1,1} \\ + 5 s_{8,2} + 4 s_{9,1} + 3 s_{10}
\end{gathered}$
&$\begin{gathered}
s_{4,2,2,2} + s_{4,4,2} + s_{5,2,1,1,1} + 2 s_{5,2,2,1} \\ + 2 s_{5,3,1,1} + 3 s_{5,3,2} + 3 s_{5,4,1} + s_{5,5} \\ + 3 s_{6,1,1,1,1} + 7 s_{6,2,1,1} + 10 s_{6,2,2} + 11 s_{6,3,1} \\ + 8 s_{6,4}  + 10 s_{7,1,1,1} + 24 s_{7,2,1} + 16 s_{7,3} \\ + 17 s_{8,1,1} + 26 s_{8,2} + 18 s_{9,1} + 8 s_{10}
\end{gathered}$
\\ \hline
11 & $s_{11}$ & $s_{9,2} + s_{10,1} + s_{11}$ &
$\begin{gathered}
    s_{7,2,2} + s_{7,4} + s_{8,1,1,1} \\ + 2 s_{8,2,1} + 2 s_{8,3} + 2 s_{9,1,1} \\ + 5 s_{9,2} + 4 s_{10,1} + 3 s_{11}
\end{gathered}$
&
$\begin{gathered}
    s_{5,2,2,2} + s_{5,4,2} + s_{6,2,1,1,1} + 2 s_{6,2,2,1} \\ + 2 s_{6,3,1,1} + 3 s_{6,3,2} + 3 s_{6,4,1} + 2 s_{6,5} \\ + 3 s_{7,1,1,1,1} + 7 s_{7,2,1,1} + 10 s_{7,2,2} + 11 s_{7,3,1} \\ + 8 s_{7,4} + 10 s_{8,1,1,1} + 24 s_{8,2,1} + 16 s_{8,3} \\ + 17 s_{9,1,1} + 26 s_{9,2} + 18 s_{10,1} + 8 s_{11}
\end{gathered}$
\\ \hline
12 &
$s_{12}$ & $s_{10,2} + s_{11,1} + s_{12}$ & 
$\begin{gathered}s_{8,2,2} + s_{8,4} + s_{9,1,1,1} \\ + 2 s_{9,2,1} + 2 s_{9,3} + 2 s_{10,1,1} \\ + 5 s_{10,2} + 4 s_{11,1} + 3 s_{12}\end{gathered}$ & 
\cellcolor{blue!15}$\begin{gathered}
s_{6,2,2,2} + s_{6,4,2} + s_{6,6}+ s_{7,2,1,1,1} \\ + 2 s_{7,2,2,1} + 2 s_{7,3,1,1} + 3 s_{7,3,2} \\ + 3 s_{7,4,1} + 2 s_{7,5} + 3 s_{8,1,1,1,1} \\ + 7 s_{8,2,1,1} + 10 s_{8,2,2} + 11 s_{8,3,1} \\ + 8 s_{8,4} + 10 s_{9,1,1,1} + 24 s_{9,2,1} + 16 s_{9,3} \\+ 17 s_{10,1,1} + 26 s_{10,2} + 18 s_{11,1} + 8 s_{12}\end{gathered}$
\\ \hline
\end{tabular}
}
\caption{Irreducibles for $[{\OS}(\Br_n)^!_i] = [\Snkrep_{\OS}((n-1)+i,n-1)]$. Note that $\ch[\OS(\braid_1)^!_i] = s_2$ for $i\geq 0$ and $\ch [\OS(\braid_n)^!_0] = s_n$ for $n\geq 2$.
% Observed onset of stability shaded in blue.
% Data generated using code available at \cite{Almousa_StirlingRepresentations_2024}.
}
\label{table: OS-shriek}
\end{table}
\begin{table}[ht]
\centering
\resizebox{\columnwidth}{!}{
\begin{tabular}{|c||c|c|c|c|}
\hline
\diagbox[width=0.75cm]{$n$}{$i$} &0&1&2&3\\ \hline\hline
2 & $s_2$ & $s_{1,1}$ & $s_2$ & $s_{1,1}$
\\ \hline
3 & $s_3$ & \cellcolor{blue!15} $s_{1,1,1} + s_{2,1}$ & $s_{1,1,1} + 2 s_{2,1} + 2 s_{3}$ & $3 s_{1,1,1} + 5 s_{2,1} + 2 s_{3}$
\\ \hline
4& $s_4$ & $s_{2,1,1} + s_{3,1}$ & 
$\begin{gathered}
    s_{1,1,1,1} + 3 s_{2,1,1} \\+ 2 s_{2,2} + 3 s_{3,1} + 2 s_{4}
\end{gathered}$
& 
$\begin{gathered}
    4 s_{1,1,1,1} + 11 s_{2,1,1} + 8 s_{2,2} \\ + 11 s_{3,1} + 4 s_{4}
\end{gathered}$
\\ \hline
5 & $s_5$ & $s_{3,1,1} + s_{4,1}$ & 
$\begin{gathered}
2 s_{2,1,1,1} + 2 s_{2,2,1} + 3 s_{3,1,1} \\ + 3 s_{3,2} + 3 s_{4,1} + 2 s_{5}
\end{gathered}$
&
$\begin{gathered}
    3 s_{1,1,1,1,1} + 10 s_{2,1,1,1} + 14 s_{2,2,1} \\ + 17 s_{3,1,1}  + 15 s_{3,2}  + 14 s_{4,1} + 4 s_{5}
\end{gathered}$
\\ \hline
6 & $s_6$ & $s_{4,1,1} + s_{5,1}$ & 
\cellcolor{blue!15}$ \begin{gathered}
    s_{2,2,1,1} + 2 s_{3,1,1,1} \\ + 2 s_{3,2,1} + s_{3,3} + 3 s_{4,1,1} \\ + 3 s_{4,2}  + 3 s_{5,1} + 2 s_{6}
\end{gathered}$
&
$\begin{gathered}
    s_{1,1,1,1,1,1} + 5 s_{2,1,1,1,1} + 8 s_{2,2,1,1} \\ + 7 s_{2,2,2} + 13 s_{3,1,1,1} + 21 s_{3,2,1} + 7 s_{3,3}  \\ + 18 s_{4,1,1}  + 18 s_{4,2} + 14 s_{5,1} + 4 s_{6}
\end{gathered}$
\\ \hline
7 & $s_7$ & $s_{5,1,1} + s_{6,1}$ & 
$\begin{gathered}
    s_{3,2,1,1} + 2 s_{4,1,1,1} \\ + 2 s_{4,2,1}  + s_{4,3} + 3 s_{5,1,1} \\ + 3 s_{5,2}  + 3 s_{6,1} + 2 s_{7}
\end{gathered}$
&
$\begin{gathered}
    s_{2,1,1,1,1,1} + 2 s_{2,2,1,1,1} + 3 s_{2,2,2,1} \\ + 6 s_{3,1,1,1,1} + 11 s_{3,2,1,1} + 8 s_{3,2,2} \\ + 7 s_{3,3,1} + 13 s_{4,1,1,1} + 22 s_{4,2,1} + 10 s_{4,3} \\ + 18 s_{5,1,1} + 18 s_{5,2} + 14 s_{6,1} + 4 s_{7}
\end{gathered}$
\\ \hline
8 & $s_8$ & $s_{6,1,1} + s_{7,1}$ &
$\begin{gathered}
    s_{4,2,1,1} + 2 s_{5,1,1,1} \\ + 2 s_{5,2,1} + s_{5,3} \\ + 3 s_{6,1,1} + 3 s_{6,2} \\ + 3 s_{7,1} + 2 s_{8}
\end{gathered}$
&
$\begin{gathered}
    s_{2,2,2,2} + s_{3,1,1,1,1,1} + 3 s_{3,2,1,1,1} \\ + 3 s_{3,2,2,1} + 3 s_{3,3,1,1} + s_{3,3,2} \\ + 6 s_{4,1,1,1,1} + 11 s_{4,2,1,1} + 8 s_{4,2,2}  \\ + 8 s_{4,3,1} + 3 s_{4,4}+ 13 s_{5,1,1,1} \\ + 22 s_{5,2,1} + 10 s_{5,3} + 18 s_{6,1,1} \\ + 18 s_{6,2} + 14 s_{7,1} + 4 s_{8}
\end{gathered}
$
\\ \hline
9 & $s_9$ & $s_{7,1,1} + s_{8,1}$ &
$\begin{gathered}
    s_{5,2,1,1} + 2 s_{6,1,1,1} \\ + 2 s_{6,2,1} + s_{6,3} \\ + 3 s_{7,1,1} + 3 s_{7,2} \\ + 3 s_{8,1} + 2 s_{9}
\end{gathered}$
&
\cellcolor{blue!15}$\begin{gathered}
s_{3,2,2,2} + s_{3,3,1,1,1} + s_{4,1,1,1,1,1} \\ + 3 s_{4,2,1,1,1} + 3 s_{4,2,2,1} + 3 s_{4,3,1,1} \\  + s_{4,3,2} + s_{4,4,1}  + 6 s_{5,1,1,1,1} \\ + 11 s_{5,2,1,1} + 8 s_{5,2,2} + 8 s_{5,3,1} \\ + 3 s_{5,4} + 13 s_{6,1,1,1} + 22 s_{6,2,1} + 10 s_{6,3} \\+ 18 s_{7,1,1} + 18 s_{7,2} + 14 s_{8,1} + 4 s_{9}
\end{gathered}$
\\ \hline
10 & $s_{10}$ & $s_{8,1,1} + s_{9,1}$
&
$\begin{gathered}
    s_{6,2,1,1} + 2 s_{7,1,1,1} \\ + 2 s_{7,2,1} + s_{7,3} \\ + 3 s_{8,1,1} + 3 s_{8,2} \\ + 3 s_{9,1} + 2 s_{10}
\end{gathered}$
&
$\begin{gathered}
 s_{4,2,2,2} + s_{4,3,1,1,1} + s_{5,1,1,1,1,1} \\ + 3 s_{5,2,1,1,1} + 3 s_{5,2,2,1} + 3 s_{5,3,1,1} \\ + s_{5,3,2} + s_{5,4,1} + 6 s_{6,1,1,1,1} \\ + 11 s_{6,2,1,1} + 8 s_{6,2,2} + 8 s_{6,3,1} \\ + 3 s_{6,4} + 13 s_{7,1,1,1} + 22 s_{7,2,1} + 10 s_{7,3} \\ + 18 s_{8,1,1} + 18 s_{8,2} + 14 s_{9,1} + 4 s_{10}
\end{gathered}$
% \\ \hline
% 11 & & & & &
\\ \hline
% 12 & & & & &
% % \cellcolor{blue!15}
% $\begin{gathered}
%     s_{4,2,2,2,2} + s_{4,4,2,2}  + s_{4,4,4} + s_{5,2,2,1,1,1}  + 2 s_{5,2,2,2,1} \\ + 2 s_{5,3,2,1,1} + 3 s_{5,3,2,2} + 2 s_{5,3,3,1} + s_{5,4,1,1,1} \\ + 4 s_{5,4,2,1} + 3 s_{5,4,3} + 2 s_{5,5,1,1} + 3 s_{5,5,2} \\ + 5 s_{6,2,1,1,1,1}  + 8 s_{6,2,2,1,1} + 12 s_{6,2,2,2} + 11 s_{6,3,1,1,1} \\ + 24 s_{6,3,2,1} + 7 s_{6,3,3} + 14 s_{6,4,1,1}  + 25 s_{6,4,2} \\ + 15 s_{6,5,1} + 7 s_{6,6} + 7 s_{7,1,1,1,1,1} + 34 s_{7,2,1,1,1}  + 53 s_{7,2,2,1} \\ + 65 s_{7,3,1,1} + 67 s_{7,3,2} + 62 s_{7,4,1} + 24 s_{7,5} + 39 s_{8,1,1,1,1} \\ + 120 s_{8,2,1,1} + 106 s_{8,2,2} + 148 s_{8,3,1} + 72 s_{8,4} + 92 s_{9,1,1,1} \\ + 209 s_{9,2,1} + 122 s_{9,3} + 118 s_{10,1,1} + 147 s_{10,2} + 87 s_{11,1} + 29 s_{12}
% \end{gathered}$
% \\ \hline
\end{tabular}
}
\caption{Irreducibles for $[{\VG}(\Br_n)^!_i] = [\Snkrep_{\VG}((n-1)+i,n-1)]$. 
% Observed onset of stability shaded in blue.
% \ayah{I tried so hard to fit $VG(\braid_{12})_5$ in but I think I am going to give up soon.}
% \ayah{ONSET OF STABILITY AT $n = 3i-3$, checked through $i = 7$}
}\label{table: VG-shriek}
\end{table}
\newpage
\centering
\begin{sidewaystable}
\resizebox{\textwidth}{!}{
\begin{tabular}{|c||c|c|c|c|c|c|}
\hline
\diagbox[width=0.75cm]{$n$}{$i$} &1&2&3&4&5\\ \hline\hline
2 & $s_{2}$ & 0 & 0 & 0 & 0
\\ \hline
3 & $s_{2,1} + s_{3}$ & $s_{1,1,1}$ & $s_{2,1}$ & $s_{1,1,1} + s_{2,1}$ & $s_{1,1,1} + 2 s_{2,1} + s_{3}$ \\ \hline 
4 & \cellcolor{blue!15} $s_{2,2} + s_{3,1} + s_{4}$ & \cellcolor{blue!15} $s_{1,1,1,1} + s_{2,1,1}$ &  $s_{2,1,1} + 2 s_{2,2} + s_{3,1}$ & $2 s_{1,1,1,1} + 3 s_{2,1,1} + 2 s_{2,2} + 2 s_{3,1}$ & $3 s_{1,1,1,1} + 6 s_{2,1,1} + 6 s_{2,2} + 6 s_{3,1} + 3 s_{4}$
\\ \hline
5 & $s_{3,2} + s_{4,1} + s_{5}$ & $s_{2,1,1,1} + s_{3,1,1}$ & $
 \begin{gathered} 2 s_{2,2,1} + s_{3,1,1} \\ + 2 s_{3,2} + s_{4,1}\end{gathered}$ & $\begin{gathered} s_{1,1,1,1,1} + 3 s_{2,1,1,1} + 3 s_{2,2,1} \\ + 5 s_{3,1,1} + 3 s_{3,2} + 2 s_{4,1} \end{gathered}$ &
$\begin{gathered}
    2 s_{1,1,1,1,1} + 8 s_{2,1,1,1} + 11 s_{2,2,1} \\ + 11 s_{3,1,1} + 12 s_{3,2} + 10 s_{4,1} + 3 s_{5}
\end{gathered}$
\\ \hline
6 & $s_{4,2} + s_{5,1} + s_{6}$ & $s_{3,1,1,1} + s_{4,1,1}$ & \cellcolor{blue!15} $\begin{gathered} s_{2,2,2} + 2 s_{3,2,1} \\ + s_{4,1,1} + 2 s_{4,2} + s_{5,1}\end{gathered}$ & 
$\begin{gathered}
    s_{2,1,1,1,1} + s_{2,2,1,1} + s_{2,2,2} \\ + 4 s_{3,1,1,1} + 5 s_{3,2,1}  + s_{3,3} \\ + 5 s_{4,1,1} + 3 s_{4,2} + 2 s_{5,1}
\end{gathered}$
&
$\begin{gathered}
    4 s_{2,1,1,1,1} + 7 s_{2,2,1,1} + 7 s_{2,2,2} \\ + 11 s_{3,1,1,1} + 18 s_{3,2,1} + 6 s_{3,3} \\ + 13 s_{4,1,1} + 17 s_{4,2} + 10 s_{5,1} + 3 s_{6}
\end{gathered}$
\\ \hline
7 & $s_{5,2} + s_{6,1} + s_{7}$ & $s_{4,1,1,1} + s_{5,1,1}$ & 
$\begin{gathered}
    s_{3,2,2} + 2 s_{4,2,1} \\ + s_{5,1,1} + 2 s_{5,2} + s_{6,1}
\end{gathered}$
&
$\begin{gathered}
    s_{3,1,1,1,1} + 2 s_{3,2,1,1} + s_{3,2,2} \\ + 2 s_{3,3,1} + 4 s_{4,1,1,1} + 5 s_{4,2,1} \\ + s_{4,3} + 5 s_{5,1,1} + 3 s_{5,2} + 2 s_{6,1}
\end{gathered}$
& 
$\begin{gathered}
    2 s_{2,2,1,1,1} + 3 s_{2,2,2,1} + 5 s_{3,1,1,1,1} \\ + 10 s_{3,2,1,1} + 9 s_{3,2,2} + 7 s_{3,3,1} \\ + 11 s_{4,1,1,1} + 21 s_{4,2,1} + 11 s_{4,3} \\ + 13 s_{5,1,1} + 17 s_{5,2} + 10 s_{6,1} + 3 s_{7}
\end{gathered}$
\\ \hline
8 & $s_{6,2} + s_{7,1} + s_{8}$ &
$\begin{gathered}
    s_{5,1,1,1} + s_{6,1,1}
\end{gathered}$
&
$\begin{gathered}
    s_{4,2,2} + 2 s_{5,2,1} \\ + s_{6,1,1} + 2 s_{6,2} + s_{7,1}
\end{gathered}$
& \cellcolor{blue!15}
$\begin{gathered}
    s_{3,3,1,1} + s_{4,1,1,1,1} \\ + 2 s_{4,2,1,1} + s_{4,2,2} + 2 s_{4,3,1} \\ + 4 s_{5,1,1,1} + 5 s_{5,2,1} + s_{5,3} \\ + 5 s_{6,1,1} + 3 s_{6,2} + 2 s_{7,1}
\end{gathered}$
&
$\begin{gathered}
    s_{2,2,2,2} + 3 s_{3,2,1,1,1} + 3 s_{3,2,2,1} \\ + 3 s_{3,3,1,1} + 2 s_{3,3,2} + 5 s_{4,1,1,1,1} \\ + 10 s_{4,2,1,1} + 10 s_{4,2,2} + 10 s_{4,3,1} \\ + 5 s_{4,4} + 11 s_{5,1,1,1} + 21 s_{5,2,1} + 11 s_{5,3} \\ + 13 s_{6,1,1} + 17 s_{6,2} + 10 s_{7,1} + 3 s_{8}
\end{gathered}$
\\ \hline
9 & $s_{7,2} + s_{8,1} + s_{9}$ & $s_{6,1,1,1} + s_{7,1,1}$ & $\begin{gathered} s_{5,2,2} + 2 s_{6,2,1} \\ + s_{7,1,1} + 2 s_{7,2} + s_{8,1}\end{gathered}$ 
&
$\begin{gathered}
    s_{4,3,1,1} + s_{5,1,1,1,1} \\ + 2 s_{5,2,1,1} + s_{5,2,2} + 2 s_{5,3,1} \\ + 4 s_{6,1,1,1} + 5 s_{6,2,1} + s_{6,3} \\ + 5 s_{7,1,1} + 3 s_{7,2} + 2 s_{8,1}
\end{gathered}$
& 
$\begin{gathered}
    s_{3,2,2,2} + s_{3,3,1,1,1} + 3 s_{4,2,1,1,1} \\ + 3 s_{4,2,2,1} + 3 s_{4,3,1,1} + 3 s_{4,3,2} \\ + 3 s_{4,4,1} + 5 s_{5,1,1,1,1} + 10 s_{5,2,1,1} \\ + 10 s_{5,2,2} + 10 s_{5,3,1} + 5 s_{5,4} \\ + 11 s_{6,1,1,1} + 21 s_{6,2,1} + 11 s_{6,3} \\ + 13 s_{7,1,1} + 17 s_{7,2} + 10 s_{8,1} + 3 s_{9}
\end{gathered}$
\\ \hline
10 & $s_{8,2} + s_{9,1} + s_{10}$ & $s_{7,1,1,1} + s_{8,1,1}$ & $\begin{gathered}s_{6,2,2} + 2 s_{7,2,1} \\ + s_{8,1,1} + 2 s_{8,2} + s_{9,1}\end{gathered}$
&
$\begin{gathered}
    s_{5,3,1,1} + s_{6,1,1,1,1} \\ + 2 s_{6,2,1,1} + s_{6,2,2} + 2 s_{6,3,1} \\ + 4 s_{7,1,1,1} + 5 s_{7,2,1} + s_{7,3} \\ + 5 s_{8,1,1} + 3 s_{8,2} + 2 s_{9,1}
\end{gathered}$
& \cellcolor{blue!15}
$\begin{gathered}
    s_{4,2,2,2} + s_{4,3,1,1,1} + s_{4,4,2} \\ + 3 s_{5,2,1,1,1} + 3 s_{5,2,2,1} + 3 s_{5,3,1,1} \\ + 3 s_{5,3,2} + 3 s_{5,4,1} + 5 s_{6,1,1,1,1} \\ + 10 s_{6,2,1,1} + 10 s_{6,2,2} + 10 s_{6,3,1} \\ + 5 s_{6,4} + 11 s_{7,1,1,1} + 21 s_{7,2,1} + 11 s_{7,3} \\ + 13 s_{8,1,1} + 17 s_{8,2} + 10 s_{9,1} + 3 s_{10}
\end{gathered}$
\\ \hline
\end{tabular}
}
\caption{Irreducibles for $[\cL(n)_i]$ when $A = \OS(\braid_n)$. 
% Observed onset of stability shaded in blue.
}\label{table: deviations-OS}
\end{sidewaystable} \newpage
\centering
\begin{sidewaystable}[ht]
\resizebox{\textwidth}{!}{
\begin{tabular}{|c||c|c|c|c|c|c|}
\hline
\diagbox[width=0.75cm]{$n$}{$i$} &1&2&3&4&5\\ \hline\hline
2 & $s_{1,1}$ & $s_2$ & 0 & 0 & 0
\\ \hline
3 & \cellcolor{blue!15} $s_{1,1,1} + s_{2,1}$ & $s_{2,1} + 2 s_{3}$ & $s_{2,1}$ & $s_{1,1,1} + s_{2,1}$ & $s_{1,1,1} + 2 s_{2,1} + s_{3}$
\\ \hline
4 & $s_{2,1,1} + s_{3,1}$ & $s_{2,2} + 2 s_{3,1} + 2 s_{4}$ & $s_{2,1,1} + 2 s_{2,2} + s_{3,1}$ & $\begin{gathered} s_{1,1,1,1} + 4 s_{2,1,1} + s_{2,2} + 2 s_{3,1}
\end{gathered}$ & $\begin{gathered} 2 s_{1,1,1,1} + 7 s_{2,1,1}  + 4 s_{2,2} + 7 s_{3,1} + 2 s_{4}\end{gathered}$
\\ \hline
5 & $s_{3,1,1} + s_{4,1}$ & $2 s_{3,2} + 2 s_{4,1} + 2 s_{5}$ & $\begin{gathered} 2 s_{2,2,1} + s_{3,1,1} \\ + 2 s_{3,2} + s_{4,1}
\end{gathered}$
& $\begin{gathered}
3 s_{2,1,1,1} + 3 s_{2,2,1} \\ + 6 s_{3,1,1} + 2 s_{3,2} + 2 s_{4,1}
\end{gathered}$
& $\begin{gathered}
    s_{1,1,1,1,1} + 8 s_{2,1,1,1} 
    + 10 s_{2,2,1} \\ + 13 s_{3,1,1}
    + 11 s_{3,2} + 10 s_{4,1} + 2 s_{5}
\end{gathered}$
\\ \hline
6 & $s_{4,1,1} + s_{5,1}$ & \cellcolor{blue!15} $\begin{gathered}
    s_{3,3} + 2 s_{4,2} \\ + 2 s_{5,1} + 2 s_{6}
\end{gathered}$
& \cellcolor{blue!15} $\begin{gathered}
    s_{2,2,2} + 2 s_{3,2,1}  \\ + s_{4,1,1} + 2 s_{4,2} + s_{5,1}
\end{gathered}$
& $\begin{gathered}
    2 s_{2,2,1,1} + 4 s_{3,1,1,1} \\ + 5 s_{3,2,1} + s_{3,3} \\ + 6 s_{4,1,1} + 2 s_{4,2} + 2 s_{5,1}
\end{gathered}$
& $\begin{gathered}
    3 s_{2,1,1,1,1} + 9 s_{2,2,1,1} + 4 s_{2,2,2} \\ + 10 s_{3,1,1,1} + 18 s_{3,2,1} + 8 s_{3,3} \\ + 16 s_{4,1,1} + 14 s_{4,2} + 10 s_{5,1} + 2 s_{6}
\end{gathered}$
\\ \hline
7 & $s_{5,1,1} + s_{6,1}$ & $\begin{gathered}
    s_{4,3} + 2 s_{5,2} \\ + 2 s_{6,1} + 2 s_{7}
\end{gathered}$
& $\begin{gathered}
    s_{3,2,2} + 2 s_{4,2,1} \\ + s_{5,1,1} + 2 s_{5,2} + s_{6,1}
\end{gathered}$
& $\begin{gathered}
    3 s_{3,2,1,1} + 2 s_{3,3,1} \\ + 4 s_{4,1,1,1} + 5 s_{4,2,1} + s_{4,3} \\ + 6 s_{5,1,1}  + 2 s_{5,2} + 2 s_{6,1}
\end{gathered}$
& $\begin{gathered}
    3 s_{2,2,1,1,1} + 3 s_{2,2,2,1}  + 3 s_{3,1,1,1,1} \\ + 11 s_{3,2,1,1} + 6 s_{3,2,2} + 9 s_{3,3,1} \\ + 11 s_{4,1,1,1} + 21 s_{4,2,1} + 11 s_{4,3}  \\+ 16 s_{5,1,1} + 14 s_{5,2} + 10 s_{6,1} + 2 s_{7}
\end{gathered}$
\\ \hline
8 & $s_{6,1,1} + s_{7,1}$ & $\begin{gathered} s_{5,3} + 2 s_{6,2} \\ + 2 s_{7,1} + 2 s_{8}\end{gathered}$ 
& $\begin{gathered} s_{4,2,2} + 2 s_{5,2,1} \\ + s_{6,1,1} + 2 s_{6,2} + s_{7,1} \end{gathered}$
& \cellcolor{blue!15} $\begin{gathered} s_{3,3,1,1} + 3 s_{4,2,1,1} \\ + 2 s_{4,3,1} + 4 s_{5,1,1,1} \\ + 5 s_{5,2,1}+ s_{5,3} \\ + 6 s_{6,1,1} + 2 s_{6,2} + 2 s_{7,1} \end{gathered} $
& $\begin{gathered} s_{2,2,2,1,1} + 3 s_{3,2,1,1,1} + 3 s_{3,2,2,1} \\ + 2 s_{3,3,1,1} + 3 s_{3,3,2}  + 3 s_{4,1,1,1,1} \\ + 12 s_{4,2,1,1} + 6 s_{4,2,2}  + 12 s_{4,3,1} \\ + 3 s_{4,4} + 11 s_{5,1,1,1} + 21 s_{5,2,1} + 11 s_{5,3} \\ + 16 s_{6,1,1} + 14 s_{6,2} + 10 s_{7,1} + 2 s_{8} \end{gathered} $
\\ \hline
9 & $s_{7,1,1} + s_{8,1}$ & $\begin{gathered} s_{6,3} + 2 s_{7,2} \\ + 2 s_{8,1} + 2 s_{9} \end{gathered}$
& $\begin{gathered} s_{5,2,2} + 2 s_{6,2,1} \\ + s_{7,1,1} + 2 s_{7,2} + s_{8,1} \end{gathered}$
& $\begin{gathered} s_{4,3,1,1} + 3 s_{5,2,1,1} \\ + 2 s_{5,3,1} + 4 s_{6,1,1,1} \\ + 5 s_{6,2,1} + s_{6,3} \\ + 6 s_{7,1,1} + 2 s_{7,2} + 2 s_{8,1} \end{gathered}$
& $\begin{gathered}
    s_{3,2,2,1,1} + s_{3,3,3} + 3 s_{4,2,1,1,1} \\ + 3 s_{4,2,2,1} + 3 s_{4,3,1,1} + 3 s_{4,3,2} \\ + 3 s_{4,4,1} + 3 s_{5,1,1,1,1} + 12 s_{5,2,1,1} \\ + 6 s_{5,2,2} + 12 s_{5,3,1} + 3 s_{5,4} \\ + 11 s_{6,1,1,1} + 21 s_{6,2,1} + 11 s_{6,3} \\ + 16 s_{7,1,1} + 14 s_{7,2} + 10 s_{8,1} + 2 s_{9}
\end{gathered}$
\\ \hline
10 & $s_{8,1,1} + s_{9,1}$ & $\begin{gathered} s_{7,3} + 2 s_{8,2} \\ + 2 s_{9,1} + 2 s_{10} \end{gathered}$ & 
$\begin{gathered}
    s_{6,2,2} + 2 s_{7,2,1} \\ + s_{8,1,1} + 2 s_{8,2} + s_{9,1}
    \end{gathered}
    $
& 
$ \begin{gathered} s_{5,3,1,1} + 3 s_{6,2,1,1} \\ + 2 s_{6,3,1} + 4 s_{7,1,1,1} \\ + 5 s_{7,2,1} + s_{7,3} \\ + 6 s_{8,1,1} + 2 s_{8,2} + 2 s_{9,1}\end{gathered}$
& \cellcolor{blue!15} $\begin{gathered}
    s_{4,2,2,1,1} + s_{4,3,3} + s_{4,4,1,1} \\ + 3 s_{5,2,1,1,1} + 3 s_{5,2,2,1} + 3 s_{5,3,1,1} \\ + 3 s_{5,3,2} + 3 s_{5,4,1} + 3 s_{6,1,1,1,1} \\ + 12 s_{6,2,1,1} + 6 s_{6,2,2} + 12 s_{6,3,1} \\ + 3 s_{6,4} + 11 s_{7,1,1,1} + 21 s_{7,2,1} + 11 s_{7,3} \\ + 16 s_{8,1,1} + 14 s_{8,2} + 10 s_{9,1} + 2 s_{10}
\end{gathered}$
\\ \hline
\end{tabular}
}
\caption{Irreducibles for $[\cL(n)_i]$ when $A = \VG(\braid_n)$.
% Observed onset of stability shaded in blue.
}\label{table: deviations-VG}
\end{sidewaystable}
\clearpage

\section{Proof of Theorem~\ref{thm: OS-perm-reps}}\label{appendix: OS-VG-perm-reps}

The proof of Theorem~\ref{thm: OS-perm-reps}
and calculation of near-boundary cases for
$\OS(\braid_n)^!_i, \VG(\braid_n)^!_i$
employ a brute-force strategy, which we outline here, giving only
brief sketches of the arguments.

Note that since all $\kk\symm_n$-modules $U$
are self-contragredient, one has
$[U^*]=[U]$ in $R_\kk(\symm_n)$, and
so the defining recurrence \eqref{equivariant-recurrence-for-shrieks} for Koszul modules simplifies to this:
\begin{equation}
   \label{eqn:shriek-rep-recurrence}
   [A^!_d]=\sum_{i=1}^{d} (-1)^{i-1} [A_i] \cdot [(A^!_{d-i})]
   \end{equation}
This means that if one defines
\begin{align*}
    g_i&=\ch\,\OS(n)_i \text{ for }0\le i\le n-1,\\
    f_i&:=\ch\,\OS(n)^!_i=\Snkrep_{\OS}(n-1+i,n-1) \text{ for }i=0,1,2,\ldots,
\end{align*}
then, with $*$ below  denoting the internal (Kronecker) product of symmetric functions, \eqref{OS-and-VG-lambda} lets one sometimes compute explicit formulas for the $g_i$ in terms of the homogeneous symmetric functions $\{h_\lambda\}$, and \eqref{eqn:shriek-rep-recurrence}  gives a recurrence for $f_i$ in terms of $f_0,f_1,\ldots,f_{i-1}$:
\begin{equation}
\label{f-g-srhiek-recurrence}
f_i=\sum_{i=1}^d (-1)^{i-1} g_i * f_{d-i}
\end{equation}
In each of the cases below, we identify a small subset $T$ of partitions  of $n$ such that the linear span of $\{ h_\lambda:\lambda\in T \}$ contains the $f_i$.  Further manipulation then gives the results described in \Cref{thm: OS-perm-reps}, and the precise formulas below.

\subsection{Proof of Theorem~\ref{thm: OS-perm-reps} part (i).}
Corresponding to the Stirling number formulas 
\begin{align*}
    S(n-1,n-1)&=1,\\
    S(n,n-1)&=\binom{n}{2},
\end{align*}
one has the following result, implying Theorem~\ref{thm: OS-perm-reps} part (i).

\begin{prop} 
\label{prop:A!n1-S(n,n-1)}
For the cases $i=0,1$, one has
\begin{align}
\label{braid-shriek-degree-zero}
\ch\, \mathcal{S}_{\OS}(n-1,n-1) = \ch\, \mathcal{S}_{\VG}(n-1,n-1)&=h_n  \quad\text{ for }n \geq 1,\\
\label{braid-OS-shriek-degree-one}
\ch\, \mathcal{S}_{\OS}(n,n-1)&= h_2 h_{n-2} \quad \text{ for }n \geq 2,\\
\label{braid-VG-shriek-degree-one}
\ch\, \mathcal{S}_{\VG}(n,n-1) &= e_2 h_{n-2}
\quad \text{ for }n \geq 2.
\end{align}
\end{prop}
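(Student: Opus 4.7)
My plan is to handle the two degrees separately and reduce everything to computing $A_1$ as a $\kk\symm_n$-module.

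The degree zero case \eqref{braid-shriek-degree-zero} is essentially a tautology: for any standard graded $\kk$-algebra, $A^!_0 = \kk$ is the one-dimensional trivial $\kk\symm_n$-module, whose Frobenius characteristic is $h_n$.

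For the degree one case, the key observation is that I can reduce to computing $[A_1]$. Specializing the Koszul dual recurrence \eqref{equivariant-recurrence-for-shrieks} to $d=1$ yields
\[
[(A^!_1)^*] = [A_1]\cdot [(A^!_0)^*] = [A_1].
\]
Since $\kk$ has characteristic zero, all $\kk\symm_n$-modules are self-contragredient (alternatively, the signed and unsigned permutation matrices giving the $\symm_n$-action on $\{x_{ij}\}$ and $\{y_{ij}\}$ are orthogonal, as noted at the end of Section~\ref{sec:symmetry-of-OS-and-VG}), so $[A^!_1] = [A_1]$. Thus it suffices to identify $A_1$ in each case.

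For both rings, the degree one component $A_1$ is spanned by the generators $\{x_{ij}\}_{1\le i<j\le n}$, on which $\symm_n$ acts via $\sigma(x_{ij}) = x_{\sigma(i)\sigma(j)}$. In the case $A = \OS(\braid_n)$, the convention $x_{ji}=x_{ij}$ makes this the permutation representation on unordered pairs $\binom{[n]}{2}$, namely $\Ind_{\symm_2\times\symm_{n-2}}^{\symm_n}(\triv\boxtimes\triv)$, whose Frobenius characteristic is $h_2 h_{n-2}$, proving \eqref{braid-OS-shriek-degree-one}. In the case $A = \VG(\braid_n)$, the convention $x_{ji}=-x_{ij}$ means the transposition $(ij)$ acts by $-1$ on $x_{ij}$ while fixing pointwise the subspace spanned by the remaining generators stabilized by $\symm_{n-2}$. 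Hence $A_1 \cong \Ind_{\symm_2\times\symm_{n-2}}^{\symm_n}(\sgn\boxtimes\triv)$, with Frobenius characteristic $e_2 h_{n-2}$, giving \eqref{braid-VG-shriek-degree-one}.

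There is no serious obstacle here — the only subtlety is getting the sign convention right in the $\VG$ case to see a genuine sign representation of the $\symm_2$ factor in the stabilizer, rather than a trivial one. Everything else is a direct bookkeeping exercise using basic Frobenius-characteristic/induction formulas.
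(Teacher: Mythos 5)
Your proof is correct and structurally the same as the paper's: both reduce the degree-one case via the Koszul dual recurrence to the identity $[A^!_1]=[A_1]$ and then identify $A_1$. The only departure is the final step -- you read off $A_1$ directly as the (signed) permutation module on the generators $\{x_{ij}\}$, tracking the sign convention $x_{ji}=\pm x_{ij}$ to land on $h_2h_{n-2}$ versus $e_2h_{n-2}$, whereas the paper instead specializes the plethysm formula \eqref{OS-and-VG-lambda} for the higher Lie characters $\OS_{(2,1^{n-2})}$ and $\VG_{(2,1^{n-2})}$. Your route is a bit more elementary and self-contained; the paper's is consistent with the general machinery it uses throughout Section~\ref{sec: stirling-reps}. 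Both are correct.
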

\begin{proof}
Equation~\eqref{braid-shriek-degree-zero} follows since $\OS(\braid_n)_0=\VG(\braid_n)_0=\kk$, carrying the trivial $\symm_n$ representation in either case.
For \eqref{braid-OS-shriek-degree-one}, \eqref{braid-VG-shriek-degree-one}, note that
\eqref{eqn:shriek-rep-recurrence} and \eqref{OS-and-VG-lambda} imply
\begin{align*}
[\OS(\braid_n)^!_1]
 &=[\OS(\braid_n)_1]
 =[\OS(\braid_n)_{(21^{n-2})}]
 = h_1[\pi_2]\cdot h_{n-2}[\pi_1] = h_2 h_{n-2},,\\
[\VG(\braid_n)^!_1]
 &=[\VG(\braid_n)_1]
 =[\VG(\braid_n)_{(21^{n-2})}]
 = h_1[\ell_2] \cdot h_{n-2}[\ell_1] = e_2 h_{n-2}. \qedhere
\end{align*}
\end{proof}

\subsection{Proof of Theorem~\ref{thm: OS-perm-reps} part (ii).}

Here we prove the curious fact that for $n\ge 7$, $\OS(\braid_n)^!_2=\mathcal{S}_{\OS}(n+1,n-1)$ is in fact \emph{half} of a permutation module.

For $n=7,8,9,10$, {\tt Sage} computation with the Burnside ring shows that $\mathcal{S}_{\OS}(n+1,n-1)$ is NOT a permutation module.  By running his Burnside solver on the first formula in Proposition~\ref{prop:A!n2OS-S_OS(n+1,n-1)} below with rational coefficients, Trevor Karn noticed positive half-integers in the data and conjectured that  two copies of 
$\mathcal{S}_{\OS}(n+1,n-1)$ together constitute  a permutation module.  

\begin{prop}\label{prop:A!n2OS-S_OS(n+1,n-1)} 
One has the following decompositions as permutation modules for $n\le 6$:
\begin{align*}
\ch\,\mathcal{S}_{\OS}(3,1)&=h_2,\\
\ch\,\mathcal{S}_{\OS}(4,2)&=h_3+h_1^3,\\
\ch\,\mathcal{S}_{\OS}(5,3)
&=h_2[h_1^2]+h_1^2 h_2+h_4,\\ 
\ch\,\mathcal{S}_{\OS}(6,4)&=h_1 \cdot h_2[h_2] + h_2(h_3+e_3) +h_2^2 h_1,\\  
\ch\,\mathcal{S}_{\OS}(7,5)&=h_2[h_2 h_1] + h_3^2 + h_4 h_1^2,
\end{align*}
and then for $n\ge 4$ one has
 \begin{align} \ch\,\OS(\braid_n)^!_2&=\ch\,\mathcal{S}_{\OS}(n+1,n-1)\notag\\
 &= h_{n-2}h_2+h_{n-3}h_1^3
+h_{n-3}h_3 +h_{n-4}h_2^2 +h_{n-4} h_4 \textcolor{red}{\mathbf{- h_{n-3} h_2 h_1 - h_{n-4} h_3 h_1}} \label{eqn:S-OS(n+1,n-1)-h}\\
& =h_{n-2}\, s_{(2)}+ h_{n-3}\, (s_{(1^3)} + s_{(2,1)}+ s_{(3)}) + h_{n-4} \,(s_{(2,2)}+s_{(4)}) \label{eqn:S-OS(n+1,n-1)-h-rep-stab-bound-is-8}\\
&=h_{n-2} h_2+\textcolor{blue}{\frac{1}{2}} h_{n-3}h_1^3+ \textcolor{blue}{\frac{1}{2}} h_{n-3}(h_3+e_3) +  h_{n-4} \cdot h_2[h_2]. \label{eqn:S-OS(n+1,n-1)-perm}
 \end{align} 
So two copies of 
$\OS(\braid_n)^!_2=\mathcal{S}_{\OS}(n+1,n-1)$ together form a permutation module, with orbit stabilisers 
$$
\{ \symm_{n-2}\times S_2 ,  \,\, \symm_{n-3} , \,\,  \symm_{n-3}\times C_3, \,\, \symm_{n-4}\times I_2(4)
\}
$$
where $C_n$ is the cyclic group of order $n$ (generated by the $n$-cycle $(1,2,\ldots,n)$ in $\symm_n$) and  $I_2(n)$ is the dihedral group of order $2n$ inside $\symm_{n}$ containing that same $n$-cycle.
\end{prop}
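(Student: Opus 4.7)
The strategy is to apply the Koszul-duality Kronecker recurrence \eqref{eqn:shriek-rep-recurrence} at $d=2$, where the two inputs are Proposition~\ref{prop:A!n1-S(n,n-1)} (which furnishes the degree-one characters) and formula \eqref{OS-and-VG-lambda} (which furnishes $\ch\,\OS(\braid_n)_2$). This will directly yield the master identity \eqref{eqn:S-OS(n+1,n-1)-h} for $n\ge 4$, after which the Schur-basis form \eqref{eqn:S-OS(n+1,n-1)-h-rep-stab-bound-is-8} and the half-permutation decomposition \eqref{eqn:S-OS(n+1,n-1)-perm} follow by elementary symmetric-function manipulations. The four small-$n$ formulas will come from specialising the same recurrence, with the abbreviated list of relevant partitions.

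Concretely, the recurrence reads
\[
\ch\,\OS(\braid_n)^!_2 \;=\; \bigl(\ch\,\OS(\braid_n)_1\bigr) \ast \bigl(\ch\,\OS(\braid_n)^!_1\bigr) \;-\; \ch\,\OS(\braid_n)_2,
\]
where $\ast$ denotes the internal (Kronecker) product induced by $[U]\cdot[V]=[U\otimes_\kk V]$ in $R_\kk(\symm_n)$.  By Proposition~\ref{prop:A!n1-S(n,n-1)} both degree-one factors equal $h_{n-2}\,h_2$, the Frobenius characteristic of $\varphi_{(n-2,2)}=\kk[\symm_n/(\symm_{n-2}\times\symm_2)]$.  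Mackey's double-coset formula then classifies $\symm_{n-2}\times\symm_2$-orbits of $2$-subsets of $[n]$ by their intersection size with the distinguished pair $\{n-1,n\}$, giving for $n\ge 4$
\[
(h_{n-2}h_2)\ast(h_{n-2}h_2) \;=\; h_{n-2}h_2 \;+\; h_{n-3}h_1^3 \;+\; h_{n-4}h_2^{\,2},
\]
with stabilisers $\symm_{n-2}\times\symm_2$, $\symm_{n-3}$, and $\symm_{n-4}\times\symm_2\times\symm_2$ respectively.  For the subtrahend, the only partitions $\lambda\vdash n$ with $\ell(\lambda)=n-2$ (for $n\ge 4$) are $(3,1^{n-3})$ and $(2,2,1^{n-4})$, so
\[
\ch\,\OS(\braid_n)_2 \;=\; h_{n-3}\,\pi_3 \;+\; h_{n-4}\,\bigl(e_2[\pi_2]\bigr);
\]
using Klyachko's formula one has $\pi_2=h_2$ and $\pi_3=s_{(2,1)}=h_2h_1-h_3$, and the decomposition $\wedge^2(\Sym^2)=s_{(3,1)}$ gives $e_2[h_2]=h_3h_1-h_4$.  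Subtracting yields \eqref{eqn:S-OS(n+1,n-1)-h}.  The cases $n=2,3$ are handled separately by the same recurrence, using the abbreviated lists $\{(2)\}$ and $\{(3)\}$ of length-$(n-2)$ partitions.

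For \eqref{eqn:S-OS(n+1,n-1)-h-rep-stab-bound-is-8}, apply Jacobi--Trudi to rewrite each $h$-product appearing in \eqref{eqn:S-OS(n+1,n-1)-h} as a signed sum of Schur functions, and collect terms; the positive Schur content $s_{(2)}, s_{(1^3)}+s_{(2,1)}+s_{(3)}, s_{(2,2)}+s_{(4)}$ in the three ``strata'' reads off directly from the Mackey summands minus the $\pi_3, e_2[\pi_2]$ contributions.  To pass to the half-permutation form \eqref{eqn:S-OS(n+1,n-1)-perm}, the decisive identities are
\[
h_1^3 + (h_3+e_3) \;=\; 2\bigl(s_{(3)}+s_{(2,1)}+s_{(1^3)}\bigr) \qquad\text{and}\qquad h_2^{\,2} \;=\; h_2[h_2] + e_2[h_2],
\]
which are one-line checks in the Schur basis; regrouping \eqref{eqn:S-OS(n+1,n-1)-h} accordingly produces \eqref{eqn:S-OS(n+1,n-1)-perm}.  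The claimed orbit stabilisers are then recognised via the character identifications $\ch\,\kk[\symm_3/C_3]=h_3+e_3$ (a direct character computation on $\symm_3$) and $\ch\,\kk[\symm_4/I_2(4)]=s_{(4)}+s_{(2,2)}=h_2[h_2]$ (since $I_2(4)$ is the $\symm_4$-stabiliser of a pair of disjoint pairs).  The only nontrivial computation is the Mackey evaluation of the Kronecker product of two pair-permutation characters; this is the main bookkeeping obstacle, but requires no deeper input than tracking intersection sizes.
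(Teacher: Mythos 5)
Your proof is correct and follows essentially the same route as the paper: the same degree-two Koszul recurrence $f_2=f_1*g_1-g_2$, the same degree-one input $f_1=g_1=h_{n-2}h_2$ from Proposition~\ref{prop:A!n1-S(n,n-1)}, the same computation $g_2=h_{n-3}(h_2h_1-h_3)+h_{n-4}(h_3h_1-h_4)$ from the $\OS_\lambda$ decomposition, and the same closing identifications $h_3+e_3=\ch\,\one\uparrow_{C_3}^{\symm_3}$ and $h_2[h_2]=\ch\,\one\uparrow_{I_2(4)}^{\symm_4}$ to pass to the half-permutation form. Your Mackey orbit-count of $(h_{n-2}h_2)*(h_{n-2}h_2)$ on ordered pairs of $2$-subsets (by intersection size $2,1,0$) is just an explicit rendering of what the paper calls ``routine manipulation'' after the tensor--induction identity with the skewing operators $s_{(2)}^\perp,s_{(1^2)}^\perp$; one small slip: for $n=2$ the set of partitions of $2$ with $n-2=0$ parts is empty (hence $g_2=0$), not $\{(2)\}$.
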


\begin{proof}[Sketch of proof.] 
The expansion for $n=2$ is clear.  For $n \geq 3$, writing $f_i,g_i$ as in \eqref{f-g-srhiek-recurrence}, one finds that 
\begin{align*}
f_1&=g_1,\\
f_2&=f_1*g_1-g_2.
\end{align*}
Using \eqref{OS-and-VG-lambda} and writing $\delta(S) \in \{0,1\}$ depending on whether statement $S$ is false or true, one has
\begin{align*}
f_1=g_1&=\ch\,A(n)_1=\ch\,\OS_{(2,1^{n-2})} =h_{n-2}\pi_2=h_{n-2}h_2,\\
g_2&=h_{n-3}(h_2 h_1-h_3)\cdot \delta_{n\ge 3} 
+h_{n-4} (h_3 h_1-h_4) \cdot \delta_{n\ge 4}.
\end{align*}
Using the standard  fact that $U\otimes (V\big\uparrow_H^G)\, \cong (U\big\downarrow_H\otimes V)\big\uparrow_H^G, $ for the Young subgroup $H=\symm_2\times \symm_{n-2},$ and the {\it skewing operators} $s_{(2)}^\perp$ and $s_{(1^2)}^\perp$ as defined in Macdonald \cite[Ex. I.5.3]{Macdonald}, the expression~\eqref{eqn:S-OS(n+1,n-1)-h}  follows by routine manipulation. Then to establish~\eqref{eqn:S-OS(n+1,n-1)-perm}, we 
use these facts:
\begin{align*}
    h_1^3+2 h_3-2 h_2 h_1=h_3+e_3&=\ch\, 1\big\uparrow_{C_3}^{\symm_3},\\
    h_2[h_2]=h_4+s_{(2,2)}=h_4+h_2^2-h_3h_1&=\ch\, 1\big\uparrow_{I_2(4)}^{\symm_4}.
    \qedhere
    \end{align*}
\end{proof}

\begin{remark}
\label{rem:A!n2VG-S_VG(n+1,n-1)} 
A similar analysis gives  the following for $\VG(\Br_n^!)_2$:
\begin{align}
\ch\, \mathcal{S}_\VG(n+1,n-1)&=h_{n-2} h_2+ h_{n-3}(h_3-h_2h_1+h_1^3)+h_{n-4}(h_4+h_2 h_1^2-h_3 h_1-h_2^2),\, n\ge 4\notag\\
&=h_{n-2}\, s_{(2)}+ h_{n-3}\, (s_{(1^3)} + s_{(2,1)}+ s_{(3)}) + h_{n-4} \,s_{(2,1,1)}.\label{eqn:S-VG(n+1,n-1)-h-rep-stab-bound-is-6}
\end{align}
Also $\ch\, \mathcal{S}_\VG(n+1,n-1)=h_1^n+h_n$ for $n=3,4, $ and hence 
 $\mathcal{S}_\VG(n+1,n-1)$ is a permutation module for $n\le 4$; it is half a permutation module for $n=5$.  However for $n=6,7,$ the Burnside solver shows that it is not a permutation module (even after  scaling), even though all character values are nonnegative.
At $n=8$ there are negative character values, so even scaling will not result in a permutation module.
\end{remark}

\subsection{Proof of Theorem~\ref{thm: OS-perm-reps} part (iii).}

For fixed small $k$, the general Stirling number formula 
\begin{equation}
\label{inclusion-exclusion-formula}
S(n,k) ={\frac {1}{k!}}
\sum _{i=0}^{k}(-1)^{i}{\binom {k}{i}}(k-i)^{n}
\end{equation}
gives fairly simple explicit formulas for $S(n,k)$  as a function of $n$, e.g., for $k=1,2, 3,4,5$:
\begin{align}
\label{explicit-S(n,1)-formula}
S(n,1) &= 1 \\
        S(n,2) &= \frac{1}{2}(2^n - 2 \cdot 1^n) = 2^{n-1}-1 = 1+ 2+ 2^2 + \cdots + 2^{n-2},
        \label{explicit-S(n,2)-formula}\\ 
        S(n,3) &= \frac{1}{6}(3^n - 3 \cdot 2^n + 3 )\label{explicit-S(n,3)-formula}\\ 
        S(n,4) &= \frac{1}{24}(4^n - 4 \cdot 3^n + 6 \cdot 2^n - 4 )
        \label{explicit-S(n,4)-formula}\\
        S(n,5) &= \frac{1}{120}(5^n - 5 \cdot 4^n + 10 \cdot 3^n - 10 \cdot 2^n + 5 )
        \label{explicit-S(n,5)-formula}
\end{align}
We give here analogous descriptions of the $\kk\symm_n$-modules $\OS(\braid_n)^!_i, \VG(\braid_n)^!_i$, having dimension $S(n-1+i,n-1)$,
starting\footnote{There is little to say for $n=1$, as $\symm_1$ is the trivial group, and
$
\OS(\braid_1)=\VG(\braid_1)=\kk=\OS(\braid_1)^!=\VG(\braid_1)^!,
$
and 
$
\ch\, \Snkrep_\OS(0,0)=\ch\, \Snkrep_\VG(0,0)=h_1.
$} with $n=2,3$. 

\begin{prop}
    \label{n-at-most-3-braid-shrieks}
    The Frobenius characteristics of the $\kk\symm_n$-modules \begin{align*}
    \OS(\braid_n)^!_i&=\Snkrep_\OS(n-1+i,n-1),\\ \VG(\braid_n)^!_i&=\Snkrep_\VG(n-1+i,n-1)
    \end{align*}
    for $n=2,3$ have these formulas:
\begin{enumerate}
\item[$n=2:$] 
\begin{itemize}
\item[] $\ch\, \Snkrep_\OS(i+1,1) =h_2,$
\item[] $\ch\, \Snkrep_\VG(i+1,1) =
 \begin{cases} h_2, &i \text{ even} ,\\
      e_2, &i \text{ odd}.
    \end{cases}$
\end{itemize}
\item[$n=3:$]
\begin{itemize}
    \item[] 
    $\ch\, \mathcal{S}_{OS}(i+2,2)= \ch\, \mathcal{S}_{VG}(i+2,2) 
    =  \frac{2^i-1}{3} h_1^3+h_3$, if $i$ even,
    \item[] 
    $\ch\, \mathcal{S}_{OS}(i+2,2) =  \frac{2(2^{i-1}-1)}{3} h_1^3+h_1h_2$, if $i$ odd,
     \item[]
     $\ch\, \mathcal{S}_{VG}(i+2,2)=\frac{2(2^{i-1}-1)}{3} h_1^3+h_1e_2,$ if $i$ odd.
\end{itemize}
\end{enumerate}
In particular,
\begin{itemize}
    \item  $\OS(\braid_2)^!_i,\OS(\braid_3)^!_i,$  are permutation modules, while 
    \item $\VG(\braid_2)^!_i, \VG(\braid_3)^!_i$ are  permutation modules for all \emph{even} $i$, and 
    \item When $n=2,3$ one has
    $$
    \OS(\braid_n)^!_i \cong 
    \begin{cases}
        \VG(\braid_n)^!_i &\text{ if }i\text{ is even},\\
              \mathrm{sgn}_n \otimes \VG(\braid_n)^!_i &\text{ if }i\text{ is odd}.\\
    \end{cases}
    $$
\end{itemize}
\end{prop}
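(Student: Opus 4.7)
For $n=2$, I would argue directly: the ground set of $\braid_2$ has a single element, so $\OS(\braid_2)\cong\kk[x_{12}]/(x_{12}^2)$ and $\VG(\braid_2)\cong\kk[x_{12}]/(x_{12}^2)$ with Koszul duals $\kk[y_{12}]$ in each case. On the degree-one generator $y_{12}$, the transposition $(12)$ acts trivially in the $\OS$ case (since the convention $x_{21}=x_{12}$ forces $(12)$ to fix $x_{12}$, and for a $1$-dimensional representation the contragredient coincides with the original) and by $-1$ in the $\VG$ case (since $x_{21}=-x_{12}$). Reading off the induced action on $y_{12}^{\,i}$ gives the claimed formulas.

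For $n=3$, since $\OS(\braid_3)$ and $\VG(\braid_3)$ vanish in degrees $\ge 3$, the Koszul reciprocity recurrence \eqref{eqn:shriek-rep-recurrence} collapses to the two-term identity $[A^!_i] = [A_1]\cdot[A^!_{i-1}] - [A_2]\cdot[A^!_{i-2}]$ in $R_\kk(\symm_3)$ (for $i\ge 2$), where the dot is the internal Kronecker product. Rather than manipulate these products symbolically, I would evaluate on the three conjugacy classes $\{e,(12),(123)\}$ of $\symm_3$. From \eqref{OS-and-VG-lambda}, using $\pi_2=h_2$, $\ell_2=e_2$, and $\pi_3=\ell_3=s_{(2,1)}$, one obtains $\ch A_1 = h_2 h_1$ for $\OS$ versus $\ch A_1 = h_1 e_2$ for $\VG$, with $\ch A_2 = s_{(2,1)}$ in both cases; the corresponding character values on $(e,(12),(123))$ are $(3,1,0)$ or $(3,-1,0)$ and $(2,0,-1)$. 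The resulting scalar three-term recurrences, with initial values $\chi_0 = 1$ and $\chi_1 = \chi_{A_1}$, solve explicitly to
\[
\chi_i(e) = 2^{i+1}-1, \qquad
\chi_i((123)) = \tfrac{1+(-1)^i}{2}, \qquad
\chi_i((12)) = \begin{cases} 1 & \text{in }\OS, \\ (-1)^i & \text{in }\VG. \end{cases}
\]

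The claimed formulas are then verified by computing the character values of $h_1^3$, $h_3$, $h_1 h_2$, $h_1 e_2$ on the three classes --- respectively $(6,0,0)$, $(1,1,1)$, $(3,1,0)$, $(3,-1,0)$ --- and matching; the coefficients $\tfrac{2^i-1}{3}$ and $\tfrac{2(2^{i-1}-1)}{3}$ are automatically nonnegative integers under the stated parities of $i$. The bullet-point consequences then follow immediately: $h_3$, $h_1 h_2$, $h_1^3$ are permutation characters (trivial, defining on three letters, and regular), exhibiting the $\OS$ formulas and the even-$i$ $\VG$ formula as honest permutation modules; meanwhile $h_1 e_2$ takes value $-1$ on a transposition, so $\VG(\braid_n)^!_i$ fails to be a permutation module for odd $i$. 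Finally, the isomorphism $\OS(\braid_n)^!_i \cong \mathrm{sgn}_n \otimes \VG(\braid_n)^!_i$ for odd $i$ reduces to the symmetric-function identity $\ch\,\OS(\braid_n)^!_i = \omega(\ch\,\VG(\braid_n)^!_i)$, with $\omega$ the standard involution swapping $h_n$ and $e_n$, and is immediate from the formulas since $\omega(h_1^3)=h_1^3$ and $\omega(h_1 e_2) = h_1 h_2$. The one piece of bookkeeping to keep under control is the Kronecker products themselves, which the pointwise character approach sidesteps entirely since $\symm_3$ has only three conjugacy classes.
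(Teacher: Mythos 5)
Your proof is correct, and it fills in the details of the paper's terse proof sketch (which just says ``induction on $i$ via the recurrences \eqref{eqn:shriek-rep-recurrence} and \eqref{OS-and-VG-lambda}'') in a slightly different but tidy way. For $n=2$ you bypass the recurrence entirely and read off the $\symm_2$-action on $\kk[y_{12}]$ directly from the sign conventions $x_{21}=x_{12}$ versus $x_{21}=-x_{12}$, which is the cleanest way to see the $h_2$-versus-$e_2$ parity. For $n=3$, where the paper presumably intends a formal inductive verification that the claimed closed forms satisfy the two-term Kronecker recurrence, you instead observe that $\symm_3$ has only three conjugacy classes and evaluate everything as scalar linear recurrences in the character values; this sidesteps Kronecker-product bookkeeping altogether and produces the closed forms $\chi_i(e)=2^{i+1}-1$, $\chi_i((12))=1$ or $(-1)^i$, $\chi_i((123))=\tfrac{1+(-1)^i}{2}$ directly. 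The translation back to the $h$- and $e$-expansions, and the deductions about permutation modules and the $\mathrm{sgn}$-twist via the involution $\omega$, are all correct. The only thing worth flagging is that this pointwise method is convenient precisely because $n\le 3$ keeps the number of conjugacy classes small; for the later propositions at $n=4,5$ the paper reverts to manipulating $h$-expansions, which scales better.
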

\begin{proof}[Sketch of proof.]
These all follow by induction on $i$ via the recurrences \eqref{eqn:shriek-rep-recurrence} and \eqref{OS-and-VG-lambda}.
\end{proof}

\begin{remark}
\label{S(n,2)-as-almost-multiples-reg-rep}

Note the expressions for $n=2$ are
consistent with the formula $S(i+1,1)=1$ coming from \eqref{explicit-S(n,1)-formula}.
We claim that the expressions for $n=3$ are also
consistent with the formulas 
\begin{align}
\label{S(n,2)-formula-rewritten-1}
S(i+2,2)&= 2^{i+1}-1 \\
\label{S(n,2)-formula-rewritten-2}
&= 1 + 2+ 2^2+ \cdots+2^i
\end{align}
coming from \eqref{explicit-S(n,2)-formula}, which we illustrate here
for $\OS(\braid_3)_i=\Snkrep(i+2,2)$.
One can rewrite it as
\begin{align*}
\ch\,\,\OS(\braid_3)_i=\ch\, \mathcal{S}_{\OS}(i+2,2)
&=
\underbrace{\left(
\frac{2^{i+1}+(-1)^i}{6} - \frac{1}{2}
\right)}_{\text{call this }c_i}
\cdot h_1^3 
+ \begin{cases}
h_3 &\text{ if }i\text{ is even},\\
h_2 h_1 &\text{ if }i\text{ is odd}
\end{cases}\\
&=
\frac{2^{i+1}}{6} 
\cdot h_1^3 
+ \begin{cases}
h_3 -\frac{1}{3} h_1^3 &\text{ if }i\text{ is even},\\
h_2 h_1 -\frac{2}{3} h_1^3&\text{ if }i\text{ is odd}.
\end{cases}
\end{align*}
Since $h_1^3, h_2 h_1, h_3$
correspond to $\kk\symm_3$-modules of dimensions $6,3,1$, one can check
that this last formula
lifts \eqref{S(n,2)-formula-rewritten-1}. 
Interestingly, the number $c_i$ of copies of the regular representation here (that is,
the coefficient of $h_1^3$) gives
a sequence $0,0,1,2,5,10,21,42,85,\ldots$
for which every other term $0,1,5,21,85,341,\ldots$ appears in the Online Encyclopedia of Integer Sequences as {\tt OEIS A002450}.

Expressions lifting \eqref{S(n,2)-formula-rewritten-2} arise when one uses the
recurrences \eqref{eqn:shriek-rep-recurrence} and \eqref{OS-and-VG-lambda},
without trying to rewrite things in terms of $h_\lambda$.  Recalling that
$\specht^\lambda$ is the irreducible
$\kk\symm_n$-module indexed by $\lambda$,
with $\ch \,\, \specht^\lambda=s_\lambda$, a {\it Schur function}.  One can check that
these recurrences give
\begin{align}  
\label{OS-geometric-series-lift}
\ch\,\mathcal{S}_\OS(i+2,2) 
&=h_3+s_{(2,1)}+s_{(2,1)}^{*2}+\cdots+s_{(2,1)}^{*i}.\\
\label{VG-geometric-series-lift}
\ch\,\mathcal{S}_\VG(i+2,2) 
&=\omega^i(h_3)+s_{(2,1)}+s_{(2,1)}^{*2}+\cdots +s_{(2,1)}^{*i},
\end{align}
where $\omega: \Lambda \rightarrow \Lambda$
is the involution on symmetric functions
swapping $h_n \leftrightarrow e_n$ for $n\ge 1$, corresponding to tensoring
$\kk \symm_n$-modules by the sign character
$\mathrm{sgn}_n$.  Bearing in mind that $h_3, e_3$ correspond to $1$-dimensional modules, while $s_{(2,1)}$ corresponds to the $2$-dimensional reflection representation $\specht^{(2,1)}$ of $\symm_3$, one sees that \eqref{OS-geometric-series-lift}, \eqref{VG-geometric-series-lift} lift \eqref{S(n,2)-formula-rewritten-2}.
Note also that \eqref{OS-geometric-series-lift} is consistent with the $n=3$ case of \eqref{rank-two-OS-shriek-as-geometric-series},
since one has a matroid isomorphism $\braid_3 \cong U_{2,3}$. 
\end{remark}

\subsection{The cases $\OS(\braid_4)^!$ and $\VG(\braid_4)^!$}

\phantom{blank line}

Here we show that the $\symm_4$-modules $\Snkrep_{\OS}(n+3,3)$ are permutation modules.
One observes a  periodicity in the initial expressions for $f_n=\ch\,\cS_{\OS}(n+3,3)$ below.

\begin{equation}\label{eqn:initial-A!4}
\begin{split}
&f_0=h_{4}\\
&f_1=  h_2^2\\
& f_2=h_1^4 \textcolor{red}{\mathbf{-h_2 h_1^2}} +2 h_2^2 +h_4 \quad\quad = h_2[h_1^2]+h_1^2 h_2+h_4\\
&f_3= 4 h_1^4 \textcolor{red}{\mathbf{-3 h_2 h_1^2}} +5 h_2^2 \quad\qquad\ = 2h_1^4+2 h_2[h_1^2] +h_1^2 h_2 +h_2^2\\
& f_4=14 h_1^4 \textcolor{red}{\mathbf{- 8 h_2 h_1^2}} +10 h_2^2 +h_4= 10 h_1^4 + 4 h_2[h_1^2] +2 h_2^2 +h_4\\
& f_5= 44 h_1^4  \textcolor{red}{\mathbf{-18 h_2 h_1^2}} +21 h_2^2 \qquad=35 h_1^4 +9 h_2[h_1^2] +3 h_2^2\\
&f_6= 135 h_1^4  \textcolor{red}{\mathbf{- 39 h_2 h_1^2}} +42 h_2^2 +h_4=115 h_1^4+20 h_2 [h_1^2]+2 h_2^2+h_2 h_1^2+h_4\\
&f_7=408 h_1^4  \textcolor{red}{\mathbf{- 81 h_2 h_1^2}} +85 h_2^2 
\qquad=367 h_1^4+41 h_2[h_1^2]+3 h_2^2 +h_2 h_1^2
\end{split}
\end{equation}

\begin{prop}\label{prop:A!4} The Frobenius characteristic $\ch\,\mathcal{S}_{OS}(n+3,3)=\ch\,\OS^!(\Br_4)_n$ is 
 an integer combination of $\{h_1^4, h_1^2h_2, h_2^2, h_4\}$.  

Let  $\mathbf{\ch\,\mathcal{S}_{OS}(n+3,3)=a_n h_1^4+b_n h_1^2 h_2 +c'_n h_2^2 +d_n h_4}, \, n\ge 0$.  Let $G_2$ be the subgroup of order 2 generated by $(12)(34)$.Then 
$\mathcal{S}_{OS}(n+3,3)$ is a permutation module with orbit stabilisers consisting of the wreath product $\symm_2[G_2]$, as well as a subset of the Young subgroups $\symm_\lambda$,  $\lambda\in \{(1^4), (2,1^2), (2^2), (4) \}$.  %
We have, for 
$a_n, c'_n, d_n\ge 0$ and $b_n<0$, 
\begin{align}
&\ch\, \mathcal{S}_{OS}(n+3,3) \notag\\
&=(a_n+\frac{b_n}{2}) h_1^4 -\frac{b_n}{2} h_2[h_1^2] +(c'_n+b_n) h_2^2 +d_n h_4, \ n\equiv 0, 1\bmod 4, 
\label{eqn:A!(4)-perm1}\\
&=(a_n+\frac{b_n-1}{2}) h_1^4 -\frac{b_n-1}{2} h_2[h_1^2] +h_1^2h_2+(c'_n+b_n-1) h_2^2 +d_n h_4,\ 
n\equiv 2, 3\bmod 4.\label{eqn:A!(4)-perm2}
\end{align}
The coefficients $a_n, b_n, c'_n, d_n$ are determined below.  

\begin{enumerate}
\item The coefficients of $h_4$ are the sequence $d_n=\frac{1+(-1)^n}{2}, \, n\ge 0.$
\item The coefficients of $h_2^2$ are $\{0,0,1,2,5,10,21,\ldots\}$, i.e. the numbers $c_n$ from Remark~\ref{S(n,2)-as-almost-multiples-reg-rep}.  More precisely, $c'_n=c_{n+3}=\frac{2^{n+2}-3+(-1)^{n+3}}{6},\, n\ge 0.$

\item $b_n-b_{n-1}=-c_{n+2}=-\frac{2^{n+1}-3+(-1)^n}{6}, n\ge 1,$ with $b_0=0=b_1$. In particular, $b_n<0$ for $n\ge 2$.  We have  
\[ b_n= -\frac{2}{3}(2^n-1) +\frac{n}{2}+\frac{1}{12}(1-(-1)^n)=-\frac{2^{n+1}}{3}+\frac{n}{2}+\frac{3}{4} -\frac{1}{12}(-1)^n, \, n\ge 0.\]
Thus $\mathcal{S}_{OS}(n+3,3)$ is NOT $h$-positive for $n\ge 2$.

\item The coefficients $a_n$ are all nonnegative, and strictly positive if $n\ge 2$. 
We have $a_0=a_1=0$ and for $n\ge 2$, 
\[a_n=\frac{3^{n+1}}{16}-\frac{n}{4}-\frac{4-(-1)^n}{16}\]
In particular, $a_n+\frac{b_n}{2}$, and $a_n+\frac{b_n-1}{2}$ are positive integers for $n\ge 3$.

Also, $a_n$ is the multiplicity of the sign representation.

\end{enumerate} 
\end{prop}

\begin{remark}\label{rem:h-coeffs-oeis-OS4shriek}
Computing dimensions shows that the $h$-expansion of $\ch\,\Snkrep(n+3,3)$ lifts the formula~\eqref{explicit-S(n,3)-formula} for the Stirling number $S(n+3,3)$. 

    The coefficient $a_n$ of $h_1^4$ gives the sequence $0,0,1,4,14,44,135,408, 1228,\ldots$, appearing as {\tt OEIS A097137}.
    (One checks that $a_n-a_{n-2}= (3^{n-1}-1)/2$.) Also  the negative of the coefficient $b_n$ of $h_1^2h_2$ gives $0,0,1,3,8,18, 39, 81, 166, 336, 677, \ldots $, which is {\tt OEIS A011377} or {\tt OEIS A178420}.
\end{remark}
A similar analysis for $\VG(\braid_4)_i^!$ shows that its  Frobenius characteristic 
$\ch \, \Snkrep_{\VG}(3+i,3)$ is also 
 an integer combination of $\{h_1^4, h_1^2h_2, h_2^2, h_4\}$, in fact of $\{h_1^4, h_2 e_2, h_4\}$. %Checking the data  shows that, up to $i=10$,  it is a permutation module only for $i=0,2,6, 9, 10.$  %(Moreover for $i=3$, it is half of a permutation module.)

Here is the data for $f_i=\ch\,\VG^!(4)_i$ with $0\le i\le 11$:
\begin{align*}
f_0&=h_4 &   f_1&=h_2 e_2\\
f_2&= h_1^4+h_4   & f_3&=4 \, h_1^4- \, h_2 e_2 \\
f_4&=12 \, h_1^4 +2 \, h_2 e_2 +h_4  & f_5&=40 \, h_1^4 +\, h_2 e_2\\
f_6&=127 \, h_1^4 -4 \, h_2 e_2 +h_4   &  f_7&= 388 \, h_1^4 +3 \, h_2 e_2\\
f_8&= 1186 \, h_1^4 + 6 \, h_2e_2  + h_4 &  f_9&=3608 \, h_1^4 - 11 \, h_2 e_2\\
f_{10}&=10901 \, h_1^4 + h_4  &  f_{11}&=32868 \, h_1^4+ 23 \, h_2 e_2\\
%%%
%f_{12}&=98968 \, h_1^4- 22 \, h_2 e_2+ h_4   &  f_{13}&=297576 \, h_1^4 - 23 \, h_2 e_2\\
%%%
%f_{14}&=  894059 \, h_1^4 + 68 \, h_2 e_2 +h_4 &  f_{15}&=2684964 \, h_1^4 - 21 \, h_2 e_2\\
%%%
%f_{16}& =8060366\, h_1^4 - 114\, h_2 e_2 + h_4 &   f_{17}&=24191896\, h_1^4 + 157\, h_2 e_2\\
%
%f_{18} &=72597633\, h_1^4+ 72\, h_2 e_2 + h_4  & f_{19}&=217836740\, h_1^4 - 385\, h_2 e_2 \\
%
%f_{20}&=653597252 \, h_1^4  +242\, h_2 e_2 + h_4
\end{align*}
Observe that the set $\{h_1^4, h_2e_2 ,  h_4\}$ is linearly independent.  
 One then has the following more precise statement:
 \begin{prop}\label{prop:VGBr4shriek-perm}
     Write $f_n$ for $\ch\,\mathcal{S}_{VG}(n+3,3)=\ch\,\VG^!(\Br_4)_n$.  Then 
$f_{2n-1}, f_{2n}-h_4\in \ZZ[h_1^4, h_2 e_2]$ 
and hence for $n \geq 0$, both the representation $\mathcal{S}_{VG}(2n+2,3)$ and the quotient representation $\mathcal{S}_{VG}(2n+3,3) / \one_{\symm_4}$ are fixed under tensoring
with the sign representation $\mathrm{sgn}$ of $\symm_4$.

Let $f_n=a_n h_1^4 +b_n h_2 e_2 +d_n h_4.$ Then, with initial values $a_0=a_1=0, a_2=1, a_3=4$, $b_0=0, b_1=1, b_2=0, b_3=-1$,   one has that $d_n=\frac{1+(-1)^n}{2}$, $a_n\ge 0$ for all $n\ge 0$, and  for $n\ge 3$:
\begin{align*} 
a_n&= 6 a_{n-1}-11 a_{n-2} +6 a_{n-3} +2(b_{n-1} -b_{n-2} +b_{n-3}) -d_{n-2},\\
b_n %&= 2-2 b_{n-1}-3 b_{n-2}-2 b_{n-3}\\
&=-b_{n-2}+2 b_{n-3}.
\end{align*}

The sequence $\{b_n\}_{n\ge 0}$ appears in {\tt OEIS A077912}, with generating function $\frac{x}{1+x^2-2x^3}$.

% 2023 Nov 6
Moreover  $\mathcal{S}_{VG}(n+3,3)$ is a permutation module if and only if $b_n=0$ or $b_n\le -2$.  Write $\ -b_n=2\alpha_n+3\beta_n$ for nonnegative integers $\alpha_n, \beta_n$. Then $a_n-(\alpha_n+\beta_n)$ is nonnegative and 
\[f_n=(a_n-(\alpha_n+\beta_n) )h_1^4 + \alpha_n\, \ch\, (1\big\uparrow_{G_2}^{\symm_4}) +\beta_n\,  \ch\, (1\big \uparrow_{V_4}^{\symm_4}) + d_n h_4\]
is the Frobenius characteristic of a permutation module, 
where the orbit stabilisers are $\symm_1$, $\symm_4$ and the subgroups $G_2=\langle (12)(34)\rangle$ and $V_4=\{(1), (12)(34), (13)(24), (14) (23)\}$ of $\symm_4$.
\end{prop}

\subsection{The case $\OS(\braid_5)^!$}

\phantom{blank line}

In this section we show that the $\symm_5$-modules $\cS_{\OS}(n+4,4)$ are also permutation modules.  We also show that the $h$-expansions exhibit a curious  periodicity modulo 4. 

The initial expressions for $f_n=\ch\,\cS_{\OS}(n+4,4)$ are as follows.
\begin{equation}\label{eqn:initial-A!5}
    \begin{split}
f_0&=h_5,\ f_1=h_3h_2,\\
f_2&=h_4h_1+h_2h_1^3+2h_3h_2\textcolor{red}{\mathbf{-h_3h_1^2}}=h_1 h_2[h_2]+h_2(h_3+e_3) +h_2^2 h_1,\\
f_3&=2h_1^5+3h_2^2h_1+2h_3h_2,\\ 
f_4&=12 h_1^5+8h_2^2h_1+2 h_3 h_2+h_5\\
f_5&=60 h_1^5 +18 h_2^2h_1+3 h_3 h_2\\
f_6&=274 h_1^5 + 38 h_2^2 h_1 + h_2 h_1^3  \textcolor{red}{\mathbf{ - h_3 h_1^2}} + 4 h_3 h_2+ h_4 h_1\\
&=(274 h_1^5  + 38 h_2 h_1^2 +2  h_3 h_2) + f_2 \\
f_7&= 1194 h_1^5+ 81 h_2^2 h_1 + 4 h_3 h_2.
    \end{split}
\end{equation}

\begin{prop}\label{prop:A!5} The $\symm_5$-module $\cS_{\OS}(n+4,4)=\ch\,\OS^!(\Br_5)_n$
is a permutation module for all $n\ge 0$, with orbit stabilisers given by 
\begin{itemize}
\item
the Young subgroups $\symm_\lambda$ for $\lambda\in\{(1^5), (2^2,1), (3,2)\}$ if $\mathbf{n\equiv 1, 3 \bmod 4}.$
\item 
the Young subgroup $\symm_{ (2^2,1)}$, as well as  the subgroups 
$\symm_1\times I_2(4), A_3\times \symm_2$ if $\mathbf{n\equiv 2 \bmod 4}.$

Here $A_3\times \symm_2$ is the subgroup of the Young subgroup $\symm_3\times \symm_2$, for  the alternating subgroup $A_3$ of $\symm_3$.
\item
the Young subgroups $\symm_\lambda$ for 
$\lambda\in\{(1^5), (2^2,1), (3,2), (5)\}$ if $\mathbf{n\equiv 0 \bmod 4}.$
\end{itemize} 

Let $J=\{h_1^5, h_2^2h_1, h_3h_2\}$.  Let  
$f_n=\ch\,\mathcal{S}_{OS}(n+4,4)=\ch\, A^!(5)_n$. 
Then 
\begin{enumerate}
\item
$f_n$ is a nonnegative integer combination of the set $J$ if $n\equiv 1, 3 \bmod 4.$
\item 
$f_n-f_2$ is a nonnegative integer combination of $J$ if $n\equiv 2 \bmod 4.$
\item
$f_n-f_0$ is a nonnegative integer combination of $J$ if $n\equiv 0 \bmod 4.$
\end{enumerate}

The following explicit decomposition holds for $f_{n+4}-f_n$:
\begin{equation}\label{eqn:h-expansion-A!5}
f_{n+4}-f_n=a_n h_1^5 +b_n h_1 h_2^2 + 2 h_2 h_3,\ n\ge 0,
\end{equation} 
where $b_0=8$,  $b_n=10(2^n)-2,\, n\ge 1,$ and 
\begin{equation}\label{eqn:coeff-formula-an}
a_n=\frac{1}{3} (1+17\cdot 4^{n+1}-3\cdot 2^{n+1}-3^{n+3}).
\end{equation}
\end{prop}

Let $0\le i\le 3$ and $k\ge 0$. Then 
\begin{equation}\label{eqn:Ashriek5OS-complete} 
\begin{split}
&f_{4k+4+i}-f_i=
\alpha_{k,i} h_1^5 + \beta_{k,i} h_2^2h_1+ 2(k+1) h_2 h_3\\
&\text{where}\\
&\alpha_{k,i}= \frac{k+1}{3}+4^{i+1} \frac{256^{k+1}-1}{45} -3^{i+2} \frac{81^{k+1}-1}{80} -2^{i+1}\frac{16^{k+1}-1}{15},\\
&\beta_{k,i}=2^{i+1}\frac{16^{k+1}-1}{3}-2(k+1).
\end{split}    
\end{equation}

\noindent
The multiplicity of the sign representation in $f_n$ is 
$\begin{cases} \alpha_{k,i}, & n=4(k+1)+i \text{ and } k\ge 0,\\
2, & n=3,\\
0, & n<3. 
\end{cases}$

\begin{remark}\label{rem:restrictions} ({\it The restriction of $\Snkrep_{\OS}(n+1,n-1)$ and $\Snkrep_{\VG}(n+1,n-1)$ to $\symm_{n-1}$})

Observe that in each of the cases $\OS(\braid_n)$, $1\le n\le 5$, the restriction of the $\symm_n$-module to $\symm_{n-1}$ is always an $h$-positive permutation module. 
The restriction is not $h$-positive for $\Snkrep(n+1,n-1)$ when $n\ge 5$, although the following formula  shows that it is a permutation module.  
\[\ch \, \Snkrep(n+1,n-1)\downarrow^{\symm_n}_{\symm_{n-1}}=(h_{n-2}h_1+h_{n-3}h_2 +2 h_{n-3} h_1^2 + h_{n-4} h_1^3 +h_{n-4} h_3) \delta_{n\ge 4}  +h_{n-5}\,\ch \,\one\uparrow_{I_2(4)}^{\symm_4}\delta_{n\ge 5}.\]
Here $I_2(4)$ is the dihedral group of order 8.
\end{remark}

\begin{remark} ({\it The restriction of $\Snkrep_{\OS}(n+3, 3)$ and $\Snkrep_{\VG}(n+3, 3)$}) 
With the coefficients defined in Proposition~\ref{prop:A!4}, the restriction of $\mathcal{S}_{OS}(n+3,3)$ to $\symm_3$ has Frobenius characteristic \[(4a_n+b_n) h_1^3+2(b_n+c'_n) h_1h_2+ d_n h_3,\] 
 and is thus $h$-positive.  In particular 
 $\Snkrep_{\OS}(n+3, 3)\big\downarrow_{\symm_{3}}$ is a permutation module whose point stabilisers are Young subgroups. 

 Proposition~\ref{prop:VGBr4shriek-perm} shows that a similar statement holds for $\Snkrep_{\VG}(n+3, 3)\big\downarrow_{\symm_{3}}$ ; here the orbit stabilisers are $\symm_1$ and $\symm_3$.
\end{remark}
\begin{remark} ({\it The restriction of $\Snkrep_{\OS}(n+4, 4)$})
The restriction of $f_n=\ch\,  \cS_{\OS}(n+4,4)$ to $\symm_4$ is $h$-positive, 
 supported on the set $\{h_1^4, h_1^2 h_2, h_2^2\}$ if $n\equiv 1,3 \bmod 4$,  the set  $\{h_1^4, h_1^2 h_2, h_2^2, h_4\}$ if $n\equiv 0 \bmod 4$, and finally the set 
$\{h_1^4, h_1^2 h_2, h_2^2, h_3 h_1, h_4\}$ if $n\equiv 2\bmod 4$.  In particular 
 $\Snkrep_{\OS}(n+4, 4)\big\downarrow_{\symm_{4}}$ is a permutation module whose point stabilisers are Young subgroups. 
\end{remark}
\begin{remark}\label{rem:triv-rep} ({\it The multiplicity of the trivial representation})  Here we collect formulas for the multiplicity of the trivial representation:

For $\Snkrep_{\OS}(n+1,n-1)$, the multiplicity of the trivial representation is 3 for $n\ge 4$, and the multiplicity of the sign representation is 0 for $n\ne 3,4$, and 1 otherwise.

For $\mathcal{S}_{OS}(n+2,2)$, the multiplicity is \[\frac{ 2^{n+1}}{6} +\frac{3+(-1)^n}{6}.\]

For $\Snkrep_{\OS}(n+3, 3)$, the multiplicity of the trivial representation is  
\[\frac{3^{n+1}}{16}+\frac{n}{4}+\frac{8+5(-1)^n}{16},\]
giving $\{1,1,3,6,17,47,139,412,\ldots  \}$.

For $\Snkrep_{\OS}(n+4, 4)$, (with  definitions as in Proposition~\ref{prop:A!5}), the multiplicity of the trivial representation is  
\begin{equation*}\label{eqn:Ashriek5OS-complete-trivial} 
\frac{k+1}{3}+4^{i+1} \frac{256^{k+1}-1}{45} -3^{i+2} \frac{81^{k+1}-1}{80} +2^{i+3}\frac{16^{k+1}-1}{5}
+\langle f_i, h_5\rangle.    
\end{equation*}
\end{remark}

\bibliographystyle{amsplain}
\bibliography{biblio}
\addcontentsline{toc}{section}{Bibliography}

\end{document}